\newtheorem{theorem}{Theorem}
\newtheorem{proposition}[theorem]{Proposition}
\newtheorem{lemma}[theorem]{Lemma}
\newtheorem{claim}[theorem]{Claim}
\newtheorem{conjecture}[theorem]{Conjecture}
\newtheorem{corollary}[theorem]{Corollary}
\newcommand{\cacher}[1]{}
\theoremstyle{definition}\newtheorem{definition}[theorem]{Definition}
\theoremstyle{definition}\newtheorem{remark}[theorem]{Remark}
\newcounter{exampleno}
\newenvironment{example}{
\smallbreak\noindent%
{\sc Example.}}%
{\dotfill\hbox{~$\square$}\smallbreak}
\def\alphaLU{\alpha_{L/U}}
\def\alphaUL{\alpha_{U/L}}
\def\pacc{p_{\mathrm{acc}}}
\def\Rb{R_{\bullet}}
\def\Rw{R_{\circ}}
\def\Claim#1.{\medbreak\ni{\bf Claim~#1.}\ }
\def\qedclaim{\hfill$\triangle$\smallskip}
\newcommand\LtoU{\textsc{Lderived$\rightarrow$Uderived}\ }
\newcommand\UtoL{\textsc{Uderived$\rightarrow$Lderived}\ }
\def\ds{\displaystyle}
\DeclareMathOperator{\Pois}{Pois}
\DeclareMathOperator{\Bern}{Bern}
\DeclareMathOperator{\Set}{\textsc{Set}}
\def\cC{\mathcal{C}}
\def\cB{\mathcal{B}}
\def\cA{\mathcal{A}}
\def\cG{\mathcal{G}}
\def\cGc{\mathcal{G}_1}
\def\cGcp{\mathcal{G}_1\ \!\!\!'}
\def\cGb{\mathcal{G}_2}
\def\cGt{\mathcal{G}_3}
\def\cGbp{\mathcal{G}_2\ \!\!\!'}
\def\cZ{\mathcal{Z}}
\def\cD{\mathcal{D}}
\def\cE{\mathcal{E}}
\def\cH{\mathcal{H}}
\def\cK{\mathcal{K}}
\def\cI{\mathcal{I}}
\def\cR{\mathcal{R}}
\def\cP{\mathcal{P}}
\def\cQ{\mathcal{Q}}
\def\cS{\mathcal{S}}
\def\cK{\mathcal{K}}
\def\cF{\mathcal{F}}
\def\cM{\mathcal{M}}
\def\cMt{\mathcal{M}_3}
\def\ol{\overline}
\def\cZU{\mathcal{Z}_U}
\def\cZL{\mathcal{Z}_L}
\def\cMtr{\vec{\cM_3}}
\def\ni{\noindent}
\def\cGtr{\vec{\cG_3}}
\def\cGbr{\vec{\cG_2}}
\def\cJ{\mathcal{J}}
\def\cJa{\mathcal{J}_{\mathrm{a}}}
\def\Ja{J_{\mathrm{a}}}
\def\vec{\widevec}
\def\widevec{\overrightarrow}
\def\ul{\underline}
\def\cRb{\mathcal{R}_{\bullet}}
\def\cRw{\mathcal{R}_{\circ}}
\def\cRbas{\mathcal{R}_{\bullet}^{\mathrm{(as)}}}
\def\cRwas{\mathcal{R}_{\circ}^{\mathrm{(as)}}}
\def\cRbh{\widehat{\mathcal{R}}_{\bullet}}
\def\cRwh{\widehat{\mathcal{R}}_{\circ}}
\def\barpi{\ol{\pi}}
\author{\'Eric Fusy}
\title[Uniform random sampling of planar graphs in linear time]{Uniform random sampling of planar graphs\\ in linear time}
\address{Algorithms project, INRIA Rocquencourt 78153 Le Chesnay Cedex, France}
\email{eric.fusy@inria.fr}
\keywords{Planar graphs, Random generation, Boltzmann sampling.}
\begin{document}

\begin{abstract}

  {\small This article introduces new algorithms for the uniform 
random generation of
labelled planar graphs. Its principles rely on Boltzmann samplers, as 
recently developed by Duchon, Flajolet, Louchard,
and Schaeffer. It combines the Boltzmann framework, a suitable use of
rejection, a new combinatorial bijection
found by Fusy, Poulalhon and Schaeffer, as well as a precise analytic 
description of the generating functions counting planar graphs, which was recently obtained by 
Gim\'enez and Noy. This gives rise to
an extremely efficient algorithm for the random generation
of planar graphs. There is a preprocessing step of some fixed small cost; and
 the expected time complexity of generation is quadratic for exact-size uniform
sampling and linear for approximate-size sampling. This greatly improves on
the best previously known time complexity for exact-size uniform
sampling of planar graphs with $n$ vertices, which was a little over
$O(n^7)$.

\emph{This is the extended and revised journal version of a conference
paper with the title ``Quadratic exact-size and 
linear approximate-size
random generation of
planar graphs'', which appeared in the Proceedings of the International Conference on  Analysis of Algorithms (AofA'05), 6-10 June 2005, Barcelona.}}
\end{abstract}

\maketitle

\section{Introduction}
\label{sec:intro}
A graph is said to be planar if it can be embedded in the plane so
that no two edges cross each other. In this article, we consider
 planar graphs that are \emph{labelled}, i.e., the $n$ 
vertices
bear distinct labels in $[1..n]$, and \emph{simple}, i.e., with no loop nor multiple edges. 
Statistical properties of planar graphs have been intensively
studied~\cite{BGHPS04,Ge,gimeneznoy}. Very recently,
Gim\'enez and Noy~\cite{gimeneznoy} have solved \emph{exactly} the
difficult problem of the asymptotic enumeration of labelled planar graphs. They
also provide exact analytic expressions for the asymptotic
probability distribution of parameters such as the number of edges
and the number of connected components. However many other statistics on
random planar graphs remain analytically and combinatorially
intractable. Thus, it is an important issue to design efficient random samplers in order to observe the  (asymptotic) behaviour of such parameters on random planar graphs.
Moreover, random generation is useful to test the correctness and efficiency of 
algorithms on planar graphs,
such as planarity testing, embedding algorithms, procedures
for finding geometric cuts, and so on.


Denise,
Vasconcellos, and Welsh have proposed a first algorithm for the random generation of 
planar graphs~\cite{alain96random}, by defining a Markov chain on
the set $\mathcal{G}_n$ of labelled planar graphs with $n$ vertices. 
At each step, two different vertices $v$ and $v'$ are chosen
at random. If they are adjacent, the edge $(v,v')$ is deleted. If they
are not adjacent and if the operation of adding $(v,v')$ does not
break planarity, then the edge $(v,v')$ is added. By symmetry of the transition matrix of the Markov chain, the
probability distribution converges to the uniform distribution on
$\mathcal{G}_n$. This algorithm is
very easy to describe but more difficult to implement, as there
exists
no simple linear-time planarity testing algorithm. More importantly, the rate
of convergence to the uniform distribution is unknown.

A second approach for uniform random generation is 
the \emph{recursive method} introduced by Nijenhuis and
Wilf~\cite{NiWi79}
and formalised by Flajolet, Van Cutsem and
Zimmermann~\cite{FlZiVa94}. The recursive method is a general
framework for the random generation of combinatorial classes admitting a
recursive decomposition. For such classes, producing an object of the class
uniformly at
random boils down to producing the \emph{decomposition tree}
corresponding to its recursive decomposition. Then, the branching
probabilities that produce the decomposition tree with suitable
(uniform) probability are computed
using the \emph{coefficients} counting the objects involved in the
decomposition. As a consequence, this method requires a preprocessing
step where large tables of large coefficients are calculated using the recursive
relations they satisfy.

\begin{figure}
{\small
\begin{tabular}{lllll}
&Aux. mem.&Preproc. time&\multicolumn{2}{c}{Time per generation}\\
\hline
\hline
Markov chains & $O(\log n)$$\phantom{n^{2^2}}$&$O(\log n)$ &\emph{unknown}
&\{exact size\} \\[1mm]
\hline
Recursive method &$O(n^5\log n)$&$O^{*}\!\!\left( n^7  \right) $&$O(n^3)$$\phantom{n^{2^2}}$ &\{exact size\} \\[1mm]
\hline
\raisebox{-0.7ex}{Boltzmann sampler} &\raisebox{-0.7ex}{$O((\log n)^k)$}&\raisebox{-0.7ex}{$O((\log
n)^k)$}& $O(n^2)$ $\phantom{n^{2^2}}$ &\{exact size\}\\[-1mm]
&&&\raisebox{+0.0ex}{$O(n/\epsilon)$} &\{approx. size\}
\end{tabular}
}
\caption{Complexities of the
random samplers of planar graphs ($O^{*}$
stands for a big $O$ taken up to logarithmic factors).}
\label{table:compar}
\end{figure}

Bodirsky \emph{et al.} have described in~\cite{bodirsky} the first polynomial-time random sampler
for planar graphs. Their idea is to apply the recursive method of sampling to 
a well known combinatorial decomposition of planar graphs according to successive
levels of connectivity, which has been formalised by Tutte~\cite{Tut}. 
Precisely, the decomposition yields 
some recurrences satisfied by the
coefficients counting planar graphs as well as subfamilies (connected, 2-connected, 3-connected), which  in turn yield an explicit recursive
way to generate planar graphs uniformly at random. As the recurrences are rather involved,
the complexity of the preprocessing step is large. Precisely, in order to draw planar graphs with $n$ vertices
(and possibly also a fixed number $m$ of edges),  
the random generator described in~\cite{bodirsky} requires a
preprocessing time of order
$O\left( n^7 (\log n)^2(\log \log n )   \right) $ and
an auxiliary memory of size
$O( n^5 \log n)$. Once the tables have been computed, the complexity
 of each generation is $O(n^3)$. A more recent optimisation of the
 recursive method by Denise and 
 Zimmermann~\cite{denise99uniform} ---based on controlled real arithmetics--- should be applicable;
it would improve the time complexity somewhat, but the storage complexity  would still be large.

In this article, we introduce a new random generator for 
labelled planar graphs, which relies on the same  
decomposition of planar graphs as the algorithm of Bodirsky \emph{et al}. The main 
difference is that we translate this 
decomposition into a random generator using the framework of Boltzmann samplers,
instead of the recursive method. 
Boltzmann samplers have been recently
developed by Duchon, Flajolet, Louchard, and Schaeffer
in~\cite{DuFlLoSc04} as a powerful framework for the random generation
of decomposable combinatorial structures. The idea of Boltzmann sampling is to gain efficiency by
 relaxing the
constraint of exact-size sampling. As we will see, the gain is particularly significant in 
the case of planar graphs, where the decomposition is more involved than for classical classes, 
such as trees. 
Given a combinatorial class, a \emph{Boltzmann sampler} draws an object of size $n$
with probability proportional to $x^n$ (or proportional to
$x^n/n!$ for labelled objects), where $x$ is a certain
\emph{real} parameter that can be appropriately tuned. Accordingly, 
the probability distribution is spread over all the objects
of the class, with the property that objects of the same size have the same
probability of occurring. In particular, the probability distribution is uniform
when restricted to a fixed size. Like the recursive method, Boltzmann
samplers can be designed for any combinatorial class admitting a
recursive decomposition, as there are explicit sampling rules associated with
each classical construction (Sum, Product, Set, Substitution). The branching probabilities used
to produce 
the decomposition tree of a random object are not based on the
\emph{coefficients} as in the
recursive method, but on the \emph{values} at $x$ of the generating
functions of 
the classes intervening in the decomposition.

In this article, we translate the decomposition of planar graphs into Boltzmann samplers and obtain
 very efficient random generators that  produce
 planar graphs with a fixed number of vertices or with fixed numbers
of vertices and edges uniformly at
random. Furthermore, our samplers have an approximate-size version where a small tolerance, 
say a few percents, is allowed for the size of the output. For
practical 
purpose, approximate-size random sampling
often suffices. The approximate-size samplers we propose are very
efficient as they have \emph{linear time complexity}. 

\begin{theorem}[Samplers with respect to number of vertices]
\label{theo:planarsamp1}
Let $n\in \mathbf{N}$ be a target size.
An \emph{exact-size} 
sampler $\frak{A}_n$ can be designed so as to generate 
labelled planar graphs with $n$ vertices uniformly at
random. For any tolerance ratio $\epsilon>0$, an
\emph{approximate-size} sampler  $\frak{A}_{n,\epsilon}$ can be designed so as to generate planar
graphs with their number of vertices in $[n(1-\epsilon),n(1+\epsilon)]$, and following the 
 uniform distribution for each size $k\in
[n(1-\epsilon),n(1+\epsilon)]$.  

Under a real-arithmetics complexity model, 
Algorithm $\frak{A}_n$ is of expected complexity $O(n^2)$, and  
Algorithm $\frak{A}_{n,\epsilon}$ is of expected complexity $O(n/\epsilon)$.

\end{theorem}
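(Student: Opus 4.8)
The plan is to build the samplers by translating the hierarchical decomposition of planar graphs—according to successive levels of connectivity (general, connected, $2$-connected, $3$-connected)—into a composition of Boltzmann samplers, and then to bound the expected running time by controlling the rejection costs incurred at each level. I would first set up the generating-function machinery: letting $G(x)$, $C(x)$, $B(x)$, $T(x)$ denote the (exponential) generating functions for labelled planar graphs, connected planar graphs, $2$-connected planar graphs, and $3$-connected ones respectively, I would recall the classical relations linking them (a \emph{Set} construction from connected to general graphs, a rooting/pointing relation from connected to $2$-connected via blocks, and the substitution relation $C^{\bullet}$ into $B$ giving the networks, culminating in the $3$-connected core extracted through the bijection of Fusy, Poulalhon and Schaeffer). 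For each construction I would invoke the corresponding Boltzmann sampling rule (Sum, Product, \emph{Set}, Substitution) to assemble a sampler $\Gamma G(x)$ producing a labelled planar graph with probability proportional to $x^n/n!$, so that the output distribution is uniform on each fixed size.

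Next I would address the \emph{exact-size} sampler $\frak{A}_n$. The standard device is to run the free Boltzmann sampler $\Gamma G(x)$ repeatedly, rejecting any output whose number of vertices differs from $n$, and to tune the parameter $x$ to the value $x_n$ that maximises the acceptance probability for target size $n$. The key point is that the singularity of $G(x)$ is of the type found by Gim\'enez and Noy, so that the size distribution under the Boltzmann measure is concentrated and the probability of hitting size exactly $n$ decays only polynomially, like $\Theta(n^{-3/2})$ (the characteristic exponent for this singularity type). Since each free generation costs expected time $O(n)$ and the expected number of trials before hitting $n$ is $O(n^{3/2})$, a naive analysis would give $O(n^{5/2})$; I would recover the claimed $O(n^2)$ by the standard trick of \emph{early abortion}, discarding any partial generation as soon as its accumulated size exceeds $n(1+\delta)$, which caps the expected cost per trial and yields expected total cost $O(n^2)$.

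For the \emph{approximate-size} sampler $\frak{A}_{n,\epsilon}$ the argument is cleaner: I would accept any output whose size lies in the window $[n(1-\epsilon),n(1+\epsilon)]$. Because the Boltzmann size distribution (tuned at $x_n$) places a constant fraction of its mass in a window of relative width $\epsilon$ around its mean, the acceptance probability is bounded below by a constant times $\epsilon$, so the expected number of trials is $O(1/\epsilon)$; combined with the $O(n)$ cost per generation (again controlled by early abortion) this gives the claimed $O(n/\epsilon)$. To see that the output is uniform \emph{within each size} $k$ in the window, I would note that the Boltzmann property guarantees equiprobability conditioned on size, and conditioning on the event ``size in the window'' preserves this within each fixed $k$.

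I expect the main obstacle to be the analysis of the expected cost per generation, i.e.\ controlling the total size handled during a single (possibly aborted) free run. The hierarchical decomposition means a single planar-graph generation recursively invokes many connected, $2$-connected, and $3$-connected subsamplers, each itself involving substitutions and rejections (for instance to enforce the correct connectivity or to sample the core via the Poulalhon–Schaeffer bijection). The delicate part is to verify that the expected total work, summed over the whole recursion tree and including all intermediate rejections within the substitution and \emph{Set} constructions, remains linear in the size actually produced—this requires checking that each layer of rejection has expected acceptance probability bounded below by a constant (so the rejection overhead is $O(1)$ per level and does not compound), and that early abortion can be propagated through the nested constructions without destroying the correctness of the output distribution. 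The analytic input of Gim\'enez and Noy on the nature of the dominant singularities, together with the linear-time guarantee of the core bijection, is what makes this linearity attainable.
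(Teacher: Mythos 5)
There is a genuine gap: you never point (derive) the class, and this omission is fatal to the complexity claims. The class $\cG$ of planar graphs is $5/2$-singular, so the coefficients behave like $c\,\rho^{-n}n^{-7/2}$ and the Boltzmann size distribution at any admissible $x$ (even at $x\to\rho_G$) has \emph{bounded} expected size and puts mass $\Theta(n^{-7/2})$ on size $n$ and $\Theta(\epsilon\, n^{-5/2})$ on the window $[n(1-\epsilon),n(1+\epsilon)]$ --- not the $\Theta(n^{-3/2})$ and ``constant fraction of mass near the mean'' that you assert. The exponent $n^{-3/2}$ is characteristic of a square-root singularity, and the whole point of the construction in the paper is to manufacture one: since derivation lowers the singularity exponent by $1$, the bi-derived class $\cG''$ has square-root singularities, and only then do the estimates $1/\pi_n=O(n^{3/2})$, $1/\pi_{n,\epsilon}=O(n^{1/2}/\epsilon)$ hold (Lemma~\ref{lem:square}). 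With the un-derived sampler your targetted algorithms have expected cost of order $n^{7/2}$ (exact size) and $n^{5/2}$ (approximate size), exactly as the paper remarks at the start of Section~\ref{sec:efficient}. Building a Boltzmann sampler for $\cG''$ is not a formality: it requires deriving the entire decomposition grammar twice (Proposition~\ref{prop:der_rules}) and producing samplers for $\cGcp$, $\cGcp'$, $\cGbp$, $\cGbp'$, $\ul{\cGtr}$, $\cGtr'$, etc., which is the bulk of Section~\ref{sec:efficient}.

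Your fallback --- early abortion to cap the cost per trial --- does not repair the accounting. Aborting when the partial size exceeds $n(1+\delta)$ caps each trial at $O(n)$, but to turn $O(n^{3/2})$ trials into total cost $O(n^2)$ you need expected cost $O(n^{1/2})$ per trial; in the paper this comes not from abortion but from the fact that the expected \emph{size} of the output of $\Gamma\cG''(x_n,1)$ is $O(n^{1/2})$ (square-root singularity again) together with the bottom-up proof that $\Lambda\cG''(x,y_0)=O((x_0-x)^{-1/2})$, which itself hinges on the criticality of the composition schemes (3-connected $\to$ networks $\to$ 2-connected $\to$ connected) established by Bender--Gao and Gim\'enez--Noy. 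The paper in fact deliberately \emph{avoids} early abortion at the top level (see the footnote in Section~\ref{sec:efficient}) precisely because the planar-graph sampler is riddled with internal rejection steps (admissibility of dissections, \textsc{Uderived}$\leftrightarrow$\textsc{Lderived} conversions, the unrooting of binary trees) whose interaction with an external interruption is hard to analyse; you flag this as ``the main obstacle'' but offer no resolution. The correctness and uniformity parts of your proposal are fine, but the complexity analysis as written establishes at best $O(n^{7/2})$ and $O(n^{5/2})$, not the claimed $O(n^2)$ and $O(n/\epsilon)$.
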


\begin{theorem}[Samplers with respect to the numbers of vertices and edges]
\label{theo:planarsamp2}
Let $n\in \mathbf{N}$ be a target size and $\mu\in(1,3)$ be a parameter describing
the ratio edges-vertices.
An \emph{exact-size}
sampler $\ol{\frak{A}}_{n,\mu}$ can be designed so as to generate planar graphs with $n$ vertices and $\lfloor \mu n\rfloor$ edges uniformly at
random. For any tolerance-ratio $\epsilon>0$, an
\emph{approximate-size} sampler  $\ol{\frak{A}}_{n,\mu,\epsilon}$ can be designed so as to generate  planar
graphs with their number of vertices in $[n(1-\epsilon),n(1+\epsilon)]$ and their ratio edges/vertices in 
$[\mu (1-\epsilon),\mu (1+\epsilon)]$, and following the 
 uniform distribution for each fixed pair (number of vertices,
number of edges).

Under a real-arithmetics complexity model, 
for a fixed $\mu\in(1,3)$, Algorithm $\ol{\frak{A}}_{n,\mu}$ is of expected complexity $O_{\mu}(n^{5/2})$. For fixed constants $\mu\in(1,3)$ and $\epsilon>0$, 
Algorithm $\ol{\frak{A}}_{n,\mu,\epsilon}$ is of expected complexity $O_{\mu}(n/\epsilon)$
(the bounding constants depend on $\mu$).

\end{theorem}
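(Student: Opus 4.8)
The plan is to refine the family of Boltzmann samplers underlying Theorem~\ref{theo:planarsamp1} into \emph{bivariate} samplers in which a second formal variable $y$ marks edges, and then to redo the rejection analysis with the extra edge constraint. Concretely, I would attach to each class of the connectivity decomposition (planar $\to$ connected $\to$ $2$-connected $\to$ $3$-connected, the last layer handled through the Fusy--Poulalhon--Schaeffer bijection) a two-variable generating function $G(x,y)$, with $x$ marking vertices and $y$ marking edges, and propagate $y$ through every construction rule (Set, Product, Substitution, and the derivation/pointing steps). Since each classical construction carries a Boltzmann rule that is insensitive to the presence of an extra marking variable, this produces a bivariate sampler $\Gamma G(x,y)$. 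As in the univariate case the Gim\'enez--Noy singularity (exponent $5/2$) keeps the unpointed mean size finite at criticality, so I would again point three times on vertices: the object $\Gppp$, refined to $G^{\bullet\bullet\bullet}(x,y)$, has vertex-singularity exponent $-1/2$, and because pointing reweights each graph by a factor depending only on its number of vertices, conditioning the output on a fixed pair (vertices, edges) is still uniform --- the $y$-weight $y^{m}$ is constant once both $N=n$ and $M=m$ are fixed --- which gives the required correctness.

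Next comes the two-dimensional tuning. I would first pick $y=y_\mu$ so that the mean edges/vertices ratio of a $y$-weighted random planar graph tends to $\mu$; as $y$ ranges over $(0,\infty)$ this ratio sweeps the whole interval $(1,3)$ (from the forest-like regime up to the triangulation-like regime), so a suitable $y_\mu$ exists for every $\mu\in(1,3)$. With $y_\mu$ fixed, the problem becomes one-dimensional in $x$ near the $y_\mu$-dependent singularity $\rho(y_\mu)$, and I would set $1-x/\rho(y_\mu)=\Theta(1/n)$ exactly as for Theorem~\ref{theo:planarsamp1}, so that the number of vertices $N$ has mean $n$, is spread over a range $\Theta(n)$, and satisfies $\Pr[N=n]\sim c/n$.

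The heart of the analysis is the joint acceptance probability for exact size, $\Pr[N=n,\ M=\lfloor\mu n\rfloor]$. Conditioning on $N=n$, I would invoke a \emph{local} limit theorem for the (now $y_\mu$-tilted) number of edges $M$, upgrading the Gaussian central limit law of Gim\'enez--Noy (mean $\sim\mu n$, standard deviation $\sim\sigma\sqrt n$) to the pointwise estimate $\Pr[M=m\mid N=n]\sim c'/\sqrt n$, uniformly for $m$ within $O(\sqrt n)$ of the mean. Multiplying yields $\Pr[N=n,\,M=\lfloor\mu n\rfloor]\sim c''/n^{3/2}$, so the expected number of rejection trials is $\Theta(n^{3/2})$; combined with an \emph{anticipated rejection} implementation that aborts a trial as soon as its partial size exceeds the vertex target (keeping the expected cost per trial at $O(n)$), this gives the expected complexity $O_\mu(n^{5/2})$ for $\ol{\frak{A}}_{n,\mu}$. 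For the approximate-size sampler $\ol{\frak{A}}_{n,\mu,\epsilon}$ the vertex window $[n(1-\epsilon),n(1+\epsilon)]$ captures probability $\Theta(\epsilon)$, while the edge window of relative width $\epsilon$ is far wider than the $\sqrt n$ fluctuation of $M$ and so, with the ratio tuned to $\mu$, captures probability $\Theta(1)$; thus $\Theta(1/\epsilon)$ trials of cost $O(n)$ each give $O_\mu(n/\epsilon)$.

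I expect the main obstacle to be precisely this bivariate local limit theorem: Gim\'enez--Noy supply a central limit law, but the $n^{-3/2}$ acceptance estimate requires the sharper pointwise (lattice) version together with uniformity in $m$ and in the tuning parameters, which calls for a careful bivariate singularity analysis of $G(x,y)$ along the critical curve $x=\rho(y)$ rather than at a single point. The remaining work --- threading $y$ and the triple pointing consistently through the $3$-connected layer of the bijection, and verifying that $y_\mu$ and $x$ are computable to sufficient precision in the real-arithmetic model --- is more bookkeeping than difficulty and parallels the univariate construction behind Theorem~\ref{theo:planarsamp1}.
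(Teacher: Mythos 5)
Your proposal is correct in outline and follows the same overall strategy as the paper: a bivariate Boltzmann sampler obtained by propagating the edge variable $y$ through the connectivity decomposition, tuning $y=y(\mu)$ via the monotone map $\mu(y)$ from $(0,\infty)$ onto $(1,3)$, pointing on vertices to flatten the size distribution, and splitting the joint acceptance probability as $\mathbb{P}(N=n)\cdot\mathbb{P}(M=\lfloor\mu n\rfloor\mid N=n)$ with the second factor of order $n^{-1/2}$ coming from a local limit (quasi-power) theorem of Gim\'enez--Noy. The one genuine divergence is the number of pointings. You derive three times, obtaining a class with singular exponent $-1/2$, so that at $x_n=(1-\tfrac{1}{2n})\rho$ the hitting probability is $\Theta(1/n)$ and the expected size per trial is $\Theta(n)$; the paper derives only twice, obtaining exponent $+1/2$, hitting probability $\Theta(n^{-3/2})$, and expected cost per trial $\Theta(\sqrt n)$ --- the products agree, giving $O_\mu(n^{5/2})$ and $O_\mu(n/\epsilon)$ either way. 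The paper explicitly abandoned the triple-derivation route (used in its conference version) because the second derivative suffices and the grammar for $\cG'''$ is heavier; so your variant is workable but costlier to set up.

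Two caveats. First, your appeal to \emph{anticipated rejection} (aborting a trial once the partial vertex count exceeds $n$) to bound the cost per trial is both unnecessary and dangerous: unnecessary because with $x_n$ strictly inside the disk of convergence the expected size of a $\cG'''$-object is already $\Theta(n)$ without truncation; dangerous because, as the paper notes, the planar-graph sampler contains internal rejection loops (admissibility of dissections, re-rooting between L-derived and U-derived classes, the early-abort for binary trees), so the ``partial size'' is not monotone during generation and the gain from early abortion is hard to analyse. Second, the step you dismiss as bookkeeping --- showing that the expected \emph{cost} of one call near the singularity is of the same order as the expected \emph{size} --- is in fact the bulk of the paper's analysis (Section~\ref{sec:complexity}): it requires verifying that every rejection step has acceptance probability bounded below and that the two composition schemes ($3$-connected $\to$ networks and $2$-connected $\to$ connected) are critical, so that the change of variables transports the $O((x_0-x)^{-1/2})$ bound up the chain. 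Your proof is not wrong to defer this to the machinery of Theorem~\ref{theo:planarsamp1}, but it should be acknowledged as the substantive part rather than routine.
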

\noindent The samplers are completely described in Section~\ref{sec:sample_vertices} 
and Section~\ref{sec:sample_edges}. The expected complexities will be proved in Section~\ref{sec:complexity}. For the sake of simplicity, we give big $O$ bounds that might depend on $\mu$ and
we do not care about quantifying the constant in the big $O$ in a precise way. 
However we strongly believe that a more careful analysis would allow us to have 
a uniform bounding constant (over $\mu\in(1,3)$) of reasonable magnitude.
This means that not only the theoretical complexity is good but also the practical one.
(As we review in Section~\ref{sec:implement}, 
we have implemented the algorithm, which easily draws  
graphs of sizes in the range of $10^5$.)

\vspace{0.2cm}

\emph{Complexity model.} Let us comment on the model we adopt to state the complexities of the random samplers.
We assume here that we are given an \emph{oracle}, which provides at unit cost the exact evaluations of the generating
functions intervening in the decomposition of planar graphs. (For planar graphs, these 
generating functions are those of families of planar graphs of different  
connectivity degrees and pointed in 
different ways.)
This assumption, called the ``oracle assumption", is by now 
classical to analyse the complexity of Boltzmann samplers, see~\cite{DuFlLoSc04} for a more detailed discussion;
it allows us to separate the \emph{combinatorial complexity} of the samplers from the 
complexity of \emph{evaluating} the generating functions, which resorts to computer algebra and is a 
research project on its own. 
Once the oracle assumption is done, the scenario of generation of a Boltzmann sampler is typically similar to a branching
process;  the generation follows a sequence of \emph{random choices} ---typically coin flips biased by 
some generating function values--- that determine the 
shape of the object to be drawn. According to these choices, the object (in this article, a planar graph) is built effectively 
by a  sequence of primitive operations such as vertex creation, edge creation, merging two graphs at a common vertex... 
The \emph{combinatorial complexity} is precisely defined as the sum of the number of coin flips and the number
of primitive operations performed to build the object.
The (combinatorial) complexity of our algorithm is compared to the complexities of the
two preceding random samplers in Figure~\ref{table:compar}.

Let us now comment on the preprocessing complexity.  
The implementation of
$\frak{A}_{n,\epsilon}$ and $\frak{A}_n$, as well as $\ol{\frak{A}}_{n,\mu,\epsilon}$ and
$\ol{\frak{A}}_{n,\mu}$, requires the
storage of a fixed number of real constants, which are special values
of generating functions. The generating functions to  be evaluated
are those of several families of planar graphs (connected,
2-connected, 3-connected). A crucial result, recently established
by Gim\'enez and Noy~\cite{gimeneznoy}, is that there exist
exact analytic equations satisfied by these generating functions. Hence,
their numerical evaluation can be performed efficiently with the help of a 
computer algebra system; the complexity we have observed in practice
(doing the computations with Maple) is  of low
polynomial degree $k$ in the number of digits that need to be computed.
(However, there is not yet a complete rigorous proof of the fact, as the Boltzmann parameter
has to approach the singularity in order to draw planar graphs of large size.)
To draw objects of size $n$, the precision needed to 
make the probability of failure small 
is typically of order $\log(n)$ digits\footnote{Notice that it is possible to 
 achieve perfect uniformity by calling 
adaptive precision routines in case of failure, 
see Denise and Zimmermann~\cite{denise99uniform} for a detailed discussion on similar problems.}.  Thus the preprocessing step to
evaluate the generating functions with a precision of $\log(n)$ digits
 has a complexity
of order $\log(n)^k$ 
(again, this is yet to be proved rigorously).
The following informal statement summarizes the discussion; making a theorem of it is the subject
of ongoing research (see the recent article~\cite{PiSaSo07}):  
\smallskip

\noindent{\bf Fact.}
\emph{With high probability, the auxiliary memory necessary to generate planar graphs of size  $n$
is of order $O(\log(n))$ and  the preprocessing time complexity is of order $O(\log(n)^k)$
for some low integer $k$.}

\smallskip

\emph{Implementation and experimental results.} We have completely implemented the 
random samplers stated in Theorem~\ref{theo:planarsamp1} and Theorem~\ref{theo:planarsamp2}.
 Details are given in Section~\ref{sec:implement}, as well as experimental results. 
Precisely, 
the evaluations of the generating functions of planar graphs have been
carried out with the computer algebra system Maple, based on the
analytic expressions given by Gim\'enez and
Noy~\cite{gimeneznoy}. Then, the random generator has been implemented in
Java, with a precision of 64 bits for the values of generating
functions (``double'' type). Using the approximate-size sampler, planar
graphs with size of order 100,000 are generated in a few seconds with
a machine clocked at 1GHz. In contrast, the recursive method of Bodirsky \emph{et
al} is currently limited to sizes of about 100.

Having the random generator implemented, we have performed some simulations
in order to observe typical properties of random planar graphs.
In particular we have observed a sharp concentration for 
the proportion of vertices of a given degree $k$ in a random planar graph of large size.


\section{Overview}

The algorithm we describe relies mainly on two ingredients. 
The first one is  a recent correspondence, called the 
closure-mapping, 
between binary trees and (edge-rooted) 3-connected planar graphs~\cite{FuPoSc05}, which makes
it possible to obtain a Boltzmann sampler for 3-connected planar graphs. 
The second one is a decomposition  formalised by Tutte~\cite{Tut}, which ensures that 
any planar graph can be decomposed into 3-connected
components, via connected and 2-connected components.
Taking advantage of Tutte's decomposition, we explain in Section~\ref{sec:decomp} how to specify
a Boltzmann sampler for planar graphs, denoted $\Gamma\cG(x,y)$, 
from the Boltzmann sampler for 3-connected
planar graphs. To do this, we 
have to extend the collection of constructions for Boltzmann samplers,
as detailed in~\cite{DuFlLoSc04}, 
and develop new rejection techniques so as to suitably handle the 
rooting/unrooting operations that appear alongside Tutte's decomposition.

Even if the Boltzmann
sampler  $\Gamma\cG(x,y)$ already yields a polynomial-time
uniform random sampler for planar graphs, the expected time complexity
to generate a graph of size $n$ ($n$ vertices) is not good, due to the fact that the size distribution of $\Gamma \cG(x,y)$ is too concentrated 
on objects of small
size. To improve the size distribution, we \emph{point} the objects,
in a way inspired by~\cite{DuFlLoSc04}, which corresponds
to a \emph{derivation} (differentiation) of the associated generating function. 
The precise singularity
analysis of the generating functions of planar graphs, which has been recently done 
in~\cite{gimeneznoy}, indicates that we have to take the second derivative of planar
graphs in order to get a good size distribution. 
In Section~\ref{sec:efficient}, we
explain how the derivation operator can be injected in the
decomposition of planar graphs. This yields a Boltzmann sampler $\Gamma
\cG''(x,y)$ for ``bi-derived'' planar
graphs. Our random generators for planar graphs are finally obtained as \emph{targetted samplers}, 
which call $\Gamma \cG''(x,y)$ (with suitably tuned
values of $x$ and $y$) until the generated graph has the desired size.  
The time complexity of the targetted samplers 
is analysed in Section~\ref{sec:complexity}. This eventually 
yields the complexity results 
stated in Theorems~\ref{theo:planarsamp1} and ~\ref{theo:planarsamp2}. The general scheme
of the planar graph generator is shown in Figure~\ref{fig:relations}.

\begin{figure}
  \begin{center}
  \includegraphics{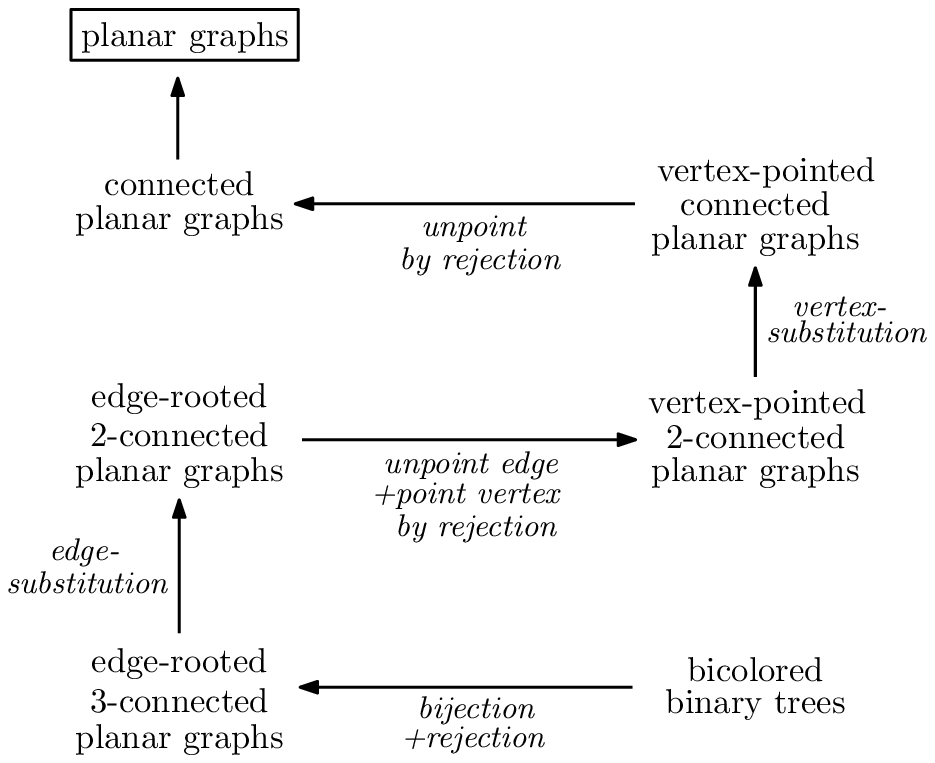}
    \caption{The chain of constructions from binary trees to planar graphs.}\label{fig:relations}
  \end{center}
\end{figure}

\section{Boltzmann samplers}
\label{sec:bolz}
In this section, we define Boltzmann samplers and describe the main properties which we will need
to handle planar graphs.
In particular, we have to extend the framework to the case of \emph{mixed classes}, meaning that 
the objects have two types of atoms. 
Indeed the decomposition of planar graphs involves both (labelled)
vertices and (unlabelled) edges. The constructions needed to formulate the decomposition of
planar graphs are classical ones in combinatorics: Sum, Product, Set, Substitutions~\cite{BeLaLe,fla}. 
In Section~\ref{sec:rule}, for each of
the constructions, we describe a \emph{sampling rule}, so that Boltzmann samplers can be 
assembled for any class that admits a decomposition in terms of these constructions.
Moreover, the decomposition of planar graphs involves rooting/unrooting operations,
which makes it necessary to develop new rejection techniques, 
as described in Section~\ref{sec:reject}.

\subsection{Definitions}
\label{sec:bolzdef}
A combinatorial class $\cC$ is a family of labelled objects (structures), that is, 
each object is made
of $n$ atoms that bear distinct labels in $[1..n]$. In addition,
the number of objects in any fixed size $n$ is finite; and any structure obtained
by relabelling a structure in $\cC$ is also in $\cC$. The
\emph{exponential}
generating function of $\mathcal{C}$ is defined as
$$C(x):=\sum_{\gamma\in\mathcal{C}}\frac{x^{|\gamma|}}{|\gamma|!},$$
where $|\gamma|$ is the size of an object $\gamma\in\cC$ (e.g., the
number of vertices of a graph). The radius of convergence of $C(x)$ is denoted
by $\rho$. A positive value $x$ is called \emph{admissible} if $x\in(0,\rho)$
(hence the sum defining $C(x)$ converges if $x$ is admissible).

Boltzmann samplers, as introduced and developed by Duchon \emph{et al.} 
in~\cite{DuFlLoSc04}, constitute a
general and efficient framework to produce a random generator
for any \emph{decomposable} combinatorial class $\mathcal{C}$.  
Instead of fixing a particular size for the random generation, objects
are drawn under a 
probability distribution spread over the whole class. 
Precisely, given an admissible value for $C(x)$, 
  the Boltzmann distribution assigns to each object of
$\mathcal{C}$ a weight
$$\mathbf{P}_x(\gamma)=\frac{x^{|\gamma|}}{|\gamma|!C(x)}\,
.$$
Notice that the distribution is uniform, i.e., two objects with the same 
size have the same probability to be chosen. A \emph{Boltzmann sampler} for the labelled class $\mathcal{C}$ is a procedure $\Gamma \cC(x)$
that, for each fixed admissible $x$, draws objects of $\mathcal{C}$ at random under the distribution $\mathbf{P}_x$.
 The authors
of~\cite{DuFlLoSc04} give sampling rules associated to classical combinatorial
constructions, such as Sum, Product, and Set. 
(For the unlabelled setting, we refer to the more recent article~\cite{FlFuPi07},
and to~\cite{BoFuPi06} for the specific case of plane partitions.)

In order to translate the
combinatorial decomposition of planar graphs into a Boltzmann sampler, 
we need to extend the framework
 of Boltzmann samplers to the bivariate case of  \emph{mixed} combinatorial
classes. A mixed class $\cC$ is a labelled combinatorial class where one takes into
account a second type of atoms, which are unlabelled. Precisely, an object in 
$\mathcal{C}=\cup_{n,m}\mathcal{C}_{n,m}$ has $n$ ``labelled
atoms'' and $m$ ``unlabelled atoms'', e.g., a graph has $n$ labelled
vertices and $m$ unlabelled edges. The labelled atoms are shortly called L-atoms, and 
the unlabelled atoms are shortly called U-atoms.
For $\gamma\in\mathcal{C}$, we write
$|\gamma|$ for the number of L-atoms of $\gamma$, called the \emph{L-size} of $\gamma$, and
$||\gamma||$ for the number of U-atoms of $\gamma$, called the \emph{U-size} of $\gamma$. 
The associated generating function $C(x,y)$ is
defined as
$$C(x,y):=\sum_{\gamma\in\mathcal{C}}\frac{x^{|\gamma|}}{|\gamma|!}y^{||\gamma||}.$$
For a fixed real value $y>0$, we denote by
$\rho_C(y)$ the radius of convergence of the function $x\mapsto C(x,y)$. A
pair $(x,y)$ is said to be \emph{admissible} if $x\in (0,\rho_C(y))$, which 
implies that $\sum_{\gamma\in\mathcal{C}}\frac{x^{|\gamma|}}{|\gamma|!}y^{||\gamma||}$ converges and that
$C(x,y)$ is well defined.  Given an admissible pair $(x,y)$, the
\emph{mixed Boltzmann distribution} is the probability distribution
$\mathbf{P}_{x,y}$ 
assigning to each
object $\gamma\in\mathcal{C}$ the probability
$$\mathbf{P}_{x,y}(\gamma)=\frac{1}{C(x,y)}\frac{x^{|\gamma|}}{|\gamma|!}y^{||\gamma||}.$$
 An important property of this distribution is that two objects with the same
size-parameters  have the same probability of occurring. 
 A \emph{mixed Boltzmann sampler}
at $(x,y)$ ---shortly called Boltzmann sampler hereafter--- is a procedure $\Gamma \cC(x,y)$
that draws objects of $\mathcal{C}$ at random under the 
distribution $\mathbf{P}_{x,y}$. Notice that the specialization $y=1$ yields a
classical 
Boltzmann sampler for $\mathcal{C}$.




\subsection{Basic classes and constructions}
\label{sec:rule}

We describe here a collection of basic classes and constructions that are used thereafter to 
formulate a decomposition for the family of planar graphs.

The basic classes we consider are: 
\begin{itemize}
\item
The 1-class, made of a unique object of size 0 (both the L-size and the U-size are equal to 0), 
called the 0-atom. The corresponding
mixed generating function is $C(x,y)=1$.
\item
The L-unit class, made of a unique object that is an L-atom; the corresponding 
mixed generating function is $C(x,y)=x$.
\item
The U-unit class, made of a unique object that is a U-atom; the corresponding 
mixed generating function is $C(x,y)=y$.
\end{itemize}

Let us now describe the five constructions that are used to
decompose planar graphs. In particular, we need two specific
substitution constructions, one at labelled atoms that is called L-substitution, the other
at unlabelled atoms that is called U-substitution. 
\vspace{0.3cm}

\noindent{\bf Sum.} The sum $\cC:=\mathcal{A}+\mathcal{B}$ of two classes
is meant as a \emph{disjoint union}, i.e., it is the union of two
distinct copies of $\mathcal{A}$ and $\mathcal{B}$. The generating function of $\cC$ satisfies
$$
C(x,y)=A(x,y)+B(x,y).
$$

\vspace{0.3cm}

\noindent{\bf Product.} The partitional product of two classes $\cA$ and $\cB$ is the class 
$\cC:=\mathcal{A}\star\mathcal{B}$ of objects that are obtained by taking a pair $\gamma=(\gamma_1\in\cA,\gamma_2\in\cB)$ and relabelling the L-atoms so that $\gamma$ bears distinct labels in $[1..|\gamma|]$. 
The generating function of $\cC$ satisfies
$$
C(x,y)=A(x,y)\cdot B(x,y).
$$
 
\vspace{0.3cm} 
 
\noindent{$\mathbf{Set_{\geq d}}$.} For $d\geq 0$ and a class $\cB$ having no object
of size 0, any object in $\cC:=\Set_{\geq d}(\cB)$ is  a finite set of at least $d$ objects
of $\cB$, relabelled so that the atoms of $\gamma$ bear distinct labels in $[1\,.\,.\,|\gamma|]$.
For $d=0$, this corresponds to the classical construction $\Set$.
The generating function of $\cC$ satisfies
$$
C(x,y)=\exp_{\geq d}(B(x,y)),\ \ \ \mathrm{where}\ \exp_{\geq d}(z):=\sum_{k\geq d}\frac{z^k}{k!}.
$$

\vspace{0.3cm} 

\noindent{\bf L-substitution.} Given $\cA$ and $\cB$ two classes such that 
$\cB$ has no object of size $0$, the class $\cC=\mathcal{A}\circ_L\mathcal{B}$ is the class
of objects that are obtained as follows: take an object 
$\rho\in\mathcal{A}$ called the \emph{core-object}, substitute each L-atom $v$ of $\rho$ by
an object $\gamma_v\in\mathcal{B}$, and  
relabel the L-atoms of $\cup_{v}\gamma_v$ with distinct labels
from $1$ to $\sum_v |\gamma_v|$. The generating function of $\cC$ satisfies
$$
C(x,y)=A(B(x,y),y).
$$

\vspace{0.3cm} 

\noindent{\bf U-substitution.} Given $\cA$ and $\cB$ two classes such that 
$\cB$ has no object of size $0$, the class $\cC=\mathcal{A}\circ_U\mathcal{B}$ is the class
of objects that are obtained as follows: take an object
$\rho\in\mathcal{A}$ called the \emph{core-object}, substitute each U-atom $e$ of $\rho$ by
an object $\gamma_e\in\mathcal{B}$, and 
relabel the L-atoms of $\rho\cup\left(\cup_{e}\gamma_e\right)$ with 
distinct labels
from $1$ to $|\rho|+\sum_e |\gamma_e|$.  We assume here that the U-atoms of an object of
$\cA$ are \emph{distinguishable}. In particular, 
this property is satisfied if $\cA$ is a family of
labelled graphs with no multiple edges, since two different edges are distinguished by the labels of their extremities. The generating function of $\cC$ satisfies
$$
C(x,y)=A(x,B(x,y)).
$$

\begin{figure}
\begin{center}
\includegraphics[width=13cm]{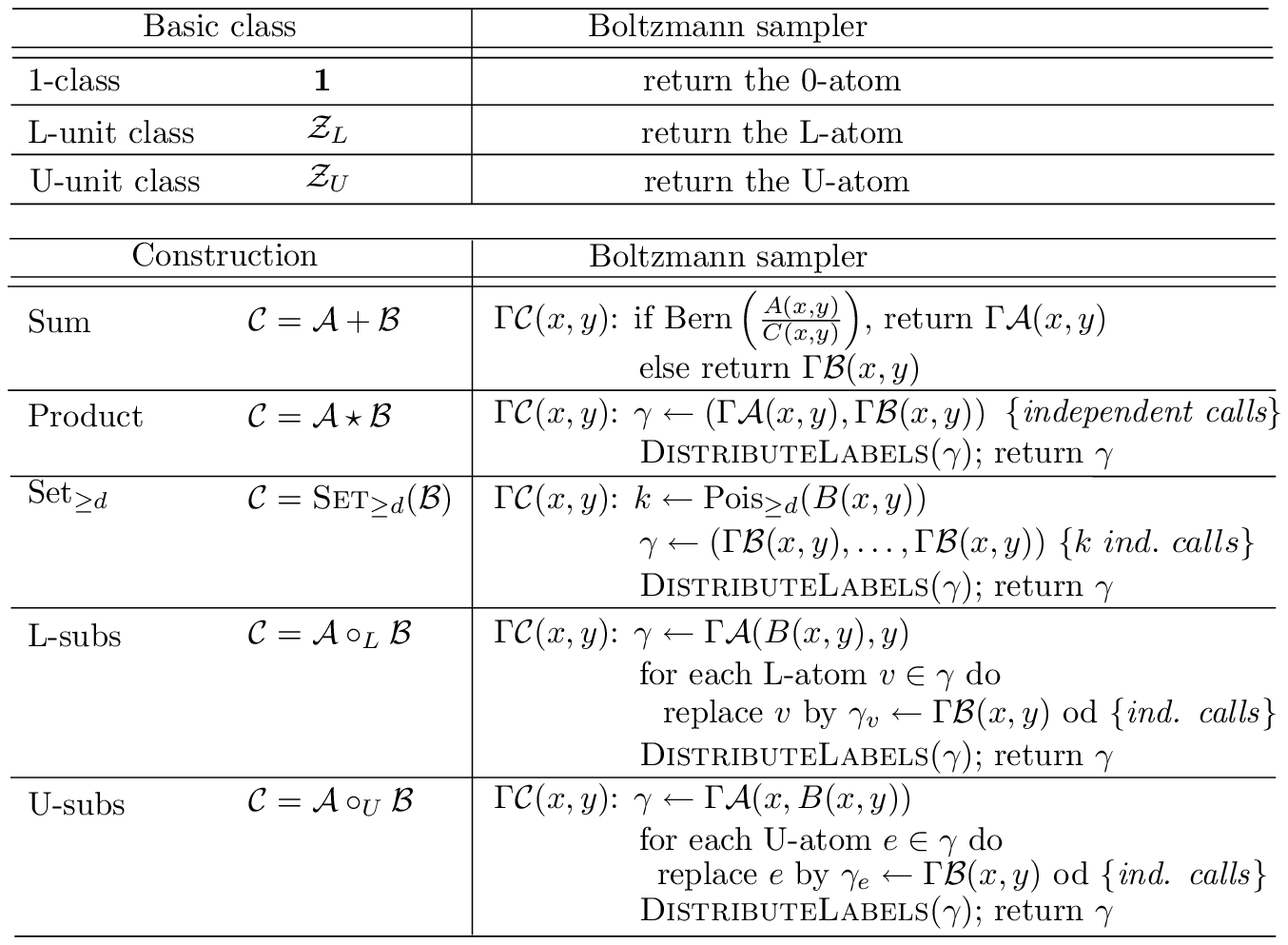}
\end{center}
\caption{The sampling rules associated with the basic classes and the constructions. For each rule involving
partitional products, there is a relabelling step  performed  by an 
auxiliary procedure
\textsc{DistributeLabels}. Given an object $\gamma$ with its 
L-atoms ranked from $1$ to $|\gamma|$,
\textsc{DistributeLabels}($\gamma$) draws a permutation 
$\sigma$ of $[1..|\gamma|]$ uniformly at random and gives label
$\sigma(i)$ to the atom of rank $i$.  }
\label{table:rules}
\end{figure}

\subsection{Sampling rules}
\label{sec:rules_boltzma}
A nice feature of Boltzmann samplers is that the basic
combinatorial constructions (Sum, Product, Set) give rise 
to simple rules for assembling the associated
Boltzmann samplers. To describe these rules, we assume that the exact
values of the
 generating functions at a given admissible pair $(x,y)$ are known. We will
also need two well-known probability distributions.
\begin{itemize}
\item
A random variable follows a \emph{Bernoulli law} of 
parameter $p\in (0,1)$
if it is equal to 1 (or true) with probability $p$ and equal to 0 (or false) 
with probability $1-p$.
\item
Given $\lambda\in\mathbb{R}_{+}$ and $d\in\mathbb{Z}_{+}$, the \emph{conditioned Poisson law}
$\Pois_{\geq d}(\lambda)$ is the probability distribution on $\mathbf{Z}_{\geq d}$ defined as follows:
$$
\mathbb{P}(k)=\frac{1}{\exp_{\geq d}(\lambda)}\frac{\lambda^k}{k!},\ \mathrm{where}\ \exp_{\geq d}(z):=\sum_{k\geq d}\frac{z^k}{k!}.
$$ 
For $d=0$, this corresponds to the classical Poisson law, abbreviated as $\Pois$.
\end{itemize}

Starting from combinatorial classes $\mathcal{A}$ and $\mathcal{B}$
endowed with Boltzmann samplers $\Gamma \cA(x,y)$ and
$\Gamma \cB(x,y)$, Figure~\ref{table:rules} describes how to assemble a
sampler for a class $\mathcal{C}$ obtained from $\mathcal{A}$ and
$\mathcal{B}$ (or from $\cA$ alone for the construction $\Set_{\geq d}$)
 using the five constructions described in this section.

\begin{proposition}
\label{prop:rules}
Let $\cA$ and $\cB$ be two mixed combinatorial classes
endowed with Boltzmann samplers $\Gamma \cA(x,y)$ and $\Gamma \cB(x,y)$.
 For each of the five constructions
$\{+$, $\star$, $\Set_{\geq d}$, L-subs, U-subs$\}$, the sampler
$\Gamma \cC(x,y)$, as specified in Figure~\ref{table:rules}, is a valid
Boltzmann sampler
 for the combinatorial class $\mathcal{C}$.
\end{proposition}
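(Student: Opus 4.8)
The plan is to verify, for each of the five constructions separately, that the sampler $\Gamma\cC(x,y)$ described in Figure~\ref{table:rules} draws an object $\gamma\in\cC$ with exactly the probability $\mathbf{P}_{x,y}(\gamma)=\frac{1}{C(x,y)}\frac{x^{|\gamma|}}{|\gamma|!}y^{||\gamma||}$ prescribed by the mixed Boltzmann distribution. Since the five cases are independent, I would treat them one at a time, and in each case the argument has the same shape: trace the probability that the procedure outputs a given $\gamma$ as a product of the random choices it makes, then simplify this product using the generating-function identity for that construction (e.g.\ $C=A+B$, $C=A\cdot B$, $C=\exp_{\geq d}(B)$, $C=A(B(x,y),y)$, or $C=A(x,B(x,y))$) to obtain precisely the weight $\frac{x^{|\gamma|}}{|\gamma|!}y^{||\gamma||}/C(x,y)$. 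Throughout I assume $\Gamma\cA$ and $\Gamma\cB$ are already valid Boltzmann samplers, so an object $\alpha\in\cA$ is produced with probability $\frac{x^{|\alpha|}}{|\alpha|!}y^{||\alpha||}/A(x,y)$, and similarly for $\cB$.

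For \textbf{Sum}, the sampler flips a Bernoulli of parameter $A(x,y)/C(x,y)$ to choose the class and then recurses; for $\gamma$ coming from $\cA$ the output probability is $\frac{A}{C}\cdot\frac{x^{|\gamma|}y^{||\gamma||}}{|\gamma|!\,A}$, which telescopes to the desired weight. For \textbf{Product}, the sampler calls $\Gamma\cA$ and $\Gamma\cB$ independently and then relabels via \textsc{DistributeLabels}; here the key combinatorial point is that a labelled object $\gamma$ of L-size $|\gamma|$ arising from the partitional product corresponds to $\binom{|\gamma|}{|\gamma_1|}$ choices of how labels split between the two factors, and the uniform relabelling supplies exactly the compensating factor so that the product $\frac{x^{|\gamma_1|}y^{||\gamma_1||}}{|\gamma_1|!\,A}\cdot\frac{x^{|\gamma_2|}y^{||\gamma_2||}}{|\gamma_2|!\,B}$ assembles into $\frac{x^{|\gamma|}y^{||\gamma||}}{|\gamma|!\,C}$ after summing over the preimages. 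For $\Set_{\geq d}$, the sampler first draws the number $k$ of components from $\Pois_{\geq d}(B(x,y))$, then draws $k$ objects from $\Gamma\cB$ and relabels; the $1/k!$ from the Poisson weight matches the $1/k!$ in $\exp_{\geq d}$, and the symmetry of the set (unordered) together with \textsc{DistributeLabels} again produces the correct multinomial factor.

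The two \textbf{substitution} rules are the ones I expect to demand the most care, and I regard them as the main obstacle. For L-substitution, the sampler draws a core-object $\rho\in\cA$ from $\Gamma\cA(B(x,y),y)$ --- that is, with the $x$-argument replaced by the value $B(x,y)$ --- and then independently attaches to each of the $|\rho|$ L-atoms an object of $\cB$ drawn from $\Gamma\cB(x,y)$, followed by a global relabelling. The verification requires unwinding that each L-atom of $\rho$ carries a factor $B(x,y)$ in the weight of $\rho$ under $\Gamma\cA(B(x,y),y)$, and that this factor is precisely cancelled and redistributed by the independent $\cB$-draws and the uniform relabelling; the bookkeeping of labels across the core and the substituted pieces (so that the product of the local weights equals $\frac{x^{|\gamma|}}{|\gamma|!}y^{||\gamma||}/C$) is the delicate step. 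U-substitution is analogous but substitutes at U-atoms: one calls $\Gamma\cA(x,B(x,y))$ and replaces each U-atom by a $\cB$-object; here the hypothesis that the U-atoms of an $\cA$-object are \emph{distinguishable} is exactly what is needed to rule out spurious automorphism factors, so that each distinct $\gamma\in\cC$ has a single well-defined decomposition and the weights multiply correctly. In both substitution cases I would be careful to check that no object of size $0$ is substituted (which is guaranteed by the standing hypothesis on $\cB$), since that is what makes the composed generating function well-defined and the sampler terminate.
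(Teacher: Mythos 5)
Your proposal is correct and follows essentially the same route as the paper: for each construction, compute the probability of a given output as a product of the probabilities of the random choices, and check against the Boltzmann weight using the corresponding generating-function identity. The paper merely formalizes your ``summing over preimages'' step by introducing \emph{generation scenarios} (a tuple of components together with the relabelling permutation) and counting how many scenarios produce a fixed $\gamma$ (e.g.\ $|\gamma_1|!\,|\gamma_2|!$ for the product, $k!\prod_i|\gamma_i|!$ for $\Set_{\geq d}$), which is exactly the bookkeeping you identify as the delicate point.
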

\begin{proof}
1) \emph{Sum:} $\mathcal{C}=\mathcal{A}+\mathcal{B}$. An object
of $\mathcal{A}$ has probability
$\frac{1}{A(x,y)}\frac{x^{|\gamma|}}{|\gamma|!}y^{||\gamma||}$ 
(by definition of $\Gamma \cA(x,y)$)
multiplied by  $\frac{A(x,y)}{C(x,y)}$ (because of the Bernoulli choice)  
of being drawn by
$\Gamma \cC(x,y)$. 
Hence, it has
probability $\frac{1}{C(x,y)}\frac{x^{|\gamma|}}{|\gamma|!}y^{||\gamma||}$ 
of being
drawn. Similarly, 
an object of
$\mathcal{B}$ has probability $\frac{1}{C(x,y)}\frac{x^{|\gamma|}}{|\gamma|!}y^{||\gamma||}$ of
being drawn. 
Hence $\Gamma \cC(x,y)$ is
a valid Boltzmann sampler for $\mathcal{C}$.

\vspace{0.2cm}

\noindent 2) \emph{Product:} 
$\mathcal{C}=\mathcal{A}\star\mathcal{B}$. Define a 
\emph{generation scenario} as a pair
$(\gamma_1\in\mathcal{A},\gamma_2\in\mathcal{B})$, together with a
function $\sigma$ that assigns to each L-atom in $\gamma_1\cup\gamma_2$
a label $i\in[1..|\gamma_1|+|\gamma_2|]$ in a bijective way.
By definition, $\Gamma \cC(x,y)$ draws a generation scenario and returns
the object $\gamma\in\cA\star\cB$ obtained by keeping the secondary labels (the ones
given by \textsc{DistributeLabels}).
 Each generation scenario has
probability
$$\left(\frac{1}{A(x,y)}\frac{x^{|\gamma_1|}}{|\gamma_1|!}y^{||\gamma_1||}\right)\left(\frac{1}{B(x,y)}\frac{x^{|\gamma_2|}}{|\gamma_2|!}y^{||\gamma_2||}\right)\frac{1}{(|\gamma_1|+|\gamma_2|)!}$$
of being drawn, the three factors corresponding respectively to
$\Gamma \cA(x,y)$, $\Gamma \cB(x,y)$, and
\textsc{DistributeLabels}($\gamma$). Observe that this
probability has the more compact form
$$
\frac{1}{|\gamma_1|!|\gamma_2|!}\frac{1}{C(x,y)}\frac{x^{|\gamma|}}{|\gamma|!}y^{||\gamma||}
.$$ 
Given
$\gamma\in\mathcal{A}\star\mathcal{B}$, let $\gamma_1$ be its first
component (in $\mathcal{A}$) and $\gamma_2$ be its second component
(in $\mathcal{B}$). Any relabelling of the labelled atoms of $\gamma_1$ from $1$
to $|\gamma_1|$ and of the labelled atoms of $\gamma_2$ from $1$ to
$|\gamma_2|$ induces a unique generation scenario producing
$\gamma$. Indeed, the two relabellings determine unambiguously the relabelling
permutation $\sigma$ of the generation scenario. Hence, $\gamma$ is
produced from $|\gamma_1|!|\gamma_2|!$ different scenarios, 
each having probability
$\frac{1}{|\gamma_1|!|\gamma_2|!C(x,y)}\frac{x^{|\gamma|}}{|\gamma|!}y^{||\gamma||}$.
As a consequence, $\gamma$ is drawn under the Boltzmann distribution.

\vspace{0.2cm}

\noindent 3) \emph{Set}$_{\geq d}$: $\mathcal{C}=\Set_{\geq d}(\cB)$. 
In the case of the construction $\Set_{\geq d}$, 
a \emph{generation scenario}
is defined as a sequence
$(\gamma_1\in\mathcal{B},\ldots,\gamma_k\in\mathcal{B})$ with $k\geq d$, together with a
function $\sigma$ that assigns to each L-atom in $\gamma_1\cup\cdots\cup\gamma_k$
a label $i\in[1..|\gamma_1|+\cdots+|\gamma_k|]$ in a bijective way. Such a generation
scenario produces an object $\gamma\in\Set_{\geq d}(\mathcal{B})$. By
definition of $\Gamma \cC(x,y)$, each 
scenario has probability 
$$\left(
\frac{1}{\exp_{\geq d}(B(x,y))}\frac{B(x,y)^k}{k!}\right)\left(\prod_{i=1}^k
\frac{x^{|\gamma_i|}y^{||\gamma_i||}}{B(x,y)|\gamma_i|!}\right)\frac{1}{(|\gamma_1|+\cdots+|\gamma_k|)!},$$
the three factors corresponding respectively to drawing $\Pois_{\geq d}(B(x,y))$, 
drawing the sequence, and the relabelling step. This probability has
the simpler form
$$\frac{1}{k!C(x,y)}\frac{x^{|\gamma|}}{|\gamma|!}y^{||\gamma||}\prod_{i=1}^k\frac{1}{|\gamma_i|!}.$$
For $k\geq d$, an object $\gamma\in\Set_{\geq d}(\mathcal{B})$ can be written as a sequence
$\gamma_1,\ldots,\gamma_k$ in $k!$ different ways. In addition, by a
similar argument as for the Product construction, a sequence
$\gamma_1,\ldots,\gamma_k$ is produced from $\prod_{i=1}^k|\gamma_i|!$
different scenarios. As a consequence, $\gamma$ is drawn under the
Boltzmann distribution.

\vspace{0.2cm}

\noindent 4) \emph{L-substitution}: 
$\mathcal{C}=\mathcal{A}\circ_L\mathcal{B}$. For this
construction, a \emph{generation scenario} is defined as a core-object
$\rho\in\mathcal{A}$, a sequence $\gamma_1,\ldots,\gamma_{|\rho|}$ of
objects of $\mathcal{B}$ ($\gamma_i$
stands for the object of $\mathcal{B}$ substituted at the atom $i$ of $\rho$), together with a
function $\sigma$ that assigns to each L-atom in $\gamma_1\cup\cdots\cup\gamma_{|\rho|}$
a label $i\in[1..|\gamma_1|+\cdots+|\gamma_{|\rho|}|]$ in a bijective way.
This corresponds to the scenario of generation of an object
$\gamma\in\mathcal{A}\circ_L\mathcal{B}$ by the algorithm 
$\Gamma \cC(x,y)$, and this 
scenario has probability
$$\left(\frac{1}{A(B(x,y),y)}\frac{B(x,y)^{|\rho|}}{|\rho|!}y^{||\rho||}\right)\left(\prod_{i=1}^{|\rho|}\frac{x^{|\gamma_i|}y^{||\gamma_i||}}{B(x,y)|\gamma_i|!}\right)\frac{1}{(|\gamma_1|+\cdots+|\gamma_{|\rho|}|)!},$$
which has the simpler form
$$\frac{x^{|\gamma|}y^{||\gamma||}}{C(x,y)|\gamma|!}\frac{1}{|\rho|!}\prod_{i=1}^{|\rho|}\frac{1}{|\gamma_i|!}.$$
Given $\gamma\in\mathcal{A}\circ_L\mathcal{B}$, labelling
 the core-object $\rho\in\mathcal{A}$ with distinct labels in $[1..|\rho|]$ and each component
 $(\gamma_i)_{1\leq i\leq|\rho|}$ with distinct labels in $[1..|\gamma_i|]$  
 induces a unique generation scenario
producing $\gamma$. As a consequence, $\gamma$ is produced from
$|\rho|!\prod_{i=1}^{|\rho|}|\gamma_i|!$ scenarios, each having
probability
$\frac{x^{|\gamma|}y^{||\gamma||}}{C(x,y)|\gamma|!}\frac{1}{|\rho|!}\prod_{i=1}^{|\rho|}\frac{1}{|\gamma_i|!}$.
Hence, $\gamma$ is drawn under the Boltzmann distribution.

\vspace{0.2cm}

\noindent 5) \emph{U-substitution}: 
$\mathcal{C}=\mathcal{A}\circ_U\mathcal{B}$. A \emph{generation scenario} 
is defined as a core-object
$\rho\in\mathcal{A}$, a sequence $\gamma_1,\ldots,\gamma_{||\rho||}$ of
objects of $\mathcal{B}$ (upon giving a rank to each unlabelled atom of $\rho$,
$\gamma_i$
stands for the object of $\mathcal{B}$ substituted at the U-atom of rank $i$ in $\rho$),
and a function $\sigma$ that assigns to each L-atom in $\rho\cup\gamma_1\cup\cdots\cup\gamma_{||\rho||}$ a label $i\in[1..|\rho|+|\gamma_1|+\cdots+|\gamma_{||\rho||}|]$. 
This corresponds to the scenario of generation of an object
$\gamma\in\mathcal{A}\circ_U\mathcal{B}$ by the algorithm 
$\Gamma \cC(x,y)$; this 
scenario has probability
$$\left(\frac{1}{A(x,B(x,y))}\frac{x^{|\rho|}}{|\rho|!}B(x,y)^{||\rho||}\right)\left(\prod_{i=1}^{||\rho||}\frac{x^{|\gamma_i|}y^{||\gamma_i||}}{B(x,y)|\gamma_i|!}\right)\left(\frac{1}{(|\rho|+|\gamma_1|+\cdots+|\gamma_{||\rho||}|)!}\right).$$
This expression has the simpler form
$$\frac{x^{|\gamma|}y^{||\gamma||}}{C(x,y)|\gamma|!}\frac{1}{|\rho|!}\prod_{i=1}^{||\rho||}\frac{1}{|\gamma_i|!}.$$
Given $\gamma\in\mathcal{A}\circ_U\mathcal{B}$, labelling  
the core-object $\rho\in\mathcal{A}$ with distinct labels in $[1..|\rho|]$ and each component
 $(\gamma_i)_{1\leq i\leq||\rho||}$ with distinct labels in $[1..|\gamma_i|]$ induces a unique generation scenario
producing $\gamma$. As a consequence, $\gamma$ is produced from
$|\rho|!\prod_{i=1}^{||\rho||}|\gamma_i|!$ scenarios, each having
probability
$\frac{x^{|\gamma|}y^{||\gamma||}}{C(x,y)|\gamma|!}\frac{1}{|\rho|!}\prod_{i=1}^{||\rho||}\frac{1}{|\gamma_i|!}$.
Hence, $\gamma$ is drawn under the Boltzmann distribution.
\end{proof}

\vspace{0.2cm}

\begin{example}\label{ex:binary}
Consider the class $\mathcal{C}$ of rooted binary
trees, where the (labelled) atoms are the inner nodes. The class $\mathcal{C}$ has
the following decomposition grammar,

$$\mathcal{C}= \left( \mathcal{C}+
\mathbf{1}\right)\star \cZ\star \left( \mathcal{C}+
\mathbf{1}\right).$$
Accordingly, the series $C(x)$ counting rooted binary trees satisfies 
$C(x)=x\left( 1+C(x)\right) ^2$. (Notice that $C(x)$ can be
easily evaluated for a fixed real parameter $x<\rho_C=1/4$.)

Using the sampling rules for Sum and Product, we obtain the following Boltzmann sampler for
binary trees, where $\{\bullet\}$ stands for a node:

\vspace{.2cm}

\begin{tabular}{ll}
$\Gamma \cC(x):$& return $(\Gamma(1+\cC)(x),\{\bullet\},\Gamma(1+\cC)(x))$ \{independent calls\}
\end{tabular}

\begin{tabular}{ll}
$\Gamma(1+\cC)(x):$& if $\Bern\left(\frac{1}{1+C(x)}\right)$ return leaf\\
& else return $\Gamma \cC(x)$
\end{tabular}

\noindent Distinct labels in $[1..|\gamma|]$ might then be distributed uniformly
at random on 
the atoms of the resulting tree $\gamma$, 
so as to make it well-labelled (see Remark~\ref{rk:labels}
below). 
Many more examples are given in~\cite{DuFlLoSc04} for labelled (and unlabelled) classes specified
using the constructions $\{+,\star,\Set\}$.
\end{example}

\vspace{0.2cm}

\begin{remark}\label{rk:labels} In the sampling rules (Figure~\ref{table:rules}), the procedure
\textsc{DistributeLabels}($\gamma$) throws distinct labels
uniformly at random on the L-atoms of $\gamma$. The fact that the
relabelling permutation is always chosen uniformly at random
ensures that the process of assigning the labels
has no memory of the past, hence 
\textsc{DistributeLabels} needs to be called just once, at the end of the 
generation procedure. (A similar remark is given by Flajolet
\emph{et al.} in~\cite[Sec. 3]{FlZiVa94} for the recursive method of sampling.)

In other words, when combining the sampling rules given in Figure~\ref{table:rules} in order to design 
a Boltzmann sampler, 
we can forget about the calls to \textsc{DistributeLabels}, see for instance the  Boltzmann 
sampler for binary trees above. In fact, we have included the \textsc{DistributeLabels} steps
in the definitions of the sampling rules only for the sake of writing the correctness proofs (Proposition~\ref{prop:rules}) in a proper way.
\end{remark}

\subsection{Additional techniques for Boltzmann sampling}
As the decomposition of planar graphs we consider is a bit involved, we need a few techniques in order
to properly translate this decomposition into a Boltzmann
sampler. These techniques, which are described in more detail below, are: 
bijections, pointing, and rejection.

\subsubsection{Combinatorial isomorphisms}
Two mixed classes $\cA$ and $\cB$ are said to be \emph{isomorphic}, shortly 
written as $\cA\simeq\cB$, if there exists a bijection $\Phi$ between $\cA$ and $\cB$
that preserves the size parameters, i.e., preserves the L-size and the U-size.
(This is equivalent to the fact that the mixed generating functions of $\cA$ and $\cB$ are equal.)
In that case,  a Boltzmann sampler $\Gamma \cA(x,y)$ for the class $\cA$ yields a Boltzmann sampler
for $\cB$ via the isomorphism: 
$\Gamma \cB(x,y): \gamma\leftarrow\Gamma \cA(x,y);\ \mathrm{return}\ \Phi(\gamma)$.

\subsubsection{L-derivation, U-derivation, and edge-rooting.}\label{sec:derive}
In order to describe our random sampler for planar graphs, 
we will make much use of \emph{derivative}
operators. The L-derived class of a mixed class $\cC=\cup_{n,m}\cC_{n,m}$ (shortly called
the derived class of $\cC$) is the mixed class
$\cC'=\cup_{n,m}\cC'_{n,m}$ of objects in $\cC$ where the greatest label is taken out, i.e., 
the L-atom with greatest label is discarded from the set of L-atoms (see the
book by Bergeron, Labelle, Leroux ~\cite{BeLaLe} for more details and examples). The class
$\cC'$ can be identified with the pointed class $\cC^{\bullet}$ of $\cC$, which is the 
class of objects of $\cC$ with a distinguished L-atom.  
Indeed the discarded atom  in an object of $\cC'$ 
plays the role of a pointed vertex. However the important
 difference between $\cC'$ and $\cC^{\bullet}$
is that the distinguished L-atom does not count in the L-size of an object in $\cC'$. 
In other words, $\cC^{\bullet}=\cZL\star\cC'$. 
Clearly, for any integers $n,m$, $\cC'_{n-1,m}$ identifies to $\cC_{n,m}$, so that the
generating function $C'(x,y)$ of $\cC'$ satisfies

\begin{equation}
C'(x,y)=\sum_{n,m} |\cC_{n,m}|\frac{x^{n-1}}{(n-1)!}y^m=\partial_x C(x,y).
\end{equation} 

The U-derived class of $\cC$ is the class $\ul{\cC}$ of objects obtained from objects of $\cC$ by 
discarding one U-atom from the set of U-atoms; in other words there is a distinguished U-atom
that does not count in the U-size. As in the definition of the U-substitution,  
we assume that all the U-atoms are distinguishable,
for instance the edges of a simple graph are distinguished by the labels of their extremities. In that case, 
$|\ul{\cC}_{n,m-1}|=m|\cC_{n,m}|$, so that 
the generating function $\ul{C}(x,y)$ 
of $\ul{\cC}$ satisfies
\begin{equation}
\ul{C}(x,y)=\sum_{n,m} m|\cC_{n,m}|\frac{x^{n}}{n!}y^{m-1}=\partial_y C(x,y).
\end{equation}

For the particular case of planar graphs, we will also
consider \emph{edge-rooted} objects (shortly called rooted objects), i.e., planar graphs where an edge 
is ``marked'' (distinguished) and directed. 
In addition, the root edge, shortly called the root, is not counted as an unlabelled atom, and
the two extremities of the root do not count as labelled atoms (i.e., are not labelled). 
The edge-rooted class of $\cC$ is denoted 
by $\vec{\cC}$. Clearly we have $\cZL^{\ 2}\star\vec{\cC}\simeq 2\star \ul{\cC}$.
Hence, the generating function $\vec{C}(x,y)$ of $\vec{\cC}$ satisfies 
\begin{equation}
\vec{C}(x,y)=\frac{2}{x^2}\partial_y C(x,y).
\end{equation}

\subsubsection{Rejection.}\label{sec:reject}
Using rejection techniques offers great flexibility to design Boltzmann samplers, since
it makes it possible to adjust
the distributions of the samplers.
\begin{lemma}[Rejection]
\label{lemma:rej}
Given a combinatorial class $\cC$, let $W:\cC\mapsto\mathbf{R}^+$ and $p:\cC\mapsto [0,1]$ be two functions, called \emph{weight-function} and \emph{rejection-function}, respectively. Assume 
that $W$ is summable, i.e., $\sum_{\gamma\in\cC}W(\gamma)$ is finite. Let $\frak{A}$ be a random
generator for $\cC$ that draws each object $\gamma\in\cC$ with probability proportional to $W(\gamma)$. Then, the procedure
$$
\frak{A}_{\mathrm{rej}}:\mathrm{repeat}\ \frak{A}\rightarrow\gamma\ \mathrm{until}\ \mathrm{Bern}(p(\gamma));\ \mathrm{return}\ \gamma
$$
is a random generator on $\cC$, which draws each object $\gamma\in\cC$ with probability proportional to $W(\gamma)p(\gamma)$.
\end{lemma}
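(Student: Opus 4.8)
The plan is to exploit the independent, identically distributed structure of the iterations of the repeat loop, reducing the analysis to a single trial preceded by a geometric number of rejected trials.

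First I would fix notation. Let $S:=\sum_{\gamma\in\cC}W(\gamma)$, which is finite by the summability hypothesis and positive since $\frak{A}$ is a genuine generator; thus $\frak{A}$ draws each $\gamma$ with probability $W(\gamma)/S$. I would then define the per-trial acceptance probability
$$\pacc:=\sum_{\gamma\in\cC}\frac{W(\gamma)}{S}\,p(\gamma),$$
which is precisely the probability that a single execution of ``$\frak{A}\to\gamma$; test $\Bern(p(\gamma))$'' yields an accepted object. I would note at this point that, since the successive trials are independent and each accepts with probability $\pacc$, the number of trials performed by $\frak{A}_{\mathrm{rej}}$ is geometric with parameter $\pacc$; hence the procedure terminates almost surely exactly when $\pacc>0$. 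This is the implicit nondegeneracy condition under which the statement is meaningful, and I would make it explicit.

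Next I would compute the probability that $\frak{A}_{\mathrm{rej}}$ returns a fixed object $\gamma_0$. Because the trials are independent and identically distributed, the event ``$\gamma_0$ is returned'' decomposes over the index $k\ge 1$ of the accepting trial: the first $k-1$ trials are each rejected (probability $1-\pacc$) and the $k$-th trial both draws $\gamma_0$ and accepts it (probability $(W(\gamma_0)/S)\,p(\gamma_0)$). All terms being nonnegative, I may sum freely to obtain a geometric series
$$\mathbf{P}(\text{return }\gamma_0)=\sum_{k\ge 1}(1-\pacc)^{k-1}\,\frac{W(\gamma_0)\,p(\gamma_0)}{S}=\frac{W(\gamma_0)\,p(\gamma_0)}{S\,\pacc}.$$
The right-hand side equals $W(\gamma_0)\,p(\gamma_0)$ multiplied by the constant $1/(S\,\pacc)$, which is independent of $\gamma_0$; this is exactly the assertion that $\frak{A}_{\mathrm{rej}}$ draws each object with probability proportional to $W(\gamma)\,p(\gamma)$. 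As a consistency check, summing over all $\gamma_0$ gives $\sum_{\gamma_0}W(\gamma_0)p(\gamma_0)/(S\,\pacc)=1$ by the definition of $\pacc$, confirming a genuine probability distribution.

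There is no genuinely hard step here: the only points requiring care are almost-sure termination (equivalently, that the normalizer $S\,\pacc$ is positive so the geometric sum converges) and the interchange of the sum over trials with the event decomposition, both immediate from nonnegativity. Alternatively, I could compress the argument by conditioning: the i.i.d.\ structure of the loop shows that the returned object has the law of a single draw conditioned on acceptance, namely $\mathbf{P}(\gamma_0\mid\text{accept})=(W(\gamma_0)/S)\,p(\gamma_0)/\pacc$, which yields the same conclusion without writing the series explicitly.
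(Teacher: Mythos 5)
Your proof is correct and follows essentially the same route as the paper's: both exploit the i.i.d.\ structure of the repeat loop to show that the probability of returning $\gamma$ is $W(\gamma)p(\gamma)/(S\cdot\pacc)$, the paper by solving the one-step recursion $P_{\mathrm{rej}}(\gamma)=P(\gamma)p(\gamma)+p_{\mathrm{rej}}P_{\mathrm{rej}}(\gamma)$ and you by summing the equivalent geometric series. Your explicit remark that $\pacc>0$ is needed for almost-sure termination is a small point the paper leaves implicit, but otherwise the arguments coincide.
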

\begin{proof}
Define $W:=\sum_{\gamma\in\cC}W(\gamma)$. By definition, $\frak{A}$ draws an object $\gamma\in\cC$ with probability $P(\gamma):=W(\gamma)/W$. Let $p_{\mathrm{rej}}$ be the probability of failure of $\frak{A}_{\mathrm{rej}}$
at each attempt. The probability $P_{\mathrm{rej}}(\gamma)$ that $\gamma$ is
drawn by $\frak{A}_{\mathrm{rej}}$ satisfies
$ P_{\mathrm{rej}}(\gamma)=P(\gamma)p(\gamma)+p_{\mathrm{rej}}P_{\mathrm{rej}}(\gamma),$ where the 
first (second) term is the probability that $\gamma$ is drawn at the first attempt (at a later 
 attempt, respectively). Hence, $P_{\mathrm{rej}}(\gamma)=P(\gamma)p(\gamma)/(1-p_{\mathrm{rej}})=W(\gamma)p(\gamma)/(W\cdot(1-p_{\mathrm{rej}}))$, i.e., $P_{\mathrm{rej}}(\gamma)$ is proportional to $W(\gamma)p(\gamma)$. 
\end{proof}

Rejection techniques are very useful for us to change the way objects are rooted.
Typically it helps us to obtain a Boltzmann sampler for $\cA'$ from a
Boltzmann sampler for  $\ul{\cA}$ and vice versa. As we will use this trick many 
times, we formalise it here by giving two explicit procedures, one from L-derived to U-derived objects, the other one from U-derived to L-derived objects.

\vspace{.5cm}

\fbox{
\begin{minipage}{12cm}
\LtoU\\
\phantom{1}\hspace{.5cm} INPUT: a mixed class $\cA$ such that $\ds\alphaUL:=\mathrm{sup}_{\gamma\in\cA}\frac{||\gamma||}{|\gamma|}$ is finite,\\
\phantom{1}\hspace{2cm}a Boltzmann sampler $\Gamma \cA'(x,y)$ for the L-derived class $\cA'$\\[0.2cm]
\phantom{1}\hspace{.5cm} OUTPUT: a Boltzmann sampler for the U-derived class $\ul{\cA}$, defined as:\\[0.2cm]
\begin{tabular}{ll}
$\Gamma \ul{\cA}(x,y)$:& repeat $\gamma\leftarrow\Gamma \cA'(x,y)$ \{at this point $\gamma\in\cA'$\}\\
& \phantom{1}\hspace{.2cm}give label $|\gamma|+1$ to the discarded L-atom of $\gamma$;\\
& \phantom{1}\hspace{.2cm}\{so $|\gamma|$ increases by $1$, and $\gamma\in\cA$\}\\
& until $\ds\mathrm{Bern}\left(\frac{1}{\alphaUL}\frac{||\gamma||}{|\gamma|}\right)$;\\
& choose a U-atom  uniformly at random and discard it\\
& $\ \ $ from the set of U-atoms; \{so $||\gamma||$ decreases by $1$, and $\gamma\in\ul{\cA}$\}\\
& return $\gamma$
\end{tabular}

\end{minipage}}

\begin{lemma}\label{lem:LtoU}
The procedure \LtoU yields a Boltzmann sampler for the class $\ul{\cA}$ from a 
Boltzmann sampler for the class $\cA'$.
\end{lemma}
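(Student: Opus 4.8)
The plan is to track the law of the object through the three phases of the procedure --- the repeat-until loop together with the relabelling, the Bernoulli acceptance test, and the final deletion of a U-atom --- retaining only proportionality factors and appealing to the Rejection Lemma (Lemma~\ref{lemma:rej}) together with the two size-identifications recorded for the L-derived and U-derived classes. Throughout I fix an admissible pair $(x,y)$ and, for $\gamma\in\cA$, write $n=|\gamma|$ and $m=||\gamma||$.

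First I would analyse one pass through the loop body. By definition $\Gamma\cA'(x,y)$ produces $\gamma'\in\cA'$ under the Boltzmann law, and the identification $\cA'_{n-1,m}\cong\cA_{n,m}$ shows that, once the discarded atom is reinserted with label $|\gamma'|+1$, every $\gamma\in\cA_{n,m}$ is obtained from exactly one $\gamma'$, hence with probability $\frac{1}{A'(x,y)}\frac{x^{n-1}}{(n-1)!}y^m$. So the body is a generator $\frak{A}$ for $\cA$ of weight $W(\gamma)\propto\frac{x^{n-1}}{(n-1)!}y^m$, and $W$ is summable because $\sum_{\gamma}W(\gamma)=A'(x,y)<\infty$. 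The acceptance probability is $p(\gamma)=\frac{1}{\alphaUL}\frac{m}{n}$, which lies in $[0,1]$ precisely because $\alphaUL=\sup_{\gamma}\frac{||\gamma||}{|\gamma|}$ is finite; this is the only place the hypothesis on $\alphaUL$ enters. Applying Lemma~\ref{lemma:rej}, once the loop exits the object $\gamma\in\cA_{n,m}$ is returned with probability proportional to $W(\gamma)p(\gamma)\propto\frac{m\,x^{n-1}y^m}{n!}$.

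Next I would treat the final line, which discards a uniformly chosen U-atom and so sends $\gamma\in\cA_{n,m}$ to some $\ul\gamma\in\ul\cA_{n,m-1}$. Since the U-atoms are distinguishable and $|\ul\cA_{n,m-1}|=m|\cA_{n,m}|$, the map $(\gamma,\text{chosen U-atom})\mapsto\ul\gamma$ is a bijection onto $\ul\cA_{n,m-1}$, so each $\ul\gamma$ comes from a unique pair and is produced with probability proportional to $\frac{m\,x^{n-1}y^m}{n!}\cdot\frac{1}{m}=\frac{x^{n-1}y^m}{n!}$. Objects with $m=0$ are accepted with probability $0$, which is consistent with their having no image in $\ul\cA$ and with almost-sure termination of the loop whenever $\ul\cA\neq\varnothing$.

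The hard --- indeed the only non-mechanical --- part will be seeing why $\frac{x^{n-1}y^m}{n!}$ is already the Boltzmann weight for $\ul\cA$. Rewriting it in terms of the intrinsic sizes $|\ul\gamma|=n$ and $||\ul\gamma||=m-1$ gives $\frac{x^{n-1}y^m}{n!}=\frac{y}{x}\cdot\frac{x^n y^{m-1}}{n!}=\frac{y}{x}\cdot\frac{x^{|\ul\gamma|}}{|\ul\gamma|!}y^{||\ul\gamma||}$, and the stray factor $\frac{y}{x}$ is a single constant independent of $\ul\gamma$, hence is absorbed by normalisation. The output law is therefore exactly $\mathbf{P}_{x,y}$ on $\ul\cA$, so $\Gamma\ul\cA(x,y)$ is a valid Boltzmann sampler. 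The crux is thus that the ``$x$-shift'' incurred by starting from the L-derivative and the ``$y$-shift'' incurred by deleting a U-atom combine into a uniform multiplicative constant --- which is precisely what makes the rejection probability $\frac{||\gamma||}{|\gamma|}$ the right choice.
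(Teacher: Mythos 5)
Your proof is correct and follows essentially the same route as the paper's: identify the loop body as a sampler for $\cA$ with weight proportional to $|\gamma|$ times the Boltzmann weight, invoke the Rejection Lemma to convert this into weight proportional to $||\gamma||$ times the Boltzmann weight, and observe that the uniform deletion of a U-atom cancels the factor $||\gamma||$. Your explicit bookkeeping of the constant $y/x$ and the remarks on the $m=0$ case and termination are welcome refinements but do not change the argument.
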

\begin{proof}
First, observe that the sampler is well defined. Indeed, by definition of the 
parameter $\alphaUL$, the Bernoulli choice is always valid (i.e., its parameter is always
in $[0,1]$). Notice that the sampler\\ 
\phantom{1}\hspace{.4cm}$\gamma\leftarrow\Gamma \cA'(x,y)$;\\ 
\phantom{1}\hspace{.4cm}give label $|\gamma|+1$ to the discarded L-atom of $\gamma$;\\ 
\phantom{1}\hspace{.4cm}return $\gamma$\\
\noindent is a sampler for $\cA$ that outputs each object $\gamma\in\cA$ with probability $\frac{1}{A'(x,y)}\frac{x^{|\gamma|-1}}{(|\gamma|-1)!}y^{||\gamma||}$,
because $\cA_{n,m}$ identifies to $\cA'_{n-1,m}$.
In other words, this sampler draws each object $\gamma\in\cA$
with probability proportional to $|\gamma|\frac{x^{|\gamma|}}{|\gamma|!}y^{||\gamma||}$.
Hence, according to Lemma~\ref{lemma:rej}, the repeat-until loop of the sampler
$\Gamma \ul{\cA}(x,y)$ yields a sampler for $\cA$ such that each object has 
probability proportional to $||\gamma||\frac{x^{|\gamma|}}{|\gamma|!}y^{||\gamma||}$.
As each U-atom has probability $1/||\gamma||$ of being discarded, the final sampler is such that each object $\gamma\in\ul{\cA}$ has probability proportional to $\frac{x^{|\gamma|}}{|\gamma|!}y^{||\gamma||}$. So $\Gamma\ul{\cA}(x,y)$ is a Boltzmann sampler for $\ul{\cA}$.
\end{proof}

We define a similar procedure to go from a U-derived class to an L-derived class:

\vspace{.5cm}

\fbox{
\begin{minipage}{12cm}
\UtoL\\
\phantom{1}\hspace{.5cm} INPUT: a mixed class $\cA$ such that $\ds\alphaLU:=\mathrm{sup}_{\gamma\in\cA}\frac{|\gamma|}{||\gamma||}$ is finite,\\
\phantom{1}\hspace{2cm}a Boltzmann sampler $\Gamma \ul{\cA}(x,y)$ for the U-derived class $\ul{\cA}$\\[0.2cm]
\phantom{1}\hspace{.5cm} OUTPUT: a Boltzmann sampler for the L-derived class 
$\cA'$, defined as:\\[0.2cm]
\begin{tabular}{ll}
$\Gamma \cA'(x,y)$:& repeat $\gamma\leftarrow\Gamma \ul{\cA}(x,y)$ \{at this point $\gamma\in\ul{\cA}$\}\\
& \phantom{1}\hspace{.2cm}take the discarded U-atom of $\gamma$ back in the set of U-atoms;\\
& \phantom{1}\hspace{.2cm} \{so $||\gamma||$ increases 
by $1$, and $\gamma\in\cA$\}\\
& until $\ds\mathrm{Bern}\left(\frac{1}{\alphaLU}\frac{|\gamma|}{||\gamma||}\right)$;\\
& discard the L-atom  with greatest label from the set of L-atoms;\\
& \{so $|\gamma|$ decreases by $1$, and $\gamma\in\cA'$\}\\
& return $\gamma$
\end{tabular}

\end{minipage}}

\begin{lemma}\label{lem:UtoL}
The procedure \UtoL yields a Boltzmann sampler for the class $\cA'$ from a 
Boltzmann sampler for the class $\ul{\cA}$.
\end{lemma}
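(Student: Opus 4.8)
The plan is to follow exactly the template of the proof of Lemma~\ref{lem:LtoU}, tracking how the Boltzmann weight of an object is transformed as it passes through the three stages of \UtoL (the inner map ``take the discarded U-atom back'', the rejection loop, and the final L-atom discard), and checking that the composite sampler draws each $\gamma'\in\cA'$ with probability proportional to its Boltzmann weight $\frac{x^{|\gamma'|}}{|\gamma'|!}y^{||\gamma'||}$. First I would check that the sampler is well defined: since $\alphaLU=\mathrm{sup}_{\gamma\in\cA}\frac{|\gamma|}{||\gamma||}$, the Bernoulli parameter $\frac{1}{\alphaLU}\frac{|\gamma|}{||\gamma||}$ always lies in $[0,1]$, and after the U-atom has been taken back one has $||\gamma||\geq 1$, so the division is legitimate.

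Next I would analyse the inner sampler ``$\gamma\leftarrow\Gamma\ul{\cA}(x,y)$; take the discarded U-atom back; return $\gamma$'' as a sampler on $\cA$. Here lies the one genuinely new point compared with Lemma~\ref{lem:LtoU}: the correspondence $\ul{\cA}_{n,m-1}\to\cA_{n,m}$ obtained by un-distinguishing the marked U-atom is \emph{not} bijective but $m$-to-one, because a given $\gamma\in\cA_{n,m}$ arises from the $m=||\gamma||$ objects of $\ul{\cA}$ got by marking each of its U-atoms in turn (this is precisely the relation $|\ul{\cA}_{n,m-1}|=m|\cA_{n,m}|$ recorded earlier). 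Summing the $\Gamma\ul{\cA}$-probabilities over these $m$ preimages, the inner sampler outputs $\gamma$ with probability $\frac{||\gamma||}{\ul{A}(x,y)}\frac{x^{|\gamma|}}{|\gamma|!}y^{||\gamma||-1}$, i.e.\ proportional to $||\gamma||\frac{x^{|\gamma|}}{|\gamma|!}y^{||\gamma||}$. This $m$-to-one bookkeeping is the only place where I expect real care is needed; everything else is mechanical mirroring of the L-derived case.

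Finally I would feed this into the rejection lemma and conclude. Applying Lemma~\ref{lemma:rej} with weight $W(\gamma)\propto ||\gamma||\frac{x^{|\gamma|}}{|\gamma|!}y^{||\gamma||}$ and rejection-function $p(\gamma)=\frac{1}{\alphaLU}\frac{|\gamma|}{||\gamma||}$, the repeat-until loop becomes a sampler on $\cA$ drawing each $\gamma$ with probability proportional to $W(\gamma)p(\gamma)\propto |\gamma|\frac{x^{|\gamma|}}{|\gamma|!}y^{||\gamma||}$ (the factor $||\gamma||$ produced by the inner map is exactly cancelled by the $1/||\gamma||$ of the rejection, which is the whole purpose of the construction). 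The concluding step ``discard the L-atom with greatest label'' is the bijection $\cA_{n,m}\to\cA'_{n-1,m}$, under which an object $\gamma'\in\cA'$ with $|\gamma'|=n-1$ and $||\gamma'||=m$ inherits probability proportional to $|\gamma|\frac{x^{|\gamma|}}{|\gamma|!}y^{||\gamma||}=\frac{x^{n}}{(n-1)!}y^{m}=x\cdot\frac{x^{|\gamma'|}}{|\gamma'|!}y^{||\gamma'||}$, hence proportional to the Boltzmann weight of $\gamma'$.

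I would also add a short remark that the hypothesis $\alphaLU<\infty$ forces $||\gamma||\geq 1$ whenever $|\gamma|\geq 1$, so that every object of $\cA'$ (which corresponds to some $\gamma\in\cA$ with $|\gamma|\geq 1$, hence $||\gamma||\geq 1$) does lie in the image of the inner sampler, while objects with $|\gamma|=0$ are rejected automatically since their Bernoulli parameter vanishes. Thus the support of the output is exactly $\cA'$ and no object is missed, so $\Gamma\cA'(x,y)$ is a valid Boltzmann sampler for $\cA'$.
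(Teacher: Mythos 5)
Your proof is correct and follows essentially the same route as the paper's: analyse the inner sampler as an $m$-to-one map giving each $\gamma\in\cA$ weight proportional to $||\gamma||\frac{x^{|\gamma|}}{|\gamma|!}y^{||\gamma||}$, cancel the factor $||\gamma||$ against $|\gamma|$ via the rejection lemma, and absorb the final label removal into the identification $\cA_{n,m}\simeq\cA'_{n-1,m}$. The extra remarks on well-definedness and on the support of the output are harmless additions to what is already in the paper.
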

\begin{proof}
Similar to the proof of Lemma~\ref{lem:LtoU}. The sampler $\Gamma \cA'(x,y)$ is well defined, as the Bernoulli choice is always valid (i.e., its parameter is always
in $[0,1]$). Notice that the sampler\\ 
\phantom{1}\hspace{.4cm}$\gamma\leftarrow\Gamma \ul{\cA}(x,y)$;\\ 
\phantom{1}\hspace{.4cm}take the discarded U-atom back to the set of U-atoms of $\gamma$;\\ 
\phantom{1}\hspace{.4cm}return $\gamma$\\
\noindent is a sampler for $\cA$ that outputs each object $\gamma\in\cA$ with probability $\frac{1}{\ul{A}(x,y)}||\gamma||\frac{x^{|\gamma|}}{|\gamma|!}y^{||\gamma||-1}$,
(because an object $\gamma\cA_{n,m}$ 
gives rise to $m$ objects in $\ul{\cA}_{n,m-1}$), i.e., with probability proportional to $||\gamma||\frac{x^{|\gamma|}}{|\gamma|!}y^{||\gamma||}$.
Hence, according to Lemma~\ref{lemma:rej}, the repeat-until loop of the sampler
$\Gamma \cA'(x,y)$ yields a sampler for $\cA$ such that each object $\gamma\in\cA$ has 
probability proportional to $|\gamma|\frac{x^{|\gamma|}}{|\gamma|!}y^{||\gamma||}$,
i.e., proportional to $\frac{x^{|\gamma|-1}}{(|\gamma|-1)!}y^{||\gamma||}$.
Hence, by discarding the greatest L-atom (i.e., $|\gamma|\leftarrow|\gamma|-1$), 
we get a probability proportional to $\frac{x^{|\gamma|}}{|\gamma|!}y^{||\gamma||}$
for every object $\gamma\in\cA'$, i.e., a Boltzmann sampler for $\cA'$.
\end{proof}

\begin{remark}\label{remark:greatest_delete}
We have stated in Remark~\ref{rk:labels} that, during a generation process, 
it is more convenient in practice to manipulate the shapes of the objects
without systematically assigning labels to them.
However, in the definition of the sampler $\Gamma \cA'(x,y)$, one step is to remove the greatest label, so it seems we need to look at the labels at that step.
In fact, as we consider here classes that are stable under relabelling, 
it is equivalent in practice to draw uniformly at random one vertex
to play the role of the 
discarded L-atom. 
\end{remark}

\section{Decomposition of planar graphs and Boltzmann samplers}
\label{sec:decomp}

Our algorithm starts with the generation of 3-connected planar graphs,
which have the nice feature that they are combinatorially tractable.
Indeed, according to a theorem of Whitney~\cite{Whitney33}, 3-connected planar graphs
have a unique embedding (up to reflection), so they are equivalent to 3-connected planar maps.
Following the general  approach introduced by Schaeffer~\cite{S-these}, 
 a bijection has been described by the author, Poulalhon, and Schaeffer~\cite{FuPoSc05} 
 to enumerate 3-connected maps~\cite{FuPoSc05} from binary trees, 
 which yields an explicit Boltzmann sampler for (rooted) 3-connected maps, as described in 
Section~\ref{sec:bolz3conn}. 

The next step is to generate
2-connected planar graphs from 3-connected ones. We take advantage of a decomposition
of 2-connected planar graphs into 3-connected planar components, which has been 
formalised by Trakhtenbrot~\cite{trak} 
(and later used by Walsh~\cite{Wa} to count 2-connected 
planar graphs and by Bender, Gao, Wormald to obtain asymptotic enumeration~\cite{BeGa}).
Finally, connected planar graphs are generated from 2-connected ones by using the well-known
decomposition into blocks, and planar graphs are generated from their connected components.
Let us mention that the decomposition of planar graphs into 3-connected components has been
 completely formalised  by Tutte~\cite{Tut} 
(though we rather use here formulations of this decomposition on \emph{rooted} graphs, as  Trakhtenbrot did).

The complete scheme we follow is illustrated in 
Figure~\ref{fig:scheme_unrooted}.

\begin{figure}
  \begin{center}
  \includegraphics[width=13.6cm]{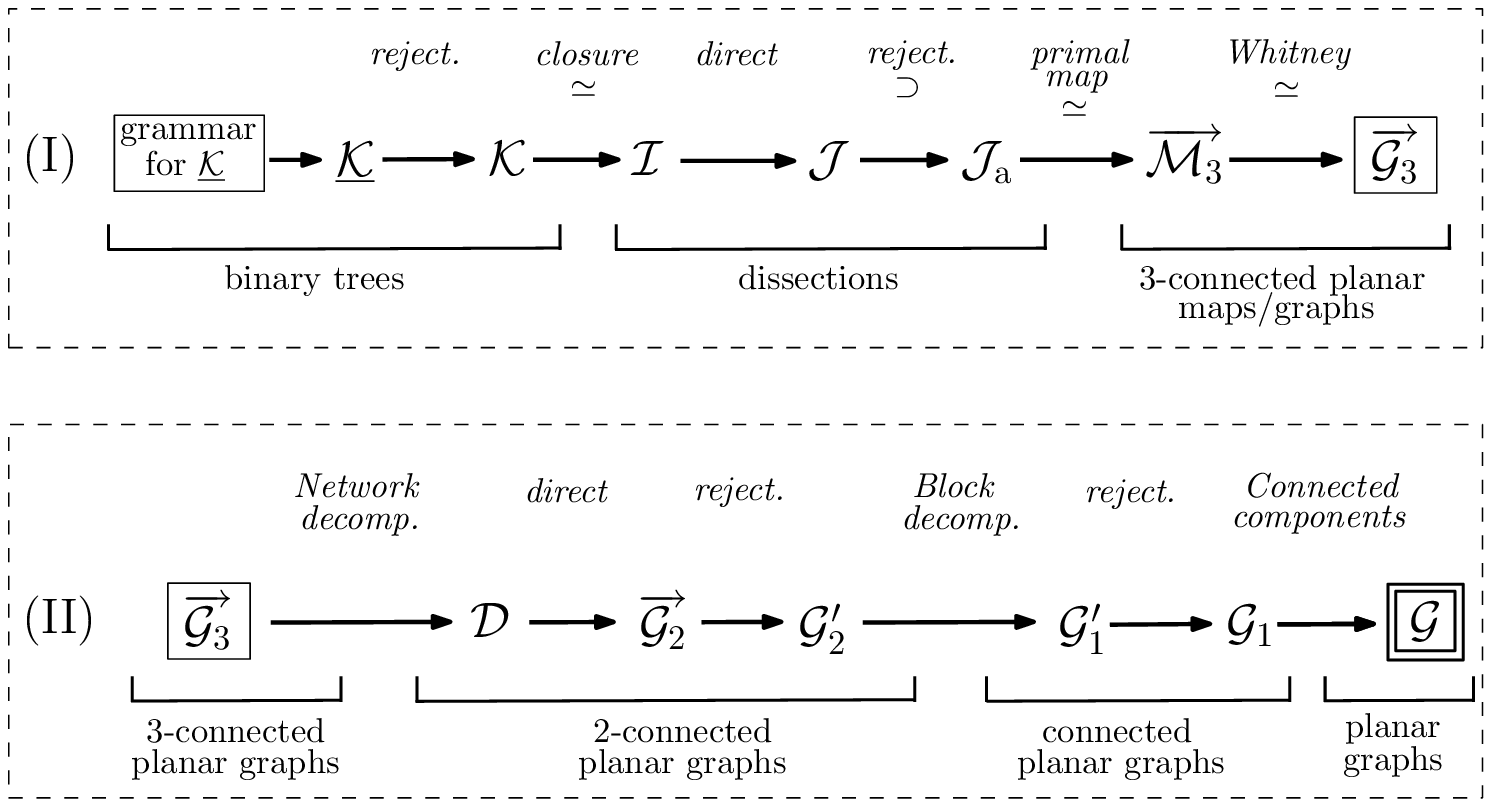}
    \caption{The complete scheme to obtain a Boltzmann sampler for 
     planar graphs. The classes are to be defined all along Section~\ref{sec:decomp}.}\label{fig:scheme_unrooted}
  \end{center}
\end{figure}



\vspace{0.2cm}

\noindent\textbf{Notations.} 
Recall that a graph is $k$-connected  if
the removal of any set of $k-1$ vertices does not disconnect the graph.
In the sequel, we consider the following classes of planar graphs: 

\vspace{0.2cm}

\begin{tabular}{l}
$\cG$: the class of all planar graphs, including the empty graph,\\
 $\cGc$: the class of connected planar graphs with at least one vertex,\\
  $\cGb$: the class
of 2-connected planar graphs with at least two vertices,\\ 
$\cGt$: the class of 3-connected planar graphs with at least four vertices.
\end{tabular}

\vspace{0.2cm}

\begin{figure}
\begin{center}
\input{Figures/firstTerms.pstex_t}
\end{center}
\caption{The connected planar graphs with at most four vertices (the 2-connected ones are surrounded). 
Below each graph is indicated the number of 
distinct labellings.}
\label{fig:firstTerms}
\end{figure}

All these classes are considered as mixed, with labelled vertices and unlabelled edges, i.e.,
the L-atoms are the vertices and the U-atoms are the edges.
Let us give the first few terms of their mixed generating functions (see also Figure~\ref{fig:firstTerms}, which 
displays the first connected planar graphs):
$$
\displaystyle
\begin{array}{rcl}
G(x,y)&$\!\!\!=\!\!\!$&1+x+\frac{x^2}{2!}(1+y)+\frac{x^3}{3!}(1+3y+3y^2+y^3)+\cdots\\[0.1cm]
G_1(x,y)&$\!\!\!=\!\!\!$&x+\frac{x^2}{2!}y+\frac{x^3}{3!}(3y^2+y^3)+\frac{x^4}{4!}(16y^3+15y^4+6y^5+y^6)+\cdots\\[0.1cm]
G_2(x,y)&$\!\!\!=\!\!\!$&\frac{x^2}{2!}y+\frac{x^3}{3!}y^3+\frac{x^4}{4!}(3y^4\!+\!6y^5\!+\!y^6)+\frac{x^5}{5!}(12y^5\!+\!70y^6\!+\!100y^7\!+\!15y^8\!+\!10y^9)+\cdots\\[0.1cm]
G_3(x,y)&$\!\!\!=\!\!\!$&\frac{x^4}{4!}y^6+\frac{x^5}{5!}(15y^8+10y^9)+\frac{x^6}{6!}(60y^9+432y^{10}+540y^{11}+195y^{12})+\cdots
\end{array}
$$

Observe that, for a mixed class $\cA$ of \emph{graphs}, 
the derived class $\cA'$, as defined in Section~\ref{sec:derive},
 is the class of graphs in $\cA$ that have  
one vertex  discarded from the set of L-atoms (this vertex plays the role of a distinguished vertex);
 $\ul{\cA}$ is the class of graph in $\cA$ with one edge discarded from the set of U-atoms 
 (this edge plays the role of a 
distinguished edge); and $\vec{\cA}$ is the class of graphs in $\cA$ with an ordered pair of adjacent vertices $(u,v)$ discarded from the set of L-atoms and the edge $(u,v)$ discarded from the set 
of U-atoms (such a graph can be considered as rooted at the directed edge $(u,v)$).  


\subsection{Boltzmann sampler for 3-connected planar graphs}
\label{sec:bolz3conn}
In this section we develop a Boltzmann sampler for 
3-connected planar graphs, more precisely for \emph{edge-rooted} ones, i.e., for the 
class $\vec{\cGt}$. Our sampler 
relies on two results. First, we recall the equivalence between  
3-connected planar graphs and 3-connected maps, where the terminology of map refers to an 
explicit embedding. Second, we take advantage of a bijection linking the families of rooted 3-connected maps
and the (very simple) family of binary trees, via intermediate objects that are certain 
quadrangular dissections of the hexagon. Using the bijection, a Boltzmann sampler for rooted 
binary trees
is translated into a Boltzmann sampler for rooted 3-connected maps. 

\subsubsection{Maps}
A \emph{map on the sphere} (\emph{planar map}, resp.) is 
a connected planar graph embedded on
the sphere (on the plane, resp.) up to continuous deformation of the surface, the embedded
graph carrying distinct labels on its vertices (as usual, the labels range from $1$ to $n$,  the number
of vertices). A planar map
is in fact equivalent to a map on the sphere with a distinguished face, which plays the role
of the unbounded face. The unbounded face of a planar map 
is called the \emph{outer face}, and the other faces
are called the  \emph{inner faces}. The vertices and edges of a planar map are said to be \emph{outer}
or \emph{inner} whether they are incident to the outer face or not.
A map is said to be \emph{rooted} if the embedded graph is edge-rooted.
The \emph{root vertex} is the origin of the root.
Classically, rooted planar maps are always assumed to have the outer face 
 on the right of the root. With that convention, rooted planar maps are 
equivalent to rooted maps on the sphere (given a rooted map on the sphere, take the face on
the right of the root as the outer face).
See Figure~\ref{fig:primal}(c) for an example of rooted planar map, where the labels are forgotten\footnote{Classically, rooted maps are considered in the literature without labels on the vertices,
as the root is enough to avoid symmetries. Nevertheless, it is convenient here to keep 
the framework of mixed classes for maps, as we do for graphs.}. 

\subsubsection{Equivalence between 3-connected planar graphs and 3-connected maps}\label{sec:equiv}
A well known result due to Whitney~\cite{Whitney33} 
states that a labelled 3-connected planar graph has a
unique embedding on the sphere up to continuous deformation and reflection 
(in general a planar graph can have many
embeddings). Notice that any 3-connected map on the sphere 
with at least 4 vertices differs from its mirror-image, due to the labels on the vertices. 
Hence every 3-connected planar graph with at least 4 vertices gives rise exactly
to  two maps on the sphere.
The 
class of 3-connected maps on the sphere with at least 4 vertices 
is denoted by $\cM_3$. As usual, the class is mixed, the L-atoms being the
vertices and the U-atoms being the edges.
Whitney's theorem ensures that
\begin{equation}
\label{eq:M}
\cM_3\simeq 2\star\cGt.
\end{equation}


Here we make use of the formulation of this isomorphism for \emph{edge-rooted} objects.
The mixed class of rooted 3-connected planar 
maps with at least 4 vertices is denoted by $\vec{\cM_3}$, where ---as for edge-rooted graphs--- the L-atoms are the vertices not incident to the root-edge and the U-atoms are the edges
except the root. Equation~(\ref{eq:M}) becomes, for edge-rooted objects:
\begin{equation}
\vec{\cM_3}\simeq2\star\vec{\cGt}.
\end{equation}





Thanks to this isomorphism, finding a Boltzmann sampler $\Gamma \vec{\cG_3}(z,w)$ 
for edge-rooted 3-connected planar graphs reduces to finding  
a Boltzmann sampler $\Gamma \vec{\cM_3}(z,w)$ for rooted 3-connected maps, 
upon forgetting the embedding.





\subsubsection{3-connected maps and irreducible dissections}\label{sec:primal_map}
We consider here some quadrangular dissections of the hexagon that are closely related
to 3-connected planar maps. (We will see that these dissections can be efficiently generated at random,
as they are in bijection with binary trees.) 

Precisely, a \emph{quadrangulated map} is a planar map (with no loop nor multiple edges) such that 
all faces except maybe the outer one have degree 4; it is called a quadrangulation if the 
outer face has degree 4. A quadrangulated map is called \emph{bicolored} if the vertices are colored  black or white such that any edge connects two vertices of different colors.
A rooted quadrangulated
map (as usual with planar maps, the root has the outer face on its right) is always assumed 
to be endowed
with the unique vertex bicoloration such that the root vertex is \emph{black}  (such a bicoloration exists, as all inner faces have even degree). 
A quadrangulated map with an outer face of degree more than 4 is called \emph{irreducible}  if 
each  4-cycle is the contour of a face. 
In particular, we define an \emph{irreducible dissection of the hexagon} ---shortly called irreducible dissection hereafter---  as an irreducible quadrangulated map with an 
hexagonal outer face, see Figure~\ref{fig:primal}(b) for an example.
A quadrangulation is called irreducible
if it has at least 2 inner vertices and if every 4-cycle, except the outer one,
delimits a face.
 Notice that the smallest irreducible dissection has one inner edge and no inner vertex
 (see Figure~\ref{fig:asymmetric}), whereas 
the smallest irreducible quadrangulation is the embedded cube,
which has 4 inner vertices and 5 inner faces. 
We consider irreducible dissections as objects of the mixed type, the L-atoms are the black inner vertices and the U-atoms are the inner faces.
It proves more convenient to consider here the irreducible dissections that are \emph{asymmetric},
meaning that there is no rotation fixing the dissection. The four non-asymmetric irreducible dissections
are displayed in Figure~\ref{fig:asymmetric}(b), all the other ones are asymmetric either due to an
asymmetric shape or due to the labels on the black inner vertices.
We denote by $\cI$ the mixed class of \emph{asymmetric} bicolored irreducible dissections.  
We define also $\cJ$ as the class of asymmetric irreducible dissections that carry a root (outer edge directed so as to have a black origin and the outer face on its right), where this time 
the L-atoms are the black vertices
except two of them (say, the origin of the root and the next black vertex in ccw order around the outer face) and the U-atoms are all the faces, including the outer one.  
Finally, we define $\cQ$ as the mixed class of rooted 
irreducible quadrangulations, where the L-atoms are the black vertices except those two incident
to the outer face, and the U-atoms  are the inner faces.

Irreducible dissections are closely related to 3-connected maps, via a classical correspondence
between planar maps and quadrangulations. 
Given a bicolored rooted quadrangulation $\kappa$, the 
\emph{primal map} of $\kappa$ is the rooted map $\mu$ whose vertex set is the set of black vertices of $\kappa$,
each face $f$ of $\kappa$ giving rise to an edge of $\mu$ 
connecting the two (opposite) black vertices of $f$,
 see Figure~\ref{fig:primal}(c)-(d).
The map $\mu$ is naturally rooted so as to have the same root-vertex as $\kappa$.

\begin{theorem}[Mullin and Schellenberg~\cite{Mu}]
The primal-map construction is a bijection between rooted irreducible 
quadrangulations
with $n$ black vertices and $m$ faces, and rooted 
3-connected maps with $n$ vertices and $m$ edges\footnote{More generally, the bijection holds
between rooted quadrangulations and rooted 2-connected maps.}. In other words, 
the primal-map construction yields the combinatorial isomorphism
\begin{equation}
\cQ\simeq\vec{\cM_3}.
\end{equation} 
In addition, the construction of a 3-connected map from an irreducible quadrangulation takes
linear time.
\end{theorem}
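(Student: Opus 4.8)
The plan is to realise the primal-map construction as the inverse of the classical angular (radial) bijection between rooted planar maps and rooted simple quadrangulations, and then to isolate the connectivity refinement, which is the only nontrivial part. First I would recall the general correspondence: to a rooted map $\mu$ one associates the quadrangulation $\kappa$ whose vertex set consists of the vertices of $\mu$ (colored black) together with one white vertex placed inside each face of $\mu$, and whose faces are the quadrilaterals obtained, for each edge $e$ of $\mu$, by joining the two endpoints of $e$ to the two white vertices sitting in the faces incident to $e$. This is inverse to the primal-map construction: the black vertices of $\kappa$ are exactly the vertices of $\mu$, and each face of $\kappa$ recovers the edge of $\mu$ joining its two opposite black corners. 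I would check that this is a size-preserving bijection on rooted objects ($n$ vertices of $\mu$ against $n$ black vertices of $\kappa$, $m$ edges against $m$ faces) and that the rooting conventions match: the root edge of $\mu$ becomes the outer quadrilateral of $\kappa$, its origin becoming the black root vertex, with the outer face kept on the right. Translated to the mixed classes, the two black vertices incident to the outer face of $\kappa$ correspond to the two endpoints of the root edge of $\mu$ (uncounted on both sides), and the inner faces of $\kappa$ correspond to the non-root edges of $\mu$; this matches L-sizes and U-sizes and yields the identification $\cQ\simeq\vec{\cM_3}$ at the level of parameters.

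Second, and this is the heart of the theorem, I would prove that the connectivity conditions match: $\kappa$ is irreducible if and only if $\mu$ is $3$-connected with at least four vertices. The key local observation is that a $4$-cycle of $\kappa$ alternates colors, hence passes through two vertices $\{u,v\}$ of $\mu$ and two white vertices representing two faces of $\mu$. A non-facial such cycle separates the sphere, placing part of $\mu$ on each side, so that deleting $\{u,v\}$ disconnects $\mu$; that is, $\{u,v\}$ is a $2$-cut. Conversely, a $2$-cut $\{u,v\}$ of $\mu$ produces, together with two faces lying on the two sides of the cut, a separating $4$-cycle of $\kappa$ through $u$, $v$ and the corresponding white vertices. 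Thus non-facial $4$-cycles of $\kappa$ are in correspondence with $2$-vertex-cuts of $\mu$, so ``every non-outer $4$-cycle bounds a face'' is exactly ``$\mu$ has no $2$-cut'', i.e.\ $\mu$ is $3$-connected; the ``at least two inner vertices'' and ``at least four vertices'' thresholds are matched by checking the extremal case, where the embedded cube corresponds to $K_4$.

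The main obstacle is this second step, specifically making rigorous that a non-facial $4$-cycle genuinely separates nontrivial pieces of $\mu$ on both sides rather than bounding an empty region, and handling the degenerate configurations (the outer $4$-cycle, would-be loops or multiple edges). I would dispose of these by first using the simplicity of $\kappa$, which via the intermediate layer of the correspondence forces $\mu$ to be $2$-connected, so that both sides of any separating $4$-cycle are nonempty; the $3$-connectivity equivalence is then a clean refinement on top of this. Finally, the linear-time claim is immediate from the construction: $\mu$ is produced by a single traversal of $\kappa$ that emits one edge per inner face, read off from its two opposite black corners, so the cost is proportional to the number of faces and hence linear in the size of $\kappa$.
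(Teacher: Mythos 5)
The paper does not prove this statement: it is imported verbatim from Mullin and Schellenberg~\cite{Mu} (the primal-map/angular correspondence), so there is no in-paper argument to compare yours against. On its own merits, your outline is the standard and correct proof strategy: realise the primal map as the inverse of the angular construction (white vertex in each face of $\mu$, joined to the corners), check that the size parameters and rooting conventions match the mixed-class conventions ($L$-atoms being the black vertices away from the outer face of $\kappa$, respectively the vertices off the root edge of $\mu$; $U$-atoms being inner faces, respectively non-root edges), and then establish the dictionary ``non-facial $4$-cycle of $\kappa$ $\leftrightarrow$ $2$-vertex-cut of $\mu$.'' You correctly identify that the only delicate point is showing that each side of a non-facial $4$-cycle $u,f,v,g$ contains a genuine piece of $\mu$: a side with no black vertex forces either an empty region (so the cycle is facial) or a white vertex joined only to $u,v$, i.e.\ a degree-$2$ face of $\mu$ coming from parallel edges, which is excluded once simplicity/$2$-connectedness is secured at the intermediate level of the correspondence. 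Your identification of the extremal case (cube $\leftrightarrow$ $K_4$) matching the thresholds ``at least $2$ inner vertices'' and ``at least $4$ vertices'' is also right, as is the linear-time claim. The one thing still owed in a full write-up is the converse direction of the dictionary in detail (a $2$-cut $\{u,v\}$ of $\mu$ yields two faces each incident to both $u$ and $v$ whose white vertices close a separating $4$-cycle), together with the check that irreducibility of $\kappa$ forces $\mu$ to be loopless and without multiple edges; you flag both, and neither hides a real obstruction.
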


The link between $\cJ$ and $\vec{\cM_3}$ is established via the family $\cQ$, which is 
at the same time isomorphic to $\vec{\cM_3}$ and closely related to $\cJ$.
Let $\kappa$ be a rooted irreducible quadrangulation, and let $e$ be the edge
following the root in cw order around the outer face. 
Then, deleting $e$ yields
a rooted irreducible dissection $\delta$.
In addition it is easily checked that $\delta$ is  asymmetric, i.e.,
the four non-asymmetric irreducible dissections, which are shown in Figure~\ref{fig:asymmetric}(b), can not be obtained in this way. 
Hence the so-called \emph{root-deletion mapping} 
is injective from $\cQ$ to $\cJ$. 
The inverse operation---called the \emph{root-addition mapping}---starts from a 
rooted irreducible dissection $\delta$, and adds an outer
edge from the root-vertex of $\delta$ to the opposite outer vertex. Notice that the rooted quadrangulation  obtained in this way might not be irreducible. 
Precisely, a non-separating 4-cycle appears iff $\delta$ has an internal path (i.e., a path using at least one inner edge) of length 3 connecting the root vertex to the opposite outer vertex.  
A rooted irreducible dissection $\delta$ 
is called \emph{admissible} iff it has no such path. The subclass of rooted  irreducible
dissections that are admissible is denoted by $\cJa$. We obtain the following result, already given in~\cite{FuPoSc05}:
\begin{lemma}
The root-addition mapping is a bijection between admissible rooted irreducible dissections 
with $n$ black vertices and $m$ faces, and rooted irreducible quadrangulations with $n$ black 
vertices and $m$ inner faces. In other words, the root-addition mapping realises the combinatorial
isomorphism
\begin{equation}
\cJa\simeq\cQ.
\end{equation}
\end{lemma}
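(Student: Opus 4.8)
The plan is to establish the bijection by analyzing precisely when the root-addition mapping produces an irreducible quadrangulation, thereby pinning down its image, and to show that restricting to the admissible subclass $\cJa$ makes the map a bijection onto $\cQ$. First I would recall from the discussion preceding the statement that the root-deletion mapping is already known to be injective from $\cQ$ into $\cJ$, and that the root-addition mapping is literally defined as its inverse operation: starting from a rooted irreducible dissection $\delta$, one adds an outer edge from the root-vertex to the opposite outer vertex, turning the hexagonal outer face into a quadrangular one. So the core content to verify is that this added-edge construction lands inside $\cQ$ exactly on the admissible dissections, and that on $\cJa$ the two maps are mutually inverse and preserve the size parameters (black vertices $\mapsto$ black vertices, faces $\mapsto$ inner faces, as recorded in the statement).

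The key structural step is the characterization of irreducibility. After root-addition, every inner face still has degree $4$ and the outer face now has degree $4$, so the result is a bicolored rooted quadrangulation; the only way it can fail to be irreducible is if some $4$-cycle other than the outer one fails to bound a face, i.e.\ is separating. I would argue that, since $\delta$ itself was irreducible (all its internal $4$-cycles already bound faces), the only new $4$-cycles introduced are those passing through the newly added edge. Such a $4$-cycle consists of the new edge together with a path of length $3$ inside $\delta$ joining the root-vertex to the opposite outer vertex, using at least one inner edge. This is exactly the obstruction flagged in the paragraph above the lemma: a non-separating (bad) $4$-cycle appears if and only if $\delta$ has an internal path of length $3$ between those two distinguished outer vertices. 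Hence the added-edge construction yields an irreducible quadrangulation precisely when no such internal path exists, which is the definition of admissibility.

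With this characterization in hand, I would check the two directions. For $\delta \in \cJa$, the construction gives a genuine member of $\cQ$; conversely, for any $\kappa \in \cQ$, deleting the edge $e$ following the root in cw order (the root-deletion mapping) returns an asymmetric irreducible dissection $\delta \in \cJ$, and since $\kappa$ was irreducible, $\delta$ cannot possess the forbidden internal $3$-path (otherwise $\kappa$ would contain a separating $4$-cycle), so $\delta \in \cJa$. Re-adding the edge recovers $\kappa$, and deleting the added edge recovers $\delta$, so root-addition and root-deletion are mutually inverse between $\cJa$ and $\cQ$. Finally I would confirm the size bookkeeping: the added/deleted edge is never a black-vertex atom, and it contributes the single extra (outer) face that distinguishes the ``all faces'' count of $\cJa$ from the ``inner faces'' count of $\cQ$, so both the L-size (black vertices) and the U-size are matched, giving the claimed isomorphism $\cJa\simeq\cQ$.

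The main obstacle I expect is the irreducibility characterization, specifically the claim that the \emph{only} $4$-cycles one must worry about in the quadrangulation are those through the new edge. Making this airtight requires using that $\delta$ is already irreducible as a dissection of the hexagon and carefully ruling out that adding one edge could create a separating $4$-cycle avoiding the new edge; this relies on the fact that the added edge is an outer edge and on the geometry of the hexagonal boundary. I would lean on the analysis already carried out in~\cite{FuPoSc05}, where this equivalence is attributed, rather than redo the planar-topological case analysis in full detail.
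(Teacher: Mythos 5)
Your proposal is correct and follows essentially the same route as the paper, which states this lemma without a formal proof: the surrounding paragraph already contains exactly your ingredients (root-deletion is injective from $\cQ$ into $\cJ$, root-addition is its inverse, and a bad $4$-cycle arises iff the dissection has an internal length-$3$ path between the two distinguished outer vertices, which is precisely the admissibility condition), with the detailed topological verification deferred to the cited reference, as you also do.
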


To sum up, we have the following link between rooted irreducible dissections and rooted 3-connected
maps: 
$$
\cJ\supset\ \cJa\simeq\cQ\simeq\vec{\cM_3}.
$$ 
Notice that  we have a combinatorial isomorphism between $\cJa$ and $\vec{\cM_3}$: 
 the root-edge addition combined with  the primal map construction.
For $\delta\in\cJa$, the rooted 3-connected map associated with $\delta$ is denoted
$\mathrm{Primal}(\delta)$.


\begin{figure}
\begin{center}
\includegraphics[width=12cm]{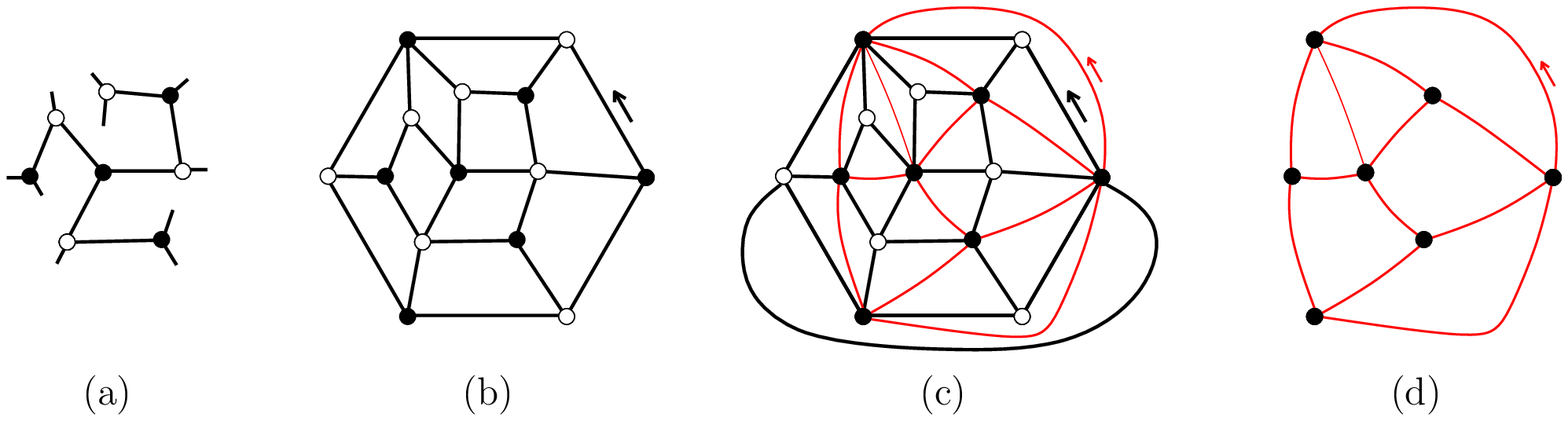}
\end{center}
\caption{(a) A binary tree, (b) the associated  irreducible dissection $\delta$ (rooted and 
 admissible), (c) the associated rooted
irreducible quadrangulation $\kappa=\mathrm{Add}(\delta)$, (d)
the associated rooted 3-connected map $\mu=\mathrm{Primal}(\delta)$.}
\label{fig:primal}
\end{figure}

As we see next, the class $\cI$ (and also the associated rooted class $\cJ$) is combinatorially tractable, as it is  
in bijection with the simple class of binary trees; hence irreducible dissections are easily
 generated at random.

\subsubsection{Bijection between binary trees and irreducible dissections}
There exist by now several elegant bijections between families of planar maps and families
of plane trees that satisfy simple context-free decomposition grammars. 
Such constructions
have first been described by Schaeffer in his thesis~\cite{S-these}, and
many other families of rooted maps have been counted in this way~\cite{Fusy06a,PS03a,PS03b,BoDiGu04}.
 The advantage of bijective constructions over recursive methods for counting maps~\cite{Tu63} 
  is that the bijections
 yield efficient ---linear-time--- generators for maps, as random sampling
 of maps is reduced to the much easier task of random sampling of trees, see~\cite{Sc99}.
 The method has been recently applied  to the family 
 of 3-connected maps, which 
 is of interest here. Precisely, as described in~\cite{FuPoSc05}, there is a bijection between binary trees and irreducible dissections
 of the hexagon, 
 which, as we have seen, are closely related
 to 3-connected maps. 
 
We define an  \emph{unrooted binary tree}, shortly called a binary tree hereafter, 
as a plane tree (i.e., a planar map with a unique face) 
where the degree of each vertex is either 1 or 3. 
 The vertices of degree 1 (3) are called leaves (nodes, resp.).
 A binary tree is said to be bicolored if its nodes are bicolored so that any two adjacent nodes 
 have different colors, see Figure~\ref{fig:primal}(a) for an example. 
In a bicolored binary tree the L-atoms are the black nodes  and the U-atoms are the leaves. A bicolored
 binary tree is called
 \emph{asymmetric} if there is no rotation-symmetry fixing it. Figure~\ref{fig:asymmetric} displays the four non-asymmetric
 bicolored binary trees; all the other bicolored binary trees are asymmetric, either due to the 
 shape being asymmetric, or due to the labels on the black nodes.
  We denote by $\cK$ the mixed class of \emph{asymmetric} bicolored binary trees (the requirement of asymmetry is necessary so that the leaves are distinguishable).
  
  \begin{figure}
  \begin{center}
  \includegraphics[width=12cm]{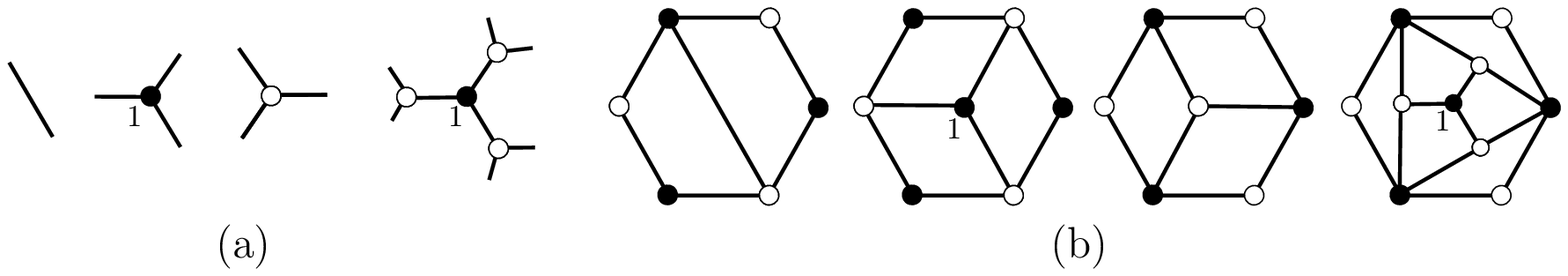}
  \end{center}
  \caption{(a) The four non-asymmetric bicolored binary trees. (b) The four non-asymmetric bicolored irreducible
  dissections.}
  \label{fig:asymmetric}
  \end{figure}

 The terminology of binary tree refers to the fact that, upon rooting a binary tree
  at an arbitrary leaf, the neighbours
 in clockwise order around each node can be classified as a father (the neighbour closest to the root), a right son, and
 a left son, which corresponds to the classical definition of rooted binary trees, as considered
 in Example~\ref{ex:binary}.


\begin{proposition}[Fusy, Poulalhon, and Schaeffer~\cite{FuPoSc05}]
\label{prop:bijbin3conn}
For $n\geq 0$ and $m\geq 2$, there exists an explicit bijection, called the  \emph{closure-mapping}, between bicolored binary trees
with $n$ black nodes and $m$ leaves, and bicolored irreducible dissections with $n$ black
inner nodes and $m$ inner faces; moreover the 4 non-asymmetric bicolored binary trees are mapped
to the 4 non-asymmetric irreducible dissections. 
In other words, the closure-mapping realises the combinatorial
isomorphism
\begin{equation}\cK\simeq \cI.\end{equation} 
The construction of  a dissection from a binary tree takes linear time. 
\end{proposition}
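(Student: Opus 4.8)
The plan is to prove Proposition~\ref{prop:bijbin3conn} by constructing the \emph{closure-mapping} explicitly and exhibiting its inverse, the \emph{opening}, thereby establishing a size-preserving bijection $\cK\simeq\cI$. First I would fix a bicolored binary tree $\tau\in\cK$ with $n$ black nodes and $m$ leaves. By a simple Euler/counting argument on plane trees where every internal node has degree $3$, one checks that the number of leaves, black nodes, and white nodes are linearly related; this bookkeeping is what will eventually match the size parameters (black inner vertices $\to$ L-atoms, inner faces $\to$ U-atoms) on the dissection side. The core construction is the \emph{closure}: one reads the contour of $\tau$ and, using a local rule, repeatedly connects leaves to nearby nodes so as to create quadrangular faces, until all leaves are absorbed and the outer boundary is a hexagon. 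Concretely I would define a ``partial closure'' that scans the sequence of edge-sides around the tree and, whenever it sees a pattern of the form (two consecutive edges) followed by a leaf, folds the leaf onto the opposite node to close a quadrangle; iterating this stack-based procedure terminates with an hexagonal outer face, yielding a quadrangulated map $\delta$.

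Next I would verify that $\delta$ is genuinely an irreducible dissection of the hexagon: that all inner faces have degree $4$ (immediate from the local closing rule, which creates only quadrangles), that the outer face is a hexagon (this follows from the leaf-count bookkeeping, the six ``unmatched'' boundary leaves becoming the hexagon corners), and crucially that \emph{irreducibility} holds, i.e.\ every internal $4$-cycle bounds a face. Irreducibility is the delicate combinatorial point: I would argue it by contradiction, showing that a separating $4$-cycle in $\delta$ would force, under the opening map, a configuration in $\tau$ that is impossible for a tree (it would create a cycle or a vertex of illegal degree). I would also track the color structure, confirming that the bicoloration of the nodes of $\tau$ is exactly the proper bicoloration of $\delta$ with the root-type vertex black, so the black-inner-node count is preserved. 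The asymmetry hypothesis transfers along the construction because the closure is canonical (it uses no arbitrary choices once a reference point is fixed), so a rotational symmetry of $\delta$ would pull back to one of $\tau$; the four exceptional symmetric trees of Figure~\ref{fig:asymmetric}(a) map precisely to the four exceptional symmetric dissections of Figure~\ref{fig:asymmetric}(b), as claimed.

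To finish, I would define the inverse \emph{opening} map, which detaches the added closure-edges to recover a tree, and check that closure and opening are mutually inverse. The cleanest way is to show that the partial-closure scan is reversible step by step: each quadrangle-closing move has a uniquely determined undo (split the quadrangle, reintroducing a leaf), and the order of moves is irrelevant so that the final tree is well defined and independent of the scanning order. This confluence/well-definedness is where I expect most of the genuine work to lie, since one must prove that the greedy leaf-absorption does not depend on the choices of which matchable leaf is processed first, and that the result is always a tree with the correct degree constraints rather than a more general map. The \textbf{main obstacle} is therefore twofold: (i) establishing irreducibility of the output dissection (no accidental internal $4$-cycle beyond face contours), and (ii) proving that closure and opening are inverse bijections by a careful confluence argument on the local moves. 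The linear-time claim follows once the construction is phrased as a single stack-based pass over the $O(n+m)$ contour of $\tau$: each leaf is pushed and matched exactly once, giving $O(n)$ total work. For the full details of the closure-mapping I would, as the statement already indicates, rely on the construction of~\cite{FuPoSc05}.
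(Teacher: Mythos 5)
Your proposal follows exactly the route the paper takes: the paper states this result as imported from~\cite{FuPoSc05} and only sketches the closure informally (legs completed into edges closing quadrangular faces, with a hexagon added at the end), and your outline of partial closure, irreducibility verification, the inverse opening map, and the linear-time single-pass implementation is precisely the content of that cited proof. One small imprecision: the hexagon is created as a new outer face to which the remaining unmatched legs are merged, rather than being formed from exactly six leftover leaves, but since you (like the paper) defer the details to~\cite{FuPoSc05}, this does not affect the argument.
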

Let us comment a bit on this bijective construction, which is described in detail in~\cite{FuPoSc05}.
Starting from a binary tree, the closure-mapping builds the dissection face by face, each leaf 
of the tree giving rise to an inner face of the dissection. More precisely, at each step,
a ``leg" (i.e., an edge incident to a leaf) is completed into an edge connecting two nodes, so as
to ``close" a quadrangular face. At the end, an hexagon is created outside of the figure, and the
leaves attached to the remaining non-completed legs are merged with vertices of the hexagon
so as to form only quadrangular faces. For instance the dissection of Figure~\ref{fig:primal}(b)
is obtained by ``closing'' the tree of Figure~\ref{fig:primal}(a).

\subsubsection{Boltzmann sampler for rooted bicolored binary trees}
\label{sec:boltz_binary_trees}
We define a rooted bicolored binary tree as a  binary tree with a marked leaf discarded from the set
of U-atoms. 
Notice that the class of rooted bicolored binary trees such that the underlying unrooted binary tree
is asymmetric   
is the U-derived class $\ul{\cK}$.

In order to write down a decomposition grammar for the class $\ul{\cK}$---to be translated into a Boltzmann sampler---we define some refined classes of rooted bicolored binary trees (decomposing $\ul{\cK}$ is a bit involved since we have 
to forbid the 4 non-asymmetric binary trees):
 $\cRb$ is the class of \emph{black-rooted} binary trees (the root leaf is connected to a black node) with at least one node,
 and $\cRw$ is the class of \emph{white-rooted}
binary trees (the root leaf is connected to a white node) with at least one node. 
We also  define $\cRbas$ ($\cRwas$) as the class of black-rooted (white-rooted, resp.)
bicolored binary trees such that the underlying unrooted binary tree is asymmetric. 
Hence $\ul{\cK}=\cRbas+\cRwas$.
We introduce two auxiliary classes; $\cRbh$ is the class of black-rooted binary trees except the (unique) one
with one black node and two white nodes; and $\cRwh$ 
is the class of white-rooted
binary trees except the two ones resulting from rooting the (unique) bicolored binary tree with one
black node and three white nodes (the 4th one in Figure~\ref{fig:asymmetric}(a)), in addition, the rooted bicolored binary tree with two leaves (the first one in Figure~\ref{fig:asymmetric}(a)) is also included in the class $\cRwh$. 

The decomposition of a bicolored binary tree at the root 
yields a complete decomposition grammar, given in Figure~\ref{fig:grammar}, for the class $\ul{\cK}=\cRbas+\cRwas$.
This grammar translates to a decomposition grammar involving only the basic classes
$\{\cZL,\cZU\}$ and the constructions $\{+,\star\}$  
($\cZL$ stands for a black node and $\cZU$ stands for a non-root leaf):
\begin{equation}\label{eq:grammar}
\left\{
\begin{array}{rcl}
\ul{\cK}&=&\cRbas+\cRwas,\\
\cRbas&=&\cRw\star\cZL\star\cZU+\cZU\star\cZL\star\cRw+\cZL\star\cRw^2,\\
\cRwas&=&\cRbh\star\cZU+\cZU\star\cRbh+\cRb^2,\\
\cRbh&=&\cRwh\star\cZL\star\cZU^2+\cZU^2\star\cZL\star\cRwh+\cRwh\star\cZL\star\cRwh,\\
\cRwh&=&\cZU+\cRb\star\cZU+\cZU\star\cRb+\cRb^2,\\
\cRb&=&(\cZU+\cRw)\star\cZL\star(\cZU+\cRw),\\
\cRw&=&(\cZU+\cRb)\star(\cZU+\cRb).
\end{array}
\right.
\end{equation}

\begin{figure}
\centerline{\includegraphics[width=14cm]{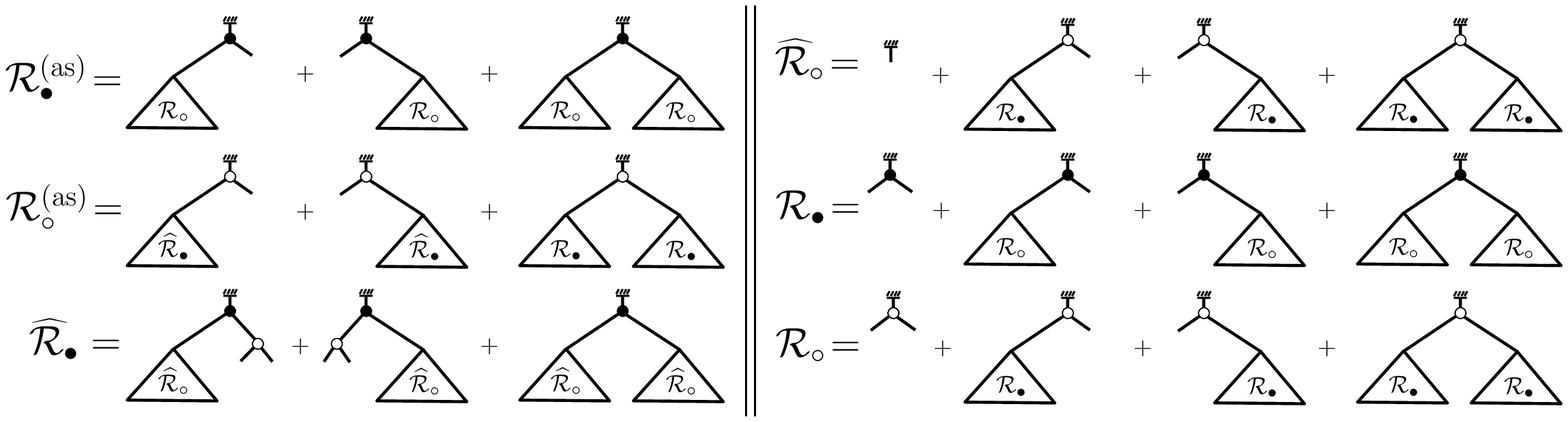}}
\caption{The decomposition grammar for the two classes $\cRbas$ and $\cRwas$ of rooted bicolored
binary trees such that the underlying binary tree is asymmetric.}
\label{fig:grammar}
\end{figure}

In turn, this grammar is translated into a Boltzmann 
sampler $\Gamma \ul{\cK}(z,w)$ 
for the class $\ul{\cK}$ using 
the sampling rules given in Figure~\ref{table:rules}, similarly as
we have done for the (simpler) class of complete binary trees in Example~1.

\subsubsection{Boltzmann sampler for bicolored binary trees}\label{sec:Ksamp}
We describe in this section a Boltzmann sampler $\Gamma \cK(z,w)$ for asymmetric bicolored binary trees, which is 
derived from the Boltzmann sampler $\Gamma\ul{\cK}(x,y)$ 
described in the previous section. Observe that each \emph{asymmetric} binary tree in $\cK_{n,m}$ gives rise
to $m$ rooted binary trees in $\ul{\cK}_{n,m-1}$, as each of the $m$ leaves, which are 
\emph{distinguishable}, might be chosen
to be discarded from the set of U-atoms. Hence, each object of $\cK_{n,m}$ has probability
$\ul{K}(z,w)^{-1}mz^n/n!y^{m-1}$ to be chosen when calling $\Gamma\ul{\cK}(z,w)$ and taking
the distinguished atom back into the set of U-atoms. Hence, from the rejection lemma 
(Lemma~\ref{lemma:rej}), the sampler
\begin{center}
\begin{tabular}{l}
repeat $\gamma\leftarrow\Gamma\ul{\cK}(z,w)$;\\ 
\hspace{.2cm}take the distinguished U-atom back into the set of U-atoms;\\
\hspace{.2cm}\{so $||\gamma||$ increases by $1$ and now $\gamma\in\cK$\}\\
until $\mathrm{Bern}\left(\frac{2}{||\gamma||}\right)$;\\
return $\gamma$ 
\end{tabular}
\end{center}
is a Boltzmann sampler for $\cK$.

However, this sampler is not efficient enough, as it uses a massive amount of rejection
to draw a tree of large size. Instead, we use an early-abort
rejection algorithm, which allows us to ``simulate" the rejection step all along the generation, thus
making it possible to reject before the entire object is generated. We find it more convenient to use the
number of nodes, instead of leaves, as the parameter for rejection (the subtle advantage is that the 
generation process $\Gamma\ul{\cK}(z,w)$ builds the tree node by node). Notice that the number
 of leaves in an unrooted binary tree $\gamma$ is equal to $2+N(\gamma)$, with $N(\gamma)$ 
 the number of nodes of $\gamma$. Hence, the rejection step
in the sampler above can be replaced by a Bernoulli choice with parameter $2/(N(\gamma)+2)$. 
We now give the early-abort algorithm, which  repeats calling $\Gamma\ul{\cK}(z,w)$ while using
 a global counter $N$ that records the number of nodes  of the tree under construction.
\vspace{0.2cm}

\fbox{
\begin{tabular}{ll}
$\Gamma \cK(z,w)$:$\!\!$& repeat \\
&\hspace{0.2cm}$N:=0$; \{counter for nodes\}\\
&\hspace{0.2cm}Call $\Gamma\ul{\cK}(z,w)$\\
&\hspace{0.2cm}each time a node is built do\\
&\hspace{0.4cm}$N:=N+1$;\\
& \hspace{0.4cm}if $\mathrm{Bern}((N+1)/(N+2))$ continue;\\
&\hspace{0.4cm}otherwise reject and restart from the first line; od\\
&until the generation finishes;\\
&return the object generated by $\Gamma\ul{\cK}(z,w)$\\
&(taking the distinguished leaf back into the set of U-atoms)
\end{tabular}
} 

\begin{lemma}\label{lem:BoltzK}
The algorithm $\Gamma \cK(z,w)$ is a Boltzmann sampler for the class $\cK$ of asymmetric bicolored binary trees.
\end{lemma}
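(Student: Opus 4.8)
The plan is to show that the early-abort algorithm $\Gamma\cK(z,w)$ produces exactly the same output distribution as the simpler rejection sampler displayed just before the statement, which has already been justified via Lemma~\ref{lemma:rej}. That simpler sampler repeatedly calls $\Gamma\ul{\cK}(z,w)$, reinstates the discarded leaf (so the object becomes an element of $\cK$), and accepts with probability $\frac{2}{||\gamma||}=\frac{2}{N(\gamma)+2}$, using the identity $||\gamma||=N(\gamma)+2$ for an unrooted binary tree. Since this simpler sampler is known to be a valid Boltzmann sampler for $\cK$, it suffices to prove that $\Gamma\cK(z,w)$ accepts each generated tree $\gamma$ with exactly the same probability $\frac{2}{N(\gamma)+2}$.

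First I would analyze the acceptance probability of one pass through the repeat-loop of $\Gamma\cK(z,w)$. A single call to $\Gamma\ul{\cK}(z,w)$ builds the tree node by node, and the global counter $N$ is incremented to $1,2,\ldots$ as each node is created. At the step where the counter becomes $N$, the algorithm continues with probability $\frac{N+1}{N+2}$. A pass survives to completion only if it is \emph{not} aborted at any node-creation step. Writing $N(\gamma)$ for the final number of nodes, the probability that none of these successive Bernoulli tests triggers an abort is the telescoping product
\begin{equation}
\prod_{N=1}^{N(\gamma)}\frac{N+1}{N+2}=\frac{2}{3}\cdot\frac{3}{4}\cdots\frac{N(\gamma)+1}{N(\gamma)+2}=\frac{2}{N(\gamma)+2}.
\end{equation}
This is precisely the acceptance probability $\frac{2}{||\gamma||}$ of the simpler sampler. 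Thus, conditioned on the shape $\gamma$ produced by the underlying call to $\Gamma\ul{\cK}(z,w)$, the early-abort procedure accepts with exactly the target probability, so the two samplers realise the same rejection scheme and draw the same conditional distribution on $\cK$.

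The second part of the argument is to verify that the early-abort mechanism does not distort \emph{which} tree is produced on an accepting pass, i.e.\ that restarting ``from the first line'' after a failed internal test is equivalent to a clean outer rejection. Here the key point is that each pass resets $N:=0$ and launches an independent fresh call to $\Gamma\ul{\cK}(z,w)$; the abort decisions within a pass depend only on the counter $N$ and use auxiliary coin flips independent of the tree's shape, so no bias is introduced among trees of a given size. Consequently the accepted object, once we take the distinguished leaf back into the set of U-atoms, follows the conditional law of the simpler sampler given acceptance, which by Lemma~\ref{lemma:rej} is the Boltzmann distribution $\mathbf{P}_{z,w}$ on $\cK$. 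The main subtlety to make airtight is the independence and memorylessness across aborts --- namely that an early abort at node $N$ is probabilistically indistinguishable from generating the whole tree and then rejecting it --- but this follows from the telescoping identity above together with the independence of the coin flips, so I would present that identity as the crux of the proof.
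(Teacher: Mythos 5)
Your proof is correct and follows essentially the same route as the paper's: both reduce the early-abort sampler to the simpler rejection sampler by computing the telescoping product $\prod_{i=1}^{N(\gamma)}\frac{i+1}{i+2}=\frac{2}{N(\gamma)+2}$ and identifying this with the acceptance probability $\frac{2}{||\gamma||}$ already validated via Lemma~\ref{lemma:rej}. Your additional remarks on the independence of the coin flips only make explicit what the paper leaves implicit.
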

\begin{proof}
At each attempt, the call to $\Gamma\ul{\cK}(z,w)$ would output a rooted binary tree $\gamma$ if there was no early interruption. 
Clearly, the probability that the generation of $\gamma$ finishes without interruption is $\prod_{i=1}^{N(\gamma)}(i+1)/(i+2)=2/(N(\gamma)+2)$. Hence, each attempt is equivalent to doing\\ 
\centerline{$\gamma\leftarrow\Gamma\ul{\cK}(z,w)$; if $\mathrm{Bern}\left(\frac{2}{N(\gamma)+2}\right)$ return $\gamma$ else reject;}\\
Thus, the algorithm $\Gamma \cK(z,w)$ is equivalent to the algorithm given in the discussion 
preceding Lemma~\ref{lem:BoltzK}, hence $\Gamma \cK(z,w)$ is a Boltzmann sampler for the family 
$\cK$.
\end{proof}

\subsubsection{Boltzmann sampler for  irreducible dissections}\label{sec:sampI}
As stated in Proposition~\ref{prop:bijbin3conn}, 
the closure-mapping realises a combinatorial isomorphism between asymmetric 
bicolored binary trees (class $\cK$) and asymmetric bicolored irreducible dissections (class $\cI$).
Hence, the algorithm

\vspace{0.2cm}

\fbox{\begin{tabular}{ll}
$\Gamma \cI(z,w)$:$\!\!$& $\tau\leftarrow \Gamma \cK(z,w)$;\\
& return $\mathrm{closure}(\tau)$
\end{tabular}}

\vspace{0.2cm}

\noindent is a Boltzmann sampler for $\cI$.
In turn this easily yields a Boltzmann
sampler for the corresponding rooted class $\cJ$. Precisely, starting from an \emph{asymmetric} bicolored irreducible dissection,
 each of the 3 outer black vertices, which are \emph{distinguishable}, might be chosen as the root-vertex in order to obtain a rooted irreducible dissection. 
 Moreover the sets of L-atoms and U-atoms are slightly different for the classes $\cI$
 and $\cJ$; indeed, a rooted dissection has
 one more L-atom (the black vertex following the root-vertex in cw order around the outer face)
 and one more U-atom (all faces are U-atoms in $\cJ$, whereas only the inner faces are U-atoms 
 in $\cI$)\footnote{We have chosen to specify the sets of L-atoms and U-atoms in this way in order to state the isomorphisms $\cK\simeq\cI$ and $\cJa\simeq\vec{\cM_3}$.}.  This yields the identity
 \begin{equation}
 \cJ= 3\star\cZL\star\cZU\star\cI,
 \end{equation}
 which directly yields (by the sampling rules of Figure~\ref{table:rules}) a Boltzmann sampler $\Gamma\cJ(z,w)$
 for $\cJ$ from the Boltzmann sampler $\Gamma \cI(z,w)$.
 
 Finally, we obtain a Boltzmann sampler for rooted admissible dissections by a simple rejection procedure

 \vspace{.2cm}
 
 \fbox{
\begin{tabular}{ll}
$\Gamma \cJa(z,w)$:$\!\!$& repeat $\delta\leftarrow\Gamma \cJ(z,w)$ until $\delta\in\cJa$;\\
& return $\delta$
\end{tabular}
} 

\vspace{.2cm}

\subsubsection{Boltzmann sampler for rooted 3-connected maps}
The Boltzmann sampler for rooted irreducible dissections and the primal-map construction  
yield the following sampler for rooted 3-connected maps: 

\vspace{0.2cm}

\fbox{
\begin{tabular}{ll}
$\Gamma \vec{\cM_3}(z,w)$:$\!\!$& $\delta\leftarrow\Gamma \cJa(z,w)$;\\
& return $\mathrm{Primal}(\delta)$
\end{tabular}
} 

\vspace{0.2cm}

\noindent where  $\mathrm{Primal}(\delta)$ is 
 the rooted 3-connected map associated to $\delta$ (see Section~\ref{sec:primal_map}).

\subsubsection{Boltzmann sampler for edge-rooted 3-connected planar graphs}

To conclude,
 the Boltzmann sampler $\Gamma \vec{\cM_3}(z,w)$ yields a Boltzmann sampler
  $\Gamma \vec{\cG_3}(z,w)$ for edge-rooted 3-connected planar graphs, according to the
  isomorphism (Whitney's theorem) $\vec{\cMt}\simeq 2\star\vec{\cGt}$,
  
  \vspace{0.2cm}

  \fbox{
\begin{tabular}{ll}
$\Gamma \vec{\cG_3}(z,w)$:$\!\!$& return $\Gamma \vec{\cM_3}(z,w)$ (forgetting the embedding)
\end{tabular}
}

\subsection{Boltzmann sampler for 2-connected planar graphs}
\label{sec:2conn3conn}
The next step is to realise a Boltzmann sampler for 2-connected planar 
graphs from the Boltzmann sampler for edge-rooted 3-connected planar graphs obtained
in Section~\ref{sec:bolz3conn}. Precisely, we first describe a Boltzmann sampler for the 
class $\vec{\cGb}$ of edge-rooted 2-connected 
planar graphs, and subsequently obtain, by using rejection techniques, a Boltzmann sampler for the class $\cGbp$ of derived  
2-connected planar graphs (having a Boltzmann sampler for $\cGbp$ allows
us to go subsequently to connected planar graphs).

To generate edge-rooted 2-connected planar graphs, we use a  well-known decomposition, due 
to Trakhtenbrot~\cite{trak}, which ensures
that an edge-rooted 2-connected planar graph can be assembled from
edge-rooted 3-connected planar components. 
This decomposition deals with so-called \emph{networks} (following the terminology
of Walsh~\cite{Wa}), 
where a network is defined as a connected graph $N$ with
two distinguished vertices  $0$ and $\infty$ called \emph{poles}, such that the graph $N^*$
obtained by adding an edge between $0$ and $\infty$ is a 2-connected
planar graph. Accordingly, we refer to Trakhtenbrot's decomposition as the \emph{network decomposition}. Notice that networks are closely related to edge-rooted 2-connected planar graphs,
though not completely equivalent (see Equation~\eqref{eq:DB} below for the precise relation).

We rely
on~\cite{Wa} for the
description of the network decomposition. 
 A \emph{series-network} or $s$-network is a network made
of at least 2 networks connected \emph{in chain} at their poles, 
the $\infty$-pole of a network coinciding with the $0$-pole of the
following network in the chain. A
\emph{parallel network} or $p$-network is a network made of at least 2
networks connected \emph{in parallel}, so that their respective
$\infty$-poles and $0$-poles coincide. A \emph{pseudo-brick}
 is a network $N$ whose poles are not adjacent and such that $N^*$
is a 3-connected planar graph with at least 4 vertices. 
 A \emph{polyhedral network} or $h$-network is a network obtained by taking 
a pseudo-brick and substituting each edge $e$ of the pseudo-brick by a network $N_e$
(polyhedral networks 
establish a link between 2-connected and 3-connected planar graphs).

\begin{proposition}[Trakhtenbrot]
\label{prop:trak}
Networks with at least 2 edges are
partitioned into $s$-networks, $p$-networks and $h$-networks.
\end{proposition}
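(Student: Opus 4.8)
The plan is to prove Trakhtenbrot's network decomposition by analyzing the structure of a network $N$ with at least 2 edges through the graph $N^*$ obtained by adding the edge $\{0,\infty\}$ between the poles. The key observation is that $N^*$ is by definition 2-connected, so I can invoke Tutte's classical theory of the decomposition of 2-connected graphs into 3-connected components (the SPQR-tree structure), but adapted to the rooted setting by distinguishing the root edge $\{0,\infty\}$. First I would establish that the three cases are mutually exclusive: an $s$-network has poles that are a separating pair whose removal leaves the chain disconnected into a path-like structure, a $p$-network has at least two internally disjoint paths between the poles (i.e.\ $0$ and $\infty$ remain in a common ``parallel bundle''), and an $h$-network corresponds to the case where $N^*$ minus the root edge is essentially 3-connected. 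Disjointness follows because these correspond to whether the poles are separated by a $2$-cut (series), joined by multiple parallel components (parallel), or neither (polyhedral).

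\emph{For exhaustiveness}, which is the substantive direction, I would argue by considering the structure of $N^*$ at the marked edge $e_0 = \{0,\infty\}$. Since $N$ has at least $2$ edges, $N^*$ has at least $3$ edges and is $2$-connected with at least $3$ vertices. I would examine whether $\{0,\infty\}$ is a separating pair of $N$ (equivalently, whether $N^* - e_0$ has a $2$-cut at the poles). If the poles admit more than one ``bridge'' (connected piece of $N$ that touches both poles, or a single edge between them), then I can group these bridges: if two or more independent pieces connect the poles in parallel, $N$ is a $p$-network. If instead the poles are separated by some internal vertex so that $N$ decomposes into a genuine chain of sub-networks linked at cut vertices, then $N$ is an $s$-network. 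The remaining case is when $N^*$ has no such separating structure at the poles, meaning $N^*$ is $3$-connected (after accounting for the root edge), whence $N$ is a pseudo-brick with edges possibly substituted by sub-networks---an $h$-network.

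\emph{The main obstacle} I expect is handling the boundary and degenerate cases cleanly, and correctly formalizing the $h$-network case where each edge of the pseudo-brick may itself be replaced by an arbitrary sub-network. In particular, I must verify that the substitution-into-a-pseudo-brick description genuinely captures all networks whose ``$3$-connected core'' is nontrivial, and that this does not silently overlap with the parallel case (a single edge of the pseudo-brick replaced by a parallel bundle must be attributed correctly). The cleanest route is to reduce to the \textbf{essentially unique} Tutte decomposition of the $2$-connected graph $N^*$: the root edge $e_0$ lies in exactly one node of Tutte's tree, and that node's type (series/bond/$3$-connected) determines directly which of the three network types $N$ falls into. This makes both disjointness and exhaustiveness immediate, since Tutte's theorem guarantees that every $2$-connected graph has such a decomposition and that the node containing a fixed edge is well-defined. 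I would therefore structure the argument by first stating Tutte's $3$-connected decomposition of $2$-connected graphs, then reading off the network type from the node of the decomposition tree containing the virtual root edge $\{0,\infty\}$, leaving only the routine verification that the three node types correspond exactly to the $s$-, $p$-, and $h$-network definitions given above.
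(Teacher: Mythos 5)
The paper does not prove this proposition: it is imported as a known result of Trakhtenbrot, in Walsh's formulation, and the grammar for networks is then built on top of it without further justification. Your proposal therefore supplies an argument where the paper supplies a citation. The route you settle on at the end---pass to the $2$-connected graph $N^*$, invoke the essential uniqueness of Tutte's decomposition into $3$-connected components, and read off the network type from the type (cycle, bond, or rigid $3$-connected) of the unique node of the decomposition tree containing the real edge $\{0,\infty\}$---is the standard modern proof and is sound; it does make disjointness and exhaustiveness immediate, since each real edge lies in exactly one node. Two remarks. Your opening characterizations in the disjointness paragraph are imprecise as stated: an $s$-network is recognized by a cut vertex of $N$ separating the poles (not by the poles forming a separating pair), and ``at least two internally disjoint pole-to-pole paths'' also holds for every $h$-network by Menger's theorem applied to the $3$-connected core, so it does not separate the parallel case from the polyhedral one. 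These slips are superseded by the Tutte-tree reduction, which is the version you should actually write out. Second, the ``routine verification'' you defer is where the remaining work lives: checking that a rigid node at $\{0,\infty\}$ yields precisely a pseudo-brick (poles non-adjacent, at least $4$ vertices) with every other skeleton edge, real or virtual, substituted by a network, and handling the simplicity constraint in the bond case (at most one parallel component is a bare edge, which is why the grammar splits $\cP$ into two terms). Neither point threatens the argument.
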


Let us explain how to obtain a recursive decomposition involving 
 the different families of networks. (We simply adapt the decomposition 
 formalised by Walsh~\cite{Wa} so as to have only positive signs.) 
 Let $\cD$, $\cS$, $\cP$, and $\cH$
be respectively the 
classes of networks, $s$-networks, $p$-networks, and $h$-networks,
where  the L-atoms are the vertices except the two poles, and the U-atoms
are the edges. In particular, $\cZU$ stands here for the class containing the link-graph as only object, i.e., the graph with one edge
connecting the two poles.
Proposition~\ref{prop:trak} ensures that
$$
\cD=\cZU+\cS+\cP+\cH.
$$

An $s$-network can be uniquely decomposed into a non-$s$-network (the
head of the chain) followed by a network (the trail of the chain),
which yields

$$
\cS=(\cZU+\cP+\cH)\star\cZL\star\cD.
$$

A $p$-network has a unique \emph{maximal} parallel decomposition into
a collection of at least two components that are not $p$-networks. Observe that
we consider here graphs without multiple edges, so that at most one of
these components is an edge. Whether there is one or no such
edge-component yields

$$
\cP=\cZU\star\Set_{\geq 1}(\cS+\cH)+\Set_{\geq 2}(\cS+\cH).
$$

 By definition, the class of $h$-networks corresponds to a 
U-substitution of networks in pseudo-bricks; and pseudo-bricks are 
exactly  edge-rooted 3-connected 
planar graphs. As a consequence (recall that $\cGt$ stands for the family of
 3-connected planar graphs),

$$
\cH=\vec{\cGt}\circ_U\cD.
$$

To sum up, we have the following grammar corresponding to the 
decomposition of networks into edge-rooted 3-connected planar graphs:

\vspace{0.3cm}

\includegraphics[width=10cm]{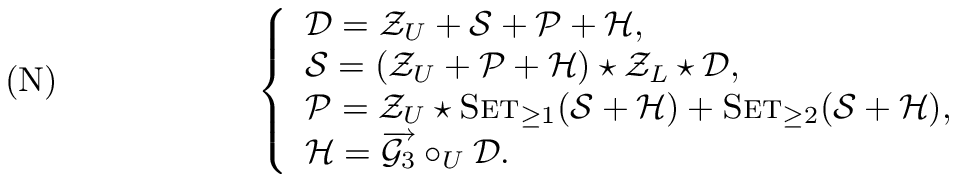}

\vspace{0.3cm}


Using the sampling rules (Figure~\ref{table:rules}), 
the decomposition grammar (N) is directly translated into a
Boltzmann sampler $\Gamma \cD(z,y)$ for networks, as given in Figure~\ref{fig:samp_networks}.
A network generated by $\Gamma \cD(z,y)$ is made of a series-parallel backbone $\beta$ (resulting
from the branching structures of the calls to $\Gamma\cS(z,y)$ and $\Gamma\cP(z,y)$) and a collection
of rooted 3-connected planar graphs that are attached at edges of $\beta$; clearly all these
3-connected components are obtained from independent calls to the Boltzmann sampler $\Gamma\cGtr(z,w)$, with $w=D(z,y)$.

\begin{figure}
\fbox{\begin{minipage}{12cm}
\ni\begin{tabular}{ll}
$\Gamma\cD(z,y)$:& Call $\Gamma\cZU(z,y)$ or $\Gamma\cS(z,y)$ or $\Gamma\cP(z,y)$ or $\Gamma\cH(z,y)$\\[.1cm]
&\hspace{.5cm}with respective 
probabilities $\frac{y}{D}$, $\frac{S}{D}$, $\frac{P}{D}$, $\frac{H}{D}$;\\[.1cm]
&return the network generated
\end{tabular}

\vspace{.4cm}

\ni\begin{tabular}{ll}
$\Gamma\cZU(z,y)$:& return the link-graph
\end{tabular}

\vspace{.4cm}

\ni\begin{tabular}{ll}
$\Gamma\cS(z,y)$:& $\gamma_1\leftarrow\Gamma(\cZU+\cP+\cH)(z,y)$;\\
&$\gamma_2\leftarrow\Gamma\cD(z,y)$;\\
&$\gamma\leftarrow\gamma_1$ in series with $\gamma_2$;\\
& return $\gamma$
\end{tabular}

\vspace{.4cm}

\ni\begin{tabular}{ll}
$\Gamma\cP(z,y)$:& Call $\Gamma\cP_1(z,y)$ or $\Gamma\cP_2(z,y)$\\
&\hspace{.5cm}with resp. probabilities $\frac{y\exp_{\geq 1}(S+H)}{P}$, $\frac{\exp_{\geq 2}(S+H)}{P}$;\\
&return the network generated
\end{tabular}

\vspace{.4cm}

\ni\begin{tabular}{ll}
$\Gamma\cP_1(z,y)$:& $k\leftarrow\Pois_{\geq 1}(S+H)$;\\
& $\gamma_1\leftarrow\Gamma(\cS+\cH)(z,w),\ldots,\gamma_k\leftarrow\Gamma(\cS+\cH)(z,w)$; \{ind. calls\}\\
&$\gamma\leftarrow(\gamma_1,\ldots,\gamma_k)$ in parallel;\\
& add to $\gamma$ an edge connecting the 2 poles;\\
& return $\gamma$
\end{tabular}

\vspace{.4cm}

\ni\begin{tabular}{ll}
$\Gamma\cP_2(z,y)$:& $k\leftarrow\Pois_{\geq 2}(S+H)$;\\
& $\gamma_1\leftarrow\Gamma(\cS+\cH)(z,w),\ldots,\gamma_k\leftarrow\Gamma(\cS+\cH)(z,w)$; \{ind. calls\}\\
&$\gamma\leftarrow(\gamma_1,\ldots,\gamma_k)$ in parallel;\\
& return $\gamma$
\end{tabular}

\vspace{.4cm}

\ni\begin{tabular}{ll}
$\Gamma \cH(z,y)$:& $\gamma\leftarrow\Gamma\cGtr(z,w)$,\ with $w=D(z,y)$;\\
&for each edge $e$ of $\gamma$ do\\
&\hspace{.5cm} $\gamma_e\leftarrow\Gamma\cD(z,y)$;\\
&\hspace{.5cm} substitute $e$ by $\gamma_e$;\\
&\hspace{.5cm} \{the poles of $\gamma_e$ are identified with the ends of $e$\\
&\hspace{.5cm} $\ $in a canonical way\}\\
& od;\\
& return $\gamma$
\end{tabular}

\vspace{.4cm}

\ni\begin{tabular}{ll}
$\Gamma(\cS+\cH)(z,y)$:&Call $\Gamma\cS(z,y)$ or $\Gamma\cH(z,y)$\\
&\hspace{.5cm}with resp. probabilities $\frac{S}{S+H}$, $\frac{H}{S+H}$;\\
&return the network generated
\end{tabular}

\vspace{.4cm}

\ni\begin{tabular}{ll}
$\Gamma(\cZU\!+\!\cP\!+\!\cH)(z,y)$:&$\!\!$Call $\Gamma\cZU(z,y)$ or $\Gamma\cP(z,y)$ or $\Gamma\cH(z,y)$\\
&\hspace{.5cm}$\!\!$with resp. probabilities $\frac{y}{y+P+H}$, $\frac{P}{y+P+H}$, $\frac{H}{y+P+H}$;\\
&$\!\!$return the network generated
\end{tabular}

\end{minipage}}
\caption{Boltzmann samplers for networks. All generating functions are assumed to be evaluated at $(z,y)$, i.e., $D:=D(z,y)$, $S:=S(z,y)$, $P:=P(z,y)$, and $H:=H(z,y)$.}
\label{fig:samp_networks}
\end{figure}

 The only terminal nodes of the
decomposition grammar are the classes $\cZL$, $\cZU$ (which are explicit), and the class $\vec{\cGt}$. 
Thus, the sampler $\Gamma \cD(z,y)$ and the auxiliary samplers $\Gamma \cS(z,y)$, $\Gamma \cP(z,y)$,
and $\Gamma \cH(z,y)$ are recursively specified in terms of $\Gamma \vec{\cG_3}(z,w)$,
where $w$ and $z$ are linked by 
$w=D(z,y)$.

Observe that each edge-rooted 2-connected planar graph different from the link-graph gives rise to two networks, obtained respectively by keeping or deleting the root-edge. This yields the identity
\begin{equation}
\label{eq:DB}
(1+\cZU)\star\vec{\cGb}=(1+\cD).
\end{equation}
From that point, a Boltzmann sampler is easily obtained for the family $\vec{\cGb}$ 
of edge-rooted 2-connected 
planar graphs. Define a procedure \textsc{AddRootEdge} that adds an edge connecting the two poles $0$ and $\infty$
of a network if they are not already adjacent, 
and roots the obtained graph at the edge $(0,\infty)$ directed
from $0$ to $\infty$.
The following sampler 
for $\vec{\cGb}$ is the counterpart of Equation~(\ref{eq:DB}).

\vspace{0.3cm}

\fbox{
\begin{tabular}{rl}
$\Gamma (1+\cD)(z,y)$: & $\!\!\!$ if $\mathrm{Bern}\left(\frac{1}{1+D(z,y)}\right)$ return the link-graph else return $\Gamma \cD(z,y)$; \\
$\Gamma \vec{\cG_2}(z,y)$: & $\!\!\!$ $\gamma \leftarrow \Gamma (1+\cD)(z,y)$;  \textsc{AddRootEdge}($\gamma$); return $\gamma$
\end{tabular}
}

\begin{lemma}\label{lem:netto2conn}
The algorithm $\Gamma \vec{\cG_2}(z,y)$ is a Boltzmann sampler for the class $\vec{\cGb}$ 
of edge-rooted 2-connected planar graphs.
\end{lemma}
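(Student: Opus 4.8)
The plan is to verify that $\Gamma\vec{\cG_2}(z,y)$ draws each object $\gamma\in\vec{\cGb}$ with probability proportional to $z^{|\gamma|}y^{||\gamma||}/|\gamma|!$, which is precisely the Boltzmann distribution for the edge-rooted class $\vec{\cGb}$. The key structural input is the identity~\eqref{eq:DB}, namely $(1+\cZU)\star\vec{\cGb}=(1+\cD)$, together with the fact (established in Figure~\ref{fig:samp_networks} and the surrounding discussion) that $\Gamma\cD(z,y)$ is already a valid Boltzmann sampler for the network class $\cD$.

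First I would establish that $\Gamma(1+\cD)(z,y)$ is a Boltzmann sampler for the class $1+\cD$. This is an instance of the Sum rule (Proposition~\ref{prop:rules}, case 1): the Bernoulli choice with parameter $\tfrac{1}{1+D(z,y)}$ selects the $0$-atom (the object of size $0$ in the class $\mathbf{1}$), and otherwise defers to $\Gamma\cD(z,y)$. Since the link-graph here plays the role of the size-$0$ object of $\mathbf{1}$ in this sum, each object of $1+\cD$ is drawn with probability $\tfrac{1}{(1+D)(z,y)}\,\tfrac{z^{|\gamma|}}{|\gamma|!}y^{||\gamma||}$, so the sampler is valid.

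Next I would interpret the isomorphism~\eqref{eq:DB} at the level of sampling. An object of $(1+\cZU)\star\vec{\cGb}$ is a pair consisting either of the $0$-atom paired with an edge-rooted $2$-connected graph, or of a single U-atom (the root edge, brought back) paired with such a graph; on the right side this corresponds exactly to a network. The procedure \textsc{AddRootEdge} realises this bijection in the sampler: starting from an object of $1+\cD$, it adds the edge $(0,\infty)$ when the poles are not already adjacent and then roots at the directed edge $(0,\infty)$. The crucial bookkeeping point---which is the step I expect to be the main obstacle---is checking that the size parameters transform correctly under $\Phi=\textsc{AddRootEdge}$: for an edge-rooted $2$-connected graph, neither the two root-endpoints count as L-atoms nor the root edge as a U-atom, so one must verify that the $|\gamma|$ and $||\gamma||$ of the output network match those of the corresponding object of $1+\cD$. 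Because~\eqref{eq:DB} is a size-preserving combinatorial isomorphism, the generating functions agree, so $\Phi$ transports the Boltzmann distribution on $1+\cD$ to the Boltzmann distribution on $\vec{\cGb}$.

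Finally I would invoke the combinatorial-isomorphism principle stated at the start of Section~\ref{sec:reject} (the remark that $\cA\simeq\cB$ lets a Boltzmann sampler for $\cA$ yield one for $\cB$ via $\gamma\leftarrow\Gamma\cA(x,y);\ \text{return }\Phi(\gamma)$). Composing the valid sampler $\Gamma(1+\cD)(z,y)$ with the isomorphism $\Phi=\textsc{AddRootEdge}$ realising~\eqref{eq:DB} therefore yields a Boltzmann sampler for $\vec{\cGb}$, which is exactly the algorithm $\Gamma\vec{\cG_2}(z,y)$. This completes the verification.
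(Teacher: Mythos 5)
Your setup is right: the identity $(1+\cZU)\star\vec{\cGb}=(1+\cD)$ is indeed the engine of the proof, your verification that $\Gamma(1+\cD)(z,y)$ is a valid Boltzmann sampler via the Sum rule is fine, and you correctly reduce everything to understanding how \textsc{AddRootEdge} interacts with the Boltzmann distribution on $1+\cD$. The gap is in your last two steps. \textsc{AddRootEdge} is \emph{not} a combinatorial isomorphism from $1+\cD$ onto $\vec{\cGb}$ --- it is a two-to-one, non-size-preserving surjection --- so the isomorphism-transfer principle of Section~\ref{sec:reject} does not apply to it, and the generating functions of its domain and codomain do not agree (they differ by the factor $1+y$). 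Concretely, each $\gamma\in\vec{\cGb}$ has exactly two preimages in $\cE:=1+\cD$: the network obtained by keeping the root edge, of U-size $||\gamma||+1$, and the one obtained by deleting it, of U-size $||\gamma||$ (for the link-graph these are the link-graph network and the $0$-atom). The size check that you flag as ``the main obstacle'' would in fact \emph{fail} for the first of these two preimages, which is precisely the signal that a bijective transfer is the wrong tool; asserting that ``the generating functions agree, so $\Phi$ transports the Boltzmann distribution'' conflates the isomorphism $(1+\cZU)\star\vec{\cGb}\simeq 1+\cD$ with a nonexistent isomorphism $\vec{\cGb}\simeq 1+\cD$.

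The correct conclusion requires summing over the fibre: the probability that $\Gamma\vec{\cG_2}(z,y)$ outputs $\gamma$ is $E(z,y)^{-1}\frac{z^{|\gamma|}}{|\gamma|!}\bigl(y^{||\gamma||+1}+y^{||\gamma||}\bigr)$ with $E=1+D$, and since $E(z,y)=(1+y)\vec{G_2}(z,y)$ this collapses to $\frac{z^{|\gamma|}}{|\gamma|!}y^{||\gamma||}/\vec{G_2}(z,y)$, the desired Boltzmann probability; this is the paper's argument. Alternatively, your route can be repaired: use the genuine isomorphism to view $\Gamma(1+\cD)(z,y)$ as a Boltzmann sampler for the product $(1+\cZU)\star\vec{\cGb}$ (retaining the bit recording whether the poles were already adjacent), and then prove the additional small lemma that discarding the $(1+\cZU)$-component of a Boltzmann-distributed pair leaves the remaining component Boltzmann-distributed --- the marginalisation over the finite factor is exactly where the factor $1+y$ cancels. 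Either way, that cancellation is the whole content of the lemma and cannot be obtained from the isomorphism principle alone.
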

\begin{proof}
Firstly, observe that  $\Gamma \vec{\cG_2}(z,y)$ outputs the link-graph either if 
the initial Bernoulli choice $X$ is 0, or if $X=1$ and the sampler $\Gamma\cD(z,y)$ 
picks up the link-graph. Hence the link-graph is returned with probability $(1+y)/(1+D(z,y))$, i.e.,
with probability $1/\vec{G_2}(z,y)$.

Apart from the link-graph, 
each graph $\gamma\in\vec{\cGb}$ appears twice in the class $\cE:=1+\cD$: once in 
$\cE_{|\gamma|,||\gamma||+1}$ (keeping the root-edge) and once in $\cE_{|\gamma|,||\gamma||}$ (deleting the root-edge).
Therefore, $\gamma$ has probability $E(z,y)^{-1}z^{|\gamma|}/|\gamma|!(y^{||\gamma||+1}+y^{||\gamma||})$ of being drawn by $\Gamma \vec{\cG_2}(z,y)$, where $E(z,y)=1+D(z,y)$ is the series of $\cE$. This probability simplifies to $z^{|\gamma|}/|\gamma|!y^{||\gamma||}/\vec{G_2}(z,y)$. Hence, $\Gamma \vec{\cG_2}(z,y)$ is a Boltzmann sampler
for the class $\vec{\cGb}$.  
\end{proof}

The last step is to obtain a Boltzmann sampler for derived 2-connected planar graphs 
(i.e., with a distinguished vertex that is not labelled and does not count
for the L-size) from
the Boltzmann sampler for edge-rooted 2-connected planar graphs (as we will see
in Section~\ref{sec:conn2conn}, derived 2-connected
planar graphs constitute the blocks to construct connected planar graphs).

We proceed in two steps. Firstly, 
we obtain a Boltzmann sampler for the U-derived class $\ul{\cGb}$ 
(i.e., with a distinguished undirected edge that does not count in the U-size).
Note that $\cF:=2\star\ul{\cGb}$ satisfies $\cF=\cZL\ \!\!\!^2\star\vec{\cGb}$.
Hence, $\Gamma\vec{\cGb}(z,y)$ directly yields a Boltzmann sampler $\Gamma\cF(z,y)$
(see the sampling rules in Figure~\ref{table:rules}). Since $\cF=2\star\ul{\cGb}$,  
a Boltzmann sampler for $\ul{\cGb}$ is obtained  by calling $\Gamma\cF(z,y)$ and then forgetting the direction 
 of the root.

Secondly, once we have a Boltzmann sampler $\Gamma\ul{\cG_2}(z,y)$ for the U-derived class $\ul{\cG_2}$, we just have to apply the procedure \UtoL (described in Section~\ref{sec:reject}) to the class $\cG_2$ in order to obtain a Boltzmann sampler $\Gamma \cGbp(z,y)$ for the L-derived class $\cGbp$. The procedure \UtoL can be successfully applied, because the ratio vertices/edges is bounded. Indeed,  each connected graph $\gamma$ 
satisfies $|\gamma|\leq ||\gamma||+1$, which easily yields  
$\alphaLU=2$
for the class $\cG_2$ (attained by the link-graph).

\subsection{Boltzmann sampler for connected planar graphs}
\label{sec:conn2conn}
Another well known graph decomposition, called the \emph{block-decomposition},
ensures that  a connected graph can be decomposed 
into 2-connected components. We take advantage of this decomposition in order to specify
a Boltzmann sampler for derived connected planar graphs from the Boltzmann sampler for 
derived 2-connected planar graphs obtained in the last section. 
Then, a further rejection step yields a Boltzmann sampler for connected planar graphs.

The  \emph{block-decomposition}  (see~\cite[p.10]{Ha} for
a detailed description) ensures that each derived
connected planar graph can be uniquely constructed in
the following way: take a set of derived 2-connected planar graphs and
attach them together, by merging their marked vertices into a unique marked
vertex. Then, for each unmarked vertex $v$ of each 2-connected
component, take a derived connected planar graph $\gamma_v$ and merge 
the marked vertex of
$\gamma_v$ with $v$ (this operation corresponds to an  
L-substitution). 
The block-decomposition gives rise to the following identity relating the classes $\cGcp$ and $\cGbp$:
\begin{equation}
\label{eq:2conn}
\cGcp=\Set\left(\cGbp\circ_L(\cZL\star\cGcp)\right).
\end{equation}
This is directly translated into the following Boltzmann sampler for $\cGcp$ using the
sampling rules of Figure~\ref{table:rules}. (Notice that the 2-connected blocks of a connected graph are built independently, each
block resulting from a  call to the Boltzmann sampler $\Gamma \cGbp(z,y)$,
where $z=xG_1\ \!\!\!'(x,y)$.)

\vspace{0.2cm}

\fbox{
\begin{tabular}{ll}
$\Gamma \cGcp(x,y)$:&$k\leftarrow \Pois (G_2\ \!\!\!'(z,y));\ \ [\mathrm{with}\ z=xG_1\ \!\!\!'(x,y)]$\\
& $\gamma\leftarrow (\Gamma \cGbp(z,y),\ldots,\Gamma
\cGbp(z,y))$; \{$k$ independent calls\} \\
& merge the $k$ components of $\gamma$ at their marked vertices;\\
&for each unmarked vertex $v$ of $\gamma$ do\\
& $\ \ \ \ \ $ $\gamma_v\leftarrow \Gamma \cGcp(x,y)$;\\
& $\ \ \ \ \ $ merge the marked vertex of $\gamma_v$ with $v$\\
& od;\\
&return $\gamma$.
\end{tabular}  
}

\vspace{.2cm}


Then, a Boltzmann sampler for connected planar graphs is simply obtained from
$\Gamma \cGcp(x,y)$ by using a rejection step so as to adjust the probability distribution:

\vspace{0.3cm}

\fbox{
\begin{tabular}{ll}
$\Gamma \cG_1(x,y)$:& repeat $\gamma\leftarrow \Gamma \cGcp(x,y)$\\
& \phantom{1}\hspace{.2cm}take the marked vertex $v$ back to the set of L-atoms;\\
& \phantom{1}\hspace{.2cm}(if we consider the labels, $v$ receives label $|\gamma|+1$)\\
& \phantom{1}\hspace{.2cm}\{this makes $|\gamma|$ increase by $1$, and $\gamma\in\cG_1$\}\\
& until $\ds\mathrm{Bern}\left(\frac{1}{|\gamma|}\right)$;\\
& return $\gamma$
\end{tabular}
}

\begin{lemma}
\label{lemma:connconnpoint}
The sampler $\Gamma \cG_1(x,y)$ is a Boltzmann sampler for connected planar graphs.
\end{lemma}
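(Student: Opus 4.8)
The plan is to mimic the structure of the proofs of Lemma~\ref{lem:LtoU} and Lemma~\ref{lem:UtoL}: isolate the body of the repeat-until loop as an auxiliary sampler for $\cGc$, identify the (non-normalised) weight with which it draws each object, and then invoke the rejection lemma (Lemma~\ref{lemma:rej}) to absorb the Bernoulli step. First I would check that $\Gamma \cG_1(x,y)$ is well defined, i.e.\ that the Bernoulli parameter $1/|\gamma|$ always lies in $(0,1]$; this is immediate, since every object of $\cGc$ has at least one vertex, so $|\gamma|\geq 1$.

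Next I would analyse the auxiliary sampler consisting of a single call $\gamma\leftarrow\Gamma\cGcp(x,y)$ followed by reinserting the marked vertex into the set of L-atoms (which raises $|\gamma|$ by one and lands $\gamma$ in $\cGc$). Since $\cGcp$ is the L-derived class of $\cGc$, discarding the greatest-labelled vertex gives a bijection identifying $(\cGc)_{n,m}$ with $\cGcp_{n-1,m}$; reinserting the marked vertex with label $|\gamma|+1$ realises this bijection backwards, so each $\gamma\in\cGc$ arises from the unique derived object of $\cGcp$ corresponding to it. Because $\Gamma\cGcp(x,y)$ draws each object of $\cGcp$ with probability proportional to $x^{|\gamma|}y^{||\gamma||}/|\gamma|!$, the auxiliary sampler draws each $\gamma\in\cGc$ with probability proportional to
$$\frac{x^{|\gamma|-1}}{(|\gamma|-1)!}\,y^{||\gamma||}=\frac1x\,|\gamma|\,\frac{x^{|\gamma|}}{|\gamma|!}\,y^{||\gamma||},$$
that is, proportional to $|\gamma|\,\frac{x^{|\gamma|}}{|\gamma|!}\,y^{||\gamma||}$ (the factor $1/x$ being an irrelevant constant at fixed $x$). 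By Remark~\ref{rk:labels} I may carry this argument out at the level of shapes, ignoring the relabelling step.

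Finally I would apply Lemma~\ref{lemma:rej} with weight-function $W(\gamma)=|\gamma|\,\frac{x^{|\gamma|}}{|\gamma|!}\,y^{||\gamma||}$ and rejection-function $p(\gamma)=1/|\gamma|$: the repeat-until loop of $\Gamma\cG_1(x,y)$ then outputs each $\gamma\in\cGc$ with probability proportional to $W(\gamma)p(\gamma)=\frac{x^{|\gamma|}}{|\gamma|!}\,y^{||\gamma||}$, which is exactly the Boltzmann weight. Hence $\Gamma\cG_1(x,y)$ is a Boltzmann sampler for $\cGc$. I do not expect any genuine obstacle here; the only points requiring care are the correct bookkeeping of the derivation relation $\cGcp_{n-1,m}\cong(\cGc)_{n,m}$ (so that reinserting the vertex contributes precisely the factor $|\gamma|$ that the rejection then cancels) and the validity of the Bernoulli parameter. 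This step is the exact \emph{unpointing} counterpart of the derived-to-underived conversions used in the \LtoU and \UtoL procedures and in Lemma~\ref{lem:BoltzK}.
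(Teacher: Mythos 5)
Your proposal is correct and follows exactly the paper's own argument: identify the body of the repeat-until loop as a sampler drawing each $\gamma\in\cGc$ with probability proportional to $|\gamma|\frac{x^{|\gamma|}}{|\gamma|!}y^{||\gamma||}$ via the identification $(\cGc)_{n,m}\simeq\cGcp_{n-1,m}$, then apply Lemma~\ref{lemma:rej} with rejection function $1/|\gamma|$ to recover the Boltzmann weight. The additional checks you flag (validity of the Bernoulli parameter, working at the level of shapes) are fine and do not change the route.
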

\begin{proof}
The proof is similar to the proof of Lemma~\ref{lem:LtoU}.
Due to the general property that $\cC_{n,m}$ identifies to $\cC'_{n-1,m}$, 
the sampler delimited inside the repeat/until loop draws each object $\gamma\in\cG_1$
with probability $G_1\ \!\!\!'(x,y)^{-1}\frac{x^{|\gamma|-1}}{(|\gamma|-1)!}y^{||\gamma||}$, i.e.,
with probability proportional to $|\gamma|\frac{x^{|\gamma|}}{|\gamma|!}y^{||\gamma||}$. Hence, according to Lemma~\ref{lemma:rej}, the sampler $\Gamma \cG_1(x,w)$
draws each object $\gamma\in\cG_1$ with probability proportional to $\frac{x^{|\gamma|}}{|\gamma|!}y^{||\gamma||}$, i.e., is a Boltzmann sampler for $\cG_1$.
\end{proof}

\subsection{Boltzmann sampler for planar graphs}
\label{sec:planconn}
A planar graph is classically decomposed into the set of its
connected components, yielding
\begin{equation}
\label{eq:CtoG} 
\cG=\Set(\cGc), 
\end{equation}
which translates to the following Boltzmann sampler for the class $\cG$ of planar graphs 
(the Set construction gives rise to a Poisson law, see Figure~\ref{table:rules}):

\vspace{0.2cm}

\fbox{\begin{tabular}{ll}
$\Gamma \cG(x,y)$:& $k\leftarrow\Pois(G_1(x,y))$;\\
& return $(\Gamma \cG_1(x,y),\ldots,\Gamma \cG_1(x,y))$ \{k independent calls\} 
\end{tabular}}

\begin{proposition}
\label{lemma:planconn}
The procedure  $\Gamma \cG(x,y)$ is a Boltzmann sampler for planar graphs. 
\end{proposition}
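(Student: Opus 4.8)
The plan is to apply the sampling rule for the Set construction (from Proposition~\ref{prop:rules}) to the decomposition $\cG=\Set(\cGc)$ given in Equation~\eqref{eq:CtoG}, using the already-established fact (Lemma~\ref{lemma:connconnpoint}) that $\Gamma\cG_1(x,y)$ is a valid Boltzmann sampler for connected planar graphs. First I would observe that the five-construction proposition covers precisely $\Set_{\geq d}$ with $d=0$, which is the plain $\Set$ appearing here; since $\cGc$ is the class of connected planar graphs with at least one vertex, it has no object of size $0$, so the hypothesis of the $\Set$ rule is met. The sampler $\Gamma\cG(x,y)$ in the box is exactly the $\Set$ sampling rule specialized to $d=0$: draw $k\leftarrow\Pois(G_1(x,y))$ and then make $k$ independent calls to $\Gamma\cG_1(x,y)$, followed by the (implicit) relabelling step of \textsc{DistributeLabels}.

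The cleanest route is therefore simply to invoke Proposition~\ref{prop:rules} directly. With $\cB:=\cGc$ and $d=0$, the proposition guarantees that the assembled sampler draws each $\gamma\in\Set(\cGc)=\cG$ under the mixed Boltzmann distribution $\mathbf{P}_{x,y}$, \emph{provided} that its building block is a genuine Boltzmann sampler for $\cGc$. That provision is supplied by Lemma~\ref{lemma:connconnpoint}, so the two ingredients fit together immediately and the conclusion follows. Concretely, I would write: ``This is an immediate consequence of the decomposition $\cG=\Set(\cGc)$ (Equation~\eqref{eq:CtoG}), of Lemma~\ref{lemma:connconnpoint} establishing that $\Gamma\cG_1(x,y)$ is a Boltzmann sampler for $\cGc$, and of the $\Set$ case of Proposition~\ref{prop:rules}.''

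If a little more detail is wanted, I would recall the mechanism from the $\Set_{\geq d}$ case of the proof of Proposition~\ref{prop:rules}: a generation scenario is a sequence $(\gamma_1,\dots,\gamma_k)$ of connected components together with a relabelling $\sigma$, and the product of the Poisson factor, the $k$ component probabilities, and the relabelling factor collapses to $\frac{1}{k!\,G(x,y)}\frac{x^{|\gamma|}}{|\gamma|!}y^{||\gamma||}\prod_{i}\frac{1}{|\gamma_i|!}$; summing over the $k!\prod_i|\gamma_i|!$ scenarios producing a fixed set $\gamma$ yields exactly $\mathbf{P}_{x,y}(\gamma)$. Here one also uses that $G(x,y)=\exp(G_1(x,y))$, matching the normalization implicit in drawing $k$ from $\Pois(G_1(x,y))$.

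The main (and only) subtlety to flag is that $\cG$ includes the empty graph, whereas the $\Set$ construction requires its argument $\cGc$ to contain no object of size $0$; this is exactly why $\cGc$ is defined with ``at least one vertex,'' and the empty graph is recovered as the $k=0$ outcome of the Poisson draw. There is essentially no obstacle: the genuine work has already been done in assembling and verifying $\Gamma\cG_1(x,y)$, and this final proposition is a one-line application of the Set rule. I would keep the proof to a single sentence citing Equation~\eqref{eq:CtoG}, Lemma~\ref{lemma:connconnpoint}, and the $\Set$ case of Proposition~\ref{prop:rules}.
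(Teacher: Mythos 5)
Your proof is correct and matches the paper's (implicit) argument exactly: the paper treats this proposition as an immediate consequence of the decomposition $\cG=\Set(\cGc)$, the $\Set$ sampling rule of Proposition~\ref{prop:rules}, and the fact that $\Gamma\cG_1(x,y)$ is a Boltzmann sampler for $\cGc$ (Lemma~\ref{lemma:connconnpoint}), which is precisely what you invoke. Your added remarks on the $d=0$ specialization, the identity $G(x,y)=\exp(G_1(x,y))$, and the empty graph arising from $k=0$ are accurate and harmless elaborations.
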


\section{Deriving an efficient sampler}
\label{sec:efficient}
We have completely described in Section~\ref{sec:decomp}  a mixed Boltzmann sampler 
$\Gamma \cG(x,y)$ for planar graphs. 
This sampler yields an exact-size uniform sampler and an approximate-size
uniform sampler for planar graphs: to sample at size $n$, call the sampler $\Gamma \cG(x,1)$ until the graph generated
has size $n$; to sample in a range of sizes $[n(1-\epsilon),n(1+\epsilon)]$, call the sampler $\Gamma \cG(x,1)$ until the graph generated has size in the range. 
These targetted samplers can be shown to have expected polynomial complexity,
of order $n^{5/2}$ for approximate-size sampling and  $n^{7/2}$
for exact-size sampling (we omit the proof since we will describe more
efficient samplers in this section). 

However, more is needed to
achieve the complexity 
stated in Theorem~\ref{theo:planarsamp1}, i.e., $O(n/\epsilon)$ 
for approximate-size sampling and
$O(n^2)$ for exact-size sampling.
The main problem of the sampler $\Gamma \cG(x,1)$ 
is that the typical size of a graph 
generated 
is small, 
so that the number of attempts to reach a large target size is prohibitive. 

In order to correct this effect, we design in this section a Boltzmann sampler 
for ``bi-derived" planar graphs, which are equivalent to bi-pointed planar 
graphs, i.e.,  with 2 distinguished 
vertices\footnote{In an earlier version of the article and in the conference version~\cite{Fu05a}, we derived  3 times---as prescribed by~\cite{DuFlLoSc04}---in order to get a singularity type $(1-x/\rho)^{-1/2}$ (efficient targetted samplers are obtained when taking $x=\rho(1-1/(2n))$). We have recently discovered that deriving 2 times (which 
yields a square-root singularity 
type $(1-x/\rho)^{1/2}$) and taking again $x=\rho(1-1/(2n))$ yields the same
complexities for the targetted samplers, with the advantage that the description and analysis  is significantly simpler 
(in the original article~\cite{DuFlLoSc04}, they prescribe
to take $x=\rho$ and to use some early abort techniques for square-root singularity
type, but it seems difficult to analyse the gain due to early abortion here,
since the Boltzmann sampler for planar graphs makes use of rejection techniques).   
 }. 
The intuition is that a Boltzmann sampler for
bi-pointed planar graphs gives more weight to large graphs, because a graph of size $n$
gives rise to $n(n-1)$ bi-pointed graphs. Hence, the probability of reaching a large size
is better (upon choosing suitably the value of the Boltzmann parameter). The fact that 
the graphs have to be pointed 2 times is due to the specific asymptotic behaviour of the
coefficients counting planar graphs, which has been recently analysed by 
Gim\'enez and Noy~\cite{gimeneznoy}.

\subsection{Targetted samplers for classes with square-root singularities.}
As we describe here, a mixed class $\cC$ with a 
certain type of singularities (square-root type)
gives rise to efficient approximate-size and exact-size samplers, provided
 $\cC$ has
a 
Boltzmann sampler such that the expected cost of generation is of the same order as the 
expected size of the object generated. 

\begin{definition}
Given a mixed class $\cC$, we define a \emph{singular point} 
of $\cC$ as a pair $x_0>0$,
$y_0>0$ such that the function $x\mapsto C(x,y_0)$ has a dominant
 singularity at $x_0$ (the radius of convergence is $x_0$).
\end{definition}

\begin{definition}\label{def:alpha_sing}
For $\alpha\in\mathbb{R}\backslash\mathbb{Z}_{\geq 0}$, a mixed class $\cC$
is called $\alpha$-singular if, for each singular point $(x_0,y_0)$ of $\cC$,
the function $x\mapsto C(x,y_0)$ has a unique dominant singularity at $x_0$ (i.e.,
$x_0$ is the unique singularity on the circle $|z|=x_0$) 
and admits a singular expansion of the form
$$
C(x,y_0)=P(x)+c_{\alpha}\cdot\left( x_0-x  \right)^{\alpha}+o\left( (x_0-x )^{\alpha}\right),
$$
where $c_{\alpha}$ is a constant, $P(x)$ is rational with no poles in the disk $|z|\leq x_0$, and where the expansion holds
in a so-called $\Delta$-neighbourhood of $x_0$, see~\cite{fla,flaod}.
In the special case $\alpha=1/2$, the class is said to have square-root singularities.
\end{definition}



\begin{lemma}\label{lem:square}
Let $\cC$ be a mixed class with square-root singularities,  and endowed 
with a Boltzmann sampler $\Gamma\cC(x,y)$. Let $(x_0,y_0)$ be a singular point of $\cC$. 
For any $n> 0$, define
$$
x_n:=\big(1-\tfrac{1}{2n}\big)\cdot x_0.
$$
Call $\pi_n$ ($\pi_{n,\epsilon}$, resp.) the probability that an object $\gamma$ generated by $\Gamma \cC(x_n,y_0)$ satisfies $|\gamma|=n$ 
($|\gamma|\in I_{n,\epsilon}:=[n(1-\epsilon),n(1+\epsilon)]$, resp.); and call $\sigma_n$ the expected size of the output of $\Gamma \cC(x_n,y_0)$.

Then $1/\pi_n$ is $O(n^{3/2})$, $1/\pi_{n,\epsilon}$ is $O(n^{1/2}/\epsilon)$, and $\sigma_n$ is $O(n^{1/2})$.

\end{lemma}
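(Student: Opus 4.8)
The plan is to reduce all three quantities to the power-series coefficients $c_k:=[x^k]\,C(x,y_0)\ge 0$ and then apply singularity analysis. Writing $p_k(x):=c_k x^k/C(x,y_0)$ for the probability that $\Gamma\cC(x,y_0)$ returns an object of L-size $k$, I have $\pi_n=p_n(x_n)$, $\pi_{n,\epsilon}=\sum_{k\in I_{n,\epsilon}}p_k(x_n)$, and $\sigma_n=\sum_k k\,p_k(x_n)=x_n\,\partial_x C(x_n,y_0)/C(x_n,y_0)$. The two analytic ingredients are the transfer theorem and the singular-differentiation theorem \cite{fla,flaod}, both applicable here because the square-root expansion of $x\mapsto C(x,y_0)$ of Definition~\ref{def:alpha_sing} holds in a $\Delta$-neighbourhood of the unique dominant singularity $x_0$.

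First I would extract the coefficient asymptotics. From $[z^k](1-z)^{1/2}\sim k^{-3/2}/\Gamma(-1/2)$ and the singular expansion, the transfer theorem gives $c_k\sim\kappa\,x_0^{-k}k^{-3/2}$ with $\kappa=-c_{1/2}x_0^{1/2}/(2\sqrt\pi)>0$ (the rational part $P$, being analytic on $|z|\le x_0$, contributes only exponentially smaller terms). I would also record that $C(x_n,y_0)\to C(x_0,y_0)=P(x_0)\in(0,\infty)$, since the $\sqrt{\,\cdot\,}$ term vanishes as $x_n\to x_0$; hence $C(x_n,y_0)=\Theta(1)$. For $\pi_n$, combining $c_n\sim\kappa x_0^{-n}n^{-3/2}$ with $(x_n/x_0)^n=(1-\tfrac{1}{2n})^n\to e^{-1/2}$ (which is bounded away from $0$) yields $c_nx_n^n=\Theta(n^{-3/2})$, so $\pi_n=\Theta(n^{-3/2})$ and $1/\pi_n=O(n^{3/2})$.

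For $\sigma_n$, singular differentiation gives $\partial_xC(x,y_0)=P'(x)-\tfrac12 c_{1/2}(x_0-x)^{-1/2}+o((x_0-x)^{-1/2})$; since $x_0-x_n=x_0/(2n)$, the dominant term has order $n^{1/2}$, whence $\sigma_n=x_n\,\partial_xC(x_n,y_0)/C(x_n,y_0)=\Theta(n^{1/2})$, in particular $O(n^{1/2})$. For $\pi_{n,\epsilon}$, every $k\in I_{n,\epsilon}$ satisfies $k=\Theta(n)$ and $(x_n/x_0)^k=(1-\tfrac1{2n})^k=\Theta(1)$ uniformly (it lies between $e^{-(1+\epsilon)/2}$ and $1$), so $p_k(x_n)=\Theta(n^{-3/2})$ uniformly over the window once $n$ is large; since $I_{n,\epsilon}$ contains $\Omega(\epsilon n)$ integers when $\epsilon n\ge 1$, summing gives $\pi_{n,\epsilon}=\Omega(\epsilon\,n^{-1/2})$, while the case $\epsilon n<1$ is handled by the trivial bound $\pi_{n,\epsilon}\ge\pi_n=\Omega(n^{-3/2})$ together with $n^{-3/2}\ge\epsilon n^{-1/2}$. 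Thus $1/\pi_{n,\epsilon}=O(n^{1/2}/\epsilon)$.

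The main obstacle is the uniformity required in the $\pi_{n,\epsilon}$ estimate: the transfer theorem only provides $c_k\sim\kappa x_0^{-k}k^{-3/2}$ as $k\to\infty$, so I must check that this equivalence holds uniformly across the whole window $[n(1-\epsilon),n(1+\epsilon)]$ — legitimate because its left endpoint tends to infinity, but it is exactly what forces the small-window case $\epsilon n<1$ to be treated separately as above. The only other delicate point is justifying the termwise differentiation of the singular expansion used for $\sigma_n$, for which I would invoke the standard singular-differentiation theorem, valid since the error term is analytic in the $\Delta$-domain.
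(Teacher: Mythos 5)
Your proposal is correct and follows essentially the same route as the paper's proof: express $\pi_n$, $\pi_{n,\epsilon}$, $\sigma_n$ through the coefficients $[x^k]C(x,y_0)$ and the logarithmic derivative, apply the transfer theorem to get $c_k\sim\kappa x_0^{-k}k^{-3/2}$, and use $(x_n/x_0)^n\to e^{-1/2}$ together with the singular expansion of $\partial_xC$ at $x_n$. The paper leaves these computations as an "easy exercise"; you have simply supplied the details (including the uniformity over the window and the small-$\epsilon n$ case), and they check out.
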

\begin{proof}
The so-called transfer theorems of singularity analysis~\cite{flaod} ensure that the coefficient
$a_n:=[x^n]C(x,y_0)$ satisfies, as $n\to\infty$, $a_n\mathop{\sim}_{n\to\infty}c\ \!x_0^{-n}n^{-3/2}$,
where $c$ is a positive constant. 
This easily yields the asymptotic bounds for $1/\pi_n$ and $1/\pi_{n,\epsilon}$, 
using the expressions $\pi_n=a_nx_n\ \!\!\!^n/C(x_n,y_0)$ and $\pi_{n,\epsilon}=\sum_{k\in I_{n,\epsilon}}a_kx_n\ \!\!\!^k/C(x_n,y_0)$. 

It is also an easy exercise to find the asymptotics of $\sigma_n$, using the 
formula (given in~\cite{DuFlLoSc04}) $\sigma_n=x_n\cdot\partial_x C(x_n,y_0)/C(x_n,y_0)$.
\end{proof}
Lemma~\ref{lem:square} suggests the following simple heuristic to obtain efficient 
targetted samplers.
For approximate-size sampling (exact-size sampling, resp.), repeat calling $\Gamma \cC(x_n,1)$ until the size of the object is in $I_{n,\epsilon}$ (is exactly $n$, resp.). 
(The parameter $y$ is useful if a target U-size $m$ is 
also given, as we will see for planar graphs in Section~\ref{sec:sample_edges}.)
The complexity of sampling will be good for a class $\cC$ that has square-root singularities and that has an efficient Boltzmann sampler. Indeed, for approximate-size sampling, the number of attempts to reach the target-domain $I_{n,\epsilon}$ (i.e., $\pi_{n,\epsilon}^{-1}$) is of order $n^{1/2}$, and for exact-size sampling, the number of attempts to reach the size $n$ (i.e., $\pi_{n}^{-1}$) is of order $n^{3/2}$.
If $\cC$ is endowed with a  
Boltzmann sampler  $\Gamma \cC(x,y)$ such that the expected
complexity of sampling at $(x_n,y_0)$ is of order $\sqrt{n}$ 
 (same order as the expected size $\sigma_n$), 
 then  
 the expected complexity 
 is typically $O(n/\epsilon)$ for approximate-size sampling and $O(n^2)$ for 
exact-size sampling, as we will see for planar graphs. 

Let us mention that the original article~\cite{DuFlLoSc04} uses a different heuristic.
The targetted samplers also repeat calling the Boltzmann sampler until 
the size of the object is in the target domain, but 
the parameter $x$ is chosen to be \emph{exactly} at the singularity $\rho$. 
The second difference is that, at each attempt, the generation is interrupted
if the size of the object goes beyond the target domain. We prefer to use
the simple heuristic discussed above, which does not require early
interruption techniques. In this way the samplers are easier to describe 
and to analyse.

In order to apply these techniques to  planar graphs, we have to derive
two times the class of planar graphs,
as indicated by the following two lemmas.

\begin{lemma}[\cite{fla}]
If a class $\cC$ is $\alpha$-singular, then the class $\cC'$ is $(\alpha-1)$-singular
(by the effect of derivation).
\end{lemma}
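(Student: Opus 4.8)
The plan is to reduce everything to differentiating the singular expansion of $C(x,y_0)$ term by term and then checking that the result still fits Definition~\ref{def:alpha_sing}, but with exponent $\alpha-1$. The starting point is the identity $C'(x,y)=\partial_x C(x,y)$ established in Section~\ref{sec:derive}, which lets me work purely analytically with the partial derivative.

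First I would identify the singular points of $\cC'$. Since a power series and its termwise derivative share the same radius of convergence, the function $x\mapsto C'(x,y_0)=\partial_x C(x,y_0)$ has the same radius of convergence $x_0$ as $x\mapsto C(x,y_0)$ for every $y_0$. Hence $(x_0,y_0)$ is a singular point of $\cC'$ if and only if it is a singular point of $\cC$, so the two classes have exactly the same singular points. Moreover, differentiation neither creates nor destroys singularities on the circle $|z|=x_0$, so the dominant singularity of $C'(x,y_0)$ remains unique at $x_0$, inherited from that of $C(x,y_0)$.

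Next, fix a singular point $(x_0,y_0)$ and start from the expansion guaranteed by $\alpha$-singularity,
$$C(x,y_0)=P(x)+c_\alpha\,(x_0-x)^\alpha+o\big((x_0-x)^\alpha\big),$$
valid in a $\Delta$-neighbourhood of $x_0$. The key step is to differentiate this with respect to $x$, which formally yields
$$C'(x,y_0)=P'(x)-\alpha c_\alpha\,(x_0-x)^{\alpha-1}+o\big((x_0-x)^{\alpha-1}\big).$$
I would justify differentiating the error term using the standard fact from singularity analysis that an $o$-estimate valid for a $\Delta$-analytic function may be differentiated, at the cost of lowering the exponent by one (Cauchy's estimate applied on a circle whose radius is proportional to $|x_0-x|$, see~\cite{flaod}). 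This is the step I expect to be the main obstacle, since it is the only place where analyticity throughout the $\Delta$-domain — not merely the asymptotic behaviour along the real axis — is genuinely required.

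Finally I would check that the displayed expansion meets Definition~\ref{def:alpha_sing} with exponent $\alpha-1$. The function $P'(x)$ is again rational and acquires no poles inside $|z|\le x_0$, since differentiation can only raise the order of already-existing poles and $P$ has none in that disk. The new leading constant is $c_{\alpha-1}:=-\alpha c_\alpha$, which is a genuine constant and is nonzero whenever $c_\alpha$ is, because $\alpha\neq 0$ (as $\alpha\notin\mathbb{Z}_{\ge 0}$). Finally, $\alpha-1\notin\mathbb{Z}_{\ge 0}$: if $\alpha-1$ were a nonnegative integer then $\alpha$ would be a positive integer, contradicting $\alpha\notin\mathbb{Z}_{\ge 0}$. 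Since this holds at every singular point $(x_0,y_0)$, the class $\cC'$ is $(\alpha-1)$-singular.
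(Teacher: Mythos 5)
Your proof is correct, and it is essentially the standard argument: the paper itself gives no proof of this lemma, citing \cite{fla}, where the result is established exactly as you do it, via the theorem on differentiation of singular expansions in a $\Delta$-domain (the Cauchy-estimate argument on a circle of radius proportional to $|x_0-x|$ that you identify as the crux). You also correctly handle the bookkeeping the paper leaves implicit — coincidence of singular points, preservation of the rational part, and the check that $\alpha-1\notin\mathbb{Z}_{\geq 0}$ — so there is nothing to add.
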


\begin{lemma}[\cite{gimeneznoy}]\label{lem:bi_der}
The class $\cG$ of planar graphs is $5/2$-singular, hence the 
class $\cG''$ of bi-derived planar graphs has square-root singularities.
\end{lemma}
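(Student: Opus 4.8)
The statement splits into two parts, and the second follows from the first with no extra work. Granting that $\cG$ is $5/2$-singular, the preceding lemma on the effect of $L$-derivation applies twice, since $5/2,3/2\notin\mathbb{Z}_{\geq 0}$: it gives that $\cG'$ is $3/2$-singular and then that $\cG''$ is $1/2$-singular, and being $1/2$-singular is by definition having square-root singularities. So the whole task reduces to establishing that $\cG$ is $5/2$-singular, which is precisely the deep enumerative result of Gim\'enez and Noy~\cite{gimeneznoy}; below I sketch the structure of that argument.

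Fix a singular point $(x_0,y_0)$ of $\cG$ and study $x\mapsto G(x,y_0)$. The plan is to follow the connectivity decomposition of Section~\ref{sec:decomp} \emph{from the bottom up}, transporting the singular type at each level. The base case is the edge-rooted $3$-connected series $\vec{G_3}(x,y_0)$: by Proposition~\ref{prop:bijbin3conn} and the chain of size-preserving isomorphisms $\cK\simeq\cI$ and $\cJa\simeq\cQ\simeq\vec{\cMt}\simeq 2\star\vec{\cGt}$, this is the series of rooted $3$-connected maps, and is therefore \emph{algebraic}. Hence, for fixed $y_0$, it is amenable to singularity analysis and carries a branch point of square-root ($1/2$) type in $x$, as is generic for algebraic series.

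The next step is to propagate this through the network decomposition (grammar~(N)). Here the U-substitution $\cH=\vec{\cGt}\circ_U\cD$ turns the network system into an implicit equation $D=\Phi(x,D,\vec{G_3}(x,D))$ with $\Phi$ rational in its arguments; this is a composition scheme whose dominant singularity is located where the implicitly-determined $D(x,y_0)$ drives $\vec{G_3}$ into its own branch point. Analysing this critical composition (an inversion/implicit-function computation) yields the singular type of $D$, hence of $\vec{\cGb}$ via~\eqref{eq:DB}, hence of $\cGb$ via the edge-rooting relation $\vec{G_2}=\tfrac{2}{x^2}\partial_yG_2$. One then climbs to connected graphs through the block decomposition~\eqref{eq:2conn}, which after pointing reads as a smooth scheme $u=x\exp(\partial_xG_2(u,y_0))$ for $u=x\,\partial_xG_1(x,y_0)$; inverting this and integrating back transports the type to $\cGc$. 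Finally $\cG=\Set(\cGc)$ composes $\cGc$ with the analytic function $\exp$ and so inherits the same type. Tracking the half-integer exponents along this chain is what yields exactly $5/2$ for $\cG$.

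The main obstacle is precisely the quantitative singularity analysis of these substitution and composition schemes. One must verify that each scheme is of the expected ``critical'' type, so that the exponents combine as claimed rather than being killed by a smooth saddle or promoted unexpectedly; one must pin the resulting exponent down to \emph{exactly} $5/2$ rather than to some other non-integer value; and one must establish that $x_0$ is the \emph{unique} dominant singularity on $|x|=x_0$ (aperiodicity, which holds since planar graphs exist in all sufficiently large sizes). This is the technical heart of~\cite{gimeneznoy}, resting on their explicit rational and algebraic parametrisations of the $2$-connected and planar generating functions together with the transfer theorems and composition-scheme machinery of singularity analysis~\cite{fla,flaod}. Once $\cG$ is shown to be $5/2$-singular, the conclusion for $\cG''$ is immediate, as noted in the first paragraph.
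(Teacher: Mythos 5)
Your proposal matches the paper exactly: the lemma is stated as a citation of Gim\'enez and Noy for the $5/2$-singularity of $\cG$, and the square-root singularity of $\cG''$ is obtained by applying the preceding derivation lemma twice, which is precisely the reduction you make. One small caveat in your supplementary sketch of the external argument (which the paper does not reproduce): the paper's later lemma on rooted $3$-connected planar graphs asserts that $\vec{\cG_3}$ is $3/2$-singular --- the square-root terms inherited from the binary-tree series cancel out --- rather than having a $1/2$-type branch point as you state for the base case, but this does not affect the validity of your reduction.
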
 

\subsection{Derivation rules for Boltzmann samplers}
As suggested by Lemma~\ref{lem:square}  and Lemma~\ref{lem:bi_der}, we will get good 
targetted samplers for planar graphs if we can describe 
 an efficient Boltzmann sampler for the class $\cG''$ of 
bi-derived planar graphs (a graph in $\cG''$ has two unlabelled vertices that are marked
specifically, say the first one is marked $\ast$ and the second one is marked $\star$).
Our Boltzmann sampler $\Gamma \cG''(x,y)$ ---to be presented in this section--- 
makes use of the decomposition of 
planar graphs into 3-connected components 
which we have already successfully used to obtain a Boltzmann sampler
for planar graphs in Section~\ref{sec:decomp}. This decomposition 
 can be formally translated into a decomposition grammar (with additional
unpointing/pointing operations). To obtain a Boltzmann sampler
for bi-derived planar graphs instead of planar graphs, the idea is simply to \emph{derive} 
this grammar 2 times.

As we explain here and as is well known in general, a decomposition grammar can be derived automatically. (In our framework, a decomposition grammar involves
the 5 constructions $\{+,\star,\Set_{\geq d},\circ_{L},\circ_{U}\}$.)

\begin{proposition}[derivation rules]\label{prop:der_rules}
The basic finite classes satisfy
$$
(\mathbf{1})'=0,\ \ \ (\cZ_L)'=1,\ \ \ (\cZ_U)'=0.
$$
The 5 constructions  satisfy the following derivation rules:
\begin{equation}
\left\{
\begin{array}{rcl}
(\cA+\cB)'&=&\cA'+\cB',\\
(\cA\star\cB)'&=&\cA'\star\cB+\cA\star\cB',\\
(\Set_{\geq d}(\cB))'&=&\cB'\star\Set_{\geq d-1}(\cB)\ \mathrm{for}\ d\geq 0,\ \ \ \mathrm{(with}\ \Set_{\geq -1}=\Set)\\
(\cA\circ_L\cB)'&=&\cB'\star(\cA'\circ_L\cB),\\
(\cA\circ_U\cB)'&=&\cA'\circ_U\cB+\cB'\star(\ul{\cA}\circ_U\cB).
\end{array}
\right.
\end{equation}
\end{proposition}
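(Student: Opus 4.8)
The plan is to reduce every identity to an equality of mixed generating functions and then invoke the fact, recorded at the start of the discussion of combinatorial isomorphisms, that two mixed classes are isomorphic if and only if their mixed generating functions coincide. Recall from Section~\ref{sec:derive} that L-derivation acts on generating functions as $\partial_x$ and U-derivation as $\partial_y$; so for each rule I would compute $\partial_x$ of the generating function attached to the construction and recognise the outcome as the generating function of the class claimed on the right-hand side. Each identity also has a transparent bijective reading --- tracking where the L-atom of largest label ends up in an object produced by the construction --- which I would keep in mind to be sure the right-hand classes are the intended ones, but the generating-function route makes the verifications uniform and short.

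For the three basic classes the generating functions are $1$, $x$ and $y$, whose $x$-derivatives are $0$, $1$ and $0$, giving $(\mathbf{1})'=0$, $(\cZL)'=\mathbf{1}$ and $(\cZU)'=0$. For Sum and Product the rules are simply the linearity of $\partial_x$ and the Leibniz rule $\partial_x(A\cdot B)=(\partial_x A)\cdot B+A\cdot(\partial_x B)$, matching $\cA'+\cB'$ and $\cA'\star\cB+\cA\star\cB'$ respectively. For $\Set_{\geq d}$ the only point is the elementary identity $\frac{d}{dz}\exp_{\geq d}(z)=\exp_{\geq d-1}(z)$, obtained by differentiating $\exp_{\geq d}(z)=\sum_{k\geq d}z^k/k!$ term by term (the $k=0$ term contributing nothing when $d=0$, consistently with the convention $\Set_{\geq -1}=\Set$). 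The chain rule then gives $\partial_x\exp_{\geq d}(B(x,y))=\exp_{\geq d-1}(B(x,y))\cdot\partial_x B(x,y)$, that is, $(\Set_{\geq d}(\cB))'=\cB'\star\Set_{\geq d-1}(\cB)$.

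The substitution rules are where the argument has genuine content, and U-substitution is the step I expect to be the main obstacle. For L-substitution the generating function is $A(B(x,y),y)$, in which $x$ enters only through the first slot, so the chain rule yields $\partial_x A(B(x,y),y)=A'(B(x,y),y)\cdot\partial_x B(x,y)$; reading $A'(B(x,y),y)$ as the generating function of $\cA'\circ_L\cB$ and $\partial_x B$ as $B'$, this is exactly $\cB'\star(\cA'\circ_L\cB)$. For U-substitution the generating function is $A(x,B(x,y))$, and now $x$ occurs both directly, in the first slot, and indirectly, through the second slot, so the chain rule produces two terms,
$$
\partial_x A(x,B(x,y))=(\partial_x A)(x,B(x,y))+(\partial_y A)(x,B(x,y))\cdot\partial_x B(x,y).
$$
The first term is the generating function of $\cA'\circ_U\cB$, since $\partial_x A=A'$ is substituted in the U-slot. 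For the second I would use that $\partial_y A=\ul{A}$ is the generating function of the U-derived class, so $(\partial_y A)(x,B(x,y))$ is the generating function of $\ul{\cA}\circ_U\cB$, and multiplying by $\partial_x B=B'$ gives $\cB'\star(\ul{\cA}\circ_U\cB)$. Adding the two yields $(\cA\circ_U\cB)'=\cA'\circ_U\cB+\cB'\star(\ul{\cA}\circ_U\cB)$. The delicate point, which I would check most carefully, is precisely this separation of the two ways $x$ enters the U-substitution: combinatorially it records whether the largest-labelled atom lies in the core-object $\rho\in\cA$ (first term) or inside one of the substituted copies of $\cB$ (second term, in which $\rho$ retains a distinguished U-atom and hence lives in $\ul{\cA}$).
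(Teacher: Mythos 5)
Your proposal is correct, but it proves the proposition by a genuinely different route than the paper does. The paper's proof is mostly a pointer: the rules for $+$, $\star$ and $\circ_L$ are quoted from the species-theory literature (Bergeron--Labelle--Leroux), the $\Set_{\geq d}$ rule is deduced from the $\circ_L$ rule via $\Set_{\geq d}(\cB)=\Set_{\geq d}(\cZL)\circ_L\cB$ and $(\Set_{\geq d}(\cZL))'=\Set_{\geq d-1}(\cZL)$, and only the $\circ_U$ rule is argued directly, by the combinatorial case analysis on whether the distinguished atom sits in the core-object or in a substituted component. You instead verify all five rules uniformly by differentiating the mixed generating functions ($\partial_x$ of $A+B$, $AB$, $\exp_{\geq d}(B)$, $A(B(x,y),y)$, $A(x,B(x,y))$) and invoking the paper's stated equivalence between class isomorphism and equality of mixed generating functions; all of these computations are correct, including the two-term chain rule for $\circ_U$. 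The trade-off is worth noting: within the paper's (weak) notion of isomorphism your argument is a complete proof, but it only certifies the \emph{existence} of a size-preserving bijection, whereas the way Proposition~\ref{prop:der_rules} is used downstream --- transporting Boltzmann samplers along the derived grammar --- requires the \emph{specific} isomorphisms (where the discarded L-atom goes). Your final paragraph supplies exactly that for the one nontrivial case, $\circ_U$, and it coincides with the paper's own argument: the distinguished atom lies either in the core $\rho\in\cA$, giving $\cA'\circ_U\cB$, or in a component substituted at a U-atom of $\rho$, which then retains a distinguished U-atom and lives in $\ul{\cA}$, giving $\cB'\star(\ul{\cA}\circ_U\cB)$. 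So nothing essential is missing; the generating-function computation is the scaffolding and the bijective readings you sketch are what make the rules usable.
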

\begin{proof}
The derivation formulas for basic classes are trivial. 
The proof of the derivation rules for $\{+,\star,\circ_L\}$ are given in~\cite{BeLaLe}. Notice that the rule for $\Set_{\geq d}$ 
follows from the rule for $\circ_L$. (Indeed, $\Set_{\geq d}(\cB)=\cA\circ_{L}\cB$, 
where $\cA=\Set_{\geq d}(\cZL)$, which clearly satisfies $\cA'=\Set_{\geq d-1}(\cZL)$.) Finally, the proof of the rule for $\circ_U$ uses similar arguments as the proof of
the rule for $\circ_L$. In an object of $(\cA\circ_U\cB)'$, the distinguished atom is either on the core-structure
(in $\cA$), or is in a certain component (in $\cB$) that is substituted at a certain
U-atom of the core-structure. The first case yields the term $\cA'\circ_U\cB$,
and the second case yields the term $\cB'\star(\ul{\cA}\circ_U\cB)$.
\end{proof}
According to Proposition~\ref{prop:der_rules}, it is completely automatic to find a decomposition
grammar for a derived class $\cC'$  if we are given a decomposition grammar for $\cC$.

\subsection{Boltzmann sampler for bi-derived planar graphs}
We present in this section our Boltzmann sampler $\Gamma \cG''(x,y)$ for 
bi-derived planar graphs, with a quite similar approach 
to the one adopted in Section~\ref{sec:decomp}, and again a bottom-to-top presentation. 
At first  the closure-mapping allows us to obtain Boltzmann samplers for 3-connected
planar graphs marked in various ways. Then we go from 3-connected to bi-derived planar
graphs via networks, bi-derived 2-connected, and bi-derived connected planar graphs. 



 The complete scheme is illustrated in 
Figure~\ref{fig:scheme_bi_derived}, which is the counterpart of Figure~\ref{fig:scheme_unrooted}.

\begin{figure}
  \begin{center}
  \includegraphics[width=13.6cm]{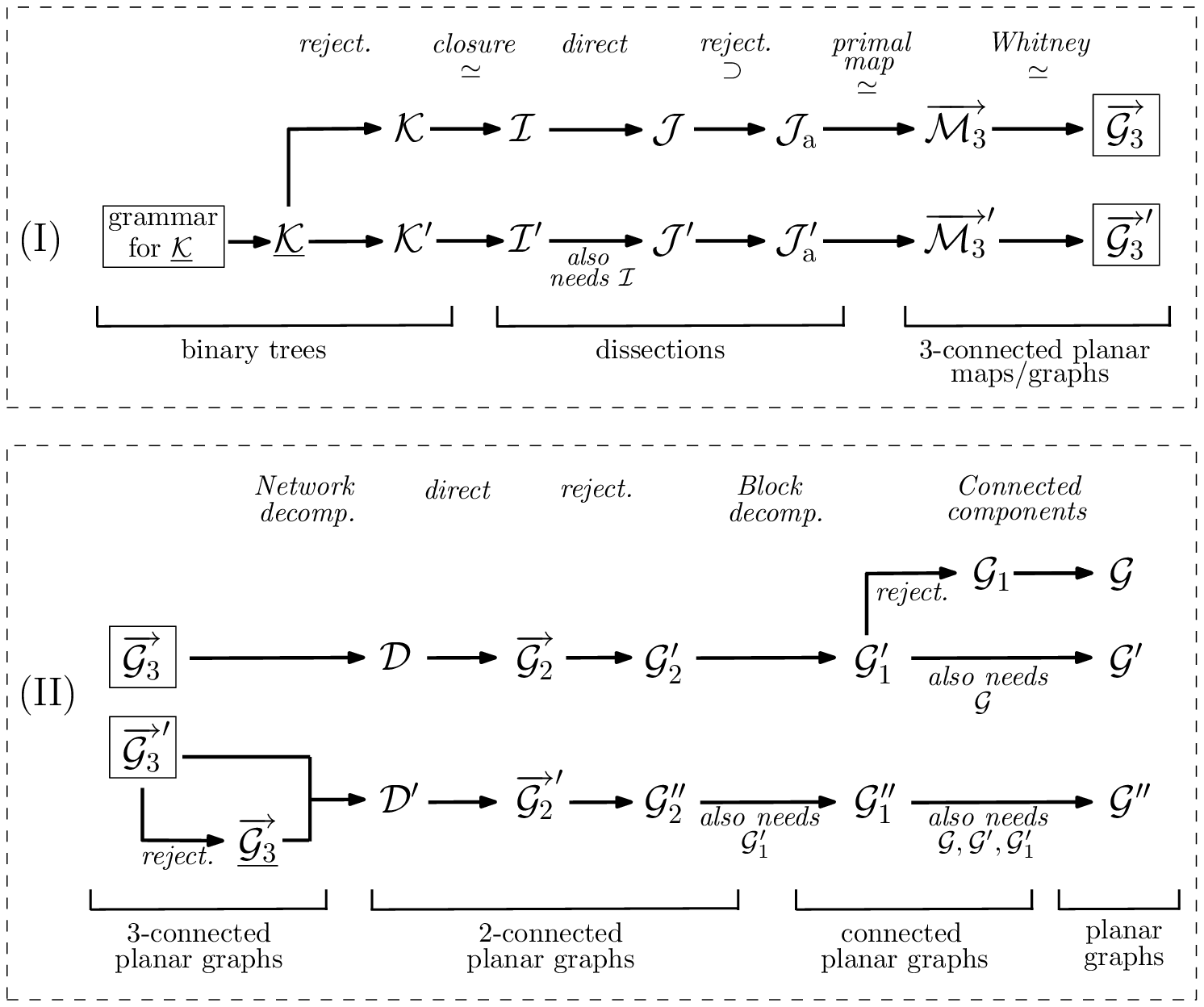}
\end{center}
    \caption{The complete scheme to obtain a Boltzmann sampler for 
    bi-derived planar graphs.}
  \label{fig:scheme_bi_derived}
\end{figure}

\subsubsection{Boltzmann samplers for derived binary trees.}\label{sec:sampKp}
We have already obtained in Section~\ref{sec:boltz_binary_trees} a Boltzmann sampler for the class
$\cK$ of unrooted asymmetric binary trees. Our purpose here is to derive a Boltzmann
sampler for the derived class $\cK'$. Recall  that we have also described in Section~\ref{sec:boltz_binary_trees} a Boltzmann sampler for the U-derived
class $\ul{\cK}$, which satisfies the completely recursive decomposition grammar~(\ref{eq:grammar})
(see also Figure~\ref{fig:grammar}). 
Hence, we have to apply the procedure \UtoL described in Section~\ref{sec:reject} to the class $\cK$
in order to obtain
a Boltzmann sampler $\Gamma \cK'(z,w)$ from $\Gamma \ul{\cK}(z,w)$.
For this we have to check that $\alphaLU$ is finite for the class $\cK$.
It is easily proved that a bicolored binary tree with $m$ leaves has $m-2$ nodes,
and that at most $\lfloor 2(m-3)/3\rfloor$ of the nodes are  black.
In addition, there exist trees with $3i+3$ leaves and $2i$ black nodes 
(those with all leaves incident to black nodes). 
Hence, for the class $\cK$, the parameter $\alphaLU$
is equal to $2/3$.
Therefore the procedure \UtoL can be applied to the class $\cK$.

\subsubsection{Boltzmann samplers for derived rooted dissections and 3-connected maps}\label{sec:sampIp}
Our next step is to obtain Boltzmann samplers for derived irreducible dissections, in order to go subsequently to 3-connected maps. As expected
we take advantage of the closure-mapping. Recall that the closure-mapping
realises the isomorphism $\cK\simeq\cI$ between the class $\cK$ of asymmetric
binary trees and the class $\cI$ of asymmetric irreducible dissections.
There is no problem in deriving an isomorphism, so the closure-mapping
also realises the isomorphism $\cK'\simeq\cI'$.
Accordingly we have the following Boltzmann sampler for the class $\cI'$:

\vspace{.2cm}

\fbox{
\begin{tabular}{ll}
$\Gamma \cI'(z,w)$:$\!\!$& $\tau\leftarrow\Gamma \cK'(z,w)$;\\
& $\delta\leftarrow\mathrm{closure}(\tau)$;\\
& return $\delta$
\end{tabular}
}

\vspace{.2cm}

\noindent where the discarded L-atom is the same in $\tau$ and in $\delta$.

Then, we easily obtain a Boltzmann sampler for the corresponding \emph{rooted}
class $\cJ'$. Indeed, the equation $\cJ=3\star\cZ_L\star\cZ_U\star\cI$ that relates $\cI$ and $\cJ$ yields $\cJ'=3\star\cZ_U\star\cI+3\star\cZ_L\star\cZ_U\star\cI'$.
Hence,  using the sampling rules of Figure~\ref{table:rules}, we obtain a Boltzmann sampler 
$\Gamma \cJ'(z,w)$ from the Boltzmann samplers $\Gamma \cI(z,w)$ and $\Gamma \cI'(z,w)$. 

From that point, 
we obtain a Boltzmann sampler for the derived rooted dissections that are admissible. 
As $\cJa\subset\cI$,
we also have $\cJa'\subset\cJ'$, which yields
the following Boltzmann sampler for $\cJa'$:

\vspace{.2cm}

\fbox{
\begin{tabular}{ll}
$\Gamma \cJa'(z,w)$:$\!\!$& repeat $\delta\leftarrow\Gamma \cJ'(z,w)$\\
& until $\delta\in\cJa'$;\\
& return $\delta$
\end{tabular}
}

\vspace{.2cm}

Finally, using the isomorphism $\cJa\simeq\cMtr$ 
(primal map construction, Section~\ref{sec:primal_map}),
which yields $\cJa'\simeq\cMtr'$, we obtain a Boltzmann 
samplers for derived  rooted 3-connected maps:

\vspace{.2cm}

\fbox{
\begin{tabular}{ll}
$\Gamma \cMtr'(z,w)$:$\!\!$& $\delta\leftarrow\Gamma \cJa'(z,w)$;\\
& return $\mathrm{Primal}(\delta)$
\end{tabular}
} 

\vspace{.2cm}

\noindent where the returned rooted 3-connected map inherits the distinguished L-atom of $\delta$.

\subsubsection{Boltzmann samplers for derived rooted 3-connected planar graphs.}
\label{sec:derived_3conn}
As we have seen in Section~\ref{sec:equiv}, Whitney's theorem states that any 3-connected planar graph has two embeddings on the sphere (which differ by a reflection).
Clearly the same property holds for 3-connected planar graphs that have additional
marks. (We have already used this observation in Section~\ref{sec:equiv} 
for rooted graphs,
$\cMtr\simeq 2\star\cGtr$, in order to obtain a Boltzmann sampler for $\cGtr$.)
Hence $\cMtr'\simeq 2\star\cGtr'$, which yields
the following Boltzmann sampler for $\cGtr'$:

\vspace{.2cm}

\fbox{
\begin{tabular}{ll}
$\Gamma \cGtr'(z,w)$:$\!\!$& return $\Gamma \cMtr'(z,w)$;\\
& (forgetting the embedding)
\end{tabular}
}

\vspace{.2cm}

The next step (in Section~\ref{sec:derived_networks}) is to go to derived networks.
This asks for a derivation of the decomposition grammar for networks,
which involves not only the classes $\cGtr$, $\cGtr'$, but also the U-derived 
class $\ul{\cGtr}$. Hence, we also need a Boltzmann sampler for $\ul{\cGtr}$.

To this aim we just have to apply the procedure \LtoU to the class $\cGtr$.
By the Euler relation, a 3-connected planar graph with $n$ vertices has at most
$3n-6$ edges (equality holds for triangulations). Hence, the parameter $\alphaUL$ is equal to $3$ for the class $\cGtr$,
so \LtoU can be successfully applied to $\cGtr$, yielding a Boltzmann sampler
for $\ul{\cGtr}$ from the Boltzmann sampler for $\cGtr'$.

\subsection{Boltzmann samplers for derived networks.}\label{sec:derived_networks}
Following the general scheme shown in Figure~\ref{fig:scheme_bi_derived}, our aim is now
to obtain a Boltzmann samplers for the class $\cD'$ of derived
 networks. Recall that the decomposition grammar for $\cD$ has allowed us to obtain a Boltzmann sampler for $\cD$ from a Boltzmann sampler for $\cGtr$. Using the derivation rules (Proposition~\ref{prop:der_rules}) injected in the grammar~(N), we obtain the following decomposition grammar for $\cD'$:

\vspace{.2cm}

\noindent\includegraphics[width=11.3cm]{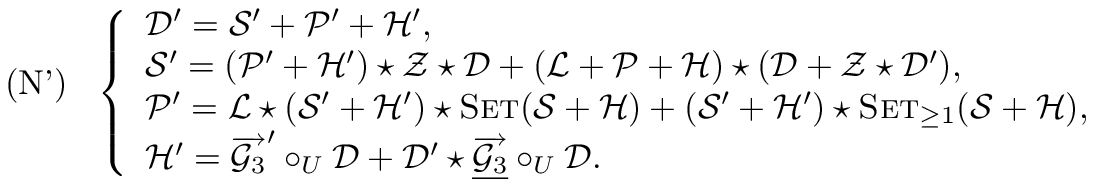}

The only terminal classes in this grammar are $\cGtr'$ and $\ul{\cGtr}$.
Hence, the sampling rules of Figure~\ref{table:rules} yield a Boltzmann 
sampler for $\cD'$ from the Boltzmann samplers for $\cGtr'$ and $\ul{\cGtr}$ which
we have obtained in Section~\ref{sec:derived_3conn}. The sampler $\Gamma\cD'(z,y)$
looks similar (though with more cases) to the one for $\Gamma\cD(z,y)$ 
given in Figure~\ref{fig:samp_networks}.

\subsection{Boltzmann samplers for bi-derived 2-connected planar graphs.}\label{sec:sampDp}
The aim of this section is to obtain Boltzmann samplers for the class $\cGbp'$
of bi-derived 2-connected planar graphs (after the Boltzmann sampler for $\cGbp$ obtained in Section~\ref{sec:2conn3conn}),
in order to go subsequently to bi-derived connected planar graphs.

At first, the Boltzmann sampler for $\cD'$  yields a Boltzmann sampler
for the class $\cGbr'$. Indeed the identity $(1+\cD)=(1+\cZ_U)\star\vec{\cG_2}$ is derived as $\cD'=(1+\cZ_U)\star\cGbr'$, which yields the following sampler,

\vspace{.2cm}

\fbox{
\begin{tabular}{ll}
$\Gamma \vec{\cG_2}'(z,y)$:$\!\!$& $\gamma\leftarrow\Gamma \cD'(z,y)$;\\
& $\textsc{AddRootEdge}(\gamma)$;\\
& return $\gamma$
\end{tabular}
} 

\vspace{.2cm}

\noindent where \textsc{AddRootEdge} has been defined in Section~\ref{sec:2conn3conn}.
The proof that this is a Boltzmann sampler for $\cGbr'$ 
is similar to the proof of Lemma~\ref{lem:netto2conn}.

Next we describe a Boltzmann sampler for the class $\ul{\cGb}'$.
As we have seen in Section~\ref{sec:2conn3conn}, $\ul{\cGb}$ and $\cGbr$ are related  by the 
identity $2\star\ul{\cGb}=\cZ_L\ \!\!\!^2\star\cGbr$. Hence, if we define $\cF:=2\star\ul{\cGb}$,
we have $\cF'=\cZL\ \!\!\!^2\star\cGbr'+2\star\cZL\star\cGbr$.
Hence, the sampling rules of Figure~\ref{table:rules} yield a Boltzmann sampler $\Gamma \cF'(z,y)$  for the class
$\cF'$. Clearly, as $\cF'=2\star\ul{\cGb}'$, a Boltzmann sampler for 
$\ul{\cGb}'$ is obtained by calling $\Gamma \cF'(z,y)$ and forgetting the direction of the root.

Finally, the procedure \UtoL yields (when applied to $\cGbp$) from the Boltzmann
sampler for $\ul{\cGb}'$ to a Boltzmann sampler for $\cGbp'$.
The procedure can be successfully applied, as the class $\cGbp$ satisfies
$\alphaLU=1$ (attained by the link-graph).

\subsubsection{Boltzmann sampler for bi-derived connected planar graphs.}\label{sec:sampCp}
The block-decomposition makes it easy to obtain a Boltzmann sampler
for the class $\cGcp'$ of bi-derived connected planar graphs (this decomposition has already allowed us to obtain a Boltzmann
sampler for $\cGcp$ in Section~\ref{sec:conn2conn}). Recall that the block-decomposition yields the
identity
$$
\cGcp=\Set\left(\cGbp\circ_L(\cZ_L\star\cGcp)\right), 
$$ 
which is derived as
$$
\cGcp'=(\cGcp+\cZ_L\star\cGcp')\star\cGbp'\circ_L(\cZ_L\star\cGcp)\star\cGcp.
$$

As we already have Boltzmann samplers for the classes $\cGbp'$ and $\cGcp$,
the sampling rules of Figure~\ref{table:rules} yield a Boltzmann sampler
$\Gamma \cGcp'(x,y)$  for the class $\cGcp'$. 
Observe that the 2-connected blocks of a graph generated
by $\Gamma \cGcp'(x,y)$ 
are obtained as independent calls to $\Gamma \cGbp(z,y)$ and 
$\Gamma \cGbp'(z,y)$, where $z$ and $x$ are related
by the change of variable $z=xG_1\ \!\!\!'(x,y)$.

\subsubsection{Boltzmann samplers for bi-derived planar graphs}\label{sec:sampGp}
We can now achieve our goal, i.e., obtain a Boltzmann sampler for the class
$\cG''$ of bi-derived planar graphs. For this purpose, we simply derive
twice the identity
$$
\cG=\Set(\cGc),
$$
which yields successively the identities
$$
\cG'=\cGcp\star\cG,
$$
and
$$
\cG''=\cGcp'\star\cG+\cGcp\star\cG'.
$$
From the first identity and $\Gamma \cG(x,y)$, $\Gamma \cGcp(x,y)$, we get 
a Boltzmann sampler $\Gamma \cG'(x,y)$ for the class $\cG'$. Then, 
from the second identity and $\Gamma \cG(x,y)$, $\Gamma \cG'(x,y)$, $\Gamma \cGcp(x,y)$, $\Gamma \cGcp'(x,y)$, we get 
a Boltzmann sampler $\Gamma \cG''(x,y)$ for the class $\cG''$.

\section{The targetted samplers for planar graphs}\label{sec:final_smap}
The Boltzmann sampler $\Gamma\cG''(x,y)$---when tuned  as indicated in 
Lemma~\ref{lem:square}---yields efficient exact-size and 
approximate-size random samplers for planar graphs, 
with the complexities as stated in Theorem~\ref{theo:planarsamp1} and Theorem~\ref{theo:planarsamp2}. Define the algorithm:

\vspace{.2cm}

\begin{tabular}{ll}
$\textsc{SamplePlanar}(x,y)$:& $\gamma\leftarrow\Gamma\cG''(x,y)$;\\
& give label $|\gamma|+1$ to the vertex marked $\star$\\
& and label $|\gamma|+2$
to the marked vertex $\ast$\\
&(thus $|\gamma|$ increases by $2$, and $\gamma\in\cG$);\\
& return $\gamma$
\end{tabular}

\subsection{Samplers according to the number of vertices}
\label{sec:sample_vertices} 

Let $\rho_G$ be the radius of convergence of $x\mapsto G(x,1)$. Define
$$x_n:=\big(1-\tfrac{1}{2n}\big)\cdot \rho_G.$$

\noindent For $n\geq 1$, the exact-size sampler is

\vspace{0.2cm}

$\frak{A}_n$: repeat $\gamma \leftarrow \textsc{SamplePlanar}(x_n,1)$ until 
$|\gamma|=n$; return~$\gamma$.

\vspace{0.2cm}

\noindent For $n\geq 1$ and $\epsilon >0$, the approximate-size sampler is 

\vspace{0.2cm}

 $\frak{A}_{n,\epsilon}$: repeat $\gamma\leftarrow \textsc{SamplePlanar}(x_n,1)$ until $|\gamma|\in [n(1-\epsilon),n(1+\epsilon)]$; return $\gamma$.

\subsection{Samplers according to the numbers of vertices and edges}
\label{sec:sample_edges} 

For any $y>0$, we denote by $\rho_G(y)$ 
the radius of convergence of $x\mapsto G(x,y)$. Let $\mu(y)$ be the function defined as
$$
\mu(y):=-y\frac{\mathrm{d}\rho_G}{\mathrm{d}y}(y)/\rho_G(y).
$$
As proved in~\cite{gimeneznoy} (using the so-called quasi-power theorem), for a fixed $y>0$, 
a large graph drawn by the Boltzmann sampler $\Gamma \cG''(x,y)$ has a ratio
edges/vertices concentrated around the value $\mu(y)$ as $x$ approaches the 
radius of convergence of $x\mapsto G(x,y)$. This yields a relation between 
the secondary parameter $y$ and the ratio edges/vertices. If we want a 
ratio edges/vertices close to a target value $\mu$, we have to choose $y$
so that $\mu(y)=\mu$. 
It is shown in~\cite{gimeneznoy} that the function $\mu(y)$ is strictly 
increasing on $(0,+\infty)$, with $\lim \mu(y)=1$ as  $y\to 0$ and $\lim \mu(y)=3$ as $y\to +\infty$. 
As a consequence, $\mu(y)$ 
has an inverse function $y(\mu)$ defined on $(1,3)$. (In addition, $\mu\mapsto y(\mu)$  can be evaluated
with good precision from the analytic equation it satisfies.) 
We define $$x_n(\mu):=\big(1-\tfrac{1}{2n}\big)\cdot\rho_G(y(\mu)).$$ 
For $n\geq 1$ and $\mu\in(1,3)$, the exact-size sampler  is

\vspace{0.2cm}

$\ol{\frak{A}}_{n,\mu}$:$\!$ repeat $\gamma\leftarrow \textsc{SamplePlanar}(x_n(\mu),y(\mu))$ 
until ($|\gamma|\!=\!n$ and $||\gamma||\!=\!\lfloor \mu n\rfloor)$; return~$\gamma$.

\vspace{0.2cm}

\noindent For $n\geq 1$, $\mu\in(1,3)$, and $\epsilon>0$, the approximate-size sampler is 

\vspace{0.2cm}

\begin{tabular}{ll}
 $\ol{\frak{A}}_{n,\mu,\epsilon}$:& repeat $\gamma\leftarrow \textsc{SamplePlanar}(x_n(\mu),y(\mu))$\\
 & until ($|\gamma|\in [n(1-\epsilon),n(1+\epsilon)]$ and $\frac{||\gamma||}{|\gamma|}\in [\mu (1-\epsilon),\mu (1+\epsilon)]$); \\
& return $\gamma$.
\end{tabular}

\vspace{.3cm}

\noindent The complexity of the samplers is analysed in Section~\ref{sec:complexity}.


\section{Implementation and experimental results}\label{sec:implement}

\subsection{Implementation}
We have completely implemented the random samplers for planar graphs described
in Section~\ref{sec:efficient}.
First we evaluated with good precision---typically 20 digits---the generating functions of the families of planar graphs that intervene in the 
decomposition (general, connected, 2-connected, 3-connected), derived up to 2 times.
The calculations have been carried out in Maple using the analytic expressions of Gim\'enez and
Noy for the generating functions~\cite{gimeneznoy}.  
We have performed the evaluations for values of the parameter $x$ associated with a bunch of reference target sizes in logarithmic scale,
$n=\{10^2, 10^3, 10^4,10^5,10^6\}$. 
From the evaluations of the generating functions, we have  
computed the vectors of real values that are associated to the random choices to 
be performed during the generation, e.g., a Poisson law vector with parameter 
$G_1(x)$ (the EGF
of connected planar graphs) is used for drawing the number of connected components of the
graph. 

The second step has been the implementation of the random sampler in Java. 
To build the graph all along the generation process, it proves more convenient to manipulate a data structure specific to  planar maps rather than planar graphs. The advantage is also  that the graph to be generated will be equipped
with an explicit (arbitrary) planar embedding. Thus if the graph generated is to be drawn in the
plane, we do not need to call the rather involved algorithms for embedding a planar graph.
Planar maps are suitably manipulated using the so-called
\emph{half-edge structure}, where each half-edge occupies a memory block containing a pointer
to the opposite half-edge along the same edge and to the next half-edge in ccw order around the
incident vertex. Using the half-edge structure, it proves very easy to implement in cost
$O(1)$ all primitives
used for building the graph---typically, merging two components at a common vertex or edge.  
Doing this, the actual complexity of implementation corresponds to the complexity of the random samplers as stated in Theorem~\ref{theo:planarsamp1} and Theorem~\ref{theo:planarsamp2}: 
linear for approximate-size sampling
and quadratic for exact-size sampling. In practice, generating a graph of size of order $10^5$
takes a few seconds on a standard computer.

\begin{figure}
  \begin{center}
    \includegraphics[width=8cm]{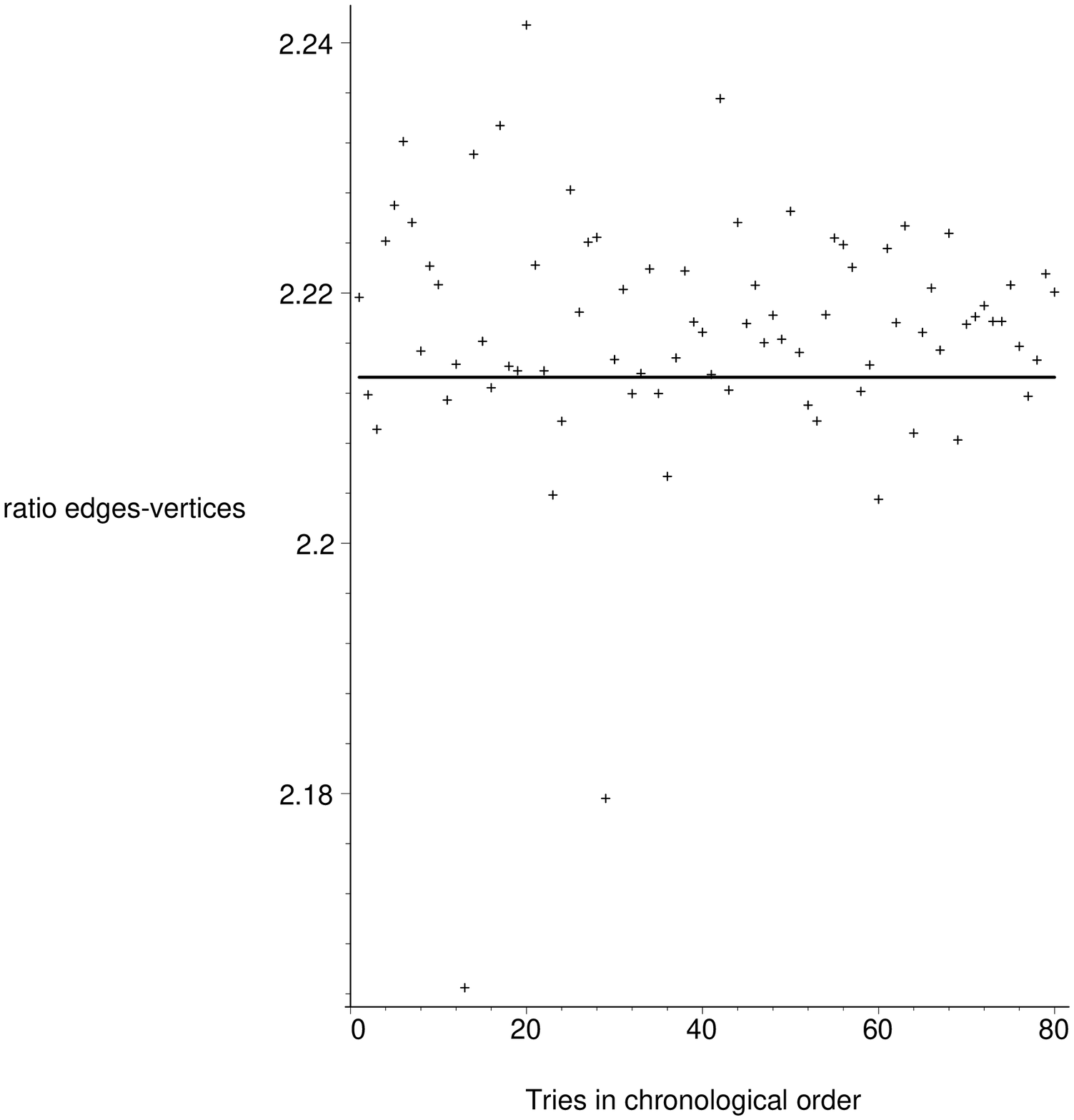}
  \end{center}
  \caption{Ratio edges/vertices observed on
  a collection $\gamma_1,\ldots,\gamma_{80}$ 
  of 80 random connected planar graphs of size at least $10^4$; each graph $\gamma_i$ yields
  a point at coordinates $(i,\mathrm{Rat}(\gamma_i))$, where $\mathrm{Rat}(\gamma)$
  is the ratio given by the number of edges divided by the number of vertices of $\gamma$.} 
  \label{fig:exp_ratio}
\end{figure}

\begin{figure}
  \begin{center}
    \includegraphics[width=8cm]{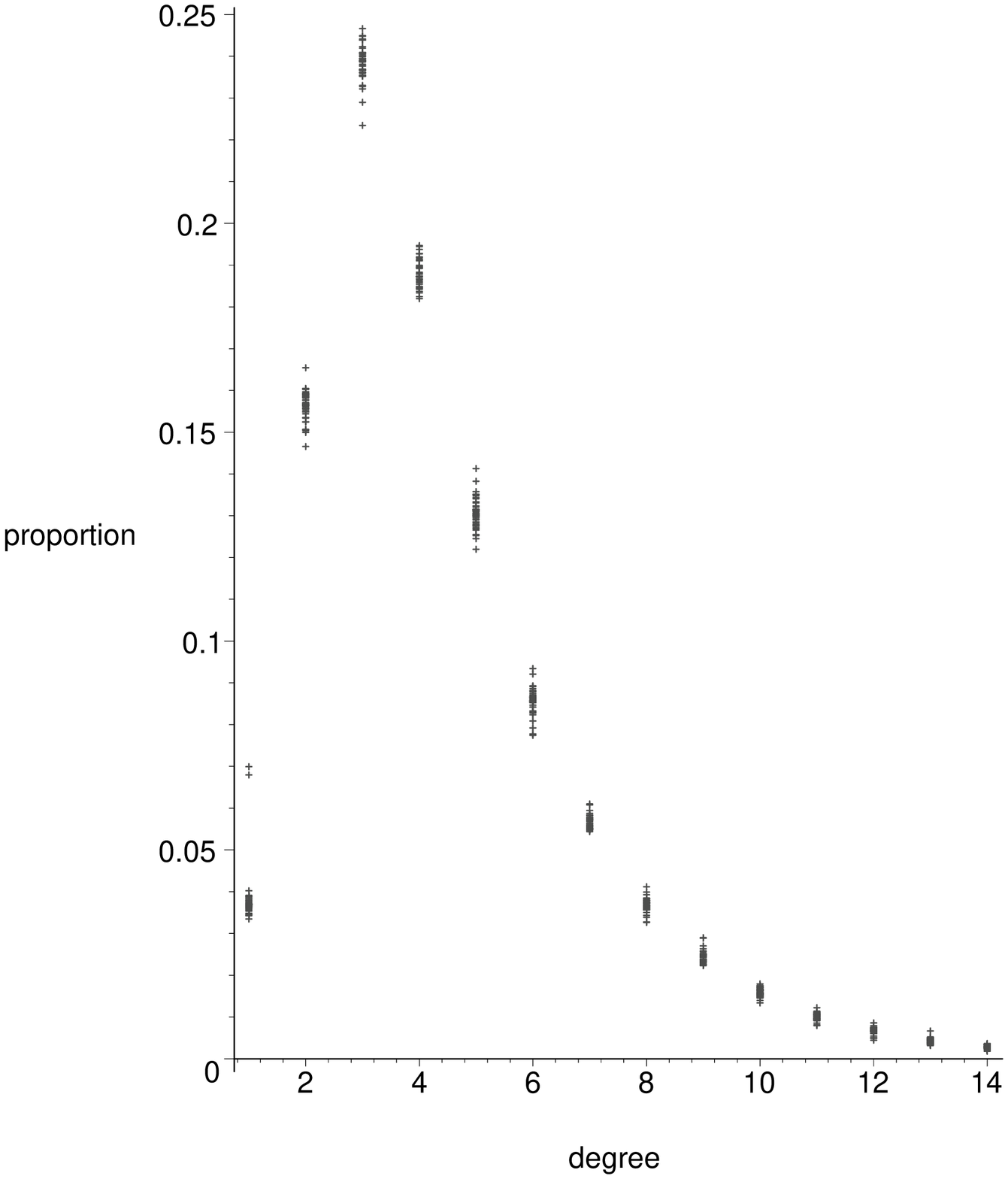}
  \end{center}
  \caption{The distribution of vertex degrees observed on
  a collection $\gamma_1,\ldots,\gamma_{80}$ 
  of 80 random connected planar graphs of size at least $10^4$. 
  Each graph $\gamma$ yields points at coordinates
  $(1,Z^{(1)}(\gamma)), (2,Z^{(2)}(\gamma)),\ldots,(d,Z^{(d)}(\gamma))$, 
  where $d$ is the maximal degree of $\gamma$ and,
  for $1\leq k\leq d$, $Z^{(k)}(\gamma)$
  is the proportion of vertices of $\gamma$ that have degree $k$.}
  \label{fig:exp_degree}
\end{figure}

\subsection{Experimentations.}  The good complexity of our random samplers allows us to observe
statistical properties of parameters
on very large random planar graphs---in the range of sizes $10^5$---where the asymptotic regime is already visible. We focus here on
 parameters that are known or
expected to be concentrated around a limit value. Note that the 
experimentations are on connected planar graphs instead of general
planar graphs. (It is slightly easier to restrict the implementation to
 connected graphs, which are conveniently manipulated using the half-edge 
 data structure.) However, from the works of Gim\'enez and Noy~\cite{gimeneznoy} and previous work
by MacDiarmid et al.~\cite{McD05}, a random planar graph consists of a 
huge connected component, plus other components whose total expected size is 
$O(1)$. Thus, statistical properties like those stated in 
Conjecture~\ref{conj:planar} should be the same for
random planar graphs as for random connected planar graphs.

\vspace{0.1cm}

\noindent\emph{Number of edges.} First we have checked that the random variable $X_n$ that counts the number of edges in a random
connected planar graph with $n$ vertices is concentrated. 
Precisely, Gim\'enez and Noy have proved that
$Y_n:=X_n/n$ converges in law to a constant $\mu\approx 2.213$, 
(they also show that the fluctuations are gaussian of magnitude
$1/\sqrt{n}$).  Figure~\ref{fig:exp_ratio} shows in ordinate the ratio edges/vertices for a collection of 80
random connected planar graphs of size at least $10^4$ drawn by our sampler. As we can see, the ratios are concentrated
around the horizontal line $y=\mu$, agreeing 
with the convergence result of Gim\'enez
and Noy.  

\vspace{0.1cm}

\noindent\emph{Degrees of vertices.} Another parameter of interest 
is the distribution of the degrees of vertices in a random planar graph. For a planar graph $\gamma$
  with $n$ vertices, 
we denote by $N^{(k)}(\gamma)$ the number of vertices of $\gamma$ that have $k$ neighbours.
Accordingly, $Z^{(k)}(\gamma):=N^{(k)}(\gamma)/n$ is the proportion of vertices of degree $k$ in $\gamma$.
It is known from Gim\'enez and Noy that, for $k=1,2$, the random variable $Z^{(k)}$ converges in law to an explicit constant.
Figure~\ref{fig:exp_degree} shows in abscissa the parameter $k$ and in ordinate the value of $Z^{(k)}$ for 
a collection of 80 random connected planar graphs of size at least $10^4$ drawn
by our sampler. Hence, the vertical line at abscissa $k$
is occupied by 80 points whose ordinates correspond to the values taken by $Z^{(k)}$ for each of 
the graphs. As we can see, for $k$ small---typically $k<<\log n$---the values of $Z^{(k)}$ are concentrated around a constant. This leads us to the following conjecture.
 
\begin{conjecture}\label{conj:planar}
For every $k\geq 1$, let $Z^{(k)}_n$ be the random variable denoting the proportion of vertices of degree $k$ in 
a random planar graph with $n$ vertices taken uniformly at random. Then $Z_n^{(k)}$ converges in law  
to an explicit constant $\pi^{(k)}$ as $n\to\infty$; and $\sum_k\pi^{(k)}=1$. 
\end{conjecture}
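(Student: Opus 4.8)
The statement is a genuinely hard analytic fact, and the plan is to prove it by the \emph{method of moments} fed by singularity analysis, exploiting the very decomposition of $\cG$ used throughout this paper. The basic object is a degree-marked generating function. Let $u$ be a catalytic variable recording the degree of one distinguished vertex and set
$$
B(x,u):=\sum_{\gamma\in\cG}\frac{x^{|\gamma|}}{|\gamma|!}\sum_{v\in\gamma}u^{\deg_{\gamma}(v)} .
$$
Then $n!\,[x^n]B(x,u)$ is the sum over labelled planar graphs of size $n$ of $\sum_v u^{\deg v}$, so extracting $[u^k]$ counts (graph,\,degree-$k$-vertex) incidences and
$$
\mathbb{E}\big[Z_n^{(k)}\big]=\frac{[x^nu^k]\,B(x,u)}{n\,[x^n]G(x,1)} .
$$
The whole problem reduces to understanding the dominant singularity of $x\mapsto B(x,u)$ for $u$ near $1$.

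First I would pin down the \emph{first moment}. At $u=1$ one has $B(x,1)=x\,\partial_xG(x,1)$, which by Lemma~\ref{lem:bi_der} has the same dominant singularity $\rho_G$ as $G$ and singular exponent $5/2-1=3/2$. The plan is to show that turning on $u$ is a small perturbation: for $u$ in a fixed complex neighbourhood of $1$, $x\mapsto B(x,u)$ keeps its unique dominant singularity at $x=\rho_G$ and admits an expansion $P(x,u)+c(u)\,(1-x/\rho_G)^{3/2}+o\big((1-x/\rho_G)^{3/2}\big)$ valid in a common $\Delta$-neighbourhood (in the sense of Definition~\ref{def:alpha_sing}), with $c(u)$ analytic in $u$. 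Granting this, the transfer theorems give, uniformly in $u$, $[x^n]B(x,u)\sim c(u)\,\rho_G^{-n}n^{-5/2}$ with $c$ analytic, while $[x^n]G(x,1)\sim c_0\,\rho_G^{-n}n^{-7/2}$ (the universal transfer constants being absorbed into $c$ and $c_0$); the powers of $n$ cancel in the ratio above, yielding $\pi^{(k)}=[u^k]c(u)/c_0$, an explicit constant.

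Next comes \emph{concentration}. Introduce the two-vertex series
$$
B_2(x,u_1,u_2):=\sum_{\gamma\in\cG}\frac{x^{|\gamma|}}{|\gamma|!}\sum_{v\ne v'}u_1^{\deg v}u_2^{\deg v'},
$$
so that $[u_1^ku_2^k]$ recovers $\mathbb{E}[N^{(k)}(N^{(k)}-1)]$ after dividing by $[x^n]G(x,1)$. Pointing at two vertices lowers the exponent to $5/2-2=1/2$. The key point to establish is the \emph{factorisation} of the singular coefficient, $B_2$ contributing a leading constant $c(u_1)c(u_2)/c_0$ --- i.e. the two marked vertices become asymptotically independent --- from which $\mathbb{E}[(Z_n^{(k)})^2]\to(\pi^{(k)})^2$ and hence $\mathrm{Var}(Z_n^{(k)})\to 0$. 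Chebyshev's inequality then gives $Z_n^{(k)}\to\pi^{(k)}$ in probability, which for a constant limit is convergence in law. Finally $\sum_k\pi^{(k)}=1$ follows from the deterministic identity $\sum_kZ_n^{(k)}=1$ together with a uniform tail bound: planar graphs have at most $3n-6$ edges, so the mean degree stays below $6$ and $\sum_k k\,\pi^{(k)}\le 6$, which controls the tail and legitimates exchanging the limit with the summation over $k$.

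\textbf{Main obstacle.} The hard part is the perturbed singularity analysis of the degree-marked series --- proving that the dominant singularity stays pinned at $\rho_G$, with analytic dependence on $u$ and the square-root-type exponent preserved, \emph{uniformly} for $u$ near $1$ rather than merely at $u=1$. Since vertex degree is a global statistic, the variable $u$ must be threaded through every layer of the decomposition: the binary-tree/closure level for $\cGt$, the series--parallel--polyhedral network composition yielding $\cGb$, the block decomposition yielding $\cGc$, and the final $\Set$ construction yielding $\cG$. Each substitution produces a functional equation for the degree-marked series, and one must check that composing them neither creates a smaller singularity in $u$ nor destroys the $\alpha$-singular structure, while tracking the explicit constants $c(u)$. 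This uniform control is precisely the delicate analytic core that lies beyond the scope of the present paper, which is why the statement is recorded here only as a conjecture.
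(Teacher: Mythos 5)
The statement you are trying to prove is recorded in the paper as Conjecture~\ref{conj:planar}: the paper offers no proof at all, only experimental evidence from the implemented sampler (Figure~\ref{fig:exp_degree}) and a pointer to~\cite{DrGiNo07}, where the convergence of the \emph{expectations} $\mathbb{E}(Z_n^{(k)})\to\pi^{(k)}$ with $\sum_k\pi^{(k)}=1$ is established; the paper explicitly states that the concentration property remains open. So there is no proof in the paper to compare against, and the relevant question is whether your proposal closes the gap. It does not: it is a programme, not a proof. The two load-bearing steps are exactly the ones you defer. First, the claim that $x\mapsto B(x,u)$ remains $\Delta$-analytic with dominant singularity pinned at $\rho_G$ and exponent $3/2$, with $c(u)$ analytic, \emph{uniformly} for $u$ near $1$, requires threading the degree variable through every layer of the decomposition (binary trees, networks, blocks, $\Set$) and re-running the singularity analysis of~\cite{gimeneznoy} with the extra parameter; this is a substantial piece of analytic combinatorics that you assert rather than carry out, and it is essentially the content of~\cite{DrGiNo07}. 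Second, and more seriously, the factorisation $B_2\sim c(u_1)c(u_2)/c_0$ of the singular constant of the two-point series --- asymptotic independence of the degrees of two marked vertices --- is precisely the concentration statement the paper identifies as the remaining open problem; it does not follow from the one-point analysis, and nothing in your argument establishes it. Without it, Chebyshev gives nothing and convergence in law to a constant is not obtained.

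Two smaller remarks. Your first-moment bookkeeping and the tail-exchange argument for $\sum_k\pi^{(k)}=1$ are fine (the deterministic bound $\sum_{k>K}Z_n^{(k)}\leq\frac{2||\gamma||}{Kn}\leq 6/K$ from the Euler relation does legitimate the interchange, \emph{once} the $\pi^{(k)}$ are known to exist). But note that $\mathbb{E}[(Z_n^{(k)})^2]=\big(\mathbb{E}[N^{(k)}(N^{(k)}-1)]+\mathbb{E}[N^{(k)}]\big)/n^2$, so you also need the first moment to be $O(n)$ to discard the diagonal term --- harmless, but it should be said. In summary: your strategy is the standard and, in hindsight, correct line of attack (method of moments fed by perturbed singularity analysis), and you are candid that the analytic core is missing; but as written the proposal proves nothing beyond what the deterministic degree bound already gives, and the conjecture remains a conjecture.
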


Let us mention some progress on this conjecture. It has recently been proved
in~\cite{DrGiNo07} that
the expected values $\mathbb{E}(Z_n^{(k)})$ converge as $n\to\infty$ to constants
$\pi^{(k)}$ that are computable and satisfy $\sum_k\pi^{(k)}=1$. Hence, what 
remains to be shown regarding the conjecture is the concentration property.

\section{Analysis of the time complexity}\label{sec:complexity}
This whole section is dedicated to the proof of the complexities of the targetted 
random samplers. We show that the expected complexities of the targetted
samplers $\frak{A}_n$, $\frak{A}_{n,\epsilon}$, $\ol{\frak{A}}_{n,\mu}$, and $\ol{\frak{A}}_{n,\mu,\epsilon}$, as described in Section~\ref{sec:final_smap}, 
are respectively
$O(n^2)$, $O(n/\epsilon)$, $O_{\mu}(n^{5/2})$, and $O_{\mu}(n/\epsilon)$
respectively (the dependency in $\mu$ in not analysed for the sake of simplicity).

Recall that the targetted samplers call $\Gamma\cG''(x,y)$ (with suitable values of $x$
and $y$) until the size parameters are in the target domain. Accordingly,
the complexity analysis is done in two steps. In the first step, we estimate the probability
of hitting the target domain, which allows us to reduce the complexity analysis
to the analysis of the 
expected complexity of the pure Boltzmann sampler $\Gamma\cG''(x,y)$. 
We use a specific
notation to denote such an expected complexity:

\begin{definition}
Given a class $\cC$ endowed with a Boltzmann sampler $\Gamma\cC(x,y)$,
we denote by $\Lambda\cC(x,y)$ the expected combinatorial complexity\footnote{See the discussion
on the complexity model after the statement of Theorem~\ref{theo:planarsamp2} in the introduction.} 
of a call to $\Gamma\cC(x,y)$ (note that $\Lambda\cC(x,y)$ depends not only
on $\cC$, but also on a specific Boltzmann sampler for $\cC$).
\end{definition}

Typically the values $(x,y)$ have to be close to a singular point of $\cG$ in order
to draw graphs of large size. Hence, in the second step, our aim is to bound
$\Lambda\cG''(x,y)$ when $(x,y)$ converges to a given singular point $(x_0,y_0)$
of $\cG$. To analyse $\Lambda\cG''(x,y)$, our approach is again from bottom to top,
as the description of the sampler in Section~\ref{sec:efficient} 
(see also the general scheme
summarized in Figure~\ref{fig:scheme_bi_derived}). At each step we give asymptotic bounds 
for the expected complexities of the Boltzmann samplers when the parameters
approach a singular point. This study requires the knowledge of the singular behaviours
of all series involved in the decomposition of bi-derived planar graphs,
which are recalled in Section~\ref{sec:sing_beh}.

\subsection{Complexity of rejection: the key lemma}
The following simple lemma will be extensively used, firstly
to reduce the complexity analysis of the targetted samplers to the one 
of pure Boltzmann samplers, secondly to estimate the effect of the rejection steps
on the expected complexities of the Boltzmann samplers.

\begin{lemma}[rejection complexity]
\label{lem:target}
Let $\frak{A}$ be a random sampler on a combinatorial class $\cC$ according to a probability distribution $\mathbb{P}$, and let $p: \cC\to [0,1]$ be a function on $\cC$, called the rejection function.
Consider the rejection algorithm

$\frak{A}_{\mathrm{rej}}$: repeat $\gamma\leftarrow\frak{A}$ until $\Bern(p(\gamma))$ return $\gamma$.

Then the expected complexity $\mathbb{E}(\frak{A}_{\mathrm{rej}})$ of $\frak{A}_{\mathrm{rej}}$ 
and the expected complexity $\mathbb{E}(\frak{A})$ of $\frak{A}$ are related by
\begin{equation}
\mathbb{E}(\frak{A}_{\mathrm{rej}})=\frac{1}{p_{\mathrm{acc}}}\mathbb{E}(\frak{A}),
\end{equation}
where $p_{\mathrm{acc}}:=\sum_{\gamma\in\cC}\mathbb{P}(\gamma)p(\gamma)$ is the probability of success
of $\frak{A}_{\mathrm{rej}}$ at each attempt.
\end{lemma}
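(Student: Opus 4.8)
The plan is to compute the expected complexity of $\frak{A}_{\mathrm{rej}}$ by conditioning on the number of attempts until acceptance. First I would observe that each attempt of $\frak{A}_{\mathrm{rej}}$ is an independent call to $\frak{A}$, followed by a Bernoulli acceptance test whose success probability, given that $\frak{A}$ produced $\gamma$, is $p(\gamma)$. Averaging over the output of $\frak{A}$, the probability that a single attempt succeeds is exactly $\pacc=\sum_{\gamma\in\cC}\mathbb{P}(\gamma)p(\gamma)$, independently of the previous attempts. Hence the number $T$ of attempts is a geometric random variable with success probability $\pacc$, so $\mathbb{E}(T)=1/\pacc$.

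Next I would account for the cost. The natural decomposition is that the total work done by $\frak{A}_{\mathrm{rej}}$ is the sum, over all attempts (failed and the final successful one), of the cost of the corresponding call to $\frak{A}$ (the cost of the Bernoulli test itself being $O(1)$, and absorbable into the model or negligible). Since each attempt is an independent and identically distributed copy of $\frak{A}$, and the stopping time $T$ is determined only by the acceptance coin flips, a clean way to finish is to write
\begin{equation}
\mathbb{E}(\frak{A}_{\mathrm{rej}})=\sum_{k\geq 1}\mathbb{P}(T=k)\cdot k\cdot\mathbb{E}(\frak{A}),
\end{equation}
using that conditioned on $T=k$ the $k$ calls to $\frak{A}$ still each have expected cost $\mathbb{E}(\frak{A})$ (the acceptance/rejection outcome of an attempt is independent of the cost of a \emph{subsequent} attempt, which is all that matters here). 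This gives $\mathbb{E}(\frak{A}_{\mathrm{rej}})=\mathbb{E}(T)\cdot\mathbb{E}(\frak{A})=\mathbb{E}(\frak{A})/\pacc$, as claimed.

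The one subtlety worth stating carefully—and what I expect to be the main point requiring care rather than a genuine obstacle—is the independence structure when the cost of a call to $\frak{A}$ is correlated with whether that same call is accepted (indeed $p(\gamma)$ depends on the very $\gamma$ whose generation incurred the cost). The cleanest route is Wald's identity: if $C_1,C_2,\dots$ are i.i.d.\ copies of the cost of $\frak{A}$ and $T$ is a stopping time with respect to the acceptance sequence, one still has $\mathbb{E}(\sum_{i=1}^{T}C_i)=\mathbb{E}(T)\,\mathbb{E}(C_1)$ provided $T$ is independent of the future costs, which holds here because the acceptance coin at attempt $i$ depends only on the output at attempt $i$ and earlier. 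Alternatively, and most transparently, one can use the renewal/first-step argument already deployed in the proof of Lemma~\ref{lemma:rej}: letting $E:=\mathbb{E}(\frak{A}_{\mathrm{rej}})$ and $e:=\mathbb{E}(\frak{A})$, the first attempt always costs $e$ in expectation, after which with probability $1-\pacc$ we restart and incur a further expected cost $E$, giving
\begin{equation}
E=e+(1-\pacc)E,
\end{equation}
whence $E=e/\pacc$. This self-consistency equation avoids any delicate exchange of summation and is the argument I would present.
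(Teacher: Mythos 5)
Your final argument---the first-step recursion $E=e+(1-\pacc)E$---is exactly the proof given in the paper, and it is correct. (One minor caveat on your first route: conditioned on $T=k$ the individual calls do \emph{not} each have expected cost $\mathbb{E}(\frak{A})$, since conditioning an attempt on its own acceptance or rejection biases its cost; only the Wald-type identity for the \emph{total} cost survives, a subtlety you yourself identify before rightly settling on the renewal equation.)
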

\begin{proof}
The quantity $\mathbb{E}(\frak{A}_{\mathrm{rej}})$ satisfies the recursive equation
$$
\mathbb{E}(\frak{A}_{\mathrm{rej}})=\mathbb{E}(\frak{A})+(1-p_{\mathrm{acc}})\mathbb{E}(\frak{A}_{\mathrm{rej}}).
$$
Indeed, a first attempt, with expected complexity $\mathbb{E}(\frak{A})$, is always 
needed; and in case of rejection, occurring with probability $(1-p_{\mathrm{acc}})$, the sampler restarts in the 
same way as when it is launched.
\end{proof}

As   a corollary we obtain the following useful formulas to estimate the effect
of rejection in Boltzmann samplers when going from L-derived (vertex-pointed) to U-derived (edge-pointed)
graphs and vice-versa.

\begin{corollary}[Complexity of changing the root]\label{lem:change_root}
Let $\cA$ be a mixed combinatorial class such that the constants $\alphaUL:=\mathrm{max}_{\gamma\in\cA}\frac{||\gamma||}{|\gamma|}$ and $\alphaLU:=\mathrm{max}_{\gamma\in\cA}\frac{|\gamma|}{||\gamma||}$ are finite. Define 
$c:=\alphaUL\cdot\alphaLU$.
\begin{itemize}
\item
Assume $\cA'$ is equipped with a Boltzmann sampler, and let $\Gamma \ul{\cA}(x,y)$ be the Boltzmann sampler for $\ul{\cA}$ obtained by applying $\LtoU$---as defined
in Section~\ref{sec:reject}---to $\cA$. Then
$$
\Lambda \ul{\cA}(x,y)\leq c\cdot \Lambda \cA'(x,y).
$$
\item
Assume $\ul{\cA}$ is equipped with a Boltzmann sampler, and let $\Gamma \cA'(x,y)$ be the Boltzmann sampler for $\cA'$ obtained by applying $\UtoL$---as defined
in Section~\ref{sec:reject}---to $\cA$. Then
$$
\Lambda \cA'(x,y)\leq c\cdot \Lambda \ul{\cA}(x,y).
$$
\end{itemize}
\end{corollary}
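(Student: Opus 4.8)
The plan is to derive both inequalities from the rejection complexity lemma (Lemma~\ref{lem:target}). The only real content is a uniform lower bound on the per-attempt acceptance probability $\pacc$, which I claim is exactly $1/c$ in each case; everything else is bookkeeping of $O(1)$ overheads.

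First I would treat the \LtoU procedure. Its repeat-until loop draws objects of $\cA$ (each call to $\Gamma\cA'(x,y)$ followed by reinstating the discarded L-atom), and accepts with the rejection function $p(\gamma)=\frac{1}{\alphaUL}\frac{||\gamma||}{|\gamma|}$. The key observation is a pointwise lower bound on $p$: by definition of $\alphaLU$ we have $\frac{|\gamma|}{||\gamma||}\leq\alphaLU$ for every $\gamma\in\cA$, hence $\frac{||\gamma||}{|\gamma|}\geq 1/\alphaLU$, and therefore $p(\gamma)\geq\frac{1}{\alphaUL\alphaLU}=1/c$, uniformly in $\gamma$. Summing against the distribution $\mathbb{P}$ of the loop's underlying sampler gives $\pacc=\sum_{\gamma}\mathbb{P}(\gamma)p(\gamma)\geq 1/c\cdot\sum_{\gamma}\mathbb{P}(\gamma)=1/c$, so $1/\pacc\leq c$. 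Lemma~\ref{lem:target} then applies with $\frak{A}$ the one-shot sampler inside the loop, whose expected cost is $\Lambda\cA'(x,y)$ (reinstating the L-atom and the Bernoulli draw contribute only an additive $O(1)$); this yields that the loop costs at most $c\cdot\Lambda\cA'(x,y)$ in expectation. The remaining work in $\Gamma\ul{\cA}(x,y)$ is to discard one uniformly chosen U-atom, a single primitive operation, so altogether $\Lambda\ul{\cA}(x,y)\leq c\cdot\Lambda\cA'(x,y)$.

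The second item is entirely symmetric: in \UtoL the rejection function is $p(\gamma)=\frac{1}{\alphaLU}\frac{|\gamma|}{||\gamma||}$, which by the definition of $\alphaUL$ (namely $\frac{||\gamma||}{|\gamma|}\leq\alphaUL$) satisfies $p(\gamma)\geq\frac{1}{\alphaLU\alphaUL}=1/c$ pointwise. The same application of Lemma~\ref{lem:target}, now with the one-shot sampler drawing from $\Gamma\ul{\cA}(x,y)$, gives $\Lambda\cA'(x,y)\leq c\cdot\Lambda\ul{\cA}(x,y)$.

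There is essentially no hard step here; the one point that deserves care is that the bound $\pacc\geq 1/c$ is uniform in the Boltzmann parameters $(x,y)$, which is exactly the feature needed downstream, since it keeps the expected number of attempts bounded by the constant $c$ even as $(x,y)$ approaches the singular point. I would also note, for cleanliness, that absorbing the $O(1)$ per-attempt and post-processing overheads into the stated inequality is legitimate because $\Lambda\cA'(x,y)$ and $\Lambda\ul{\cA}(x,y)$ are each bounded below by a positive constant.
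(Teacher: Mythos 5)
Your proposal is correct and follows essentially the same route as the paper: bound the Bernoulli parameter pointwise by $1/(\alphaUL\cdot\alphaLU)=1/c$ using the definitions of $\alphaUL$ and $\alphaLU$, conclude $\pacc\geq 1/c$, and invoke Lemma~\ref{lem:target} to get $\Lambda\ul{\cA}(x,y)\leq c\cdot\Lambda\cA'(x,y)$ (and symmetrically for \UtoL). Your extra remarks about the uniformity in $(x,y)$ and the $O(1)$ overheads are consistent with (indeed slightly more careful than) the paper's treatment.
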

\begin{proof}
Let us give the proof for \LtoU (the other case is proved in a similar way). By definition of \LtoU
the probability of the Bernoulli choice at each attempt in $\Gamma\ul{\cA}(x,y)$ 
is at least $\frac{1}{\alphaUL}\mathrm{min}_{\gamma\in\cA}\frac{||\gamma||}{|\gamma|}$,
i.e.,  at least $1/(\alphaUL\cdot\alphaLU)$. Hence the probability $\pacc$ of success
at each attempt is at least $1/c$.
Therefore, by Corollary~\ref{lem:change_root},
$\Lambda \ul{\cA}(x,y)=\Lambda \cA'(x,y)/\pacc\leq c\cdot\Lambda\cA'(x,y)$.
\end{proof}

\subsection{Reduction to analysing the expected complexity of Boltzmann samplers}

We prove here that analysing the expected complexities of the targetted samplers
reduces to analysing the expected complexity $\Lambda\cG''(x,y)$ when $(x,y)$
approaches a singular point. (Recall that a singular point $(x_0,y_0)$ for a class $\cC$ is such that the function $x\mapsto C(x,y_0)$ has a dominant singularity at $x_0$.)

\begin{claim}
\label{claim:eq}
Assume that for every singular point $(x_0,y_0)$ of $\cG$, the expected complexity
of the Boltzmann sampler for $\cG''$ satisfies\footnote{In this article all convergence 
statements are meant ``from below'', 
i.e., $x\to x_0$ means that $x$ approaches $x_0$ while staying smaller than $x_0$.}
\begin{equation}\label{eq:claim}
\Lambda\cG''(x,y_0)=O((x_0-x)^{-1/2})\ \ \mathrm{as}\ x\to x_0.
\end{equation}
Then the expected complexities of the targetted samplers $\frak{A}_n$, 
$\frak{A}_{n,\epsilon}$, 
 $\ol{\frak{A}}_{n,\mu}$, and $\ol{\frak{A}}_{n,\mu,\epsilon}$---as defined in Section~\ref{sec:final_smap}---are respectively
 $O(n^2)$, $O(n/\epsilon)$, $O_{\mu}(n^{5/2})$, and $O_{\mu}(n/\epsilon)$.

In other words, proving~\eqref{eq:claim} is enough to prove the 
complexities of the random samplers for planar graphs, as stated in 
Theorem~\ref{theo:planarsamp1}
 and Theorem~\ref{theo:planarsamp2}.
\end{claim}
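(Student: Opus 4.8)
The plan is to reduce each of the four targetted samplers to a single rejection loop and apply the rejection-complexity lemma (Lemma~\ref{lem:target}). Each sampler repeats a call to $\textsc{SamplePlanar}(x,y)$ until the size parameters fall in the target domain; since the relabelling inside $\textsc{SamplePlanar}$ costs $O(1)$, one attempt has expected cost $\Lambda\cG''(x,y)+O(1)$. The Boltzmann parameter is always tuned as $x=x_n=(1-\tfrac1{2n})\,x_0$ for the relevant singular point $x_0$ (namely $\rho_G$ for $\frak{A}_n,\frak{A}_{n,\epsilon}$ and $\rho_G(y(\mu))$ for the bivariate samplers), so that $x_0-x_n=x_0/(2n)$. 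Hypothesis~\eqref{eq:claim} then gives, for a single attempt,
$$\Lambda\cG''(x_n,y_0)=O\big((x_0-x_n)^{-1/2}\big)=O(\sqrt n).$$
By Lemma~\ref{lem:target} the expected complexity of each targetted sampler is therefore $O(\sqrt n)/\pacc$, where $\pacc$ is the probability that one attempt lands in the target domain (the rejection function is the indicator of the target domain). It remains to bound $1/\pacc$ in each case.

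For the two samplers with respect to vertices only, I would invoke Lemma~\ref{lem:square} directly with $y_0=1$ (recalling from Lemma~\ref{lem:bi_der} that $\cG''$ has square-root singularities). For $\frak{A}_n$ the acceptance probability is $\pacc=\pi_n$ up to the harmless $+2$ shift of $\textsc{SamplePlanar}$ (hitting $|\gamma|=n$ for the output corresponds to hitting L-size $n-2$ for the bi-derived object, which changes nothing asymptotically), so $1/\pacc=O(n^{3/2})$ and the total cost is $O(\sqrt n)\cdot O(n^{3/2})=O(n^2)$. For $\frak{A}_{n,\epsilon}$ we have $\pacc=\pi_{n,\epsilon}$ with $1/\pi_{n,\epsilon}=O(n^{1/2}/\epsilon)$, giving total cost $O(\sqrt n)\cdot O(n^{1/2}/\epsilon)=O(n/\epsilon)$.

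For the two samplers with respect to both vertices and edges I would fix $y=y(\mu)$, so that $x\mapsto G(x,y(\mu))$ is again univariate and square-root-singular and the marginal estimates $\pi_n,\pi_{n,\epsilon}$ of Lemma~\ref{lem:square} apply to the vertex count. The new ingredient is the edge count conditioned on the vertex count: by the quasi-powers result of Gim\'enez and Noy~\cite{gimeneznoy}, a large graph drawn at $(x_n(\mu),y(\mu))$ has its number of edges concentrated around $\mu n$ with gaussian fluctuations of order $\sqrt n$. For the exact sampler $\ol{\frak{A}}_{n,\mu}$ a local limit estimate then gives that, conditioned on $|\gamma|=n$, the probability of hitting exactly $||\gamma||=\lfloor\mu n\rfloor$ is $\Theta(n^{-1/2})$; multiplying by the marginal $\pi_n=\Theta(n^{-3/2})$ yields $\pacc=\Theta(n^{-2})$, hence total cost $O(\sqrt n)\cdot O(n^2)=O_\mu(n^{5/2})$. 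For the approximate sampler $\ol{\frak{A}}_{n,\mu,\epsilon}$ (with $\mu,\epsilon$ fixed) concentration alone suffices: the admissible ratio window $[\mu(1-\epsilon),\mu(1+\epsilon)]$ corresponds to an edge-count window of width $\Theta(\epsilon n)$, which dwarfs the $\Theta(\sqrt n)$ fluctuations, so the ratio constraint is met with probability $\Omega(1)$; combined with $1/\pi_{n,\epsilon}=O(n^{1/2}/\epsilon)$ this gives $1/\pacc=O(n^{1/2}/\epsilon)$ and total cost $O(n/\epsilon)$.

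The main obstacle is the bivariate exact case $\ol{\frak{A}}_{n,\mu}$: here one needs not merely the central limit behaviour of the edge count but a \emph{local} limit theorem yielding the sharp $\Theta(n^{-1/2})$ conditional probability of hitting the single prescribed value $\lfloor\mu n\rfloor$. This is the one place where the univariate singularity analysis underlying Lemma~\ref{lem:square} is insufficient and the finer bivariate asymptotics of~\cite{gimeneznoy} are genuinely required; the remaining three cases follow from Lemma~\ref{lem:square} and Lemma~\ref{lem:target} essentially mechanically.
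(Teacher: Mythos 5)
Your proposal is correct and follows essentially the same route as the paper: reduce each targetted sampler to a single rejection loop via Lemma~\ref{lem:target}, bound the per-attempt cost by $\Lambda\cG''(x_n,y_0)=O(\sqrt{n})$ using the hypothesis, and bound the acceptance probability by combining Lemma~\ref{lem:square} (for the marginal in the number of vertices) with the Gim\'enez--Noy quasi-power/central-limit results (for the edge count conditioned on the vertex count), factoring the joint hitting probability as marginal times conditional exactly as the paper does with $\barpi_{n\wedge\mu}=\barpi_{\mu|n}\cdot\pi_n$. Your identification of the exact bivariate case as the one spot requiring a local limit estimate ($1/\barpi_{\mu|n}=O_\mu(n^{1/2})$) matches the paper's appeal to the quasi-power theorem of~\cite{gimeneznoy}.
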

\begin{proof}
Assume that  (\ref{eq:claim}) holds.
Let $\pi_{n,\epsilon}$ ($\pi_n$, resp.) be the probability that the output
of $\textsc{SamplePlanar}(x_n,1)$ ---with $x_n=(1-1/2n)\cdot\rho_{G}$--- has size in $I_{n,\epsilon}:=[n(1-\epsilon),n(1+\epsilon)]$ (has size $n$, resp.).
According to Lemma~\ref{lem:target}, the expected complexities of the 
exact-size and approximate-size samplers with respect to vertices ---as
described in Section~\ref{sec:sample_vertices}--- satisfy
$$
\mathbb{E}(\frak{A_n})=\frac{\Lambda \cG''(x_n,1)}{\pi_n},\ \ \ \ \ \ \  \mathbb{E}(\frak{A_{n,\epsilon}})=\frac{\Lambda \cG''(x_n,1)}{\pi_{n,\epsilon}}. 
$$
Equation~(\ref{eq:claim}) ensures that, when $n\to\infty$, $\Lambda \cG''(x_n,1)$ is $O(n^{1/2})$.
In addition, according to Lemma~\ref{lem:bi_der}, $\cG''$ is $1/2$-singular (square-root
singularities). Hence, by Lemma~\ref{lem:square}, $1/\pi_n$ is $O(n^{3/2})$ 
 and $1/\pi_{n,\epsilon}$  is $O(n^{1/2}/\epsilon)$. Thus, $\mathbb{E}(\frak{A_n})$ is $O(n^2)$ and 
$\mathbb{E}(\frak{A_{n,\epsilon}})$ is $O(n/\epsilon)$. 

The proof for the samplers with respect to vertices and edges is a bit more technical.
Consider a planar graph $\gamma$ drawn by the sampler 
$\textsc{SamplePlanar}(x_n(\mu),y(\mu))$. 
In view of the proof for the exact-size sampler, define 
$$\barpi_{n\wedge\mu}:=\mathbb{P}(||\gamma||\!=\!
\lfloor \mu n\rfloor, |\gamma|=n),\ \  \barpi_{\mu|n}:=\mathbb{P}(||\gamma||\!\!=\!\!\lfloor \mu n\rfloor\ |\ |\gamma|\!\!=\!\!n),\ \ \pi_n:=\mathbb{P}(|\gamma|\!\!=\!\!n).$$ 

\noindent In view of the proof for the approximate-size sampler, define 
$$\barpi_{n\wedge\mu,\epsilon}:=\mathbb{P}(|\gamma|\in[n(1-\epsilon),n(1+\epsilon)],\ ||\gamma||/|\gamma|\in[\mu(1-\epsilon),\mu(1+\epsilon)]),$$ 
$$\barpi_{\mu | n,\epsilon}:=\mathbb{P}(||\gamma||/|\gamma|\in[\mu(1-\epsilon),\mu(1+\epsilon)]\ |\ |\gamma|\in[n(1-\epsilon),n(1+\epsilon)]),$$ and 
$$\pi_{n,\epsilon}:=\mathbb{P}(|\gamma|\in[n(1-\epsilon),n(1+\epsilon)]).$$ 
Notice that $\barpi_{n\wedge\mu}=\barpi_{\mu|n}\cdot\pi_n$ and $\barpi_{n\wedge\mu,\epsilon}=\barpi_{\mu | n,\epsilon}\cdot\pi_{n,\epsilon}$. Moreover, Lemma~\ref{lem:target} 
ensures that
$$
\mathbb{E}(\ol{\frak{A}}_{n,\mu})=\frac{\Lambda \cG''(x_n(\mu),y(\mu))}{\barpi_{n\wedge\mu}},\ \ \ \ \ \ \  \mathbb{E}(\ol{\frak{A}}_{n,\mu,\epsilon})=\frac{\Lambda \cG''(x_n(\mu),y(\mu))}{\barpi_{n\wedge\mu,\epsilon}}. 
$$
 It has been shown by Gim\'enez and 
Noy~\cite{gimeneznoy} (based
on the quasi-power theorem) that, for a fixed $\mu\in(1,3)$,  
$1/\barpi_{\mu|n}$ is 
$O_\mu(n^{1/2})$ as $n\to\infty$ (the dependency in $\mu$ is not discussed here
for the sake of simplicity).
Moreover, Lemma~\ref{lem:square} ensures that 
$1/\pi_n$ is $O_{\mu}(n^{3/2})$ as $n\to\infty$. 
Hence, $1/\barpi_{n,\mu}$ is $O_{\mu}(n^{2})$.
Finally Equation~(\ref{eq:claim}) ensures that  
$\Lambda \cG''(x_n(\mu),y(\mu))$ is $O_{\mu}(n^{1/2})$, 
therefore $\mathbb{E}(\ol{\frak{A}}_{n,\mu})$ is $O_{\mu}(n^{5/2})$.

For the approximate-size samplers, the results of Gim\'enez and Noy (central limit theorems) ensure that,
when $\mu\in(1,3)$ and $\epsilon>0$ are fixed and $n\to \infty$, 
$\barpi_{\mu | n,\epsilon}$ converges to 1. 
In addition, Lemma~\ref{lem:square} ensures that $1/\pi_{n,\epsilon}$ is $O_{\mu}(n^{1/2}/\epsilon)$. 
Hence, $1/\barpi_{n\wedge\mu,\epsilon}$ is $O_{\mu}(n^{1/2}/\epsilon)$. Equation~(\ref{eq:claim}) implies that  $\Lambda \cG''(x_n(\mu),y(\mu))$ is $O_{\mu}(n^{1/2})$, hence 
$\mathbb{E}(\ol{\frak{A}}_{n,\mu,\epsilon})$ is $O_{\mu}(n/\epsilon)$.
\end{proof}

From now on, our aim is to prove that, for any singular point $(x_0,y_0)$ of $\cG$,
$\Lambda\cG''(x,y_0)$ is $O((x_0-x)^{-1/2})$ as $x\to x_0$.

\subsection{Expected sizes of Boltzmann samplers}
Similarly as for the expected complexities, 
it proves convenient to use specific notations for the expected sizes 
associated to  Boltzmann samplers, and to state some of their basic properties.

\begin{definition}[expected sizes]
Let $\cC$ be a mixed combinatorial class, and let $(x,y)$ be admissible for $\cC$ (i.e.,
 $C(x,y)$ converges). Define respectively the expected L-size and the expected U-size 
at $(x,y)$ as the quantities
$$
|\cC|_{(x,y)}:=\frac{1}{C(x,y)}\sum_{\gamma\in\cC}|\gamma|\frac{x^{|\gamma|}}{|\gamma|!}y^{||\gamma||}=x\frac{\partial_x C(x,y)}{C(x,y)},
$$
$$||\cC||_{(x,y)}:=\frac{1}{C(x,y)}\sum_{\gamma\in\cC}||\gamma||\frac{x^{|\gamma|}}{|\gamma|!}y^{||\gamma||}=y\frac{\partial_yC(x,y)}{C(x,y)}.
$$
\end{definition}

We will need the following two simple lemmas at some points of the analysis.

\begin{lemma}[monotonicity of expected sizes]\label{lem:monotonicity}
Let $\cC$ be a mixed class. 
\begin{itemize}
\item
For each fixed $y_0>0$, the expected L-size $x\mapsto |\cC|_{(x,y_0)}$ is increasing with $x$. 
\item
For each fixed $x_0>0$, the expected U-size $y\mapsto |\cC|_{(x_0,y)}$ is increasing with $y$.
\end{itemize}
\end{lemma}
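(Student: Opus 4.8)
The plan is to read each expected size as the mean of a size parameter under the Boltzmann distribution $\mathbf{P}_{x,y}$ and to show that its derivative in the corresponding variable equals a variance, hence is nonnegative. This is the standard ``derivative of the mean is the variance'' argument for Boltzmann/Gibbs-type distributions, and it is the natural route given that $|\cC|_{(x,y)}$ and $||\cC||_{(x,y)}$ are precisely the $\mathbf{P}_{x,y}$-expectations of $|\gamma|$ and $||\gamma||$.

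For the first bullet I would fix $y_0>0$ and write $|\cC|_{(x,y_0)}=S_1(x)/S_0(x)$, where $S_k(x):=\sum_{\gamma\in\cC}|\gamma|^k\frac{x^{|\gamma|}}{|\gamma|!}y_0^{||\gamma||}$ for $k=0,1,2$. These sums converge for admissible $x$ and may be differentiated termwise on compact subsets of $(0,\rho_C(y_0))$. The key identity, obtained by differentiating $x^{|\gamma|}$, is $\frac{d}{dx}S_k(x)=\frac1x S_{k+1}(x)$. Then I would compute
\[
\frac{d}{dx}\frac{S_1}{S_0}=\frac{S_1'S_0-S_1S_0'}{S_0^2}=\frac1x\cdot\frac{S_2S_0-S_1^2}{S_0^2}=\frac1x\Big(\mathbb{E}_{x,y_0}[|\gamma|^2]-\mathbb{E}_{x,y_0}[|\gamma|]^2\Big)=\frac1x\,\mathrm{Var}_{x,y_0}(|\gamma|)\ \geq\ 0,
\]
since $x>0$. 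This yields monotonicity, with strict increase unless the L-size is constant over $\cC$ (in which case the map is merely constant, so ``increasing'' should be read as nondecreasing).

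For the second bullet I would run the identical computation with the roles of $x$ and $y$, and of $|\gamma|$ and $||\gamma||$, exchanged: fixing $x_0>0$ and setting $T_k(y):=\sum_{\gamma\in\cC}||\gamma||^k\frac{x_0^{|\gamma|}}{|\gamma|!}y^{||\gamma||}$, one has $\frac{d}{dy}T_k=\frac1y T_{k+1}$ and hence $\frac{d}{dy}\frac{T_1}{T_0}=\frac1y\,\mathrm{Var}_{x_0,y}(||\gamma||)\geq 0$. I would flag here that the quantity that closes the variance argument is the expected \emph{U}-size $||\cC||_{(x_0,y)}$; this is what the statement intends. Indeed, differentiating the expected \emph{L}-size $|\cC|_{(x_0,y)}$ in $y$ instead produces $\frac1y\,\mathrm{Cov}_{x_0,y}(|\gamma|,||\gamma||)$, a covariance that need not have a fixed sign for a general mixed class, so the monotone statement is genuinely the U-size one.

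The computations above are the whole content, so there is no real obstacle; the only points to watch are the routine justification of termwise differentiation inside the domain of convergence (uniform convergence on compacts, standard for these weighted sums) and the degenerate case where the relevant size parameter is $\mathbf{P}_{x,y}$-almost surely constant, where the variance vanishes. Strict monotonicity holds exactly when the corresponding size is non-degenerate.
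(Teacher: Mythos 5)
Your proof is correct and follows essentially the same route as the paper, which simply cites the fact (from Duchon et al.) that the derivative of the expected size equals $1/x$ (resp.\ $1/y$) times the variance of the corresponding size parameter under the Boltzmann distribution; you merely carry out that variance computation explicitly. Your observation that the second bullet should read $||\cC||_{(x_0,y)}$ rather than $|\cC|_{(x_0,y)}$ is also correct --- it is a typo in the statement, and the paper's proof indeed treats the expected U-size.
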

\begin{proof}
As noticed in~\cite{DuFlLoSc04} 
(in the labelled framework), the derivative of the function 
$f(x):=|\cC|_{(x,y_0)}$ is equal to $1/x$ multiplied by the variance of the L-size of an object under the 
Boltzmann distribution at $(x,y_0)$. Hence $f'(x)\geq 0$ for $x>0$, so $f(x)$ is increasing with $x$.
Similarly the derivative of 
$g(y):=||\cC||_{(x_0,y)}$ is equal to $1/y$ multiplied by the variance of the U-size of an object under the 
Boltzmann distribution at $(x_0,y)$, hence $g(y)$ is increasing with $y$ for $y>0$. 
\end{proof}

\begin{lemma}[divergence of expected sizes at singular points]\label{lem:exp_size}
Let $\cC$ be an $\alpha$-singular class and let $(x_0,y_0)$
be a singular point of $\cC$. Then, as $x\to x_0$:
\begin{itemize}
\item
if $\alpha>1$, the expected size $x\mapsto |\cC|_{(x,y_0)}$ converges to a 
positive constant,
\item
if $0<\alpha<1$, the expected size $x\mapsto |\cC|_{(x,y_0)}$ diverges and is of order $(x_0-x)^{\alpha-1}$.
\end{itemize}
\end{lemma}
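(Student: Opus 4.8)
The plan is to work directly from the closed form $|\cC|_{(x,y_0)}=x\,\partial_x C(x,y_0)/C(x,y_0)$ recorded in the definition of expected sizes, and to read off the asymptotics of numerator and denominator as $x\to x_0$ from the singular expansion granted by $\alpha$-singularity. Since $\cC$ is $\alpha$-singular and $(x_0,y_0)$ is a singular point, Definition~\ref{def:alpha_sing} gives
\begin{equation*}
C(x,y_0)=P(x)+c_\alpha\,(x_0-x)^\alpha+o\big((x_0-x)^\alpha\big)
\end{equation*}
in a $\Delta$-neighbourhood of $x_0$, with $P$ rational and analytic on $|z|\le x_0$. First I would note that, because $\alpha>0$, the singular term vanishes as $x\to x_0$, so $C(x,y_0)\to P(x_0)$; and since $C(\cdot,y_0)$ has nonnegative coefficients and $\cC$ contains objects of positive size, this limit is positive (it is bounded below by $C(x',y_0)>0$ for any fixed $0<x'<x_0$, using that $C(\cdot,y_0)$ is nondecreasing). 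Thus the denominator tends to the finite positive constant $P(x_0)$ in both cases, and the whole problem reduces to the behaviour of $\partial_x C(x,y_0)$.

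The key, and only delicate, step is to differentiate the singular expansion. Because the expansion is valid in a $\Delta$-domain, the standard transfer results on differentiation of singular expansions (see~\cite{fla,flaod}) apply and yield
\begin{equation*}
\partial_x C(x,y_0)=P'(x)-\alpha\,c_\alpha\,(x_0-x)^{\alpha-1}+o\big((x_0-x)^{\alpha-1}\big),
\end{equation*}
where $P'$ is again analytic, hence bounded, near $x_0$. I expect this to be the main obstacle to state cleanly: one must invoke that term-by-term differentiation of a $\Delta$-analytic singular expansion is legitimate (it is \emph{not} valid for an expansion known only on the real axis), so the $\Delta$-analyticity built into Definition~\ref{def:alpha_sing} is essential here.

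With both asymptotics in hand I would split into the two cases. If $\alpha>1$, then $(x_0-x)^{\alpha-1}\to 0$, so $\partial_x C(x,y_0)\to P'(x_0)$, a finite constant, and therefore $|\cC|_{(x,y_0)}\to x_0\,P'(x_0)/P(x_0)$; this limit is positive, which follows by observing that the expected L-size is a positive quantity increasing in $x$ (Lemma~\ref{lem:monotonicity}), so its limit is bounded below by its value at any $x'<x_0$. If $0<\alpha<1$, then $\alpha-1<0$ and the term $-\alpha c_\alpha (x_0-x)^{\alpha-1}$ dominates both $P'(x)$ and the error, giving $\partial_x C(x,y_0)\sim -\alpha c_\alpha (x_0-x)^{\alpha-1}$; dividing by $C(x,y_0)\to P(x_0)$ yields
\begin{equation*}
|\cC|_{(x,y_0)}\sim \frac{-\alpha\,c_\alpha\,x_0}{P(x_0)}\,(x_0-x)^{\alpha-1},
\end{equation*}
which is of order $(x_0-x)^{\alpha-1}$ and hence diverges. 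The leading constant is positive: since $|\cC|_{(x,y_0)}>0$ throughout and $(x_0-x)^{\alpha-1}>0$, positivity forces $-\alpha c_\alpha/P(x_0)>0$ (equivalently $c_\alpha<0$, consistent with the nonnegativity of the coefficients of $C(\cdot,y_0)$). This settles both cases and hence the lemma.
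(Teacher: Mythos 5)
Your proof is correct and follows essentially the same route as the paper: both rest on the formula $|\cC|_{(x,y_0)}=x\,\partial_x C(x,y_0)/C(x,y_0)$ together with the fact that differentiating the $\Delta$-analytic singular expansion shifts the exponent from $\alpha$ to $\alpha-1$. The paper simply packages that differentiation step as the previously quoted lemma that $\cC'$ is $(\alpha-1)$-singular, whereas you carry it out explicitly (and correctly note that $\Delta$-analyticity is what legitimises it).
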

\begin{proof}
Recall that $|\cC|_{(x,y_0)}=x\cdot C'(x,y_0)/C(x,y_0)$, and $\cC'$ is $(\alpha-1)$-singular
if $\cC$ is $\alpha$-singular. 
Hence, if $\alpha>1$, both functions $C(x,y_0)$ and $C'(x,y_0)$ converge to positive 
constants as $x\to x_0$, so that $|\cC|_{(x,y_0)}$ also converges to a positive constant.
 If $0<\alpha<1$, $C(x,y_0)$ still converges, but $C'(x,y_0)$ diverges, of order $(x_0-x)^{\alpha-1}$
 as $x\to x_0$. Hence $|\cC|_{(x,y_0)}$ is also of order $(x_0-x)^{\alpha-1}$.
\end{proof}

\subsection{Computation rules for the expected complexities of Boltzmann samplers}

Thanks to Claim~\ref{claim:eq}, the complexity analysis is now reduced to estimating the 
expected complexity $\Lambda\cG''(x,y)$ when $(x,y)$ is close to a singular point of $\cG$.
For this purpose, 
we introduce explicit rules to compute $\Lambda\cC(x,y)$ if $\cC$
is specified from other classes by a decomposition grammar. These rules will be combined with 
 Lemma~\ref{lem:target} and Corollary~\ref{lem:change_root} (complexity due to the rejection steps) 
in order to get a precise asymptotic bound for $\Lambda\cG''(x,y)$.


We can now formulate the computation rules for the expected complexities.

\begin{figure}
\begin{tabular}{l|l}
Construction & $\ \ \ \ \ \ \ \ \ \ $Expected complexity\\
\hline\hline
$\cC=\cA+\cB$ & $\Lambda \cC(x,y)=1+\frac{A(x,y)^{\phantom{f}}}{C(x,y)}\Lambda \cA(x,y)+\frac{B(x,y)}{C(x,y)}\Lambda \cB(x,y)$\\[.2cm]
$\cC=\cA\star\cB$ & $\Lambda \cC(x,y)=\Lambda \cA(x,y)+\Lambda \cB(x,y)$\\[.1cm]
$\cC=\Set_{\geq d}(\cB)$ & $\Lambda \cC(x,y)=\frac{\exp_{\geq d-1}(B(x,y))}{\exp_{\geq d}(B(x,y))}B(x,y)\cdot(1+\Lambda \cB(x,y))$\\[.2cm]
$\cC=\cA\circ_L\cB$ & $\Lambda \cC(x,y)=\Lambda \cA(B(x,y),y)+|\cA|_{(B(x,y),y)}\cdot\Lambda \cB(x,y)$\\[.2cm]
$\cC=\cA\circ_U\cB$ & $\Lambda \cC(x,y)=\Lambda \cA(x,B(x,y))+||\cA||_{(x,B(x,y))}\cdot\Lambda \cB(x,y)$
\end{tabular}
\caption{The expected complexities of Boltzmann samplers specified using the sampling rules
for the constructions $\{+,\star,\Set_{\geq d},\circ_L,\circ_U\}$ (as given in Figure~\ref{table:rules}) satisfy 
explicit equations. There $\exp_{\geq -1}(z)=\exp(z)$ and, for $d\geq 0$, $\exp_{\geq d}(z)=\sum_{k\geq d}z^k/k!$.}
\label{fig:comp_rules}
\end{figure}

\begin{lemma}[computation rules for expected complexities]\label{lem:comp_rules}
Let $\cC$ be a class obtained from simpler classes $\cA$, $\cB$
by means of one of the constructions $\{+,\star,\Set_{\geq d},\circ_L,\circ_U\}$.

If $\cA$ and $\cB$ are equipped with Boltzmann samplers, let $\Gamma\cC(x,y)$
be the Boltzmann sampler for $\cC$ obtained from the sampling rules of Figure~\ref{table:rules}.
Then there are explicit rules, as given in Figure~\ref{fig:comp_rules}, 
to compute the expected complexity of $\Gamma\cC(x,y)$ from
 the expected complexities of $\Gamma\cA(x,y)$ and $\Gamma\cB(x,y)$. 
\end{lemma}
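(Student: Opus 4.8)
The plan is to verify each of the five computation rules by a direct expected-value calculation, treating the generation process of $\Gamma\cC(x,y)$ as a branching/recursive process and summing the expected number of coin flips and primitive operations. The governing principle throughout is linearity of expectation: the expected complexity of $\Gamma\cC(x,y)$ decomposes as the (expected) cost of the choices made at the top level of the sampler, plus the expected total cost of the recursive calls to $\Gamma\cA$ and $\Gamma\cB$ that the sampler makes. First I would fix conventions about what ``one unit'' of combinatorial complexity means (one coin flip or one primitive operation), consistently with the discussion following Theorem~\ref{theo:planarsamp2}, so that the additive $+1$ terms and the multiplicative factors are accounted for unambiguously.

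For the rule-by-rule verification I would proceed as follows. For the \emph{Sum} $\cC=\cA+\cB$, the sampler performs one Bernoulli choice (cost $1$) and then, with probability $A/C$ calls $\Gamma\cA$ and with probability $B/C$ calls $\Gamma\cB$; linearity gives $\Lambda\cC=1+\frac{A}{C}\Lambda\cA+\frac{B}{C}\Lambda\cB$. For the \emph{Product} $\cC=\cA\star\cB$, the sampler makes independent calls to $\Gamma\cA$ and $\Gamma\cB$ (ignoring \textsc{DistributeLabels} per Remark~\ref{rk:labels}), giving $\Lambda\cC=\Lambda\cA+\Lambda\cB$. For $\Set_{\geq d}$, the key step is to compute the expected number $k$ of components drawn under the law $\Pois_{\geq d}(B(x,y))$ and to observe that each of the $k$ components costs $1$ (for being drawn/assembled) plus $\Lambda\cB$ to generate; here the expected value of $k$ under $\Pois_{\geq d}(\lambda)$ is exactly $\lambda\,\exp_{\geq d-1}(\lambda)/\exp_{\geq d}(\lambda)$, which yields the stated factor. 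For the two substitution rules, the core observation is that the number of recursive calls to $\Gamma\cB$ equals, in expectation, the expected number of atoms of the core-object at which substitution occurs: for $\circ_L$ the core-object is drawn by $\Gamma\cA$ at the shifted argument $(B(x,y),y)$, and the expected number of L-atoms of that core-object is precisely $|\cA|_{(B(x,y),y)}$, each contributing a call of cost $\Lambda\cB$; similarly for $\circ_U$ the expected number of U-atoms is $\|\cA\|_{(x,B(x,y))}$. Adding the cost $\Lambda\cA(B(x,y),y)$ (respectively $\Lambda\cA(x,B(x,y))$) of generating the core-object itself produces the two stated formulas.

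The step I expect to be the main obstacle is the two substitution rules, and specifically the justification that the expected cost contributed by the substituted components factors cleanly as (\emph{expected number of substitution sites}) $\times\,\Lambda\cB$. This requires a Wald-type argument: the number of sites is itself random (determined by the core-object drawn from the $B$-shifted Boltzmann distribution), and one must argue that the expected total cost of the components equals the expected number of sites times the per-component expectation $\Lambda\cB$. The clean way to see this is to condition on the core-object $\rho$: given $\rho$, there are exactly $|\rho|$ (resp.\ $\|\rho\|$) independent calls to $\Gamma\cB$, so the conditional expected component-cost is $|\rho|\cdot\Lambda\cB$ (resp.\ $\|\rho\|\cdot\Lambda\cB$); taking expectation over $\rho$ and using linearity moves the expectation onto $|\rho|$, which by definition averages to $|\cA|_{(B(x,y),y)}$ (resp.\ $\|\cA\|_{(x,B(x,y))}$) since the core-object is Boltzmann-distributed at argument $B(x,y)$ in the L-slot (resp.\ U-slot). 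The essential subtlety to flag is that the recursive calls $\Gamma\cB$ are drawn at the \emph{original} parameters $(x,y)$ while the core-object is drawn at the \emph{substituted} parameters, exactly as in the sampling rules of Figure~\ref{table:rules}; keeping these two parameter regimes distinct is what makes the formulas consistent, and checking the probability weights carried by the sampling rules in the proof of Proposition~\ref{prop:rules} is what licenses identifying the core-object distribution with the Boltzmann distribution at the shifted argument.
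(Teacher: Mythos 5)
Your proposal is correct and follows essentially the same route as the paper's proof: a rule-by-rule verification via linearity of expectation, with the substitution rules handled by conditioning on the core-object (drawn at the shifted argument) so that the expected component cost factors as the expected number of substitution sites times $\Lambda\cB(x,y)$. The only cosmetic difference is that the paper derives the $\Set_{\geq d}$ rule by viewing $\Set_{\geq d}(\cB)$ as $\Set_{\geq d}(\cZ_L)\circ_L\cB$ and noting that the cost of drawing the conditioned Poisson value equals the value itself, whereas you compute the expectation of $\Pois_{\geq d}(B(x,y))$ directly; both yield the same factor.
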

\begin{proof}
\noindent\emph{Disjoint union:}  $\Gamma\cC(x,y)$ first flips a coin, which (by convention) has 
 unit cost in the combinatorial complexity. 
 Then $\Gamma\cC(x,y)$ either calls
$\Gamma\cA(x,y)$ or $\Gamma\cB(x,y)$ with respective probabilities $A(x,y)/C(x,y)$ and $B(x,y)/C(x,y)$.

\noindent\emph{Product:}  $\Gamma\cC(x,y)$ calls $\Gamma\cA(x,y)$ and then $\Gamma\cB(x,y)$,
which yields the formula. 

\noindent\emph{L-substitution:} $\Gamma\cC(x,y)$ calls $\gamma\leftarrow\Gamma\cA(B(x,y),y)$ and then replaces each L-atom of $\gamma$ by an  object generated by $\Gamma\cB(x,y)$. Hence, in average, the first step takes time $\Lambda\cA(B(x,y),y)$ and the second step
takes time $|\cA|_{(B(x,y),y)}\cdot\Lambda\cB(x,y)$.  

\noindent\emph{$\Set_{\geq d}$:} note that $\Set_{\geq d}(\cB)$ is equivalent to $\cA\circ_L\cB$, where $\cA:=\Set_{\geq d}(\cZL)$, which has generating function $\exp_{\geq d}(z):=\sum_{k\geq d}z^k/k!$. A Boltzmann sampler $\Gamma\cA(z,y)$ simply consists in drawing an integer under a conditioned Poisson law
$\Pois_{\geq d}(z)$, which is done by a simple iterative loop. As the number of iterations is equal
to the value that is returned (see~\cite{DuFlLoSc04} for a more detailed discussion), the expected cost of generation for $\cA$ is equal to the expected size, i.e.,
$$
\Lambda\cA(z,y)=|\cA|_{(z,y)}=z\frac{\exp_{\geq d}\ \!\!\!'(z)}{\exp_{\geq d}(z)}=z\frac{\exp_{\geq d-1}(z)}{\exp_{\geq d}(z)}.
$$ 
Hence, from the formula for $\Lambda(\cA\circ_L\cB)(x,y)$, we obtain the formula for $\Set_{\geq d}$.

\noindent\emph{U-substitution:} the formula for $\circ_U$ is proved similarly as the one for $\circ_L$.
\end{proof}

\begin{remark}\label{rk:finite}
When using the computation rules of Figure~\ref{fig:comp_rules} in a recursive way,
we have to be careful to check beforehand that all the expected complexities
that are involved are finite. Otherwise there is the danger of getting weird identities like ``$\sum_{k\geq 0}2^k=1+2\sum_{k\geq 0}2^k$, so $\sum_{k\geq 0}2^k=-1$.''
\end{remark}

\subsection{Analytic combinatorics of planar graphs}\label{sec:sing_beh}

Let $\cC$ be an  $\alpha$-singular class (see Definition~\ref{def:alpha_sing}).
  A very useful remark to be used all along the 
analysis of the expected complexities is the following:
 if $\alpha\geq 0$, the function $C(x,y_0)$ converges when $x\to x_0$,
and the limit has to be a positive constant;
whereas if $\alpha< 0$, the function $C(x,y_0)$  diverges to $+\infty$ 
 and is of order $(x_0-x)^{\alpha}$.

In this section, we review the degrees of singularities of the series of all classes (binary trees, dissections, 3-connected, 2-connected, connected, and general planar graphs) that are involved 
in the decomposition of planar graphs. We will use extensively this information to estimate
the expected complexities of the Boltzmann samplers in Section~\ref{sec:as_bounds}.

\begin{lemma}[bicolored binary trees]\label{lem:sing_bin}
Let $\cR=\cRb+\cRw$ be the class of rooted bicolored binary trees, which is specified by
the system
$$
\cRb=\cZ_L\star(\cZ_U+\cRw)^2,\ \ \ \cRw=(\cZU+\cRb)^2.
$$
Then the classes $\cRb$, $\cRw$ are $1/2$-singular.
The class $\ul{\cK}$ ($\cK$) of rooted (unrooted, resp.) asymmetric bicolored binary
trees is $1/2$-singular ($3/2$-singular, resp.). In addition, these two classes have
the same singular points as $\cR$.
\end{lemma}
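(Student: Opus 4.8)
The plan is to analyze the singularity structure of the system defining $\cRb$ and $\cRw$, then transfer this information to $\ul{\cK}$ and $\cK$ via the grammar~\eqref{eq:grammar} and the derivation/size relations already established.

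First I would handle the core system
\begin{equation}
\cRb=\cZ_L\star(\cZ_U+\cRw)^2,\qquad \cRw=(\cZU+\cRb)^2.
\end{equation}
Writing $b:=\Rb(z,w)$ and $r:=\Rw(z,w)$ for the generating functions at a fixed admissible $w=y_0>0$, I get the polynomial system $b=z(w+r)^2$ and $r=(w+b)^2$. Eliminating, one obtains a single algebraic equation $F(z,b)=0$ for $b$ as a function of $z$ (with $w$ a parameter). The plan is to invoke the standard theory of algebraic singularities for positive systems: since the system is \emph{strongly connected} and the coefficients are nonnegative, the smooth implicit function theorem applies up to the first point $z_0$ where the Jacobian of the system degenerates, i.e.\ where $\partial F/\partial b=0$. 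At that point the branch has a square-root singularity, so $b$ and $r$ admit expansions $b = b_0 - c_b\sqrt{z_0-z} + \cdots$ and likewise for $r$, with $c_b,c_r>0$. This establishes that $\Rb$ and $\Rw$ are $1/2$-singular and pins down the common singular point $(z_0,y_0)$. The key facts I would cite are the positivity/aperiodicity conditions guaranteeing a unique dominant singularity of square-root type, as in~\cite{fla,flaod}.

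Next I would transfer to $\ul{\cK}$. The grammar~\eqref{eq:grammar} expresses $\ul{\cK}=\cRbas+\cRwas$ and expresses all the auxiliary classes $\cRbas,\cRwas,\cRbh,\cRwh$ as polynomials in $\cRb,\cRw,\cZL,\cZU$ using only $+$ and $\star$. Since sums and products of $1/2$-singular series with the same dominant singularity $z_0$ remain $1/2$-singular at $z_0$ (provided the leading square-root coefficient does not cancel, which here it does not because all the combinatorial coefficients are nonnegative and the classes are nonempty), every class in the grammar inherits a square-root singularity at the same point $(z_0,y_0)$. Hence $\ul{K}(z,y_0)=\Rbas(z,y_0)+\Rwas(z,y_0)$ is $1/2$-singular with the same singular points as $\cR$. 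Finally, for $\cK$ itself I would use the relation between $\cK$ and $\ul{\cK}$: since a tree with $m$ leaves has $m-2$ nodes, the classes satisfy $\ul{\cK}=\partial_w\cK$ (U-derivation), so $\cK$ is obtained from $\ul{\cK}$ by integration in $w$, equivalently $\ul{\cK}$ is the U-derivative of $\cK$. By the derivation rule for singularity exponents (each U-derivative lowers $\alpha$ by $1$, the analogue of Lemma stated for L-derivation via $C'$), $\cK$ must be $(1/2+1)=3/2$-singular, with the same dominant singularity since derivation does not move the singular point.

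The main obstacle I anticipate is justifying rigorously that the square-root coefficient does \emph{not} degenerate when forming the various polynomial combinations in~\eqref{eq:grammar}, and in particular verifying the aperiodicity/irreducibility hypotheses needed to guarantee a \emph{unique} dominant singularity for the algebraic system rather than merely a singularity on the circle $|z|=z_0$. I would address this by appealing to the Drmota--Lalley--Woods framework for positive strongly-connected polynomial systems, which yields both the square-root type and the uniqueness of the dominant singularity in one stroke; the nonnegativity of all series then guarantees that no leading coefficient cancels in the $+/\star$ combinations, so the inherited singularity is genuinely of square-root type at the common point $(z_0,y_0)$.
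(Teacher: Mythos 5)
Your treatment of $\cRb$, $\cRw$ and $\ul{\cK}$ coincides with the paper's: Drmota--Lalley--Woods for the strongly connected positive system gives the square-root type, and $\ul{\cK}$ inherits it because grammar~\eqref{eq:grammar} writes $\ul{\cK}$ as a positive polynomial in $\cZL,\cZU,\cRb,\cRw$, with positivity preventing cancellation of the leading term. The gap is in your last step, the passage from $\ul{\cK}$ to $\cK$. You invoke ``each U-derivative lowers $\alpha$ by $1$'' in reverse: from $\ul{\cK}=\partial_w\cK$ being $1/2$-singular you conclude that $\cK$ is $3/2$-singular. This inference is not automatic. First, the cited rule (the lemma that $\cC'$ is $(\alpha-1)$-singular when $\cC$ is $\alpha$-singular) concerns the L-derivative, i.e.\ differentiation in the \emph{same} variable in which the singularity is measured; here you differentiate in $w$ while the singular exponent is measured in $z$, and the transfer $\partial_w$ lowers the $z$-exponent only via the term $-\,c(w)\,\alpha\, z_0'(w)\,(z_0(w)-z)^{\alpha-1}$, which requires $z_0'(w)\neq 0$ and a singular expansion of $K(z,w)$ that is uniform and differentiable in $w$. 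Second, and more importantly, the implication runs the wrong way: knowing the exponent of the derivative does not determine the exponent of the antiderivative unless you first rule out that $K(z,w_0)$ itself carries a $(z_0-z)^{1/2}$ term. One can rule this out (such a term would force $\partial_w K$ to have a divergent $(z_0-z)^{-1/2}$ term, contradicting convergence of $\ul{K}$ at the singularity, \emph{provided} $z_0'(w_0)\neq 0$ and term-by-term differentiation of the Puiseux expansion is licit), but none of this is argued in your proposal; as written the step is an unjustified appeal to a rule that does not apply in this direction.

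The paper avoids all of this with the classical dissymmetry-type subtraction: since a tree has exactly one more vertex than edges, $K$ equals the series of vertex-rooted trees minus the series of edge-rooted trees; both are $1/2$-singular with the same singular point, their leading square-root terms cancel identically, and the next term, of exponent $3/2$, survives. That argument is purely algebraic on the singular expansions already established for $\cRb,\cRw$ and requires no inverse differentiation. I would replace your final paragraph with this subtraction argument, or else supply the missing ingredients ($z_0'(w)\neq 0$, uniformity of the Puiseux expansion in $w$, and the divergence contradiction) explicitly.
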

\begin{proof}
The classes $\cRb$ and $\cRw$ satisfy a decomposition grammar that has a 
strongly connected dependency graph.
Hence, by a classical theorem of  Drmota, Lalley, Woods~\cite{fla}, the generating functions of these classes
 have square-root singular type. 
Notice that, from the decomposition grammar~\eqref{eq:grammar},
 the class $\ul{\cK}$ can be expressed 
as a positive polynomial in $\cZL$, $\cZU$, $\cRb$, and $\cRw$.
Hence $\ul{\cK}$ inherits the singular points and the square-root
singular type from $\cRb, \cRw$.
Finally, the generating function of $\cK$ is classically obtained as a subtraction
(a tree has one more vertices than edges, so subtract the series counting the trees rooted
at an edge from the series counting the trees rooted at a vertex). The leading square-root singular terms cancel out due to the subtraction, leaving  a leading singular term of degree $3/2$.
\end{proof}

\begin{lemma}[irreducible dissections, from~\cite{FuPoSc05}]\label{lem:sing_diss}
The class $\cJ$ of rooted irreducible dissections is $3/2$-singular and has 
the same singularities as $\cK$.
\end{lemma}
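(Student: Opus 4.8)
The plan is to reduce everything to the $3/2$-singularity of $\cK$ established in Lemma~\ref{lem:sing_bin}, using the two structural facts already at our disposal: the closure isomorphism $\cK\simeq\cI$ (Proposition~\ref{prop:bijbin3conn}) and the identity $\cJ=3\star\cZL\star\cZU\star\cI$. Combining them gives, at the level of generating functions, $J(x,y)=3xy\,I(x,y)=3xy\,K(x,y)$. Thus for any fixed $y_0>0$ the one-variable section factors as $J(x,y_0)=3y_0\,x\,K(x,y_0)$, i.e.\ $K(x,y_0)$ multiplied by the entire monomial $3y_0 x$, which is nonvanishing at positive reals.

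First I would record that multiplication by $3y_0 x$ changes neither the radius of convergence of the section nor the set of singularities on the critical circle. Since $3y_0 x$ is entire, the radius of convergence of $x\mapsto J(x,y_0)$ equals that of $x\mapsto K(x,y_0)$; hence $\cJ$ and $\cK$ have exactly the same singular points, and for each such $(x_0,y_0)$ the point $x_0$ remains the unique dominant singularity of $x\mapsto J(x,y_0)$ on $|z|=x_0$. This already yields the ``same singularities as $\cK$'' half of the statement.

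Next I would transfer the singular expansion. By $3/2$-singularity of $\cK$, near a singular point $(x_0,y_0)$ we have, in a $\Delta$-neighbourhood of $x_0$, an expansion $K(x,y_0)=P(x)+c_{3/2}(x_0-x)^{3/2}+o((x_0-x)^{3/2})$ with $P$ rational and pole-free on $|z|\le x_0$ and $c_{3/2}\neq 0$. Writing the analytic prefactor as $3y_0 x=3x_0 y_0+3y_0(x-x_0)$ and multiplying through, the product $3y_0 x\,P(x)$ is again rational with no poles in the disk, while the singular contribution is $3x_0 y_0\,c_{3/2}(x_0-x)^{3/2}$ up to a term of order $(x_0-x)^{5/2}$, which is absorbed into the error $o((x_0-x)^{3/2})$. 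Since $x_0>0$ and $y_0>0$, the leading coefficient $3x_0 y_0\,c_{3/2}$ is nonzero, so $J(x,y_0)$ has a genuine singular term of exponent $3/2$. Therefore $\cJ$ is $3/2$-singular, completing the proof.

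The computation is entirely routine; the only point requiring a line of justification is that multiplying a singular expansion of $3/2$-type by an analytic factor preserves both the analytic (rational, pole-free) part and the singular exponent, and in particular does not accidentally kill the leading singular coefficient --- which is guaranteed here by $x_0>0$ and $y_0>0$. I do not anticipate any real obstacle.
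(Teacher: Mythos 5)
Your proposal is correct and follows essentially the same route as the paper: the paper's proof likewise just observes that $\cJ=3\star\cZL\star\cZU\star\cI\simeq 3\star\cZL\star\cZU\star\cK$, so that $J(x,y)=3xy\,K(x,y)$ inherits the singular points and singularity type of $K$. You merely spell out the (routine) verification that multiplying by the nonvanishing entire factor $3y_0x$ preserves the radius, the uniqueness of the dominant singularity, and the leading $3/2$-term, which the paper leaves implicit.
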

\begin{proof}
The class $\cJ$ is equal to $3\star\cZL\star\cZU\star\cI$, which 
is isomorphic to $3\star\cZL\star\cZU\star\cK$, so $\cJ$ has the same
singular points and singularity type as $\cK$. 
\end{proof}

\begin{lemma}[rooted 3-connected planar graphs~\cite{BeRi}]\label{lem:sing_3_conn}
The class $\vec{\cG_3}$ of edge-rooted 3-connected planar graphs is $3/2$-singular;
and the class $\ul{\vec{\cG_3}}$ of U-derived edge-rooted 3-connected planar graphs
is $1/2$-singular. These classes have the same singular points as~$\cK$.
\end{lemma}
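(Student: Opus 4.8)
The plan is to transport the singular behaviour through the size-preserving bijective chain linking $\vec{\cG_3}$ to the binary-tree family $\cK$, and then to read off the exponent from the known enumeration of $3$-connected maps. First I would record that all the identifications in that chain are size-preserving: Whitney's theorem gives $\vec{\cM_3}\simeq 2\star\vec{\cG_3}$, the Mullin--Schellenberg bijection gives $\cQ\simeq\vec{\cM_3}$, and the root-addition mapping gives $\cJa\simeq\cQ$. Composing these, $\vec{G_3}(x,y)=\tfrac{1}{2}\,\Ja(x,y)$, so for each fixed admissible $y_0$ the classes $\vec{\cG_3}$, $\vec{\cM_3}$, $\cQ$ and $\cJa$ share the same dominant singularity $x_0(y_0)$ and the same singular exponent; the harmless factor $2$ affects neither.

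Next I would pin the exponent to $3/2$ using the asymptotic enumeration of rooted $3$-connected planar maps of \cite{BeRi}: for fixed $y_0$ the coefficients $[x^n]\vec{M_3}(x,y_0)$ grow like $c(y_0)\,x_0(y_0)^{-n}n^{-5/2}$, and by the standard transfer dictionary of singularity analysis \cite{fla} a coefficient asymptotic in $n^{-5/2}$ corresponds precisely to a singular expansion of the form $P(x)+c_{3/2}(x_0-x)^{3/2}+o((x_0-x)^{3/2})$. Hence $\vec{\cG_3}$ is $3/2$-singular. To identify $x_0(y_0)$ with the singular point of $\cK$, I would use Lemma~\ref{lem:sing_diss}: $\cJ$ has the same singularity $\rho_{\cK}(y_0)$ as $\cK$, and since $\cJa\subseteq\cJ$ has nonnegative coefficients, its radius of convergence satisfies $x_0(y_0)\ge\rho_{\cK}(y_0)$. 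The reverse inequality reflects the fact that the admissible dissections are not exponentially rarer than all irreducible dissections, which is exactly what the growth constant recorded in \cite{BeRi} certifies; thus $x_0(y_0)=\rho_{\cK}(y_0)$.

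For the U-derived class I would argue that U-derivation lowers the singular exponent by one, just as L-derivation does, but for a different reason. Writing $\ul{\vec{G_3}}(x,y)=\partial_y\vec{G_3}(x,y)$ and differentiating the expansion $\vec{G_3}(x,y)=P(x,y)+c(y)\,(x_0(y)-x)^{3/2}+\cdots$ with respect to $y$, the chain rule acting on the moving singular point $x_0(y)$ produces a leading term $\tfrac{3}{2}\,c(y)\,x_0'(y)\,(x_0(y)-x)^{1/2}$. Since $y$ weights the edges, the radius $x_0(y)$ is strictly decreasing, so $x_0'(y)\neq 0$ and this term does not vanish; hence $\ul{\vec{\cG_3}}$ is $1/2$-singular, with the same singular points as $\vec{\cG_3}$ and $\cK$.

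The main obstacle I anticipate lies in making the U-derived step rigorous: one must justify that the $3/2$-singular expansion may be differentiated in the auxiliary parameter $y$ (so that the chain-rule computation is legitimate) and that $x_0'(y)\neq 0$. The first point is the standard but delicate matter of analyticity and uniformity of singular expansions with respect to a parameter; the second reduces to checking that the singular point of the binary-tree system $\cRb=\cZL\star(\cZU+\cRw)^2$, $\cRw=(\cZU+\cRb)^2$ genuinely moves with $y$, which follows from the explicit $y$-dependence of $\rho_{\cK}(y)$ established in Lemma~\ref{lem:sing_bin}. A secondary subtlety is the reverse radius inequality $x_0(y_0)\le\rho_{\cK}(y_0)$ for the subclass $\cJa\subseteq\cJ$, which I would settle through the enumeration of \cite{BeRi} rather than by a direct combinatorial injection.
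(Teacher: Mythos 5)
Your route is genuinely different from the paper's, but it has two gaps, one of which is a real logical flaw. The paper does not transport the singularity through the bijective chain and then invert coefficient asymptotics; it uses the fact (Mullin's formula) that $\vec{G_3}(z,w)$ is a \emph{rational expression} in the two algebraic series $\Rb(z,w)$, $\Rw(z,w)$ of rooted bicolored binary trees, cites~\cite{BeRi,BeGa} for the statement that the singular points coincide with those of $\Rb,\Rw$, and then substitutes the square-root Puiseux expansions of $\Rb,\Rw$ into the rational expression: the $(z_0-z)^{1/2}$ terms cancel and the leading singular term has exponent $3/2$. Your first gap is the step ``coefficients $\sim c\,x_0^{-n}n^{-5/2}$, hence a singular expansion with exponent $3/2$.'' The transfer theorems only go the other way: coefficient asymptotics alone do not yield analytic continuation to a $\Delta$-neighbourhood nor the prescribed form of the singular expansion, and Definition~\ref{def:alpha_sing} explicitly requires both. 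To make this step sound you must quote the singular expansion (or the rational/algebraic structure behind it) from~\cite{BeRi}, not the coefficient estimate.

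The second gap is the one you flag yourself and do not close: for $\ul{\vec{\cG_3}}$ you differentiate the singular expansion of $\vec{G_3}$ in the secondary variable and apply the chain rule to the moving singularity. Term-by-term differentiability of a singular expansion in a parameter is not automatic, and your justification that $x_0'(y)\neq 0$ by appeal to Lemma~\ref{lem:sing_bin} does not work, since that lemma says nothing about how the singular point varies with $y$. The paper's rationality argument disposes of both issues at once: implicit differentiation of the algebraic system for $\Rb,\Rw$ shows that $\partial_w\Rb$ and $\partial_w\Rw$ are again rational in $z,w,\Rb,\Rw$, hence so is $\ul{\vec{G_3}}(z,w)$; the singular points are unchanged (U-derivation multiplies the coefficient of $z^nw^m$ by $m$), and substituting the square-root expansions now leaves a leading term of exponent $1/2$. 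If you want to keep your parameter-differentiation route, you must supply the uniformity of the singular expansion in $w$ and a genuine proof that the radius strictly decreases in $w$; otherwise I would switch to the rational-expression argument, which is what the cited references actually provide.
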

\begin{proof}
The series $\vec{G_3}(z,w)$ has been proved in~\cite{Mu} to have a rational 
expression in terms of the two series $\Rb(z,w)$ and $\Rw(z,w)$ of rooted
bicolored binary trees. This property is easily shown to be stable by taking derivatives, so the same property holds for the series $\ul{\vec{G_3}}(z,w)$.
It is proved in~\cite{BeRi,BeGa} 
that the singular points of  $\cGtr$ 
are  the same as those of  $\cRb$ and $\cRw$.
Hence, the singular expansion of $\vec{G_3}(z,w)$ at any singular point is simply obtained from
the ones  of $\Rb(z,w)$ and $\Rw(z,w)$; one finds that the square-root terms
cancel out, leaving a leading singular term of degree $3/2$.  
The study of $\ul{\vec{\cG_3}}$ is similar. First, the rooting operator does not change the 
singular points (as it multiplies a coefficient $(n,m)$ only by a factor $m$),
hence, $\ul{\vec{\cG_3}}$ has the same singular points as $\cRb,\cRw$, which
ensures that the singular expansion of $\ul{\vec{G_3}}(z,w)$ can be obtained from those
of $\cRb$ and $\cRw$. One finds that the leading singular term is this time of the 
square-root type.
\end{proof}

\begin{lemma}[networks, from~\cite{BeGa}]\label{lem:sing_networks}
The classes $\cD$, $\cS$, $\cP$, and $\cH$ of networks are $3/2$-singular, and
these classes have the same singular points.
\end{lemma}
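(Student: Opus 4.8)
The plan is to collapse the whole network system into a single implicit equation for $D(z,y)$ in which the only non-elementary ingredient is $\vec{G_3}$, and then to read off the singular behaviour of $\cD$ directly from that of $\vec{\cG_3}$ (Lemma~\ref{lem:sing_3_conn}); the behaviour of $\cS,\cP,\cH$ will follow because their series are smooth, non-degenerate functions of $D$ and of $\vec{G_3}(z,D)$.

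First I would eliminate $S$ and $P$ from grammar~(N). Writing $u:=S+H$ and combining $D=y+S+P+H$ with the $\Set$-equation $P=y(e^{u}-1)+(e^{u}-1-u)$ gives $D+1=(1+y)e^{u}$, while the $s$-network equation yields $S=zD^2/(1+zD)$. Substituting $u=S+H$ with $H=\vec{G_3}(z,D)$ produces the single equation
\begin{equation}
\vec{G_3}(z,D)\;=\;\log\frac{D+1}{1+y}\;-\;\frac{zD^2}{1+zD}
\;=:\;\Theta(z,D),
\end{equation}
where $\Theta$ is analytic in $(z,D)$ for $z,D>0$. Equivalently $\Phi(z,D):=\vec{G_3}(z,D)-\Theta(z,D)=0$ defines $D=D(z,y)$ implicitly, and every network series is an elementary function of $z$, $D$ and $\vec{G_3}(z,D)$.

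Now fix $y_0>0$ and study $z\mapsto D(z,y_0)$. As $z$ grows $D$ grows and stays analytic as long as $\Phi$ is analytic and $\partial_D\Phi\neq 0$. A singularity can therefore arise only (A) from a branch point, where $\partial_D\Phi=0$, which would give a square-root ($1/2$) singularity, or (B) from the trajectory $(z,D(z,y_0))$ reaching the singular curve of $\vec{\cG_3}$, along which $\vec{G_3}$ carries a $3/2$-singular term. The hard part is showing that (B) dominates: that the trajectory meets the $3$-connected singular curve \emph{strictly before} any branch point, and that the crossing is transversal. This is exactly the quantitative content of Bender, Gao and Wormald~\cite{BeGa}, and it cannot be extracted from the grammar alone. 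Here Lemma~\ref{lem:sing_3_conn} is what makes the transfer clean: since $\ul{\vec{\cG_3}}$ is $1/2$-singular, $\partial_D\vec{G_3}=\ul{\vec{G_3}}$ converges to a finite limit at the curve, so $\partial_D\Phi$ has a finite limit there, and (this being the non-degeneracy verified in~\cite{BeGa}) a nonzero one. The smooth implicit-function transfer then carries the $3/2$-singular term of $\vec{G_3}$ directly to $D$, giving $D(z,y_0)=\text{(analytic)}+\kappa\,(z_0-z)^{3/2}+o((z_0-z)^{3/2})$ with $\kappa\neq 0$. Hence $\cD$ is $3/2$-singular, with the same singular points as $\vec{\cG_3}$, and therefore as $\cK$.

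Finally I would propagate this to $\cS,\cP,\cH$. Each of $S=zD^2/(1+zD)$ and $P=(1+y)(e^{u}-1)-u$ with $u=\log((D+1)/(1+y))$ is an analytic function of $D$ whose relevant derivative is nonzero at the singular point $(z_0,D_0)$ ($\partial_D S\neq 0$ since $D_0>0$, and $dP/du=D_0\neq 0$), so both inherit the $3/2$-singular term of $D$ without cancellation; and $H=\vec{G_3}(z,D)$ is $3/2$-singular by its own expansion read along the transversal crossing. Since $D$, $S$, $P$, $H$ are all analytic for $z<z_0$ and all become singular exactly at $z_0$, the four classes $\cD,\cS,\cP,\cH$ are $3/2$-singular with the same singular points. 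The only genuinely non-formal step is the dominance and location of the singularity in (B); everything else is elimination of variables plus the smooth transfer enabled by the $1/2$-regularity of $\ul{\vec{\cG_3}}$.
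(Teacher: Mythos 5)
The paper gives no proof of this lemma at all: it is stated as imported from Bender, Gao and Wormald~\cite{BeGa}, and the surrounding text only reuses its consequences (e.g.\ the criticality of the composition scheme, invoked again in the proof of Lemma~\ref{lem:comp_D}). Your sketch therefore does strictly more than the paper, and it is a faithful reconstruction of the argument of~\cite{BeGa} (also used by Gim\'enez and Noy): the elimination is algebraically correct ($S=zD^2/(1+zD)$ from $S=(D-S)zD$, then $1+D=(1+y)e^{S+H}$ from the $\Set$-equations, hence $\vec{G_3}(z,D)=\log\frac{1+D}{1+y}-\frac{zD^2}{1+zD}$), the dichotomy ``branch point versus inherited singularity'' is the right one, and you correctly isolate the two steps that cannot be read off the grammar --- that the curve $(z,D(z,y_0))$ meets the singular curve of $\vec{\cGt}$ strictly before the $D$-derivative of the analytic part of $\Phi$ vanishes, and that this derivative is nonzero there --- as the quantitative content of~\cite{BeGa}; your use of Lemma~\ref{lem:sing_3_conn} to argue that $\partial_D\vec{G_3}=\ul{\vec{G_3}}$ stays finite at the crossing is exactly the right mechanism. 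Two further points are being silently imported along with these: the uniqueness of the dominant singularity on the circle $|z|=z_0$, which Definition~\ref{def:alpha_sing} requires, and the nonvanishing of the transferred $3/2$-coefficients --- your computations $\partial_D S=zD(2+zD)/(1+zD)^2>0$ and $dP/du=(1+y)e^u-1=D_0$ settle $\cS$ and $\cP$, but for $\cH$ one must still check that the direct singular term of $\vec{G_3}(z,\cdot)$ and the one induced through $D$ do not cancel. Since the paper itself delegates all of this to~\cite{BeGa}, your proposal is consistent with, and more informative than, the paper's treatment; just present it as a reconstruction of the cited proof rather than an independent one.
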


\begin{lemma}[2-connected, connected, and general planar graphs~\cite{gimeneznoy}]\label{lem:sing_planar}
The classes $\cG_2$, $\cG_1$, $\cG$ of 2-connected, connected, and general planar graphs are all $5/2$-singular. In addition, the singular points of $\cG_2$ are the same
as those of networks, and the singular points are the same in $\cG_1$ as in $\cG$.
\end{lemma}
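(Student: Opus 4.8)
The plan is to propagate singular information up the chain networks $\to$ 2-connected $\to$ connected $\to$ general planar graphs, tracking at each step both the location of the dominant singularity in $x$ (for a fixed $y_0$) and the singular exponent. The recurring mechanism is that each combinatorial relation either preserves the exponent (when it amounts to an analytic substitution) or shifts it by an integer, either through the effect of a derivation or through integration against a \emph{moving} singularity. The whole point of the lemma is that, although networks are only $3/2$-singular, the exponent climbs back up to $5/2$ for all three graph classes.

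First I would treat $\cGb$ via networks. Combining the identity \eqref{eq:DB}, which reads $(1+y)\vec{G_2}(x,y)=1+D(x,y)$ at the level of series, with the edge-rooting relation $\vec{G_2}(x,y)=\tfrac{2}{x^2}\partial_y G_2(x,y)$, one obtains
\[
\partial_y G_2(x,y)=\frac{x^2}{2(1+y)}\bigl(1+D(x,y)\bigr).
\]
Since networks are $3/2$-singular (Lemma~\ref{lem:sing_networks}) and the prefactor is analytic, $x\mapsto\partial_y G_2(x,y_0)$ is $3/2$-singular with exactly the singular points of $\cD$. Integrating in $y$ from $0$, where $G_2(x,0)=0$ because a $2$-connected graph carries at least one edge, recovers $G_2$. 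Because the dominant singularity $\rho_D(y)$ moves with $y$ (its monotonicity in $y$ locating $\rho_D(y_0)$ as the dominant singularity over $[0,y_0]$), the standard transfer for the integral of a moving singularity raises the exponent by one, from $3/2$ to $5/2$. This shows $\cGb$ is $5/2$-singular with the same singular points as networks.

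Next I would pass to $\cGc$ through the block-decomposition \eqref{eq:2conn}, which in series form is the smooth implicit equation
\[
\partial_x G_1(x,y)=\exp\!\bigl(\partial_x G_2(x\,\partial_x G_1(x,y),\,y)\bigr).
\]
By the derivation rule for $L$-derivation (an $\alpha$-singular class has an $(\alpha-1)$-singular derived class), $\cGbp$ has a $3/2$-singular series $\partial_x G_2$. The crucial point is that this composition is \emph{subcritical}: as $x$ increases, the inner argument $x\,\partial_x G_1$ reaches the singularity $\rho_{G_2}(y)$ of $\partial_x G_2$ \emph{before} the implicit scheme can develop a branch point of its own. Consequently the singularity of $\partial_x G_1$ is inherited from that of $\partial_x G_2$, so $\partial_x G_1$ is again $3/2$-singular; integrating once in $x$ then shows $\cGc$ is $5/2$-singular, its singular point being the value of $x$ at which $x\,\partial_x G_1$ hits $\rho_{G_2}(y)$ (so $\rho_{G_1}(y)<\rho_{G_2}(y)$, unlike for the $\cGb$--networks correspondence).

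Finally, $\cG=\Set(\cGc)$ gives $G(x,y)=\exp(G_1(x,y))$ by \eqref{eq:CtoG}; since $\exp$ is entire and non-degenerate, composing it with $G_1$ preserves both the location of the dominant $x$-singularity and its exponent, so $\cG$ is $5/2$-singular and has the same singular points as $\cGc$. I expect the entire difficulty to sit in the connected step: one must verify subcriticality of the composition, so that the singularity of $\cGc$ is \emph{inherited} (keeping exponent $3/2$ for $\partial_x G_1$) rather than being a new square-root branch point, which would produce exponent $1/2$ and destroy the $5/2$-singularity needed downstream. This is precisely the delicate analytic input of Gim\'enez and Noy~\cite{gimeneznoy}, on which I would rely rather than re-derive.
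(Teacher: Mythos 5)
The paper states this lemma without proof, as a direct citation of Gim\'enez and Noy, so there is no internal argument to compare against; your sketch is consistent with how the result is established there and with how the paper later uses it. You correctly isolate the three mechanisms --- integration in $y$ against the moving singularity $\rho_D(y)$ to pass from networks to $\cG_2$, criticality of the substitution $z=xG_1'(x,y)$ followed by one integration in $x$ to pass to $\cG_1$, and analytic composition with $\exp$ for $\cG=\Set(\cGc)$ --- and you correctly defer the one genuinely hard analytic input (that the block-decomposition scheme reaches the singularity of $G_2'$ before the implicit equation can develop a branch point of its own) to \cite{gimeneznoy}. One terminological caution: the paper, in the proof of Lemma~\ref{lem:comp_G1p}, calls that composition scheme \emph{critical}, meaning precisely that $xG_1'(x,y_0)$ converges to $\rho_{G_2}(y_0)$ as $x\to x_0$; what you call ``subcritical'' is the same phenomenon, but you should align the vocabulary so as not to suggest the strictly subcritical case in which the inner argument would stay away from the outer singularity (which would again give an inherited singularity, but located at a branch point of $G_1'$ itself and hence of the wrong type).
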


\subsection{Asymptotic bounds on the expected complexities of Boltzmann samplers}
\label{sec:as_bounds}
This section is dedicated to proving the asymptotic bound $\Lambda\cG''(x,y_0)=O((x_0-x)^{-1/2})$.
For this purpose we adopt again a bottom-to-top approach, following 
the scheme of Figure~\ref{fig:scheme_bi_derived}. 
For each class $\cC$ appearing in this scheme,  
we provide an asymptotic bound for the expected complexity of the Boltzmann sampler 
in a neighbourhood of any fixed  singular point of $\cC$. In the end we arrive
at the desired estimate of $\Lambda\cG''(x,y_0)$.

\subsubsection{Complexity of the Boltzmann samplers for binary trees}\label{sec:comp_binary_trees}

\begin{lemma}[U-derived bicolored binary trees]\label{lem:comp_uK}
Let $(z_0,w_0)$ be a singular point of $\cK$.
Then,  the expected complexity of the Boltzmann sampler for $\ul{\cK}$---given
in Section~\ref{sec:boltz_binary_trees}---satisfies, 
$$
\Lambda \ul{\cK}(z,w)=O\Big((z_0-z)^{-1/2}\Big)\ \mathrm{as}\ (z,w)\to (z_0,w_0).
$$
\end{lemma}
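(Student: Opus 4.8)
The plan is to apply the computation rules of Lemma~\ref{lem:comp_rules} (Figure~\ref{fig:comp_rules}) directly to the decomposition grammar~\eqref{eq:grammar}, which involves only the constructions $+$ and $\star$. The seven classes split into a genuinely recursive core $\{\cRb,\cRw\}$ and five further classes $\cRbh,\cRwh,\cRbas,\cRwas,\ul{\cK}$ that are built \emph{non-recursively} on top of $\cRb,\cRw$ (together with the terminal classes $\cZL,\cZU$, each of expected complexity $O(1)$). So the heart of the matter is to bound $\Lambda\cRb(z,w)$ and $\Lambda\cRw(z,w)$; the five remaining complexities are then bounded-coefficient sums of these, since in the sum rule the weights $A/C,B/C$ lie in $[0,1]$ and all series converge to finite positive limits at the singular point.

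First I would write out the rules for $\cRb=(\cZU+\cRw)\star\cZL\star(\cZU+\cRw)$ and $\cRw=(\cZU+\cRb)\star(\cZU+\cRb)$. Using the product rule (additive) and the sum rule, and abbreviating $a:=\Lambda\cRb$, $b:=\Lambda\cRw$, one gets a linear system
\begin{equation*}
a=\alpha+\beta\,b,\qquad b=\gamma+\delta\,a,\qquad \beta=\tfrac{2\Rw}{w+\Rw},\ \ \delta=\tfrac{2\Rb}{w+\Rb},
\end{equation*}
where $\alpha,\gamma$ collect the constant ($+1$) terms and the $O(1)$ contributions of $\cZL,\cZU$, and are bounded near $(z_0,w_0)$. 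Solving yields $a=(\alpha+\beta\gamma)/(1-\beta\delta)$ with $\beta\delta=4\Rw\Rb/((w+\Rw)(w+\Rb))$. The key algebraic observation is that, using $\Rb=z(w+\Rw)^2$ and $\Rw=(w+\Rb)^2$, this product simplifies to $\beta\delta=4z(w+\Rw)(w+\Rb)$, so that the denominator equals $1-4z(w+\Rw)(w+\Rb)$ --- exactly the Jacobian determinant of the system defining $(\Rb,\Rw)$, which vanishes at the singular point.

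It then remains to control the rate at which this determinant tends to $0$. By Lemma~\ref{lem:sing_bin} (Drmota--Lalley--Woods), $\Rb,\Rw$ admit square-root expansions $\Rb=\Rb^0-c_\bullet\sqrt{z_0-z}+\cdots$ and $\Rw=\Rw^0-c_\circ\sqrt{z_0-z}+\cdots$ at $(z_0,w_0)$. Substituting into $1-4z(w_0+\Rw)(w_0+\Rb)$ and using that the constant term vanishes (the criticality condition $4z_0(w_0+\Rw^0)(w_0+\Rb^0)=1$), the dominant correction is linear in $\Rb-\Rb^0,\Rw-\Rw^0$, hence of order $\sqrt{z_0-z}$. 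Thus $1-\beta\delta=\Theta(\sqrt{z_0-z})$, the numerator $\alpha+\beta\gamma$ stays bounded, and therefore $\Lambda\cRb(z,w)$ and (symmetrically) $\Lambda\cRw(z,w)$ are $O((z_0-z)^{-1/2})$ --- consistent with the expected size being of the same order, as predicted by $1/2$-singularity and Lemma~\ref{lem:exp_size}. Feeding this into the non-recursive rules for $\cRwh,\cRbh,\cRbas,\cRwas$ and finally $\ul{\cK}=\cRbas+\cRwas$ preserves the bound, giving $\Lambda\ul{\cK}(z,w)=O((z_0-z)^{-1/2})$.

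Two points deserve care. Following Remark~\ref{rk:finite}, before manipulating these equations one must check that all the complexities are finite for $z<z_0$; this holds because $(z,w)$ is then admissible and subcritical, so the sampler terminates with finite expected size and the Jacobian determinant is strictly positive. The main obstacle is precisely the asymptotic analysis of the denominator: identifying it with the Jacobian determinant and extracting its $\Theta(\sqrt{z_0-z})$ behaviour from the square-root singular expansions is the only step that goes beyond routine bookkeeping, and it is what ties the complexity bound to the singularity structure established in Lemma~\ref{lem:sing_bin}.
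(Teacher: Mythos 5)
Your proof is correct, but it takes a genuinely different --- and considerably heavier --- route than the paper. The paper's argument is essentially two lines: since $\Gamma\ul{\cK}(z,w)$ is a direct translation of a completely recursive grammar with no rejection, its cost equals the number of nodes built, which is at most $4|\tau|$; hence $\Lambda\ul{\cK}(z,w)\leq 4\,|\ul{\cK}|_{(z,w_0)}$ by monotonicity of expected sizes (Lemma~\ref{lem:monotonicity}), and the conclusion follows from the $1/2$-singularity of $\ul{\cK}$ together with Lemma~\ref{lem:exp_size}. You instead solve the linear system for $(\Lambda\cRb,\Lambda\cRw)$ coming from the computation rules, identify the denominator $1-4z(w+\Rw)(w+\Rb)$ with the Jacobian determinant of the Drmota--Lalley--Woods system, and extract its $\Theta(\sqrt{z_0-z})$ vanishing from the square-root expansions. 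This is exactly the technique the paper reserves for the harder case of derived networks (where rejection and $\circ_U$-substitution make the ``cost equals size'' shortcut unavailable), so your approach is more general but re-proves, in this instance, a fact that follows for free: with no rejection the expected complexity \emph{is} the expected size up to a constant. Two small points: your identification $\beta\delta=4z(w+\Rw)(w+\Rb)$ and its interpretation as the Jacobian are correct; and your denominator analysis is carried out at $w=w_0$, so to cover the full limit $(z,w)\to(z_0,w_0)$ from below you should add one sentence noting that $4z(w+\Rw)(w+\Rb)$ is increasing in $w$ (all series have nonnegative coefficients), whence $1-\beta\delta(z,w)\geq 1-\beta\delta(z,w_0)=\Theta(\sqrt{z_0-z})$ for $w\leq w_0$ --- the same monotonicity device the paper uses for expected sizes.
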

\begin{proof}
The Boltzmann sampler $\Gamma \ul{\cK}(z,w)$ is just obtained by translating a completely
recursive decomposition grammar. Hence, the generation process consists in 
building the tree node by node following certain branching rules. 
Accordingly, the cost of generation is just equal to the number of nodes of the 
tree that is finally returned, assuming unit cost for building a node\footnote{ 
We could also use the computation rules for the expected complexities, but here there is  the simpler argument that the expected complexity is equal to the expected size, as there is no rejection yet.}.
As an unrooted binary 
tree has two more leaves than nodes, we have
$$
\Lambda \ul{\cK}(z,w)\leq ||\ul{\cK}||_{(z,w)}\leq ||\ul{\cK}||_{(z,w_0)},
$$
where the second inequality results from the monotonicity property of expected sizes (Lemma~\ref{lem:monotonicity}).

Notice that, for
$\tau\in\ul{\cK}$, the 
number of nodes is not greater than $(3|\tau|+1)$, where $|\tau|$ is as usual the number
of black nodes. Hence the number of nodes is at most $4|\tau|$.
As a consequence,
$$
\Lambda \ul{\cK}(z,w)\leq 4\cdot |\ul{\cK}|_{(z,w_0)}.
$$
According to Lemma~\ref{lem:sing_bin}, the class $\ul{\cK}$ is $1/2$-singular. Hence, by Lemma~\ref{lem:exp_size},  $|\ul{\cK}|_{(z,w_0)}$  is $O((z_0-z)^{-1/2})$ as $z\to z_0$. So $\Lambda \ul{\cK}(z,w)$
is also $O((z_0-z)^{-1/2})$.
\end{proof}

\begin{lemma}[derived bicolored binary trees]\label{lem:comp_der_bin}
Let $(z_0,w_0)$ be a singular point of $\cK$.
Then, the expected complexity of the Boltzmann sampler for $\cK'$---given
in Section~\ref{sec:sampKp}---satisfies
$$
\Lambda \cK'(z,w)=O\left((z_0-z)^{-1/2}\right)\ \mathrm{as}\ (z,w)\to(z_0,w_0).
$$
\end{lemma}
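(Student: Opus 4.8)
The plan is to derive the bound directly from Corollary~\ref{lem:change_root} (complexity of changing the root), fed by the estimate for $\Lambda\ul{\cK}$ already obtained in Lemma~\ref{lem:comp_uK}. Recall from Section~\ref{sec:sampKp} that the sampler $\Gamma\cK'(z,w)$ is, by construction, the one produced by applying the procedure \UtoL to the class $\cK$, starting from the sampler $\Gamma\ul{\cK}(z,w)$. Hence the \UtoL case of Corollary~\ref{lem:change_root} yields $\Lambda\cK'(z,w)\le c\cdot\Lambda\ul{\cK}(z,w)$, where $c:=\alphaUL\cdot\alphaLU$, \emph{provided} both constants $\alphaUL$ and $\alphaLU$ are finite for the class $\cK$.

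First I would record the two ratio constants for $\cK$. The value $\alphaLU=2/3$ was already computed in Section~\ref{sec:sampKp}. It remains to check that $\alphaUL:=\sup_{\gamma\in\cK}\frac{||\gamma||}{|\gamma|}$ is finite, i.e.\ that the number of leaves of a bicolored binary tree is bounded by a constant times its number of black nodes. This is the one genuinely new point, and it follows from the same counting used for $\alphaLU$ together with the black/white symmetry: swapping the two colours is an involution of the class, so the bound ``at most $\lfloor 2(m-3)/3\rfloor$ of the $m-2$ nodes are black'' holds equally with ``white'' in place of ``black''. Consequently the number of black nodes is at least $\tfrac{m}{3}$, which gives $\alphaUL\le 3$ and hence $c=\alphaUL\cdot\alphaLU$ finite (in fact $c\le 2$). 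I would stress that this finiteness of $\alphaUL$ is exactly what Corollary~\ref{lem:change_root} requires but what the mere \emph{validity} of \UtoL (Lemma~\ref{lem:UtoL}) did not, since the latter used only $\alphaLU$.

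With $c$ finite, it remains to feed in the asymptotic estimate for the source sampler. By Lemma~\ref{lem:sing_bin} the classes $\cK$ and $\ul{\cK}$ have the same singular points, so the given singular point $(z_0,w_0)$ of $\cK$ is also a singular point of $\ul{\cK}$, and Lemma~\ref{lem:comp_uK} applies to give $\Lambda\ul{\cK}(z,w)=O\big((z_0-z)^{-1/2}\big)$ as $(z,w)\to(z_0,w_0)$. Combining this with the inequality $\Lambda\cK'(z,w)\le c\cdot\Lambda\ul{\cK}(z,w)$ yields $\Lambda\cK'(z,w)=O\big((z_0-z)^{-1/2}\big)$, which is the claim. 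I would also note in passing (cf.\ Remark~\ref{rk:finite}) that all the expected complexities appearing here are finite, so the chained inequalities are meaningful. The only real work is the symmetry argument bounding $\alphaUL$; everything else is a mechanical application of the rejection-complexity corollary to the already-proved bound of Lemma~\ref{lem:comp_uK}.
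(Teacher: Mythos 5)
Your proof takes exactly the paper's route: apply the \UtoL case of Corollary~\ref{lem:change_root} to the class $\cK$ and feed in the estimate of Lemma~\ref{lem:comp_uK}, so the structure is sound and the conclusion does follow, because all that Corollary~\ref{lem:change_root} requires is the \emph{finiteness} of $\alphaUL$ and $\alphaLU$. However, your numerical claim $\alphaUL\le 3$ (equivalently, ``the number of black nodes is at least $m/3$'') is false. The bicolored binary tree with one black node and three white nodes has $m=6$ leaves and a single black node, giving $||\gamma||/|\gamma|=6$; the paper records $\alphaUL=6$ as attained by precisely this tree, and even restricting to asymmetric trees the tree with one black and two white nodes has five leaves and ratio $5>3$. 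The slip is in the colour-swap step: the correct bound on the size of one colour class among the $m-2$ nodes is $(2m-3)/3$ rather than $2(m-3)/3$ (the star with three black nodes, one white node and six leaves already has three black nodes, exceeding $\lfloor 2(6-3)/3\rfloor=2$), so the symmetry argument only yields ``black nodes $\ge (m-3)/3$'', whence $\alphaUL\le 3m/(m-3)\le 6$ for $m\ge 6$, with small $m$ checked directly. With $\alphaUL=6$ and $\alphaLU=2/3$ one gets $c=4$, i.e.\ $\Lambda\cK'(z,w)\le 4\,\Lambda\ul{\cK}(z,w)$, exactly as in the paper. Since only a finite constant is needed for the $O\big((z_0-z)^{-1/2}\big)$ conclusion, your argument survives this correction; and your remark that Corollary~\ref{lem:change_root} genuinely requires the finiteness of $\alphaUL$, which the mere validity of \UtoL in Lemma~\ref{lem:UtoL} did not, is well taken.
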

\begin{proof}
The sampler $\Gamma \cK'(z,w)$ has been obtained from $\Gamma \ul{\cK}(z,w)$
by applying  the procedure \UtoL  to the class $\cK$. It
is easily checked that the ratio number of black nodes/number of leaves in 
a bicolored binary tree
is bounded from above and from below (we have already used the ``below''
bound in Lemma~\ref{lem:comp_uK}). Precisely, $3|\tau|+3\geq ||\tau||$ and 
 $|\tau|\leq 2||\tau||/3$, from which it is easily checked that $\alphaLU=2/3$ and 
$\alphaUL=6$ (attained by the tree with 1 black and 3 white nodes).
Hence,  according to Corollary~\ref{lem:change_root}, 
$\Lambda \cK'(z,w)\leq 4\ \!\Lambda \ul{\cK}(z,w)$,
so $\Lambda \cK'(z,w)$ is $O\left((z_0-z)^{-1/2}\right)$. 
\end{proof}

\begin{lemma}[bicolored binary trees]\label{lem:comp_bin}
Let $(z_0,w_0)$ be a singular point of $\cK$.
Then,  the expected complexity of the Boltzmann sampler for $\cK$---given
in Section~\ref{sec:Ksamp}---satisfies
$$
\Lambda \cK(z,w)=O\left(1\right)\ \mathrm{as}\ (z,w)\to(z_0,w_0).
$$
\end{lemma}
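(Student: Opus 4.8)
The plan is to analyse the early-abort rejection algorithm $\Gamma\cK(z,w)$ of Section~\ref{sec:Ksamp} by separating the expected cost of a single \emph{attempt} from the number of attempts. Write $T$ for the cost of one attempt (essentially the number of tree nodes actually built before the attempt either completes or is aborted), and $\pacc$ for the probability that a single attempt completes. Since the successive attempts are i.i.d.\ and an attempt's outcome is independent of the cost of the subsequent ones, the renewal recursion $\mathbb{E}[C]=\mathbb{E}[T]+(1-\pacc)\mathbb{E}[C]$ behind Lemma~\ref{lem:target} holds, giving
\begin{equation}
\Lambda\cK(z,w)=\frac{\mathbb{E}[T]}{\pacc}.
\end{equation}
It therefore suffices to show that $\pacc$ stays bounded away from $0$ and that $\mathbb{E}[T]=O(1)$ as $(z,w)\to(z_0,w_0)$.

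For the acceptance probability, I would sum the per-object acceptance weights appearing in the proof of Lemma~\ref{lem:BoltzK}: regrouping the rooted trees produced by $\Gamma\ul{\cK}(z,w_0)$ according to their underlying unrooted tree (a tree with $L$ leaves arises from $L$ rootings, each accepted with probability $2/L$) collapses to the closed form
\begin{equation}
\pacc=\frac{2\,K(z,w_0)}{w_0\,\ul{K}(z,w_0)}.
\end{equation}
By Lemma~\ref{lem:sing_bin}, $\cK$ is $3/2$-singular and $\ul{\cK}$ is $1/2$-singular; both exponents being nonnegative, the remark opening Section~\ref{sec:sing_beh} shows that $K(z,w_0)$ and $\ul{K}(z,w_0)$ each converge to a positive constant as $z\to z_0$. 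Hence $\pacc$ converges to a positive constant, so $1/\pacc=O(1)$.

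The heart of the argument is $\mathbb{E}[T]=O(1)$. The point is that the abort coins depend only on the running counter and on fresh Bernoulli draws, hence are independent of the tree generated by $\Gamma\ul{\cK}$. Thus the probability of building at least $k$ nodes in one attempt factors as the survival probability $\prod_{i=1}^{k-1}\frac{i+1}{i+2}=\frac{2}{k+1}$ times the tail $\mathbb{P}_{\ul{\cK}}(N\ge k)$ of the node count $N$, so that
\begin{equation}
\mathbb{E}[T]=\sum_{k\ge 1}\frac{2}{k+1}\,\mathbb{P}_{\ul{\cK}}(N\ge k).
\end{equation}
Now I would invoke the transfer theorem, exactly as in the proof of Lemma~\ref{lem:square}: since $\ul{\cK}$ is $1/2$-singular, its coefficients at $w_0$ decay like $j^{-3/2}$, so (using $N\le 4|\gamma|$ from Lemma~\ref{lem:comp_uK} and that $\ul{K}(z,w_0)$ stays bounded below near $z_0$) the tail obeys $\mathbb{P}_{\ul{\cK}}(N\ge k)=O(k^{-1/2})$ \emph{uniformly} for $z$ in a left neighbourhood of $z_0$. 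Each summand is then $O(k^{-3/2})$, the series converges, and $\mathbb{E}[T]=O(1)$. Combined with the lower bound on $\pacc$, this yields $\Lambda\cK(z,w)=O(1)$.

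The main obstacle — and the very reason the early-abort device is needed — is this tail estimate. A crude bound through $\Lambda\ul{\cK}(z,w)=O((z_0-z)^{-1/2})$ would be hopeless, since the expected size of a tree drawn from $\ul{\cK}$ \emph{diverges} at the singular point (its mean is infinite, as the tail is only $k^{-1/2}$). What rescues the estimate is that the weights $\frac{1}{k+1}\mathbb{P}_{\ul{\cK}}(N\ge k)$ are summable even though $\mathbb{P}_{\ul{\cK}}(N\ge k)$ alone is not; equivalently, an aborted attempt costs only logarithmically, not linearly, in the size it would have reached. The delicate part is making the $O(k^{-1/2})$ tail bound hold uniformly as $z\to z_0$, rather than merely at $z=z_0$.
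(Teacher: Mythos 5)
Your proposal is correct and follows essentially the same route as the paper: the renewal identity $\Lambda\cK=\mathbb{E}[T]/\pacc$, the closed form $\pacc=2K/(w\ul{K})$ converging to a positive constant, the telescoping survival probability $2/(k+1)$, and the transfer-theorem tail $O(k^{-1/2})$ (made uniform by coefficient monotonicity) giving a convergent $\sum k^{-3/2}$. The paper merely organizes the same estimate by conditioning on the generated tree and bounding the conditional cost by $2H_m$ before averaging against the $m^{-3/2}$ weights, which is your computation with the order of summation interchanged.
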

\begin{proof}
At each attempt in the generator $\Gamma \cK(z,w)$, the first step is to
call $\Gamma\ul{\cK}(z,w)$ to generate a certain tree $\tau\in\ul{\cK}$
 (it is here convenient to assume that the object is ``chosen'' before the generation starts), with probability
$$
\frac{1}{\ul{K}(z,w)}\frac{z^{|\tau|}}{|\tau|!}w^{||\tau||};
$$
and the probability that the generation succeeds to finish is $2/(||\tau||+1)$.
Hence, the total probability of success at each attempt in $\Gamma \cK(z,w)$ 
satisfies
$$
\pacc=\sum_{\tau\in\ul{\cK}}\frac{1}{\ul{K}(z,w)}\frac{z^{|\tau|}}{|\tau|!}w^{||\tau||}\cdot\frac{2}{||\tau||+1}.
$$
As each object $\tau\in\cK$ gives rise to $||\tau||$ objects in $\ul{\cK}$ that all have
L-size $|\tau|$ and U-size 
$||\tau||-1$, we also have
$$
\pacc=\sum_{\tau\in\cK}\frac{2}{\ul{K}(z,w)}\frac{z^{|\tau|}}{|\tau|!}w^{||\tau||-1}=\frac{2K(z,w)}{w\ul{K}(z,w)}.
$$
As $\cK$ is $3/2$-singular and $\ul{\cK}$ is $1/2$-singular, $\pacc$ converges to the positive constant $c_0:=2K(z_0,w_0)/(w_0\ul{K}(z_0,w_0))$ 
as $(z,w)\to(z_0,w_0)$.

Now call $\mathfrak{A}(z,w)$ the random generator for $\cK$ delimited inside the repeat/until
loop of $\Gamma \cK(z,w)$, and let $\Lambda \frak{A}(z,w)$ be the expected complexity
of $\mathfrak{A}(z,w)$. According to Lemma~\ref{lem:target}, $\Lambda \cK(z,w)=\Lambda\frak{A}(z,w)/\pacc$. In addition, when $(z,w)\to (z_0,w_0)$, $\pacc$ converges to a positive constant, hence it 
remains to prove that $\Lambda \frak{A}(z,w)=O(1)$ 
in order to prove the lemma.

Let $\tau\in\ul{\cK}$, and let $m:=||\tau||$. During a call to $\frak{A}(z,w)$, and knowing (again, in advance) that  $\tau$ is under generation,
the probability  that  at least $k\geq 1$ nodes of $\tau$ are built  is $2/(k+1)$,
due to the Bernoulli
probabilities telescoping each other. Hence, for $k<m-1$, the probability $p_k$
that the generation aborts when exactly $k$ nodes are generated 
 satisfies $p_k=\frac{2}{k+1}-\frac{2}{k+2}=\frac{2}{(k+1)(k+2)}$. In addition, the probability that the whole tree is generated is $2/m$ (with a final rejection or not), in which case $(m-1)$ nodes are built. 
Measuring the complexity as the number of nodes that are built, 
we obtain the following expression for the expected complexity of $\frak{A}(z,w)$ knowing that $\tau$ is chosen:
$$
\Lambda\frak{A}^{(\tau)}(z,w)=\sum_{k=1}^{m-2}k\cdot p_k+(m-1)\frac{2}{m}\leq 2\ \!H_m,
$$
where $H_m:=\sum_{k=1}^m1/k$ is the $m$th Harmonic number. 
Define $a_m(z):=[w^m]\ul{K}(z,w)$. We have
$$
\Lambda\frak{A}(z,w)\leq \frac{2}{\ul{K}(z,w)}\sum_m H_ma_m(z)w^m\leq \frac{2}{\ul{K}(z,w)}\sum_m H_ma_m(z_0)w_0^m.
$$
Hence, writing $c_0:=3/\ul{K}(z_0,w_0)$, we have $\Lambda\frak{A}(z,w)\leq c_0\sum_m H_ma_m(z_0)w_0^m$ for $(z,w)$ close to $(z_0,w_0)$.
Using the Drmota-Lalley-Woods theorem (similarly as in Lemma~\ref{lem:sing_bin}),
it is easily shown that the function $w\mapsto \ul{K}(z_0,w)$ has a square-root singularity at $w=w_0$. Hence, the transfer theorems
of singularity analysis~\cite{fla,flaod} yield the asymptotic estimate $a_m(z_0)\sim c\ \! m^{-3/2}w_0^{-m}$ for some constant $c>0$, so that $a_m(z_0)\leq c'\ \! m^{-3/2}w_0^{-m}$ for some
constant $c'>0$. Hence $\Lambda\frak{A}(z,w)$ is bounded by the converging
series $c_0\ \!c'\sum_m H_m\ \!m^{-3/2}$ for $(z,w)$ close to $(z_0,w_0)$, which concludes the proof.
\end{proof}

\subsubsection{Complexity of the Boltzmann samplers for irreducible dissections}

\begin{lemma}[irreducible dissections]\label{lem:comp_irr}
Let $(z_0,w_0)$ be a singular point of $\cI$. Then, the expected complexities
of the Boltzmann samplers for $\cI$ and $\cI'$---described respectively in Section~\ref{sec:sampI} and~\ref{sec:sampIp}---satisfy, as $(z,w)\to (z_0,w_0)$:
\begin{eqnarray*}
\Lambda \cI(z,w)&=&O\ (1),\\
\Lambda \cI'(z,w)&=&O\left((z_0-z)^{-1/2}\right).
\end{eqnarray*}
\end{lemma}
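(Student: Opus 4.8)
The plan is to observe that each of the two samplers factors as \emph{generate a tree, then close it}: $\Gamma\cI(z,w)$ runs $\tau\leftarrow\Gamma\cK(z,w)$ and returns $\mathrm{closure}(\tau)$, while $\Gamma\cI'(z,w)$ runs $\tau\leftarrow\Gamma\cK'(z,w)$ and returns $\mathrm{closure}(\tau)$. By linearity of expectation the combinatorial cost therefore splits as $\Lambda\cI(z,w)=\Lambda\cK(z,w)+c_{\mathrm{cl}}$ and $\Lambda\cI'(z,w)=\Lambda\cK'(z,w)+c_{\mathrm{cl}}'$, where $c_{\mathrm{cl}}$ and $c_{\mathrm{cl}}'$ denote the expected cost of one application of the closure mapping in each case. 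Since Lemma~\ref{lem:comp_bin} gives $\Lambda\cK(z,w)=O(1)$ and Lemma~\ref{lem:comp_der_bin} gives $\Lambda\cK'(z,w)=O((z_0-z)^{-1/2})$, it will suffice to show that the expected closure cost is, in each case, of the same order as the tree-generation cost.

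To bound the closure cost, I would first invoke Proposition~\ref{prop:bijbin3conn}, which guarantees that $\mathrm{closure}$ runs in time linear in the size of its input tree. The crucial step is then to control the expected size of the generated tree. Here I would use that the tree samplers $\Gamma\cK(z,w)$ and $\Gamma\cK'(z,w)$ build their output atom by atom, so that every node and every leaf costs at least one primitive operation; consequently the size of the tree produced during the \emph{successful} attempt is bounded above by the combinatorial cost of that attempt, which is itself at most the total combinatorial cost of the call. Taking expectations, the expected tree size is $O(\Lambda\cK(z,w))$ for $\Gamma\cK$ and $O(\Lambda\cK'(z,w))$ for $\Gamma\cK'$, and the same bounds carry over to $c_{\mathrm{cl}}$ and $c_{\mathrm{cl}}'$. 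Plugging back yields $\Lambda\cI(z,w)=O(\Lambda\cK(z,w))=O(1)$ and $\Lambda\cI'(z,w)=O(\Lambda\cK'(z,w))=O((z_0-z)^{-1/2})$, as desired.

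The delicate point to nail down is precisely this ``tree size $\le$ generation cost'' estimate: I must check that the cost measured in the combinatorial model genuinely dominates the number of vertices and leaves built, and that the cost of the single successful attempt is controlled by the total expected cost $\Lambda$ (which already folds in the rejected attempts). Routing the argument through the generation cost is what lets me sidestep a genuine two-variable singular analysis of the mean size $|\cK'|_{(z,w)}$ and, in particular, the question of monotonicity of the expected L-size in the $w$-variable, which Lemma~\ref{lem:monotonicity} does not supply directly. Should a size-based argument be preferred instead, the same conclusion is reachable by writing $|\cK'|_{(z,w)}\le\tfrac23\,||\cK'||_{(z,w)}\le\tfrac23\,||\cK'||_{(z,w_0)}$, using the L-size/U-size ratio bounds recorded in Lemma~\ref{lem:comp_der_bin} together with monotonicity of the expected U-size in $w$, and then estimating $||\cK'||_{(z,w_0)}$ from the $1/2$-singular behaviour of $\cK'$; but the cost-based route is cleaner and I would present that one.
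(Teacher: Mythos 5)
Your proposal is correct, and its skeleton (expected cost $=$ expected tree-generation cost $+$ expected closure cost, with the closure linear in the tree size by Proposition~\ref{prop:bijbin3conn}) is the same as the paper's; the genuine difference lies in how the expected tree size is controlled. The paper bounds the expected number of leaves directly: $||\cK||_{(z,w)}\leq||\cK||_{(z,w_0)}$ by monotonicity of the expected U-size in $w$ (Lemma~\ref{lem:monotonicity}), then converts to an L-size via the pointwise bound $||\tau||\leq 4|\tau|$ (resp.\ $||\tau||\leq 4|\cZL\star\tau|$ for $\cK'$), and finally invokes Lemma~\ref{lem:exp_size} with the $3/2$- resp.\ $1/2$-singularity of $\cK$ and $\cK'$. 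You instead dominate the output size pointwise by the combinatorial cost of the successful attempt, hence by the total cost, so that the expected size is at most $\Lambda\cK(z,w)$ (resp.\ $\Lambda\cK'(z,w)$), already estimated in Lemmas~\ref{lem:comp_bin} and~\ref{lem:comp_der_bin}. This is sound in the paper's complexity model, where atom creation is a counted primitive and the tree samplers build every node and leaf explicitly, and it is arguably more economical since it avoids a second pass of singular analysis; the paper's route has the advantage of not relying on that implementation-level coupling between cost and size, and it is the template reused later for classes (networks, connected graphs) where the two are not so tightly linked. One caveat on your fallback sketch: the quantity the closure cost actually requires is the expected U-size $||\cK'||_{(z,w)}$ (the number of leaves), and Lemma~\ref{lem:exp_size} controls only expected L-sizes; this is why the paper's chain runs U-size $\to$ monotonicity in $w$ $\to$ ratio bound to an L-size $\to$ singular estimate, in that order, rather than bounding $|\cK'|_{(z,w)}$ by $||\cK'||_{(z,w)}$ and then trying to estimate $||\cK'||_{(z,w_0)}$ directly from the $1/2$-singularity. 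Since you present the cost-based route as your main argument, this does not affect the validity of your proof.
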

\begin{proof}
As stated in Proposition~\ref{prop:bijbin3conn} and proved in~\cite{FuPoSc05}, the closure-mapping has linear
time complexity, i.e., there exists a constant $\lambda$ such that the cost of
closing any binary tree $\kappa$ is at most $\lambda\cdot||\kappa||$.
Recall that $\Gamma\cI(z,w)$ calls $\Gamma \cK(z,w)$ and closes the binary tree
generated. Hence
$$
\Lambda \cI(z,w)\leq \Lambda \cK(z,w)+\lambda\cdot ||\cK||_{(z,w)}\leq \Lambda \cK(z,w)+\lambda\cdot ||\cK||_{(z,w_0)},
$$
where the second inequality results from the monotonicity property of expected sizes (Lemma~\ref{lem:monotonicity}).
Again we use the fact that, for $\tau\in\cK$, $||\tau||\leq 3|\tau|+1$,
so $||\tau||\leq 4|\tau|$. Hence 
$$
\Lambda \cI(z,w)\leq \Lambda \cK(z,w)+4\lambda\cdot |\cK|_{(z,w_0)}.
$$
As the class $\cK$ is $3/2$-singular, the expected size $|\cK|_{(z,w_0))}$ is $O(1)$
when $z\to z_0$. In addition, according to Lemma~\ref{lem:comp_bin}, 
$\Lambda \cK(z,w)$ is $O(1)$ when $(z,w)\to (z_0,w_0)$.
Hence $\Lambda\cI(z,w)$ is $O(1)$. 

Similarly, for $\cI'$, we have 
$$
\Lambda \cI'(z,w)\leq \Lambda \cK'(z,w)+\lambda\cdot ||\cK'||_{(z,w))}\leq \Lambda \cK'(z,w)+4\lambda\cdot |\cZL\star\cK'|_{(z,w_0)}.
$$
As the class $\cK'$ is $1/2$-singular (and so is $\cZL\star\cK'$), 
the expected size  $|\cZL\star\cK'|_{(z,w_0)}$ is $O((z_0-z)^{-1/2})$ when $z\to z_0$.
In addition we have proved in Lemma~\ref{lem:comp_der_bin} that $\Lambda \cK'(z,w)$
is $O((z_0-z)^{-1/2})$. 
Therefore $\Lambda\cI'(z,w)$ is $O((z_0-z)^{-1/2})$.
\end{proof}

\begin{lemma}[rooted irreducible dissections]\label{lem:comp_root_irr}
Let $(z_0,w_0)$ be a singular point of $\cI$. Then,  the expected complexities
of the Boltzmann samplers for $\cJ$ and $\cJ'$---described respectively in Section~\ref{sec:sampI} and~\ref{sec:sampIp}---satisfy, as $(z,w)\to (z_0,w_0)$:
\begin{eqnarray*}
\Lambda \cJ(z,w)&=&O\ (1),\\
\Lambda \cJ'(z,w)&=&O\left((z_0-z)^{-1/2}\right).
\end{eqnarray*}
\end{lemma}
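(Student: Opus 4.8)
The plan is to read off both bounds directly from the computation rules for expected complexities (Lemma~\ref{lem:comp_rules}, Figure~\ref{fig:comp_rules}), feeding in the estimates already obtained for $\cI$ and $\cI'$ in Lemma~\ref{lem:comp_irr}, namely $\Lambda\cI(z,w)=O(1)$ and $\Lambda\cI'(z,w)=O((z_0-z)^{-1/2})$. Since $\cJ$ and $\cJ'$ are obtained from $\cI$ and $\cI'$ only by products with the basic classes $\cZL,\cZU$ and a finite sum (the factor $3$), no recursion is involved and every intervening expected complexity stays finite for $(z,w)$ below the singular point, so the pitfall flagged in Remark~\ref{rk:finite} does not occur. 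Note also that $(z_0,w_0)$ is a singular point of $\cJ$ as well, since $\cJ$ differs from $\cI$ by polynomial factors in $\cZL,\cZU$ (cf.\ Lemma~\ref{lem:sing_diss}).

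For $\cJ$, I would start from the identity $\cJ=3\star\cZL\star\cZU\star\cI$ that defines the sampler $\Gamma\cJ(z,w)$. The Boltzmann samplers for the basic classes $\cZL$ and $\cZU$ each have unit cost, and the factor $3$ only adds the cost of one uniform choice among three outer black vertices; thus each of these contributes $O(1)$. Applying the product rule $\Lambda(\cA\star\cB)=\Lambda\cA+\Lambda\cB$ repeatedly gives
$$
\Lambda\cJ(z,w)=O(1)+\Lambda\cI(z,w).
$$
Since $\Lambda\cI(z,w)=O(1)$ by Lemma~\ref{lem:comp_irr}, this yields $\Lambda\cJ(z,w)=O(1)$.

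For $\cJ'$, I would use the derived identity $\cJ'=3\star\cZU\star\cI+3\star\cZL\star\cZU\star\cI'$, which governs $\Gamma\cJ'(z,w)$. Writing $\cA:=3\star\cZU\star\cI$ and $\cB:=3\star\cZL\star\cZU\star\cI'$, the product rule gives $\Lambda\cA(z,w)=O(1)+\Lambda\cI(z,w)=O(1)$ and $\Lambda\cB(z,w)=O(1)+\Lambda\cI'(z,w)=O((z_0-z)^{-1/2})$. By the sum rule, $\Lambda\cJ'(z,w)$ equals $1$ plus a convex combination of $\Lambda\cA(z,w)$ and $\Lambda\cB(z,w)$ (the weights being the two branching probabilities, which lie in $[0,1]$ and sum to $1$), so that
$$
\Lambda\cJ'(z,w)\le 1+\Lambda\cA(z,w)+\Lambda\cB(z,w)=O((z_0-z)^{-1/2}),
$$
the dominant contribution coming from $\Lambda\cI'$.

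The argument is essentially mechanical once Lemma~\ref{lem:comp_irr} is in place, so I do not expect a genuine obstacle here; the only points needing care are the bookkeeping of which summand carries the divergent term and the (easily checked) finiteness of all intervening quantities. The real analytic work — bounding the closure-mapping cost and controlling the divergence rate of the expected sizes of binary trees — has already been done in the chain of lemmas culminating in Lemma~\ref{lem:comp_irr}.
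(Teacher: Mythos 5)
Your proposal is correct and follows essentially the same route as the paper: both derive the bounds from the identities $\cJ=3\star\cZL\star\cZU\star\cI$ and $\cJ'=3\star\cZL\star\cZU\star\cI'+3\star\cZU\star\cI$ together with the sum/product computation rules, arriving at $\Lambda\cJ(z,w)=O(1)$ and $\Lambda\cJ'(z,w)\leq 1+\Lambda\cI(z,w)+\Lambda\cI'(z,w)=O((z_0-z)^{-1/2})$ via Lemma~\ref{lem:comp_irr}. The extra remarks on finiteness and on $(z_0,w_0)$ being singular for $\cJ$ are harmless additions.
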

\begin{proof}
The sampler $\Gamma \cJ(z,w)$ is directly obtained from $\Gamma \cI(z,w)$, according to the identity $\cJ=3\star\cZL\star\cZU\star\cI$, so $
\Lambda\cJ(z,w)=\Lambda\cI(z,w)$, which is $O(1)$ as $(z,w)\to(z_0,w_0)$.

The sampler $\Gamma\cJ'(z,w)$ is obtained from $\Gamma \cI(z,w)$
and $\Gamma \cI'(z,w)$, according to the identity $\cJ'=3\star\cZL\star\cZU\star\cI'+3\star\cZU\star\cI$. Hence, 
$\Lambda\cJ'(z,w)\leq 1+\Lambda\cI(z,w)+\Lambda\cI'(z,w)$. 
According to Lemma~\ref{lem:comp_irr}, 
$\Lambda\cI(z,w)$ and $\Lambda\cI'(z,w)$ are respectively $O(1)$ and 
$O((z_0-z)^{-1/2})$ when $(z,w)\to (z_0,w_0)$. Hence $\Lambda\cJ'(z,w)$ is $O((z_0-z)^{-1/2})$.
\end{proof}

\begin{lemma}[admissible rooted irreducible dissections]\label{comp:Ia}
Let $(z_0,w_0)$ be a singular point of $\cI$. Then,  the expected complexities
of the Boltzmann samplers for $\cJa$ and $\cJa'$---described respectively in Section~\ref{sec:sampI} and~\ref{sec:sampIp}---satisfy, as $(z,w)\to (z_0,w_0)$:
\begin{eqnarray*}
\Lambda \cJa(z,w)&=&O\ (1),\\
\Lambda \cJa'(z,w)&=&O\left((z_0-z)^{-1/2}\right).
\end{eqnarray*}
\end{lemma}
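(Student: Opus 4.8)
The plan is to treat both $\Gamma\cJa(z,w)$ and $\Gamma\cJa'(z,w)$ as pure rejection samplers and to invoke the rejection-complexity lemma (Lemma~\ref{lem:target}). The sampler $\Gamma\cJa(z,w)$ repeats $\Gamma\cJ(z,w)$ until the output lands in $\cJa\subset\cJ$, i.e.\ it uses the rejection function $p(\delta)=\mathbf{1}[\delta\in\cJa]$; hence
$$
\Lambda\cJa(z,w)=\frac{\Lambda\cJ(z,w)}{\pacc},\qquad \pacc=\frac{\Ja(z,w)}{J(z,w)},
$$
where the acceptance probability $\pacc$ is exactly the Boltzmann mass of the admissible subclass inside $\cJ$. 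In the same way $\Gamma\cJa'(z,w)$ repeats $\Gamma\cJ'(z,w)$ until the output is in $\cJa'\subset\cJ'$, so that
$$
\Lambda\cJa'(z,w)=\frac{\Lambda\cJ'(z,w)}{\pacc'},\qquad \pacc'=\frac{\Ja'(z,w)}{J'(z,w)}.
$$

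Since the numerators $\Lambda\cJ(z,w)=O(1)$ and $\Lambda\cJ'(z,w)=O((z_0-z)^{-1/2})$ are already controlled by Lemma~\ref{lem:comp_root_irr}, the whole argument reduces to showing that $\pacc$ and $\pacc'$ stay bounded away from $0$ as $(z,w)\to(z_0,w_0)$. To get this I would feed in the singularity data of Section~\ref{sec:sing_beh}. The class $\cJ$ is $3/2$-singular with the singular points of $\cI$ (Lemma~\ref{lem:sing_diss}), while the chain of isomorphisms $\cJa\simeq\cQ\simeq\vec{\cM_3}\simeq 2\star\vec{\cGt}$ together with Lemma~\ref{lem:sing_3_conn} shows that $\cJa$ is also $3/2$-singular and shares these singular points. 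Because the exponent $3/2$ is positive, both $J(z,w)$ and $\Ja(z,w)$ converge to finite positive constants at $(z_0,w_0)$, so $\pacc\to\Ja(z_0,w_0)/J(z_0,w_0)>0$; combined with $\Lambda\cJ(z,w)=O(1)$ this yields $\Lambda\cJa(z,w)=O(1)$.

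For the derived classes I would use that derivation lowers the singular exponent by one and leaves the singular points unchanged (it only scales the coefficient $(n,m)$ by $n$, cf.\ the argument in Lemma~\ref{lem:sing_3_conn}). Thus $\cJ'$ and $\cJa'$ are both $1/2$-singular with the same singular point $(z_0,w_0)$. As $1/2>0$, both $J'(z,w)$ and $\Ja'(z,w)$ again tend to finite positive constants, so $\pacc'$ converges to a positive constant. Plugging this and $\Lambda\cJ'(z,w)=O((z_0-z)^{-1/2})$ into the second displayed identity gives $\Lambda\cJa'(z,w)=O((z_0-z)^{-1/2})$, as claimed.

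The main obstacle is precisely the positivity of the two limiting acceptance probabilities: one must rule out that rejection becomes catastrophic (i.e.\ $\pacc\to 0$) as the singularity is approached. This is guaranteed by the fact that the admissible subclass and the full class share the same singular points and have a nonnegative singular exponent, so that both the accepted-mass series and the total-mass series tend to strictly positive limits. A minor point to verify first (cf.\ Remark~\ref{rk:finite}) is that all the expected complexities involved are finite for $(z,w)$ strictly below the singularity, so that the identities above are legitimate before passing to the limit.
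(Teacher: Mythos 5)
Your overall strategy is the same as the paper's: both arguments view $\Gamma\cJa(z,w)$ and $\Gamma\cJa'(z,w)$ as rejection samplers over $\cJ$ and $\cJ'$, compute $\pacc=\Ja(z,w)/J(z,w)$ and $\pacc'=\Ja'(z,w)/J'(z,w)$, and show these converge to positive constants using the facts that $\cJ$, $\cJa\simeq\vec{\cM_3}$ are $3/2$-singular and their derived classes are $1/2$-singular with the same singular points, so all four series tend to positive limits. That part of your argument is correct and matches the paper.

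There is, however, one accounting step you skip. In the paper's combinatorial complexity model, the rejection function $p(\delta)=\mathbf{1}[\delta\in\cJa]$ is not free to evaluate: at \emph{every} attempt one must test admissibility, i.e.\ search for an internal path of length $3$ from the root vertex to the opposite outer vertex, and this costs time linear in the size of the dissection. So the correct numerator in the rejection identity is not $\Lambda\cJ(z,w)$ but the expected cost of ``generate \emph{and} test,'' which the paper bounds by $\Lambda\cJ(z,w)+\lambda\cdot\|\cJ\|_{(z,w)}\leq\Lambda\cJ(z,w)+\lambda\cdot\|\cJ\|_{(z,w_0)}$ using the linearity of the test and the monotonicity of expected sizes (Lemma~\ref{lem:monotonicity}); since $\cJ$ is $3/2$-singular this extra term is $O(1)$, and the analogous term $\lambda\cdot\|\cJ'\|_{(z,w_0)}$ for the derived case is $O((z_0-z)^{-1/2})$ because $\cJ'$ is $1/2$-singular. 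The omission does not change the final bounds, but without it your identity $\Lambda\cJa(z,w)=\Lambda\cJ(z,w)/\pacc$ undercounts the work actually performed, so you should insert this extra per-attempt cost before invoking Lemma~\ref{lem:target}.
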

\begin{proof}
Call $\ol{\Gamma}{\cJ}(z,w)$ the sampler that calls 
$\Gamma\cJ(z,w)$ and checks
if the dissection is admissible. 
By definition, $\Gamma\cJa(z,w)$ repeats calling $\ol{\Gamma}{\cJ}(z,w)$
until the dissection generated is  in $\cJa$. Hence the probability of acceptance $\pacc$
at each attempt is equal to $\Ja(z,w)/J(z,w)$, i.e., is equal to $\vec{M_3}(z,w)/J(z,w)$ (the isomorphism $\cJa\simeq\vec{\cM_3}$ yields $\Ja(z,w)=\vec{M_3}(z,w)$). Call
$\ol{\Lambda}\cJ(z,w)$ the expected complexity of $\ol{\Gamma}\cJ(z,w)$. 
By Lemma~\ref{lem:comp_root_irr},
$$
\Lambda \cJa(z,w)=\frac{1}{\pacc}\ol{\Lambda} \cJ(z,w)=\frac{J(z,w)}{\vec{M_3}(z,w)}\ol{\Lambda} \cJ(z,w).
$$
We recall from Section~\ref{sec:sing_beh} that the singular points are the same for rooted 3-connected planar graphs/maps, for bicolored binary trees, and for 
irreducible dissections. Hence $(z_0,w_0)$ is a singular point for $\vec{M_3}(z,w)$.
The classes $\cJ$ and $\vec{\cM_3}\simeq 2\star\vec{\cG_3}$ are $3/2$-singular
by Lemma~\ref{lem:sing_diss} and Lemma~\ref{lem:sing_3_conn}, respectively.
Hence, when $(z,w)\to(z_0,w_0)$, the series $J(z,w)$ and $\vec{M_3}(z,w)$
are $\Theta(1)$, even more they converge to positive constants (because
 these functions are rational in terms of 
bivariate series for binary trees). Hence $\pacc$ also 
converges to a positive constant, so it remains to prove that $\ol{\Lambda} \cJ(z,w)$
is $O(1)$. 
Testing admissibility (i.e., the existence of an internal path of length 3 
connecting the root-vertex to the opposite outer vertex) has clearly linear time complexity.
Hence, for some constant $\lambda$, 
$$\ol{\Lambda} \cJ(z,w)\leq\Lambda\cJ(z,w)+\lambda\cdot||\cJ||_{(z,w)}\leq \Lambda\cJ(z,w)+\lambda\cdot||\cJ||_{(z,w_0)},$$
where the second inequality results from the monotonicity of the expected sizes (Lemma~\ref{lem:monotonicity}).
Both $\Lambda\cJ(z,w)$ and $||\cJ||_{(z,w_0)}$ are $O(1)$ when $z\to z_0$ 
(by Lemma~\ref{lem:comp_root_irr} and because $\cJ$ is $3/2$-singular, respectively). Hence $\ol{\Lambda}\cJ(z,w)$ is also $O(1)$, so $\Lambda \cJa(z,w)$ is also $O(1)$.

The proof  for $\cJa'$ is similar. First, we have
$$
\Lambda \cJa'(z,w)=\frac{J'(z,w)}{\vec{M_3}'(z,w)}\cdot\ol{\Lambda} \cJ'(z,w),
$$
where $\ol{\Lambda} \cJ'(z,w)$ is the expected cost of a call to $\Gamma\cJ'(z,w)$
followed by an admissibility test.
Both series $J'(z,w)$ and $\vec{M_3}'(z,w)$ are $1/2$-singular, even more,
they  converge to positive constants as $(z,w)\to(z_0,w_0)$ (again, because these functions
are rational in terms of bivariate series of binary trees).  
Hence, when $(z,w)\to(z_0,w_0)$, the quantity $J'(z,w)/\vec{M_3}'(z,w)$ converges to a positive constant.
Moreover, according to the linear complexity of admissibility testing, we have
$\ol{\Lambda} \cJ'(z,w)\leq\Lambda \cJ'(z,w)+\lambda\cdot||\cJ'||_{(z,w_0)}$.
Both quantities $\Lambda \cJ'(z,w)$ and $||\cJ'||_{(z,w_0)}$ are $O((z_0-z)^{-1/2})$. Hence $\Lambda \cJa'(z,w)$
is also $O((z_0-z)^{-1/2})$.
\end{proof}

\subsubsection{Complexity of the Boltzmann samplers for 3-connected maps}

\begin{lemma}[rooted 3-connected maps]\label{lem:comp_M3}
Let $(z_0,w_0)$ be a singular point of $\cMt$. Then the expected complexities
of the Boltzmann samplers for $\vec{\cM_3}$ and $\vec{\cM_3}'$ satisfy respectively, as $(z,w)\to (z_0,w_0)$:
\begin{eqnarray*}
\Lambda \vec{\cM_3}(z,w)&=&O\ (1),\\
\Lambda \vec{\cM_3}'(z,w)&=&O\left((z_0-z)^{-1/2}\right).
\end{eqnarray*}
\end{lemma}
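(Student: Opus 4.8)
The plan is to read off the two bounds from the two facts just established, since the samplers $\Gamma\vec{\cM_3}(z,w)$ and $\Gamma\vec{\cM_3}'(z,w)$ are built from $\Gamma\cJa(z,w)$ and $\Gamma\cJa'(z,w)$ by merely applying the $\mathrm{Primal}$ construction to the dissection produced, and this construction runs in linear time (Mullin--Schellenberg, composed with the root-addition mapping). So each expected complexity splits as the cost of the underlying dissection sampler plus the cost of $\mathrm{Primal}$, the latter being at most a constant $\lambda$ times the size of the dissection $\delta$; and by Euler's relation for a quadrangular dissection the numbers of black vertices, edges and faces are all within constant factors of one another, so it is harmless to measure this size by the L-size $|\delta|$. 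First I would note that a singular point $(z_0,w_0)$ of $\cMt$ is also a singular point of $\cK$, $\cI$ and $\cJ$, all these classes sharing the same singular points by Lemmas~\ref{lem:sing_bin}, \ref{lem:sing_diss} and~\ref{lem:sing_3_conn}; this is what licenses the use of Lemma~\ref{comp:Ia} at this point.

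For the first bound I would write, using the monotonicity of expected sizes (Lemma~\ref{lem:monotonicity}) to freeze the second variable at $w_0$,
\begin{equation*}
\Lambda\vec{\cM_3}(z,w)\leq\Lambda\cJa(z,w)+\lambda\cdot|\cJa|_{(z,w)}\leq\Lambda\cJa(z,w)+\lambda\cdot|\cJa|_{(z,w_0)}.
\end{equation*}
By Lemma~\ref{comp:Ia} the first term is $O(1)$. For the second term, the isomorphism $\cJa\simeq\vec{\cM_3}\simeq 2\star\vec{\cG_3}$ together with Lemma~\ref{lem:sing_3_conn} shows that $\cJa$ is $3/2$-singular, so Lemma~\ref{lem:exp_size} (the case $\alpha>1$) gives that $|\cJa|_{(z,w_0)}$ converges to a constant as $z\to z_0$. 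Hence $\Lambda\vec{\cM_3}(z,w)=O(1)$.

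The second bound follows the same template with the derived classes:
\begin{equation*}
\Lambda\vec{\cM_3}'(z,w)\leq\Lambda\cJa'(z,w)+\lambda\cdot|\cJa'|_{(z,w_0)}.
\end{equation*}
Since derivation lowers the singularity exponent by $1$, the class $\cJa'$ is $1/2$-singular, so Lemma~\ref{lem:exp_size} (the case $0<\alpha<1$) yields $|\cJa'|_{(z,w_0)}=O((z_0-z)^{-1/2})$; and Lemma~\ref{comp:Ia} gives $\Lambda\cJa'(z,w)=O((z_0-z)^{-1/2})$ as well. Adding the two terms gives $\Lambda\vec{\cM_3}'(z,w)=O((z_0-z)^{-1/2})$.

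I do not expect a genuine obstacle in this lemma: the argument is a direct assembly of Lemma~\ref{comp:Ia}, Lemma~\ref{lem:sing_3_conn}, Lemma~\ref{lem:exp_size} and Lemma~\ref{lem:monotonicity}, and all the analytic content (the $3/2$-singularity of $\vec{\cM_3}$, hence the $1/2$-singularity of its derivative, resting on the rationality of $\vec{M_3}(z,w)$ in the binary-tree series and on the coincidence of singular points along $\cK\simeq\cI$, $\cJa\simeq\vec{\cM_3}$) is imported from Section~\ref{sec:sing_beh} rather than reproved here. The only points worth stating carefully are the linear-time claim for $\mathrm{Primal}$ and the Euler-type equivalence of the size parameters that lets the $\mathrm{Primal}$ cost be controlled by a single expected size; both are routine.
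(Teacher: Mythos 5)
Your proof is correct, and it is precisely the alternative that the paper itself flags at the end of its own proof (``a proof following the same lines as in Lemma~\ref{lem:comp_irr} would also be possible''). The paper's actual argument is shorter and rests on a stronger observation: the primal-map construction is just a reinterpretation of the half-edge encoding of the rooted map, so it costs nothing, and one gets the exact equalities $\Lambda \vec{\cM_3}(z,w)=\Lambda \cJa(z,w)$ and $\Lambda \vec{\cM_3}'(z,w)=\Lambda \cJa'(z,w)$, after which Lemma~\ref{comp:Ia} finishes immediately. You instead charge a genuine linear cost $\lambda\cdot|\delta|$ for $\mathrm{Primal}(\delta)$ and absorb it via the expected sizes, which forces the extra bookkeeping that $\cJa\simeq\vec{\cM_3}$ is $3/2$-singular (so $|\cJa|_{(z,w_0)}=O(1)$ by Lemma~\ref{lem:exp_size}) and $\cJa'$ is $1/2$-singular (so $|\cJa'|_{(z,w_0)}=O((z_0-z)^{-1/2})$), together with the monotonicity of expected sizes to freeze $w$ at $w_0$. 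What your version buys is robustness: it does not depend on any claim about the data structure, only on the linear-time statement for the primal-map construction and on the Euler-type comparability of the size parameters (which, as in Lemma~\ref{lem:comp_irr}, needs the four non-asymmetric dissections to be excluded so that the L-size is nonzero). What the paper's version buys is brevity and exact constants. Both are sound, and all the analytic inputs you cite (coincidence of singular points across $\cK$, $\cI$, $\cJ$, $\vec{\cM_3}$; the singularity exponents) are indeed established in Section~\ref{sec:sing_beh} and used identically in Lemma~\ref{comp:Ia}.
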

\begin{proof}
Recall that $\Gamma\vec{\cM_3}(z,w)$ ($\Gamma\vec{\cM_3}'(z,w)$, resp.) calls $\Gamma\cJa(z,w)$ ($\Gamma\cJa'(z,w)$, resp.) and
returns the primal map of the dissection. 
The primal-map construction is in fact just a reinterpretation of the combinatorial
encoding of rooted maps (in particular when dealing with the half-edge data structure). Hence $\Lambda \vec{\cM_3}(z,w)=\Lambda \cJa(z,w)$ and $\Lambda \vec{\cM_3}'(z,w)=\Lambda \cJa'(z,w)$.
This concludes the proof, according to the estimates for $\Lambda \cJa(z,w)$ and $\Lambda \cJa'(z,w)$ given in Lemma~\ref{comp:Ia}. (A proof following
the same lines as in Lemma~\ref{lem:comp_irr} would also be possible.)
\end{proof}

\subsubsection{Complexity of the Boltzmann samplers for 3-connected planar graphs}

\begin{lemma}[rooted 3-connected planar graphs]\label{lem:comp_samp_Gt}
Let $(z_0,w_0)$ be a singular point of $\cGt$. Then the expected complexities
of the Boltzmann samplers for $\vec{\cG_3}$, $\vec{\cG_3}'$ and 
$\ul{\vec{\cG_3}}$ satisfy respectively, as $(z,w)\to (z_0,w_0)$:
\begin{eqnarray*}
\Lambda \vec{\cG_3}(z,w)&=&O\ (1),\\
\Lambda \vec{\cG_3}'(z,w)&=&O\left((z_0-z)^{-1/2}\right),\\
\Lambda \ul{\vec{\cG_3}}(z,w)&=&O\left((z_0-z)^{-1/2}\right).
\end{eqnarray*}
\end{lemma}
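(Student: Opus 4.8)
The plan is to reduce all three estimates to the bounds for rooted 3-connected \emph{maps} already established in Lemma~\ref{lem:comp_M3}, using the fact that passing from maps to graphs via Whitney's theorem costs nothing. First I would observe that the sampler $\Gamma\cGtr(z,w)$ of Section~\ref{sec:bolz3conn} merely calls $\Gamma\cMtr(z,w)$ and forgets the embedding, an operation of zero combinatorial cost, and likewise $\Gamma\cGtr'(z,w)$ of Section~\ref{sec:derived_3conn} just calls $\Gamma\cMtr'(z,w)$ and forgets the embedding. Hence
$$
\Lambda\cGtr(z,w)=\Lambda\cMtr(z,w),\qquad \Lambda\cGtr'(z,w)=\Lambda\cMtr'(z,w).
$$
Since $\cMt\simeq 2\star\cGt$, the series $M_3$ and $G_3$ differ only by the constant factor $2$ and therefore share the same singular points; thus a singular point $(z_0,w_0)$ of $\cGt$ is also a singular point of $\cMt$, and Lemma~\ref{lem:comp_M3} applies at $(z_0,w_0)$. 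This immediately yields $\Lambda\cGtr(z,w)=O(1)$ and $\Lambda\cGtr'(z,w)=O((z_0-z)^{-1/2})$ as $(z,w)\to(z_0,w_0)$, which are the first two claims.

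For the third claim I would invoke Corollary~\ref{lem:change_root}. Recall from Section~\ref{sec:derived_3conn} that $\Gamma\ul{\cGtr}(z,w)$ is obtained by applying \LtoU to the class $\cGtr$, so it suffices to check that both $\alphaUL$ and $\alphaLU$ are finite for $\cGtr$. The Euler relation for 3-connected planar graphs gives $\alphaUL=3$ (as already stated in Section~\ref{sec:derived_3conn}), while the minimum-degree-$3$ condition forces at least $3n/2$ edges on $n$ vertices, so the ratio of L-atoms to U-atoms stays bounded and $\alphaLU$ is finite as well. Writing $c:=\alphaUL\cdot\alphaLU$, the first bullet of Corollary~\ref{lem:change_root} gives $\Lambda\ul{\cGtr}(z,w)\leq c\cdot\Lambda\cGtr'(z,w)$, and combining this with the second claim produces $\Lambda\ul{\cGtr}(z,w)=O((z_0-z)^{-1/2})$.

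The argument is essentially bookkeeping, and the only points requiring a little care—the main (minor) obstacle—are confirming that the vertex/edge ratio of edge-rooted 3-connected planar graphs is bounded both above \emph{and} below (so that \LtoU genuinely applies), and pinning down that the singular points of the maps class coincide with those of the graphs class, so that Lemma~\ref{lem:comp_M3} may legitimately be invoked at the prescribed $(z_0,w_0)$.
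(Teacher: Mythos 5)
Your proposal is correct and follows essentially the same route as the paper's proof: the first two bounds are transferred verbatim from Lemma~\ref{lem:comp_M3} since forgetting the embedding has no cost, and the third follows from Corollary~\ref{lem:change_root} applied to \LtoU on $\vec{\cG_3}$, with the Euler relation giving $\alphaUL=3$ and the minimum-degree-three condition giving $\alphaLU=2/3$ (so $c=2$ in the paper's version). The only difference is that you leave $\alphaLU$ as ``finite'' rather than pinning it to $2/3$, which is immaterial for the $O$-estimate.
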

\begin{proof}
The sampler $\Gamma \vec{\cG_3}(z,w)$ ($\Gamma \vec{\cG_3}'(z,w)$, resp.) is directly obtained from $\Gamma \vec{\cM_3}(z,w)$ ($\Gamma \vec{\cM_3}'(z,w)$, resp.) by forgetting the 
embedding. Hence $\Lambda \vec{\cG_3}(z,w)=\Lambda \vec{\cM_3}(z,w)$ and 
$\Lambda \vec{\cG_3}'(z,w)=\Lambda \vec{\cM_3}'(z,w)$, which are---by 
Lemma~\ref{lem:comp_M3}---respectively
$O(1)$ and $O((z_0-z)^{-1/2})$ as $(z,w)\to (z_0,w_0)$.

Finally, the sampler $\Gamma \ul{\vec{\cG_3}}(z,w)$ is obtained from $\Gamma \vec{\cG_3}'(z,w)$ by applying the procedure \LtoU to the class $\vec{\cG_3}$.
By the Euler relation, $\alphaUL=3$ (given asymptotically by triangulations)
and $\alphaLU=2/3$ (given asymptotically by cubic graphs).
Thus, by Corollary~\ref{lem:change_root}, $\Lambda\ul{\vec{\cG_3}}(z,w)\leq 2\cdot\Lambda\vec{\cG_3}'(z,w)$, which ensures that $\Lambda\ul{\vec{\cG_3}}(z,w)$ is 
$O((z_0-z)^{-1/2})$.
\end{proof}

\subsubsection{Complexity of the Boltzmann samplers for networks}
At first 
we need to introduce the following notations. Let $\cC$ be a class endowed with a 
Boltzmann sampler $\Gamma\cC(x,y)$ and let $\gamma\in\cC$. Then $\Lambda\cC^{(\gamma)}(x,y)$ 
denotes the expected complexity of $\Gamma\cC(x,y)$ conditioned on the fact that the 
object generated is $\gamma$. If $\Gamma\cC(x,y)$ uses rejection, i.e., repeats building objects and rejecting them until finally an object is accepted, then $\Lambda
\cC^{\mathrm{rej}}(x,y)$ denotes the expected complexity of $\Gamma\cC(x,y)$
without counting the last (successful) attempt.

\begin{lemma}[networks]\label{lem:comp_D}
Let $(z_0,y_0)$ be a singular point of $\cD$.
Then,  the expected complexity of the Boltzmann sampler
for $\cD$---described in Section~\ref{sec:2conn3conn}---satisfies 
$$
\Lambda \cD(z,y_0)=O\left(1\right)\ \mathrm{as}\ z\to z_0.
$$
\end{lemma}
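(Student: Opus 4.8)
The plan is to analyze the Boltzmann sampler $\Gamma\cD(z,y)$ recursively, mirroring the decomposition grammar (N) and using the computation rules of Figure~\ref{fig:comp_rules} together with the singularity information of Lemma~\ref{lem:sing_networks}. The central difficulty, and the reason this is not a one-line corollary of the complexity rules, is that the sampler for $\cD$ is \emph{mutually recursive}: $\cD$ calls $\cS$, $\cP$, $\cH$, and these call back into $\cD$ (for instance $\cH=\vec{\cGt}\circ_U\cD$ substitutes a fresh network at every edge of a 3-connected core). So I cannot simply apply the computation rules top-down; I must set up a system of inequalities for the expected complexities $\Lambda\cD$, $\Lambda\cS$, $\Lambda\cP$, $\Lambda\cH$ and solve it, being careful (as warned in Remark~\ref{rk:finite}) to verify finiteness before manipulating the recursive identities.

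First I would record the key structural fact that makes everything converge: a network generated by $\Gamma\cD(z,y)$ has its expected L-size and U-size controlled at the singular point. Since $\cD$ is $3/2$-singular (Lemma~\ref{lem:sing_networks}), Lemma~\ref{lem:exp_size} with $\alpha=3/2>1$ gives that $|\cD|_{(z,y_0)}$ converges to a positive constant as $z\to z_0$; by monotonicity (Lemma~\ref{lem:monotonicity}) the same holds for the U-size, and indeed for all of $\cS,\cP,\cH$ which share the singular point. The intuition behind the whole lemma is that the expected total \emph{size} of a network drawn at $(z_0,y_0)$ is finite, and since the sampler builds the object by a number of primitive operations proportional to its size (there is essentially no rejection inside $\Gamma\cD$ apart from the terminal calls to $\Gamma\vec{\cG_3}$, which cost $O(1)$ by Lemma~\ref{lem:comp_samp_Gt}), the expected complexity should likewise be finite.

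Concretely, I would apply the computation rules to the four equations. The product and sum rules give $\Lambda\cS$ and $\Lambda\cD$ in terms of the others with only additive overhead. The $\Set_{\geq d}$ rule applied to $\cP$ contributes a factor of the expected number of parallel components (bounded because $B=S+H$ stays below its radius of convergence) times $(1+\Lambda(\cS+\cH))$. The crucial term is the U-substitution rule for $\cH=\vec{\cGt}\circ_U\cD$, which yields
\begin{equation}
\Lambda\cH(z,y)=\Lambda\vec{\cG_3}(z,D(z,y))+||\vec{\cG_3}||_{(z,D(z,y))}\cdot\Lambda\cD(z,y).
\end{equation}
Here the multiplier $||\vec{\cG_3}||_{(z,D(z,y))}$ is the expected number of edges of the 3-connected core, each of which triggers a recursive call to $\Gamma\cD$. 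The plan is to show this multiplier stays strictly controlled: because $\cGt$ is $3/2$-singular and the substituted value $w=D(z,y_0)$ stays at the singular point $w_0$ as $z\to z_0$, the expected edge-count $||\vec{\cG_3}||_{(z_0,w_0)}$ converges to a positive constant. I would then combine the four rules into a single linear inequality of the form $\Lambda\cD(z,y_0)\le a + b\,\Lambda\cD(z,y_0)$ with constants $a,b$ converging as $z\to z_0$.

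The main obstacle is the self-referential coefficient $b$: a priori the recursion could diverge if the expected ``branching factor'' were $\ge 1$. I would resolve this exactly as in the proof of Lemma~\ref{lem:comp_bin}, by arguing that the aggregate expected complexity equals (up to the bounded terminal cost of the 3-connected calls) the expected total size of the generated network, which is finite at the singular point by Lemma~\ref{lem:exp_size}. In other words, rather than trying to bound $b<1$ directly, I would first establish finiteness of $\Lambda\cD(z,y_0)$ via the size argument (each primitive operation corresponds to an atom created, and the expected number of atoms is $O(1)$ since $\cD$ is $3/2$-singular), which legitimizes the algebraic manipulation of the recursive system; then solving for $\Lambda\cD(z,y_0)$ shows it converges to a finite constant, giving $\Lambda\cD(z,y_0)=O(1)$ as $z\to z_0$.
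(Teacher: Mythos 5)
Your proposal identifies the right structural ingredients (criticality of the composition scheme, $3/2$-singularity of the network classes, convergence of the expected sizes), but the route you emphasize --- setting up the linear system $X=AX+L$ for $(\Lambda\cD,\Lambda\cS,\Lambda\cP,\Lambda\cH)$ via the computation rules of Figure~\ref{fig:comp_rules} --- has a gap as you describe it. The coefficient of $\Lambda\cD$ coming from the U-substitution $\cH=\vec{\cGt}\circ_U\cD$ is $||\vec{\cGt}||_{(z,D(z,y_0))}\geq 5$, so after elimination the self-referential inequality $\Lambda\cD\leq a+b\,\Lambda\cD$ cannot be assumed to have $b<1$, and establishing mere \emph{finiteness} of $\Lambda\cD(z,y_0)$ for each fixed $z<z_0$ does not rescue it: finiteness at each $z$ forces $b(z)<1$ for that $z$, hence only $b\leq 1$ in the limit, and if the limit equals $1$ the solution $a/(1-b)$ blows up as $z\to z_0$. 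Closing this route requires the additional argument --- which the paper deploys later, for Lemma~\ref{lem:comp_G1p} --- that the expected \emph{size} satisfies the same linear recursion with a positive inhomogeneous term and is known to converge because $\cD$ is $3/2$-singular, which forces the limiting branching factor to be strictly below $1$. You do not spell this out.

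In fact the remark you relegate to a ``legitimization'' role is essentially the paper's entire proof, and it is the cleaner path: the paper bounds the \emph{conditional} expected complexity given the output $\gamma$ by $c\,||\gamma||$ and then takes expectations, using that $||\cD||_{(z,y_0)}=O(1)$ by Lemmas~\ref{lem:sing_networks} and~\ref{lem:exp_size}; no recursive system is needed. The one point that requires care there --- and that your phrasing ``the terminal calls to $\Gamma\vec{\cG_3}$ cost $O(1)$'' glosses over --- is that the cost of a call to $\Gamma\vec{\cG_3}$ \emph{conditioned on its output being} $\kappa$ is not $O(1)$: it decomposes as the object-independent expected cost of the rejected attempts, bounded by $\Lambda\vec{\cG_3}(z,w)=O(1)$ via Lemma~\ref{lem:comp_samp_Gt}, \emph{plus} the deterministic cost $O(||\kappa||)$ of the final successful construction through the closure-mapping. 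Summing $O(1)+O(||\kappa_i||)$ over the $3$-connected components and adding the cost $||\beta||$ of the series--parallel backbone gives $O(||\gamma||)$ because each edge of $\gamma$ appears in at most one of $\beta,\kappa_1,\ldots,\kappa_r$. With that decomposition made explicit your size argument is correct and coincides with the paper's proof; without it, the correlation between the number of $3$-connected calls and their individual costs is left uncontrolled.
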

\begin{proof}
Trakhtenbrot's decomposition ensures that a network
$\gamma\in\cD$ is a collection
of 3-connected components $\kappa_1,\ldots,\kappa_r$
(in $\cGtr$) that are assembled
together in a series-parallel backbone $\beta$ (due to
the auxiliary classes $\cS$ and $\cP$).
Moreover, if $\gamma$ is produced by the Boltzmann
sampler $\Gamma \cD(z,y_0)$,
then each of the 3-connected components $\kappa_i$
results from a call to $\Gamma \vec{\cG_3}(z,w)$,
where $w:=D(z,y_0)$.

An important point, which is proved in~\cite{BeGa}, 
is that the composition scheme to go from
rooted 3-connected planar graphs to networks is critical.
This means that  $w_0:=D(z,y_0)$ (change
of variable from 3-connected planar graphs to networks) 
is such that $(z_0,w_0)$ is a singular point of $\cGtr$.

As the series-parallel backbone is built
edge by edge, the cost of generating $\beta$
 is simply $||\beta||$ (the number of edges of $\beta$); and the
expected cost of generating $\kappa_i$,
 for $i\in [1..r]$, is $\Lambda
\vec{\cG_3}^{(\kappa_i)}(z,w)$. Hence
 \begin{equation}
 \Lambda
\cD^{(\gamma)}(z,y_0)=||\beta||+\sum_{i=1}^r\Lambda\vec{\cG_3}^{(\kappa_i)}(z,w).
 \end{equation}
 
\begin{claim}
 There exists a constant $c$ such that, for every
$\kappa\in\vec{\cG_3}$, 
 $$
 \Lambda\vec{\cG_3}^{(\kappa)}(z,w)\leq c||\kappa||\ \ \
\mathrm{as}\ \ (z,w)\to(z_0,w_0).
 $$
 \end{claim}
\smallskip

\ni{\it Proof of the claim.}
 The Boltzmann sampler $\Gamma \vec{\cG_3}(z,w)$ is
obtained by repeated attempts to 
 build binary trees until the tree is successfully
generated (no early interruption) and gives
 rise to a 3-connected planar graph (admissibility
condition). For $\kappa\in\cK$, call $c^{(\kappa)}$ the cost of building
$\kappa$ (i.e., generate the underlying 
 binary tree and perform the closure). Then
 $$
 \Lambda \vec{\cG_3}^{(\kappa)}(z,w)=\Lambda
\vec{\cG_3}^{\mathrm{rej}}(z,w)+c^{(\kappa)}.
 $$
  
 Notice that $\Lambda
\vec{\cG_3}^{\mathrm{rej}}(z,w)\leq \Lambda
\vec{\cG_3}(z,w)$,
 which is $O(1)$ as $(z,w)\to(z_0,w_0)$. Moreover, the
closure-mapping has linear time
 complexity. Hence there exists a constant $c$
independent from $\kappa$  and from $z$ such that
    $\Lambda \vec{\cG_3}^{(\kappa)}(z,w)\leq c\
\!||\kappa||$ as $z\to z_0$.
\qedclaim
 
 The claim ensures that, upon taking $c>1$, every
$\gamma\in\cD$ satisfies 
  $$
 \Lambda \cD^{(\gamma)}(z,y_0)\leq
c(||\beta||+\sum_{i=1}^r||\kappa_i||)\ \ \
\mathrm{as}\ \ z\to z_0.
 $$ 
Since each edge of $\gamma$ is represented at most
once in $\beta\cup\kappa_1\cup\ldots\cup\kappa_r$, we
also have $\Lambda D^{(\gamma)}(z,y_0)\leq
c||\gamma||$.
Hence, when $z\to z_0$, 
$\Lambda \cD^{(\gamma)}(z,y_0)\leq 3c\cdot(|\gamma|+1)$ (by the Euler relation), which yields
$$
\Lambda \cD(z,y_0)\leq 3c\cdot |\cZL\star\cD|_{(z,y_0)}.
$$
As the class $\cD$ is $3/2$-singular (clearly, so is $\cZL\star\cD$),
the expected size $|\cZL\star\cD|_{(z,y_0)}$ is $O(1)$ when $z\to z_0$. 
Hence $\Lambda \cD(z,y_0)$ is $O(1)$. 
\end{proof}

\begin{lemma}[derived networks]
Let $(z_0,y_0)$ be a singular point of $\cD$.
Then,  the expected complexity of the Boltzmann sampler
for $\cD'$---described in Section~\ref{sec:sampDp}---satisfies
$$
\Lambda \cD'(z,y_0)=O\left((z_0-z)^{-1/2}\right)\ \mathrm{as}\ z\to z_0.
$$
\end{lemma}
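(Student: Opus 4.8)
The plan is to apply the computation rules of Lemma~\ref{lem:comp_rules} (Figure~\ref{fig:comp_rules}) to the decomposition grammar for $\cD'$, which is obtained from grammar~(N) by the derivation rules of Proposition~\ref{prop:der_rules}. The first thing I would stress is a structural feature of those rules: each of them produces, in every summand, \emph{exactly one} primed factor (this is visible in $(\cA\star\cB)'=\cA'\star\cB+\cA\star\cB'$ and in $(\cA\circ_U\cB)'=\cA'\circ_U\cB+\cB'\star(\ul{\cA}\circ_U\cB)$). Consequently the primed network classes $\{\cD',\cS',\cP',\cH'\}$ satisfy a system in which the derivation descends along a single \emph{chain} (never a branching tree), and whose only terminal derived classes are $\cGtr'$ and $\ul{\cGtr}$. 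Evaluating all series at $(z,y_0)$ with the critical change of variable $w=D(z,y_0)\to w_0$, the computation rules turn this grammar into a finite linear system for the quadruple $(\Lambda\cD',\Lambda\cS',\Lambda\cP',\Lambda\cH')$. As in the proof of Lemma~\ref{lem:comp_D}, I would first record that for $z<z_0$ all the complexities involved are finite (Remark~\ref{rk:finite}), so that the system is genuinely solvable before letting $z\to z_0$.

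Next I would classify the constituents of the system by their order as $z\to z_0$. The undifferentiated network complexities $\Lambda\cD,\Lambda\cS,\Lambda\cP,\Lambda\cH$ are $O(1)$ by Lemma~\ref{lem:comp_D}, and the branching probabilities (ratios such as $S'/D'$, $H'/D'$) together with the expected sizes of the $3/2$-singular classes are bounded and converge to positive constants, using that $\cD,\cS,\cP,\cH,\cGtr$ are $3/2$-singular (Lemmas~\ref{lem:sing_networks} and~\ref{lem:sing_3_conn}) and the monotonicity of expected sizes (Lemma~\ref{lem:monotonicity}). The only divergent contributions are the terminal complexities $\Lambda\cGtr'(z,w)$ and $\Lambda\ul{\cGtr}(z,w)$, each $O((z_0-z)^{-1/2})$ by Lemma~\ref{lem:comp_samp_Gt}, and the expected sizes of the $1/2$-singular derived classes, which are $O((z_0-z)^{-1/2})$ by Lemma~\ref{lem:exp_size} (the expected U-sizes being controlled by the expected L-sizes through the bounded edge/vertex ratio of networks). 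Collecting terms, the system takes the shape $\mathbf{v}=\mathbf{s}(z)+M(z)\,\mathbf{v}$ with $\mathbf{v}=(\Lambda\cD',\Lambda\cS',\Lambda\cP',\Lambda\cH')^{\top}$, where the source $\mathbf{s}(z)=O((z_0-z)^{-1/2})$ componentwise and the non-negative matrix $M(z)$ converges to a constant matrix $M_0$.

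It then remains to invert $I-M(z)$, and this is where the main difficulty lies: I must show that $M(z)$ stays uniformly contracting, equivalently that $\rho(M_0)<1$. Because each derived term carries a single primed factor, the coefficient matrix governs a single descent chain, and its spectral radius being $<1$ is equivalent to the expected number of recursive calls to $\Gamma\cD'$ (the expected length of the chain, i.e.\ the number of levels in the descent from the root of the network to its marked vertex) staying bounded as $z\to z_0$. The key observation is that this chain \emph{terminates} at each reachable level with limiting probability bounded below by a positive constant: from $\cH'=\cGtr'\circ_U\cD+\cD'\star(\ul{\cGtr}\circ_U\cD)$ the non-recursive branch $\cGtr'\circ_U\cD$ is chosen with limiting probability $\vec{G_3}'(z_0,w_0)/H'(z_0,w_0)>0$, and from $\cS'$ the summand in which the differentiated factor is the series-junction vertex $\cZL$ (so that the marked vertex sits on the backbone) ends the chain with positive limiting probability as well. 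Hence the chain length is dominated by a geometric variable of bounded mean, so $\rho(M_0)<1$, the inverse $(I-M(z))^{-1}$ is bounded near $z_0$, and $\mathbf{v}=(I-M(z))^{-1}\mathbf{s}(z)=O((z_0-z)^{-1/2})$; in particular $\Lambda\cD'(z,y_0)=O((z_0-z)^{-1/2})$.

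I expect the contractivity of $M_0$ to be the crux of the argument, and the delicate point is to disentangle the two effects of the critical composition of $3$-connected planar graphs into networks (used in Lemma~\ref{lem:comp_D}, following~\cite{BeGa}). Criticality is what makes the terminal derived samplers $\Gamma\cGtr'$ and $\Gamma\ul{\cGtr}$ costly, i.e.\ it inflates the \emph{per-level} cost to $O((z_0-z)^{-1/2})$; but this cost is already carried by the source $\mathbf{s}(z)$. What must be checked is that criticality does \emph{not} blow up the \emph{number} of levels, and this is exactly what the positive limiting termination probability guarantees. Once this separation is granted, the conclusion follows by the same rejection and composition bookkeeping (Lemma~\ref{lem:target} and Corollary~\ref{lem:change_root}) as in the undifferentiated case treated in Lemma~\ref{lem:comp_D}.
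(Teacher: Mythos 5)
Your proposal is correct and follows essentially the same route as the paper: translate the derived grammar into a linear system $X=AX+L$ via the computation rules, check finiteness first, show the source term is $O((z_0-z)^{-1/2})$ using the critical composition with $3$-connected graphs, and invert $I-A$ near $z_0$. Your geometric-termination argument for $\rho(M_0)<1$ (positive limiting probability of ending the descent from $\cS'$ and $\cH'$) is just the probabilistic restatement of the paper's observation that $A_0$ is irreducible and strictly substochastic (rows $2$ and $4$ summing to less than $1$), so $(I-A_0)$ is invertible by standard Markov chain theory.
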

\begin{proof}
Let us fix $z\in(0,z_0)$. 
Define  $X:=(\Lambda \cD'(z,y_0),\Lambda \cS'(z,y_0),\Lambda \cP'(z,y_0),\Lambda \cH'(z,y_0))$. Our strategy here is to use the computation rules (Figure~\ref{fig:comp_rules})
to obtain a recursive equation specifying the vector $X$. 
By Remark~\ref{rk:finite}, we have to check that the components of $X$ are finite.
\begin{claim}\label{claim:Dp_finite}
For $z\in(0,z_0)$, the quantities $\Lambda \cD'(z,y_0)$, $\Lambda \cS'(z,y_0)$, $\Lambda \cP'(z,y_0)$, and $\Lambda \cH'(z,y_0)$ are finite.
\end{claim}

\noindent\emph{Proof of the claim.} Consider $\Lambda \cD'(z,y_0)$ (the verification
is similar for $\Lambda \cS'(z,y_0)$, $\Lambda \cP'(z,y_0)$, and $\Lambda \cH'(z,y_0)$).
Let $\gamma\in\cD'$, with $\beta$ the series-parallel backbone and $\kappa_1,\ldots,\kappa_r$ the 3-connected components of $\gamma$. Notice that each $\kappa_i$
is drawn either by $\Gamma\vec{\cG_3}(z,w)$ or $\Gamma\ul{\vec{\cG_3}}(z,w)$
or $\Gamma\vec{\cG_3}'(z,w)$, where $w=D(z,y_0)$. Hence the expected
cost of generating $\kappa_i$ is bounded by $M+c||\kappa_i||$, where $M:=\mathrm{Max}(\Lambda\vec{\cG_3}(z,w),\Lambda\ul{\vec{\cG_3}}(z,w),\Lambda\vec{\cG_3}'(z,w))$ and $c||\kappa_i||$ represents the cost of building $\kappa_i$ 
using the closure-mapping. As a consequence,
$$
\Lambda\cD'^{(\gamma)}(z,y_0)\leq ||\beta||+\sum_{i=1}^r M+c||\kappa_i||\leq C||\gamma||,\ \mathrm{with}\ C:=M+c+1.
$$ 
Hence
$$
\Lambda\cD'(z,y_0)\leq \frac{C}{D'(z,y_0)}\sum_{\gamma\in\cD'}||\gamma||\frac{z^{|\gamma|}}{|\gamma|!}y_0^{||\gamma||},
$$
which is $O(1)$ since it converges to the constant $Cy_0\partial_yD'(z,y_0)/D'(z,y_0)$.
\qedclaim

Using the computation rules given in Figure~(\ref{fig:comp_rules}), the decomposition grammar~(N') of derived networks---as given in Section~\ref{sec:sampDp}---is translated to a linear system
$$
X=AX+L,
$$ 
where $A$ is a $4\times 4$-matrix and $L$ is a 4-vector. Precisely, the components of $A$ are
rational or exponential expressions in terms of series of networks and their derivatives: all these quantities converge
as $z\to z_0$ because all the classes of networks are $3/2$-singular. Hence $A$ converges
to a matrix $A_0$ as $z\to z_0$. 
In addition, $A$ is a substochastic matrix, i.e., a matrix
with nonnegative coefficients and with sum at most 1 in each row. Indeed, the entries in each of 
the 4 rows of $A$
correspond to probabilities of a Bernoulli switch when calling $\Gamma D'(z,y)$, $\Gamma S'(z,y)$,
$\Gamma P'(z,y)$, and $\Gamma H'(z,y)$, respectively. Hence, the limit matrix  $A_0$ is also
substochastic. It is easily checked that $A_0$ is indeed strictly
substochastic, i.e., at least one row has sum $<1$ (here, the first and third row add up to 1, whereas the second and fourth row add up to $<1$). In addition, $A_0$ is irreducible, 
i.e., the dependency graph induced
by the nonzero coefficients of $A_0$ is strongly connected.  A well known result of Markov chain theory
ensures that  $(I-A_0)$ is invertible~\cite{Ke}. Hence, $(I-A)$ is invertible for $z$ close to $z_0$, 
and $(I-A)^{-1}$ converges to
the matrix $(I-A_0)^{-1}$.  Moreover, the components of $L$ are of the form 
$$L=\Big(a,b,c,d\cdot\Lambda \vec{\cGt}'(z,w)+e\cdot\Lambda \ul{\vec{\cG_3}}(z,w)\Big),$$ where $w=D(z,y_0)$ and  $\{a,b,c,d,e\}$ are expressions involving the series of networks, their derivatives, and the quantities $\{\Lambda D,\Lambda S, \Lambda P,\Lambda H\}$,
which have already been shown to be bounded as $z\to z_0$. As a consequence, $a,b,c,d,e$ are $O(1)$ as $z\to z_0$. 
Moreover, it has been shown in~\cite{BeGa} that 
the value $w_0:=D(z_0,y_0)$ is such that $(z_0,w_0)$ is singular for $\cG_3$,
and $w_0-w\sim \lambda\cdot(z_0-z)$, with $\lambda:=D'(z_0,y_0)$.
By Lemma~\ref{lem:comp_samp_Gt}, $\Lambda\vec{\cG_3}'(z,w)$ and $\Lambda\ul{\vec{\cG_3}}(z,w)$
are $O((z_0-z)^{-1/2})$ as $z\to z_0$; hence these quantities are also 
$O((z_0-z)^{-1/2})$.
We conclude that the components of $L$ are $O((z_0-z)^{-1/2})$,
as well as the components of $X=(I-A)^{-1}L$. In particular, $\Lambda\cD'(z,y_0)$
(the first component of $X$) is $O((z_0-z)^{-1/2})$.
\end{proof}

\vspace{2cm}

\subsubsection{Complexity of the Boltzmann samplers for 2-connected planar graphs}

\begin{lemma}[rooted 2-connected planar graphs]\label{lem:comp_vecG2}
Let $(z_0,y_0)$ be a singular point of $\cG_2$. Then the expected complexities
of the Boltzmann samplers for $\vec{\cG_2}$ and $\vec{\cG_2}'$ satisfy respectively, as $z\to z_0$:
\begin{eqnarray*}
\Lambda \vec{\cG_2}(z,y_0)&=&O\ (1),\\
\Lambda \vec{\cG_2}'(z,y_0)&=&O\left((z_0-z)^{-1/2}\right).
\end{eqnarray*}
\end{lemma}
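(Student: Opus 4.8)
The plan is to observe that both $\Gamma\vec{\cG_2}(z,y)$ and $\Gamma\vec{\cG_2}'(z,y)$ are obtained from the corresponding network samplers — whose expected complexities were already settled in Lemma~\ref{lem:comp_D} and in the immediately preceding lemma on derived networks — by appending only the single cheap operation \textsc{AddRootEdge}. Consequently the two estimates should transfer almost verbatim from those of $\cD$ and $\cD'$. Before doing so, I would record the point that legitimises the transfer: by Lemma~\ref{lem:sing_planar} the singular points of $\cG_2$ are exactly those of networks, so a singular point $(z_0,y_0)$ of $\cG_2$ is also a singular point of $\cD$, which is precisely the hypothesis needed to invoke the network estimates at $(z_0,y_0)$.

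For $\vec{\cG_2}$, recall that $\Gamma\vec{\cG_2}(z,y)$ runs $\Gamma(1+\cD)(z,y)$ and then applies \textsc{AddRootEdge}. The sampler $\Gamma(1+\cD)(z,y)$ realises the Sum construction, returning the link-graph (cost $O(1)$) with probability $1/(1+D(z,y))$ and otherwise calling $\Gamma\cD(z,y)$; hence the computation rule of Figure~\ref{fig:comp_rules} gives
$$
\Lambda(1+\cD)(z,y_0)=1+\frac{1}{1+D(z,y_0)}\,O(1)+\frac{D(z,y_0)}{1+D(z,y_0)}\,\Lambda\cD(z,y_0).
$$
Since $\cD$ is $3/2$-singular, $D(z,y_0)$ converges to a finite positive constant as $z\to z_0$, while $\Lambda\cD(z,y_0)=O(1)$ by Lemma~\ref{lem:comp_D}; thus $\Lambda(1+\cD)(z,y_0)=O(1)$. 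Adding the $O(1)$ cost of \textsc{AddRootEdge} yields $\Lambda\vec{\cG_2}(z,y_0)=O(1)$.

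For $\vec{\cG_2}'$, the sampler $\Gamma\vec{\cG_2}'(z,y)$ simply calls $\Gamma\cD'(z,y)$ and then applies \textsc{AddRootEdge}, so
$$
\Lambda\vec{\cG_2}'(z,y_0)=\Lambda\cD'(z,y_0)+O(1).
$$
By the preceding lemma on derived networks, $\Lambda\cD'(z,y_0)=O\big((z_0-z)^{-1/2}\big)$ as $z\to z_0$, whence $\Lambda\vec{\cG_2}'(z,y_0)=O\big((z_0-z)^{-1/2}\big)$. The one step meriting a moment of care — and the main (minor) obstacle — is confirming that \textsc{AddRootEdge} is genuinely $O(1)$ in the combinatorial complexity model: it inserts at most one edge between the two poles and directs it, and in the half-edge data structure both the adjacency test and the edge insertion are constant-time primitives. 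Everything else is a direct inheritance from the network complexity estimates.
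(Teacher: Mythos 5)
Your proof is correct and follows essentially the same route as the paper: both estimates are inherited directly from the network samplers $\Gamma\cD(z,y_0)$ and $\Gamma\cD'(z,y_0)$, with the observation that \textsc{AddRootEdge} only adds $O(1)$ cost. Your additional remarks (the Sum computation rule for $\Gamma(1+\cD)$, the convergence of $D(z,y_0)$, and the identification of singular points of $\cG_2$ with those of $\cD$ via Lemma~\ref{lem:sing_planar}) merely make explicit what the paper leaves implicit.
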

\begin{proof}
Recall that the Boltzmann sampler $\Gamma \vec{\cG_2}(z,y_0)$ is directly
obtained from $\Gamma \cD(z,y_0)$, more precisely from $\Gamma (1+\cD)(z,y_0)$.
According to Lemma~\ref{lem:comp_D},  $\Lambda \cD(z,y_0)$ is $O(1)$ as $z\to z_0$, hence 
$\Lambda \vec{\cG_2}(z,y_0)$ is also $O(1)$.

Similarly $\Gamma\vec{\cG_2}'(z,y_0)$ is directly obtained from $\Gamma \cD'(z,y_0)$,
hence $\Lambda\vec{\cG_2}'(z,y_0)=\Lambda\cD'(z,y_0)$, which is $O((z_0-z)^{-1/2})$
as $z\to z_0$.
\end{proof}

\begin{lemma}[U-derived 2-connected planar graphs]\label{lem:comp_UG2}
Let $(z_0,y_0)$ be a singular point of $\cG_2$. Then,  the expected complexities
of the Boltzmann samplers for $\ul{\cG_2}$ and $\ul{\cG_2}'$---described in Section~\ref{sec:sampDp}---satisfy, as $z\to z_0$:
\begin{eqnarray*}
\Lambda \ul{\cG_2}(z,y_0)&=&O\ (1),\\
\Lambda \ul{\cG_2}'(z,y_0)&=&O\left((z_0-z)^{-1/2}\right).
\end{eqnarray*}
\end{lemma}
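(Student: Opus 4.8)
The plan is to obtain both bounds by a direct application of the computation rules of Figure~\ref{fig:comp_rules}, feeding in the estimates for $\vec{\cG_2}$ and $\vec{\cG_2}'$ already established in Lemma~\ref{lem:comp_vecG2}; the crucial observation is that the two samplers here use \emph{no rejection}, so the analysis reduces to the additive Sum/Product rules. First I would recall from Section~\ref{sec:sampDp} that $\Gamma\ul{\cG_2}(z,y)$ is obtained by sampling $\cF=\cZL^2\star\vec{\cG_2}$ (which equals $2\star\ul{\cG_2}$) and forgetting the direction of the root, while $\Gamma\ul{\cG_2}'(z,y)$ is obtained from $\cF'=\cZL^2\star\vec{\cG_2}'+2\star\cZL\star\vec{\cG_2}$ (which equals $2\star\ul{\cG_2}'$) in the same way. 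Since ``forgetting the direction'' together with the projection discarding the $2\star$ copy-index is a size-preserving $O(1)$ reinterpretation of the generated object, I would argue that $\Lambda\ul{\cG_2}(z,y_0)=\Lambda\cF(z,y_0)+O(1)$ and $\Lambda\ul{\cG_2}'(z,y_0)=\Lambda\cF'(z,y_0)+O(1)$, reducing everything to bounding $\Lambda\cF$ and $\Lambda\cF'$.

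For the first bound, I would apply the Product rule to $\cF=\cZL\star\cZL\star\vec{\cG_2}$, giving
$$
\Lambda\cF(z,y_0)=O(1)+\Lambda\vec{\cG_2}(z,y_0),
$$
since generating an L-atom has unit cost; Lemma~\ref{lem:comp_vecG2} then yields $\Lambda\vec{\cG_2}(z,y_0)=O(1)$, whence $\Lambda\ul{\cG_2}(z,y_0)=O(1)$. For the second bound, I would first verify the finiteness required by Remark~\ref{rk:finite} (immediate here, as $\Gamma\cF'$ only calls the rejection-free samplers for $\cZL$, $\vec{\cG_2}$ and $\vec{\cG_2}'$), and then combine the Sum and Product rules on $\cF'=(\cZL^2\star\vec{\cG_2}')+(2\star\cZL\star\vec{\cG_2})$ to get
$$
\Lambda\cF'(z,y_0)=1+p_1\big(O(1)+\Lambda\vec{\cG_2}'(z,y_0)\big)+p_2\big(O(1)+\Lambda\vec{\cG_2}(z,y_0)\big),
$$
where the Bernoulli weights $p_1,p_2\in[0,1]$ sum to $1$. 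By Lemma~\ref{lem:comp_vecG2} the term $\Lambda\vec{\cG_2}'(z,y_0)=O((z_0-z)^{-1/2})$ dominates, while $\Lambda\vec{\cG_2}(z,y_0)=O(1)$; since $p_1,p_2$ are bounded by $1$, this gives $\Lambda\cF'(z,y_0)=O((z_0-z)^{-1/2})$ and hence $\Lambda\ul{\cG_2}'(z,y_0)=O((z_0-z)^{-1/2})$.

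I do not expect a genuine obstacle here: because neither sampler performs rejection, no $1/\pacc$ factor has to be controlled and the computation rules apply termwise. The only points needing a little care are the bookkeeping of the $O(1)$ ``forget the direction''/copy-index projection step, and---as flagged in Remark~\ref{rk:finite}---confirming that all expected complexities entering the additive rules are finite for fixed $z\in(0,z_0)$ before letting $z\to z_0$.
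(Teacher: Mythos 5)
Your proposal is correct and follows essentially the same route as the paper's proof: both use the identities $2\star\ul{\cG_2}=\cZ_L^{\ 2}\star\vec{\cG_2}$ and $2\star\ul{\cG_2}'=\cZ_L^{\ 2}\star\vec{\cG_2}'+2\star\cZ_L\star\vec{\cG_2}$, apply the Sum/Product computation rules (no rejection involved), and conclude from Lemma~\ref{lem:comp_vecG2}. The only cosmetic difference is that the paper writes the bound $\Lambda\ul{\cG_2}'\leq 1+\Lambda\vec{\cG_2}'+\Lambda\vec{\cG_2}$ directly rather than carrying the Bernoulli weights $p_1,p_2$.
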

\begin{proof}
The Boltzmann sampler for $\ul{\cG_2}$ is directly obtained from the one for $\vec{\cG_2}$, according to the identity $2\star\ul{\cG_2}=\cZ_L\ \!\!\!^2\star\vec{\cG_2}$. 
Hence $\Lambda\ul{\cG_2}(z,y_0)=\Lambda\vec{\cG_2}(z,y_0)$,
which is $O(1)$ as $z\to z_0$, according to Lemma~\ref{lem:comp_vecG2}.
Similarly, the Boltzmann sampler for $\ul{\cG_2}'$ is directly obtained from the ones
for the classes $\vec{\cG_2}$ and $\vec{\cG_2}'$, according to the identity $2\star\ul{\cG_2}'=\cZ_L\ \!\!\!^2\star\vec{\cG_2}'+2\star\cZ_L\star\vec{\cG_2}$.
Hence $\Lambda\ul{\cG_2}(z,y_0)\leq 1+\Lambda\vec{\cG_2}'(z,y_0)+\Lambda\vec{\cG_2}(z,y_0)$.
 When $z\to z_0$, $\Lambda\vec{\cG_2}(z,y_0)$ is $O(1)$ 
and $\Lambda\vec{\cG_2}'(z,y_0)$
is $O((z_0-z)^{-1/2})$ according to Lemma~\ref{lem:comp_vecG2}. Hence, 
$\Lambda\ul{\cG_2}'(z,y_0)$ is $O((z_0-z)^{-1/2})$.
\end{proof}

\begin{lemma}[bi-derived 2-connected planar graphs]\label{lem:comp_LG2}
Let $(z_0,y_0)$ be a singular point of $\cG_2$. Then, the expected complexities
of the Boltzmann samplers for $\cGbp$ and $\cGbp'$---described in Section~\ref{sec:sampDp}---satisfy, as $z\to z_0$:
\begin{eqnarray*}
\Lambda \cGbp(z,y_0)&=&O\ (1),\\
\Lambda \cGbp'(z,y_0)&=&O\left((z_0-z)^{-1/2}\right).
\end{eqnarray*}
\end{lemma}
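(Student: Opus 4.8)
The plan is to observe that both samplers in the statement are produced by the \UtoL rejection procedure of Section~\ref{sec:reject}, so that their expected complexities are immediately controlled by Corollary~\ref{lem:change_root} in terms of the expected complexities of the corresponding U-derived samplers. Those latter complexities have just been established in Lemma~\ref{lem:comp_UG2}. Concretely, I would apply the second item of Corollary~\ref{lem:change_root} twice, each time checking that the ratio-constants $\alphaUL$ and $\alphaLU$ are finite so that the corollary is applicable.

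First I would treat $\cGbp$. The sampler $\Gamma\cGbp(z,y)$ is obtained (Section~\ref{sec:2conn3conn}) by applying \UtoL to the class $\cGb$, feeding in $\Gamma\ul{\cGb}(z,y)$. To invoke Corollary~\ref{lem:change_root} I must verify that both $\alphaLU$ and $\alphaUL$ are finite for $\cGb$. The value $\alphaLU=2$ has already been recorded in Section~\ref{sec:2conn3conn}, and Euler's relation gives $||\gamma||\leq 3|\gamma|-6$ for any $2$-connected planar graph with at least three vertices, whence $\alphaUL\leq 3$. Thus $c:=\alphaUL\cdot\alphaLU$ is finite, and the corollary gives $\Lambda\cGbp(z,y_0)\leq c\cdot\Lambda\ul{\cGb}(z,y_0)$. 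Since $\Lambda\ul{\cGb}(z,y_0)=O(1)$ by Lemma~\ref{lem:comp_UG2}, this yields $\Lambda\cGbp(z,y_0)=O(1)$.

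Next I would treat $\cGbp'$. The sampler $\Gamma\cGbp'(z,y)$ is obtained (Section~\ref{sec:sampDp}) by applying \UtoL to the class $\cGbp$, feeding in $\Gamma\ul{\cGb}'(z,y)$, where $\ul{\cGb}'$ is precisely the U-derived class of $\cGbp$. Again I check the constants, this time for the class $\cGbp$: the value $\alphaLU=1$ is recorded in Section~\ref{sec:sampDp}, and the same Euler bound (now with $|\gamma|$ equal to the number of vertices minus the discarded one) gives $\alphaUL\leq 3$. Hence $c':=\alphaUL\cdot\alphaLU$ is finite, and Corollary~\ref{lem:change_root} gives $\Lambda\cGbp'(z,y_0)\leq c'\cdot\Lambda\ul{\cGb}'(z,y_0)$. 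By Lemma~\ref{lem:comp_UG2}, $\Lambda\ul{\cGb}'(z,y_0)=O((z_0-z)^{-1/2})$, so $\Lambda\cGbp'(z,y_0)=O((z_0-z)^{-1/2})$, as claimed.

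This argument is essentially bookkeeping with the machinery already in place, so I do not expect a genuine obstacle. The only two points demanding care are (i) correctly matching each sampler to the class on which \UtoL is invoked---$\cGb$ in the first case, $\cGbp$ in the second---since the relevant ratio-constants are those of that base class; and (ii) confirming finiteness of $\alphaUL$ in both cases, which follows from the Euler relation, together with the $\alphaLU$ values already computed. Once these are in hand, the whole proof reduces to the two displayed inequalities combined with the input estimates of Lemma~\ref{lem:comp_UG2}.
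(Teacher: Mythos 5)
Your proposal is correct and follows essentially the same route as the paper's own proof: apply Corollary~\ref{lem:change_root} to the \UtoL step (on $\cGb$ for the first sampler, on $\cGbp$ for the second), verify finiteness of $\alphaUL$ and $\alphaLU$ via the Euler relation, and conclude from Lemma~\ref{lem:comp_UG2}. The paper merely makes the constants explicit ($6$ and $3$ respectively), which your argument does not need for the stated big-$O$ bounds.
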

\begin{proof}
Recall that the Boltzmann sampler $\Gamma\cGbp(z,y_0)$ is obtained from $\Gamma\ul{\cG_2}(z,y_0)$
by applying the procedure \UtoL to the class $\cG_2$. In addition, according to 
the Euler relation, any simple connected planar graph $\gamma$ (with $|\gamma|$ the number of vertices and $||\gamma||$ the number of edges) satisfies $|\gamma|\leq
||\gamma||+1$ (trees) and $||\gamma||\leq 3|\gamma|-6$ (triangulations).
It is then easily checked that, for the class $\cG_2$, 
$\alphaUL=3$ (attained asymptotically by triangulations) and $\alphaLU=2$ (attained by the link-graph, which  has 2 vertices and 1 edge). Hence, by Corollary~\ref{lem:change_root},
 $\Lambda\cGbp(z,y_0)\leq 6\ \!\Lambda\ul{\cG_2}(z,y_0)$. 
Thus, by Lemma~\ref{lem:comp_UG2}, $\Lambda\cGbp(z,y_0)$ is $O(1)$ as $z\to z_0$.

The proof for $\Lambda \cGbp'(z,y_0)$ is similar, except that the procedure
\UtoL is now applied to the derived class $\cGbp$, meaning that the L-size is now
the number of vertices minus 1. We still have $\alphaUL=3$ (attained asymptotically by triangulations), and now $\alphaLU=1$
(attained by the link-graph). Corollary~\ref{lem:change_root} yields $\Lambda\cGbp'(z,y_0)\leq 3\ \!\Lambda\ul{\cG_2}'(z,y_0)$. 
Hence, from Lemma~\ref{lem:comp_UG2}, $\Lambda\cGbp'(z,y_0)$ is $O((z_0-z)^{-1/2})$ as $z\to z_0$.
\end{proof}

\subsubsection{Complexity of the Boltzmann samplers for connected planar graphs}

\begin{lemma}[derived connected planar graphs]\label{lem:comp_G1p}
Let $(x_0,y_0)$ be a singular point of $\cG_1$. Then,  the expected complexity
of the Boltzmann sampler for $\cGcp$---described in Section~\ref{sec:conn2conn}---satisfies
\begin{eqnarray*}
\Lambda \cGcp(x,y_0)&=&O\ \!(1)\ \ \ \mathrm{as}\ x\to x_0.\\
\end{eqnarray*}
\end{lemma}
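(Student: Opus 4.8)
The plan is to apply the computation rules of Figure~\ref{fig:comp_rules} to the block-decomposition identity $\cGcp=\Set\big(\cGbp\circ_L(\cZL\star\cGcp)\big)$ and extract a self-referential equation for the single unknown $L:=\Lambda\cGcp(x,y_0)$. Writing $\cM:=\cGbp\circ_L(\cZL\star\cGcp)$, the $\Set$-rule (with $d=0$) gives $\Lambda\cGcp=G_2'(z,y_0)\,(1+\Lambda\cM)$, where $z:=xG_1'(x,y_0)$ is the value of the generating function of $\cZL\star\cGcp$ at which the inner substitution is evaluated (this is exactly the change of variable $z=xG_1'(x,y)$ appearing in the sampler of Section~\ref{sec:conn2conn}). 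The $\circ_L$-rule then gives $\Lambda\cM=\Lambda\cGbp(z,y_0)+|\cGbp|_{(z,y_0)}\cdot\Lambda(\cZL\star\cGcp)$, and since $\Lambda(\cZL\star\cGcp)=\Lambda\cZL+L$ with $\Lambda\cZL$ a constant (creating one vertex), I obtain
\begin{equation*}
L=G_2'(z,y_0)\Big(1+\Lambda\cGbp(z,y_0)+|\cGbp|_{(z,y_0)}(\Lambda\cZL+L)\Big).
\end{equation*}
Using $|\cGbp|_{(z,y_0)}=z\,G_2''(z,y_0)/G_2'(z,y_0)$, the coefficient of $L$ on the right-hand side collapses to $z\,G_2''(z,y_0)$, so that
\begin{equation*}
L=\frac{G_2'(z,y_0)\big(1+\Lambda\cGbp(z,y_0)+\Lambda\cZL\,|\cGbp|_{(z,y_0)}\big)}{1-z\,G_2''(z,y_0)}.
\end{equation*}

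Before exploiting this solution I would first verify, as Remark~\ref{rk:finite} insists, that $L$ is finite for each fixed $x\in(0,x_0)$; this parallels Claim~\ref{claim:Dp_finite}. For $x<x_0$ the map $x\mapsto z$ is increasing so $z<z_0$, the branching factor $z\,G_2''(z,y_0)$ is then strictly below its value at $z_0$, and a per-object bound $\Lambda\cGcp^{(\gamma)}(x,y_0)\le c\,||\gamma||$ (each $2$-connected block being built in time linear in its size, by the linear cost of the closure-mapping and the arguments behind Lemma~\ref{lem:comp_D}) yields $L\le c\,|\cZL\star\cGcp|_{(x,y_0)}<\infty$. With finiteness secured, the displayed expression for $L$ is legitimate rather than a formal manipulation of a divergent quantity.

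It then remains to control the numerator and the denominator as $x\to x_0$. For the numerator: since $\cG_1$ is $5/2$-singular, $G_1'$ is $3/2$-singular and converges at $x_0$, so $z=xG_1'(x,y_0)\to z_0:=x_0G_1'(x_0,y_0)$. The crucial structural input, due to Gim\'enez and Noy~\cite{gimeneznoy} and recorded in Lemma~\ref{lem:sing_planar}, is that the connected/$2$-connected composition is \emph{critical}, i.e.\ $(z_0,y_0)$ is a singular point of $\cG_2$. Hence Lemma~\ref{lem:comp_LG2} gives $\Lambda\cGbp(z,y_0)=O(1)$, while $G_2'(z,y_0)$ and $|\cGbp|_{(z,y_0)}$ converge to finite positive constants because $\cGbp$ is $3/2$-singular (Lemma~\ref{lem:exp_size}, applied with $\alpha=3/2>1$). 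Thus the numerator converges to a finite positive limit.

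The main obstacle is the denominator $1-z\,G_2''(z,y_0)$, which is exactly the subcriticality condition of the recursive sampler's branching. Since the second derivative of a $5/2$-singular class is $1/2$-singular, $G_2''(z,y_0)$ converges to the finite constant $G_2''(z_0,y_0)$ as $z\to z_0$, so the denominator tends to $\delta:=1-z_0\,G_2''(z_0,y_0)$. What must genuinely be argued is that $\delta>0$ strictly. This is the content of the composition being critical but \emph{not} doubly critical: if instead $z_0\,G_2''(z_0,y_0)\ge 1$, the singularity of $G_1$ would arise from $\mathrm{d}x/\mathrm{d}z=e^{-G_2'(z,y_0)}\big(1-z\,G_2''(z,y_0)\big)$ vanishing, which would force a square-root singularity for $\cG_1$ and contradict its $5/2$-singular type (Lemma~\ref{lem:sing_planar}). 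Therefore $\delta>0$, the denominator stays bounded away from $0$, and $L$ converges to a finite limit, giving $\Lambda\cGcp(x,y_0)=O(1)$ as $x\to x_0$, as claimed.
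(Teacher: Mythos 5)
Your proof is correct and follows the same skeleton as the paper's: translate the block-decomposition identity $\cGcp=\Set\big(\cGbp\circ_L(\cZL\star\cGcp)\big)$ through the computation rules of Figure~\ref{fig:comp_rules}, first secure finiteness via a per-object bound $\Lambda\cGcp^{(\gamma)}(x,y_0)\le C||\gamma||$ (this is exactly the paper's Claim~\ref{claim:Gcp_finite}), solve the resulting affine equation for $\Lambda\cGcp(x,y_0)$, and invoke criticality of the connected/2-connected composition so that $z\to z_0$ with $(z_0,y_0)$ singular for $\cG_2$, making the numerator $O(1)$ by Lemma~\ref{lem:comp_LG2}. The one step where you genuinely diverge is the crux: showing the denominator $1-z\,G_2''(z,y_0)=1-G_2'(z,y_0)\cdot|\cGbp|_{(z,y_0)}$ stays bounded away from $0$. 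The paper uses a self-contained trick: the expected size $|\cGcp|_{(x,y_0)}$ satisfies the same recursion with $\Lambda\cGbp$ replaced by $|\cGbp|$, giving $|\cGcp|_{(x,y_0)}=u/(1-u)$ with $u=z\,G_2''(z,y_0)$; since $\cGcp$ is $3/2$-singular this quantity converges, which forces $u(z_0)<1$. You instead argue at the level of the functional equation $x=z\,e^{-G_2'(z,y_0)}$: if $1-z\,G_2''$ vanished, the singularity of $G_1$ would be created by the vanishing derivative and could not be of type $5/2$. This is a valid alternative (the standard subcritical-versus-supercritical dichotomy for implicit schemes) and it explains \emph{why} the branching is subcritical, but it is stated a little loosely at the boundary case $z_0G_2''(z_0,y_0)=1$: there the derivative vanishes exactly at $z_0$, the resulting singularity is not literally of square-root type (it mixes the $(z_0-z)^{3/2}$ term of $G_2'$ with the quadratic vanishing of the inverse map), and one must still verify that the exponent differs from $5/2$. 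The paper's expected-size argument sidesteps that case analysis entirely, which is why it is the cleaner route; both rest ultimately on the same input, namely the $5/2$-singularity of $\cG_1$ from Lemma~\ref{lem:sing_planar}.
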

\begin{proof}
Recall that the Boltzmann sampler for $\cGcp$ results from the identity (block
decomposition, Equation~\eqref{eq:2conn})
$$
\cGcp=\Set\left(\cGbp\circ_L(\cZ_L\star\cGcp) \right).
$$
We want to use the computation rules (Figure~\ref{fig:comp_rules})
to obtain a recursive equation for $\Lambda\cGcp(x,y_0)$. Again,
according to Remark~\ref{rk:finite}, we have to check that $\Lambda\cGcp(x,y_0)$
is finite. 
\begin{claim}\label{claim:Gcp_finite}
For $0<x<x_0$, the quantity $\Lambda\cGcp(x,y_0)$ is finite.
\end{claim}
\ni\textit{Proof of the claim.} Let $\gamma\in\cGcp$, with 
 $\kappa_1,\ldots,\kappa_r$ the 2-connected blocks 
of $\gamma$. We have
$$
\Lambda\cGcp^{(\gamma)}(x,y_0)=2||\gamma||+\sum_{i=1}^r\Lambda\cGbp^{(\kappa_i)}(z,y_0),\ \ \mathrm{where}\ z=xG_1\ \!\!\!'(x,y_0).
$$
(The first term stands for the cost of choosing the degrees using a generator for  a Poisson law; note that the sum of the degrees over all the vertices of $\gamma$ 
is $2||\gamma||$.)
It is easily shown that there exists a constant $M$ such that 
$\Lambda\cGbp^{(\kappa)}(z,y_0)\leq M||\kappa||$ for any $\kappa\in\cGbp$ 
(using the fact that such a bound holds for $\Lambda\cD^{(\kappa)}(z,y_0)$ 
and that $\Gamma\cGbp(z,y_0)$ is obtained from $\Gamma\cD(z,y_0)$ via a
simple rejection step). Therefore $\Lambda\cGcp^{(\gamma)}(x,y_0)\leq C||\gamma||$,
with $C=2+M$. We conclude that 
$$
\Lambda\cGcp(x,y_0)\leq \frac{C}{G_1\ \!\!\!'(x,y_0)}\sum_{\gamma\in\cGcp}||\gamma||\frac{x^{|\gamma|}}{|\gamma|!}y_0^{||\gamma||},
$$
which is $O(1)$ since it converges to the constant $Cy_0\partial_yG_1\ \!\!\!'(x,y_0)/G_1\ \!\!\!'(x,y_0)$.
\qedclaim

The computation rules (Figure~\ref{fig:comp_rules}) yield
$$
\Lambda\cGcp(x,y_0)=G_2\ \!\!\!'(z,y_0)\cdot\left(\Lambda\cGbp(z,y_0)+|\cGbp|_{(z,y_0)}\cdot\Lambda\cGcp(x,y_0) \right)\ \ \mathrm{where}\ z=xG_1\ \!\!\!'(x,y_0),
$$
so that
$$
\Lambda\cGcp(x,y_0)=\frac{G_2\ \!\!\!'(z,y_0)\Lambda\cGbp(z,y_0)}{1-G_2\ \!\!\!'(z,y_0)\cdot|\cGbp|_{(z,y_0)}}.
$$
Similarly as in the transition from 3-connected planar graphs to networks, 
we use the important point, proved in~\cite{gimeneznoy},  that the composition scheme
to go from 2-connected to connected planar graphs is critical.
This means that, when $x\to x_0$, the quantity $z=xG_1\ \!\!\!'(x,y_0)$
(which is the change of variable from 2-connected to connected) converges to 
a positive constant $z_0$ such that $(z_0,y_0)$ is a singular point of $\cG_2$.
Hence, according to Lemma~\ref{lem:comp_LG2}, $\Lambda\cGbp(z,y_0)$ is $O(1)$
as $x\to x_0$. Moreover, as the class $\cGbp$ is $3/2$-singular, 
the series $G_2\ \!\!\!'(z,y_0)$ 
and the expected size 
$|\cGbp|_{(z,y_0)}$ converge to positive
constants that are denoted respectively $G_2\ \!\!\!'(z_0,y_0)$ and $|\cGbp|_{(z_0,y_0)}$. 
We have shown that the numerator
of $\Lambda\cGcp(x,y_0)$ is $O(1)$ and that the denominator converges
as $x\to x_0$. To prove that  $\Lambda\cGcp(x,y_0)$ is $O(1)$, it remains to
check that the denominator does not converge to $0$, i.e., to prove that 
$G_2\ \!\!\!'(z_0,y_0)\cdot |\cGbp|_{(z_0,y_0)}\neq 1$. 

To show this, we use the simple
trick that the expected complexity and expected size of Boltzmann samplers
satisfy similar computation rules. Indeed, from Equation~\eqref{eq:2conn}, it is easy to derive
the equation
$$
|\cGcp|_{(x,y_0)}=G_2\ \!\!\!'(z,y_0)\cdot|\cGbp|_{(z,y_0)}\cdot\left(|\cGcp|_{(x,y_0)}+1\right)\ \ \mathrm{where}\ z=xG_1\ \!\!\!'(x,y_0),
$$
either using the formula $|\cC|_{(x,y)}=\partial_x C(x,y)/C(x,y)$,
or simply by interpreting what happens during a call to $\Gamma\cGcp(x,y)$
(an average of $G_2\ \!\!\!'(z,y_0)$ blocks are attached at the root-vertex, 
each block has 
average size $|\cGbp|_{(z,y_0)}$ and carries a connected component of average
size $(|\cGcp|_{(x,y_0)}+1)$  at each non-root vertex). Hence
$$
|\cGcp|_{(x,y_0)}=\frac{G_2\ \!\!\!'(z,y_0)\cdot |\cGbp|_{(z,y_0)}}{1-G_2\ \!\!\!'(z,y_0)\cdot |\cGbp|_{(z,y_0)}}.
$$
Notice that this is the same expression as $\Lambda\cGcp(x,y_0)$, except
for $|\cGbp|_{(z,y_0)}$ replacing $\Lambda\cGbp(z,y_0)$ in the numerator.
The important point is that we already know that  $|\cGcp|_{(x,y_0)}$ 
converges as $x\to x_0$, since the class $\cGcp$ is $3/2$-singular (see Lemma~\ref{lem:sing_planar}).
Hence $G_2\ \!\!\!'(z_0,y_0)\cdot |\cGbp|_{(z_0,y_0)}$ has to be different from $1$ (more precisely, it is strictly less than $1$), which concludes the proof.
\end{proof}

\begin{lemma}[bi-derived connected planar graphs]\label{lem:comp_G1pp}
Let $(x_0,y_0)$ be a singular point of $\cG_1$. Then,  the expected complexity
of the Boltzmann sampler for $\cGcp'$---described in Section~\ref{sec:sampCp}---satisfies
\begin{eqnarray*}
\Lambda \cGcp'(x,y_0)&=&O\ \left((x_0-x)^{-1/2}\right)\ \ \mathrm{as}\ x\to x_0.\\
\end{eqnarray*}
\end{lemma}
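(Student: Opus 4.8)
The plan is to mirror the proof of Lemma~\ref{lem:comp_G1p}, now applied to the differentiated grammar. Recall from Section~\ref{sec:sampCp} that the block decomposition $\cGcp=\Set(\cGbp\circ_L(\cZL\star\cGcp))$ differentiates to
$$\cGcp'=(\cGcp+\cZL\star\cGcp')\star\big(\cGbp'\circ_L(\cZL\star\cGcp)\big)\star\cGcp,$$
and that the relevant change of variable is $z=xG_1'(x,y_0)$ (the generating function of $\cZL\star\cGcp$), which by the criticality of the $2$-connected-to-connected composition scheme---established in~\cite{gimeneznoy} and already used in Lemma~\ref{lem:comp_G1p}---sends $x\to x_0$ to $z\to z_0$ with $(z_0,y_0)$ a singular point of $\cG_2$. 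Before invoking the computation rules of Figure~\ref{fig:comp_rules}, I would first record a finiteness statement in the spirit of Claim~\ref{claim:Gcp_finite}: every $\gamma\in\cGcp'$ is built in time $O(||\gamma||)$, since its $2$-connected blocks arise from calls to $\Gamma\cGbp(z,y_0)$ and $\Gamma\cGbp'(z,y_0)$, each of per-object cost linear in the block size. Summing over $\gamma$ then gives a quantity proportional to $y_0\partial_yG_1''(x,y_0)/G_1''(x,y_0)$, finite for $x<x_0$, so $\Lambda\cGcp'(x,y_0)<\infty$; this licenses the recursive use of the computation rules (Remark~\ref{rk:finite}).

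Next I would apply the rules for $\star$, $+$ and $\circ_L$ to the grammar above. Set $T:=\Lambda\cGcp'(x,y_0)$ and write $\cV:=\cGbp'\circ_L(\cZL\star\cGcp)$, with $U:=G_1'(x,y_0)+xG_1''(x,y_0)$ the generating function of $\cGcp+\cZL\star\cGcp'$ (here $G_1'$ and $G_1''$ denote $\partial_xG_1$ and $\partial_x^2G_1$, the series of $\cGcp$ and $\cGcp'$). The product rule gives $T=\Lambda(\cGcp+\cZL\star\cGcp')(x,y_0)+\Lambda\cV(x,y_0)+\Lambda\cGcp(x,y_0)$, and expanding the first term by the sum rule (together with $\Lambda(\cZL\star\cGcp')=O(1)+T$) yields
$$T=1+\frac{G_1'}{U}\,\Lambda\cGcp(x,y_0)+\frac{xG_1''}{U}\big(O(1)+T\big)+\Lambda\cV(x,y_0)+\Lambda\cGcp(x,y_0).$$
The key structural feature is that $\cGcp'$ re-enters its own complexity equation through the term $\cZL\star\cGcp'$ coming from differentiating the Set, so $T$ obeys a linear fixed-point equation. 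Collecting $T$ gives $T\,(1-\tfrac{xG_1''}{U})=T\,\tfrac{G_1'}{U}=(\text{rest})$. Since $\cGcp$ is $3/2$-singular and $\cGcp'$ is $1/2$-singular, both $G_1'$ and $G_1''$ converge to positive constants as $x\to x_0$, so $U\to U(x_0,y_0)>0$ and the self-coefficient $\tfrac{G_1'}{U}$ stays bounded away from $0$; hence it suffices to bound the right-hand side.

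The only summand in $(\text{rest})$ that is not $O(1)$ is $\Lambda\cV$. By the $\circ_L$ rule, $\Lambda\cV(x,y_0)=\Lambda\cGbp'(z,y_0)+|\cGbp'|_{(z,y_0)}\cdot\Lambda(\cZL\star\cGcp)(x,y_0)$ with $z=xG_1'(x,y_0)$. Here $\Lambda(\cZL\star\cGcp)(x,y_0)=O(1)$ by Lemma~\ref{lem:comp_G1p}; $\Lambda\cGbp'(z,y_0)=O((z_0-z)^{-1/2})$ by Lemma~\ref{lem:comp_LG2}; and, since $\cGbp'$ is $1/2$-singular and $(z_0,y_0)$ is one of its singular points, $|\cGbp'|_{(z,y_0)}=O((z_0-z)^{-1/2})$ by Lemma~\ref{lem:exp_size}. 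Thus $\Lambda\cV(x,y_0)=O((z_0-z)^{-1/2})$. Finally $\tfrac{dz}{dx}=G_1'+xG_1''=U$ converges to $U(x_0,y_0)>0$, so $z$ is $C^1$ near $x_0$ with nonvanishing derivative and $z_0-z\sim U(x_0,y_0)(x_0-x)$, giving $(z_0-z)^{-1/2}=\Theta((x_0-x)^{-1/2})$. Dividing by the positive limit of $\tfrac{G_1'}{U}$ then yields $T=\Lambda\cGcp'(x,y_0)=O((x_0-x)^{-1/2})$, as claimed.

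The main obstacle I anticipate is the transfer of the singular order through the critical composition: one must justify that the inner variable $z=xG_1'(x,y_0)$ reaches the singular point $z_0$ of $\cG_2$ with a finite, nonvanishing derivative, so that the $(z_0-z)^{-1/2}$ estimate coming from the bi-derived $2$-connected samplers translates cleanly into $(x_0-x)^{-1/2}$. This is precisely where the criticality result of Gim\'enez and Noy and the $1/2$-singularity of $\cGcp'$ (which keeps $G_1''$ bounded at $x_0$) enter. A secondary point is the self-referential complexity equation: one has to confirm that the self-coefficient is the ratio $xG_1''/U<1$ and that $1-xG_1''/U=G_1'/U$ is bounded away from zero near $x_0$, so that the fixed point can be solved safely---exactly the analogue of the ``denominator $\neq 0$'' verification carried out in Lemma~\ref{lem:comp_G1p}.
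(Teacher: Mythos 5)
Your proposal is correct and follows essentially the same route as the paper: the same differentiated block-decomposition identity, the same preliminary finiteness check in the spirit of Claim~\ref{claim:Gcp_finite}, the same linear fixed-point equation for $\Lambda\cGcp'(x,y_0)$ obtained from the computation rules (with the self-coefficient $xG_1\ \!\!\!''/(G_1\ \!\!\!'+xG_1\ \!\!\!'')$ bounded away from $1$ because $\cGcp$ and $\cGcp'$ are respectively $3/2$- and $1/2$-singular), and the same transfer of the $(z_0-z)^{-1/2}$ bound through the critical composition $z=xG_1\ \!\!\!'(x,y_0)$ with nonvanishing limiting derivative. The minor bookkeeping differences (carrying the $O(1)$ cost of $\cZL$ explicitly, citing Lemma~\ref{lem:exp_size} for $|\cGbp'|_{(z,y_0)}$) do not change the argument.
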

\begin{proof}
The proof for  $\Lambda \cGcp'(x,y_0)$ is easier than for $\Lambda \cGcp(x,y_0)$.
Recall that $\Gamma \cGcp'(x,y_0)$ is obtained from the identity
$$
\cGcp'=\left(\cGcp+\cZ_L\star\cGcp' \right)\star\cGbp'\circ_L(\cZ_L\star\cGcp)\star\cGcp.
$$
At first one easily checks (using similar arguments as in Claim~\ref{claim:Gcp_finite})
that $\Lambda \cGcp'(x,y_0)$ is finite. 
 Using the computation rules given in Figure~\ref{fig:comp_rules}, we obtain, writing as usual $z=xG_1\ \!\!\!'(x,y_0)$,
\begin{eqnarray*}
\Lambda\cGcp'(x,y_0)&\!\!\!=\!\!\!&1+\frac{G_1\ \!\!\!'(x,y_0)}{G_1\ \!\!\!'(x,y_0)\!+\!xG_1\ \!\!\!''(x,y_0)}\Lambda\cGcp(x,y_0)+\frac{xG_1\ \!\!\!''(x,y_0)}{G_1\ \!\!\!'(x,y_0)\!+\!xG_1\ \!\!\!''(x,y_0)}\Lambda\cGcp'(x,y_0)\\
&&+ \Lambda\cGbp'(z,y_0)+|\cGbp'|_{(z,y_0)}\cdot\Lambda\cGcp(x,y_0)+\Lambda\cGcp(x,y_0).
\end{eqnarray*}
Hence
$$\Lambda\cGcp'(x,y_0)=a(x,y_0)\cdot(1+b(x,y_0)\cdot \Lambda\cGcp(x,y_0)+\Lambda\cGbp'(z,y_0)+|\cGbp'|_{(z,y_0)}\cdot\Lambda\cGcp(x,y_0)),$$
where 

\vspace{-.3cm}

$$a(x,y_0)=\frac{G_1\ \!\!\!'(x,y_0)+xG_1\ \!\!\!''(x,y_0)}{G_1\ \!\!\!'(x,y_0)},\ \ \ b(x,y_0)=\frac{2G_1\ \!\!\!'(x,y_0)+xG_1\ \!\!\!''(x,y_0)}{G_1\ \!\!\!'(x,y_0)+xG_1\ \!\!\!''(x,y_0)}.$$ As the classes $\cGcp$ and $\cGcp'$ are respectively $3/2$-singular
and $1/2$-singular, the series $a(x,y_0)$ and $b(x,y_0)$ converge when $x\to x_0$.
As $\cGbp'$ is $1/2$-singular,   
$|\cGbp'|_{(z,y_0)}$ is $O((z_0-z)^{-1/2})$ when $z\to z_0$.
Moreover, according to Lemma~\ref{lem:comp_LG2}, $\Lambda\cGbp'(z,y_0)$ is $O((z_0-z)^{-1/2})$.
Next we use the fact that the change of variable from 2-connected to connected is critical. 
Precisely, as proved in~\cite{BeGa}, when $x\to x_0$ and when $z$ and $x$ are related by $z=xG_1\ \!\!\!'(x,y_0)$,
we have $z_0-z\sim \lambda\cdot (x_0-x)$, with $\lambda:=\lim \mathrm{d}z/\mathrm{d}x=x_0G_1\ \!\!\!''(x_0,y_0)+G_1\ \!\!\!'(x_0,y_0)$.
Hence, $|\cGbp'|_{(z,y_0))}$ and $\Lambda\cGbp'(z,y_0)$ are $O((x_0-x)^{-1/2})$.  
In addition, we have proved in Lemma~\ref{lem:comp_G1p}  that 
$\Lambda\cGcp(x,y_0)$ is $O(1)$.
We conclude that $\Lambda\cGcp'(x,y_0)$ is $O((x_0-x)^{-1/2})$.
\end{proof}

\begin{lemma}[connected planar graphs]\label{lem:comp_G1}
Let $(x_0,y_0)$ be a singular point of $\cG_1$. Then,  
the expected complexity
of the Boltzmann sampler for $\cG_1$---described in Section~\ref{sec:conn2conn}---satisfies
$$
\Lambda \cG_1(x,y_0)=O\ (1)\ \ \mathrm{as}\ x\to x_0.
$$
\end{lemma}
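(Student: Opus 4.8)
The sampler $\Gamma\cG_1(x,y_0)$ is a rejection sampler wrapped around $\Gamma\cGcp(x,y_0)$: each attempt calls $\Gamma\cGcp(x,y_0)$, puts the marked vertex back among the L-atoms so that the generated object $\gamma$ lies in $\cG_1$, and accepts with probability $1/|\gamma|$. The plan is to invoke the rejection-complexity lemma (Lemma~\ref{lem:target}), which gives $\Lambda\cG_1(x,y_0)=\pacc^{-1}\cdot\mathbb{E}(\mathfrak{A})$, where $\mathbb{E}(\mathfrak{A})$ is the expected cost of one attempt and $\pacc$ the acceptance probability at each attempt. This reduces the claim to two estimates: $\mathbb{E}(\mathfrak{A})=O(1)$ and $\pacc$ bounded away from $0$ as $x\to x_0$.

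First I would bound the per-attempt cost. One attempt consists of a call to $\Gamma\cGcp(x,y_0)$ plus the $O(1)$ bookkeeping of moving the marked vertex back into the set of L-atoms, so $\mathbb{E}(\mathfrak{A})=\Lambda\cGcp(x,y_0)+O(1)$. Lemma~\ref{lem:comp_G1p} already gives $\Lambda\cGcp(x,y_0)=O(1)$ as $x\to x_0$, which settles this part immediately.

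The real content is the acceptance probability. From the proof of Lemma~\ref{lemma:connconnpoint}, the loop body draws each $\gamma\in\cG_1$ with probability $G_1\ \!\!\!'(x,y_0)^{-1}\frac{x^{|\gamma|-1}}{(|\gamma|-1)!}y_0^{||\gamma||}$. Multiplying by the rejection weight $p(\gamma)=1/|\gamma|$ and summing, I would compute
$$
\pacc=\frac{1}{G_1\ \!\!\!'(x,y_0)}\sum_{\gamma\in\cG_1}\frac{x^{|\gamma|-1}}{|\gamma|!}y_0^{||\gamma||}=\frac{G_1(x,y_0)}{x\,G_1\ \!\!\!'(x,y_0)},
$$
the last step using $\sum_{\gamma\in\cG_1}\frac{x^{|\gamma|}}{|\gamma|!}y_0^{||\gamma||}=G_1(x,y_0)$. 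It then remains to show this ratio stays away from $0$. Here I would use the singularity data: by Lemma~\ref{lem:sing_planar} the class $\cG_1$ is $5/2$-singular, and by the effect of derivation $\cGcp=\cG_1'$ is $3/2$-singular; since both exponents are positive, both $G_1(x,y_0)$ and $G_1\ \!\!\!'(x,y_0)$ converge to finite positive constants as $x\to x_0$. Hence $\pacc$ converges to the positive constant $G_1(x_0,y_0)/(x_0\,G_1\ \!\!\!'(x_0,y_0))$, so that $\pacc^{-1}=O(1)$; combining this with $\mathbb{E}(\mathfrak{A})=O(1)$ yields $\Lambda\cG_1(x,y_0)=O(1)$.

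The only place requiring care---the ``main obstacle'', though a mild one for this lemma---is verifying that $\pacc$ does not degenerate to $0$, which is precisely where the positivity of the singular exponents of $\cG_1$ and $\cGcp$ enters, guaranteeing that neither $G_1$ blows up nor $G_1\ \!\!\!'$ grows unboundedly relative to it near $x_0$. Everything else reduces to the already-established bound of Lemma~\ref{lem:comp_G1p} and the routine normalizing-constant computation displayed above.
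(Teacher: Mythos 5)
Your proof is correct and follows essentially the same route as the paper: reduce via the rejection-complexity lemma to the per-attempt cost (handled by Lemma~\ref{lem:comp_G1p}) and the acceptance probability, compute $\pacc=G_1(x,y_0)/(x\,G_1\ \!\!\!'(x,y_0))$ exactly as the paper does, and conclude from the singular exponents $5/2$ and $3/2$ that both series converge to positive constants so $\pacc$ stays bounded away from $0$. No gaps.
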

\begin{proof}
As described in Section~\ref{sec:conn2conn}, the sampler $\Gamma\cG_1(x,y)$ computes 
$\gamma\leftarrow\Gamma\cGcp(x,y)$ and keeps $\gamma$ with probability $1/(|\gamma|+1)$.
Hence the probability of success at each attempt is
$$
\pacc=\frac{1}{G_1\ \!\!\!'(x,y_0)}\sum_{\gamma\in\cGcp}\frac{1}{|\gamma|+1}\frac{x^{|\gamma|}}{|\gamma|!}y_0^{||\gamma||}=\frac{1}{G_1\ \!\!\!'(x,y_0)}\sum_{\gamma\in\cGcp}\frac{x^{|\gamma|}}{(|\gamma|+1)!}y_0^{||\gamma||}.
$$
Recall that for any class $\cC$, $\cC'_{n,m}$ identifies to $\cC_{n+1,m}$. Hence
$$
\pacc=\frac{1}{G_1\ \!\!\!'(x,y_0)}\sum_{\gamma\in\cG_1}\frac{x^{|\gamma|-1}}{|\gamma|!}y_0^{||\gamma||}=\frac{G_1(x,y_0)}{xG_1\ \!\!\!'(x,y_0)}.
$$
In addition, by Lemma~\ref{lem:target}, 
$\Lambda\cG_1(x,y_0)=\Lambda\cGcp(x,y_0)/\pacc$. As the classes $\cG_1$ and $\cGcp$ are
respectively $5/2$-singular and $3/2$-singular, both series $G_1(x,y_0)$ and $G_1\ \!\!\!'(x,y_0)$
converge to positive constants when $x\to x_0$. 
Hence $\pacc$ converges to a positive constant as well.
In addition, $\Lambda\cGcp(x,y_0)$ is $O(1)$ by Lemma~\ref{lem:comp_G1p}. 
Hence $\Lambda\cG_1(x,y_0)$ is  also $O(1)$.
\end{proof}

\subsubsection{Complexity of the Boltzmann samplers for planar graphs}\label{sec:comp_planar}

\begin{lemma}[planar graphs]
Let $(x_0,y_0)$ be a singular point of $\cG$. Then, the expected complexities
of the Boltzmann samplers for $\cG$,  $\cG'$ and $\cG''$---described in Section~\ref{sec:planconn} and~\ref{sec:sampGp}---satisfy, as $x\to x_0$: 
\begin{eqnarray*}
\Lambda \cG(x,y_0)&=&O\ (1),\\
\Lambda \cG'(x,y_0)&=&O\ (1),\\
\Lambda \cG''(x,y_0)&=&O\ ((x_0-x)^{-1/2}).
\end{eqnarray*}
\end{lemma}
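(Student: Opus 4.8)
The plan is to read off all three bounds directly from the computation rules of Figure~\ref{fig:comp_rules}, applied to the three defining identities $\cG=\Set(\cGc)$, $\cG'=\cGcp\star\cG$, and $\cG''=\cGcp'\star\cG+\cGcp\star\cG'$, feeding in the complexity estimates for the connected classes already established in Lemmas~\ref{lem:comp_G1}, \ref{lem:comp_G1p}, and~\ref{lem:comp_G1pp}. Two preliminary observations make this legitimate. By Lemma~\ref{lem:sing_planar} the classes $\cG_1$ and $\cG$ share their singular points, so the fixed singular point $(x_0,y_0)$ of $\cG$ is equally a singular point of $\cG_1$, which is precisely what the three connected-class lemmas require. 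Moreover, since $\cG_1$ is $5/2$-singular, the series $G_1(x,y_0)$ converges to a positive constant as $x\to x_0$ (and likewise every series attached below to a class with positive singularity exponent stays bounded). Finally, since none of the three identities is recursive---each is built from classes whose expected complexity has already been bounded---the finiteness hypothesis of Remark~\ref{rk:finite} is automatic, so the computation rules apply verbatim.

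First I would treat $\cG$. The $\Set_{\geq 0}$ rule gives $\Lambda\cG(x,y_0)=G_1(x,y_0)\bigl(1+\Lambda\cG_1(x,y_0)\bigr)$; with $\Lambda\cG_1(x,y_0)=O(1)$ from Lemma~\ref{lem:comp_G1} and $G_1(x,y_0)$ bounded, this is $O(1)$. Next, the product rule applied to $\cG'=\cGcp\star\cG$ gives $\Lambda\cG'(x,y_0)=\Lambda\cGcp(x,y_0)+\Lambda\cG(x,y_0)$, which is $O(1)$ by combining Lemma~\ref{lem:comp_G1p} with the bound just obtained for $\cG$.

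The last step is $\cG''=\cGcp'\star\cG+\cGcp\star\cG'$. Writing $\cA_1:=\cGcp'\star\cG$ and $\cA_2:=\cGcp\star\cG'$, the product rule yields $\Lambda\cA_1(x,y_0)=\Lambda\cGcp'(x,y_0)+\Lambda\cG(x,y_0)$ and $\Lambda\cA_2(x,y_0)=\Lambda\cGcp(x,y_0)+\Lambda\cG'(x,y_0)$. By Lemma~\ref{lem:comp_G1pp} the first is $O((x_0-x)^{-1/2})$, while the second is $O(1)$ by Lemma~\ref{lem:comp_G1p} and the bound for $\cG'$. The disjoint-union rule then expresses $\Lambda\cG''(x,y_0)$ as $1$ plus a convex combination, with branching-probability weights $A_1/G''$ and $A_2/G''$ lying in $[0,1]$, of $\Lambda\cA_1(x,y_0)$ and $\Lambda\cA_2(x,y_0)$; since these weights are bounded by $1$, the dominant contribution is the $O((x_0-x)^{-1/2})$ term coming from $\cA_1$, giving the claimed bound.

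I do not expect a genuine obstacle: this lemma is purely the top-level bookkeeping that assembles all the earlier estimates along the scheme of Figure~\ref{fig:scheme_bi_derived}, and it is exactly the estimate~\eqref{eq:claim} that Claim~\ref{claim:eq} needs. The only points demanding a moment's care are verifying that the weights in the sum rule are bounded by $1$ (so that the $O((x_0-x)^{-1/2})$ term is neither cancelled nor amplified) and that the series $G_1(x,y_0)$, attached to a class with a positive singularity exponent, remains bounded as $x\to x_0$; both follow immediately from the singularity data recalled in Section~\ref{sec:sing_beh}.
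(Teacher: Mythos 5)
Your proposal is correct and follows essentially the same route as the paper: apply the computation rules of Figure~\ref{fig:comp_rules} to the three identities $\cG=\Set(\cGc)$, $\cG'=\cGcp\star\cG$, $\cG''=\cGcp'\star\cG+\cGcp\star\cG'$, and feed in Lemmas~\ref{lem:comp_G1}, \ref{lem:comp_G1p}, and~\ref{lem:comp_G1pp} together with the convergence of $G_1(x,y_0)$ guaranteed by the $5/2$-singularity. The only (immaterial) difference is that the paper writes the Set step as $\Lambda\cG=G_1\cdot\Lambda\cG_1$ rather than $G_1\cdot(1+\Lambda\cG_1)$, and bounds the convex combination in the last step by the plain sum of the four terms, exactly as you do.
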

\begin{proof}
Recall that $\Gamma\cG(x,y)$ is obtained from $\Gamma\cG_1(x,y)$
using the identity
$$
\cG=\Set(\cG_1),
$$
hence $\Lambda \cG(x,y_0)=G_1(x,y_0)\cdot\Lambda\cG_1(x,y_0)$. When 
$x\to x_0$, $G_1(x,y_0)$ converges (because $\cG_1$ is $5/2$-singular) 
and $\Lambda\cG_1(x,y_0)$
is $O(1)$ (by Lemma~\ref{lem:comp_G1}). Hence $\Lambda \cG(x,y_0)$ is $O(1)$.

Then, $\Gamma\cG'(x,y)$ is obtained from $\Gamma\cGcp(x,y)$ and $\Gamma\cG(x,y)$
using the identity
$$
\cG'=\cGcp\star\cG.
$$
Hence $\Lambda\cG'(x,y_0)=\Lambda\cGcp(x,y_0)+\Lambda\cG(x,y_0)$. When 
$x\to x_0$, $\Lambda\cGcp(x,y_0)$ is $O(1)$ (by Lemma~\ref{lem:comp_G1p}) and 
$\Lambda\cG(x,y_0)$
is $O(1)$, as proved above. Hence $\Lambda \cG'(x,y_0)$ is $O(1)$.

Finally, $\Gamma\cG''(x,y)$ is obtained from $\Gamma\cGcp'(x,y)$,
$\Gamma\cGcp(x,y)$, $\Gamma\cG'(x,y)$, and $\Gamma\cG(x,y)$
using the identity
$$
\cG''=\cGcp'\star\cG+\cGcp\star\cG'.
$$
Hence 
$$
\Lambda\cG''(x,y_0)=1+\frac{a}{a+b}\left(\Lambda\cGcp'(x,y_0)+\Lambda\cG(x,y_0)\right)+\frac{b}{a+b}\left(\Lambda\cGcp(x,y_0)+\Lambda\cG'(x,y_0)\right),
$$
where $a=G_1\ \!\!\!''(x,y_0)G(x,y_0)$ and $b=G_1\ \!\!\!'(x,y_0)G'(x,y_0)$. Thus
$$ 
\Lambda\cG''(x,y_0)\leq 1+\Lambda\cGcp'(x,y_0)+\Lambda\cG(x,y_0)+\Lambda\cGcp(x,y_0)+\Lambda\cG'(x,y_0).
$$
When $x\to x_0$, $\Lambda \cGcp'(x,y_0)$ is $O((x_0-x)^{-1/2})$ (by Lemma~\ref{lem:comp_G1pp}),
$\Lambda \cGcp(x,y_0)$ is $O(1)$ (by Lemma~\ref{lem:comp_G1p}), 
and $\Lambda \cG'(x,y_0)$ and  
$\Lambda \cG(x,y_0)$ are 
$O(1)$, as proved above. Hence 
$\Lambda \cG''(x,y_0)$ is $O((x_0-x)^{-1/2})$, which concludes the proof.
\end{proof}
This concludes the proof of the expected complexities of our random samplers.
(Recall that, thanks to Claim~\ref{claim:eq}, the proof has been reduced to proving
 the asymptotic estimate $\Lambda\cG''(x,y_0)=O((x_0-x)^{-1/2})$.) 
 
 \vspace{0.2cm}

\noindent\emph{Acknowledgements.}
I am very grateful to Philippe Flajolet for his encouragements and for several  
 corrections and suggestions that
led to a significant improvement of the presentation of the results.
I greatly thank the anonymous referee for an extremely 
detailed and insightful report, which led to a major revision of an earlier version of 
the article.
I have also enjoyed fruitful discussions with Gilles
Schaeffer,  Omer Gim\'enez and 
Marc Noy, in particular regarding the implementation of the algorithm.

   \bibliographystyle{plain}
\bibliography{Fusy_biblio}

\end{document}